\documentclass[reqno,11pt]{amsart}

\usepackage{amsmath,amssymb,amsfonts,amsthm,mathtools}
\usepackage{mathrsfs}
\usepackage{bm}
\usepackage{enumitem}
\usepackage{array,tabularx}
\usepackage{microtype}
\usepackage[colorlinks=true,citecolor=blue,linkcolor=blue,urlcolor=blue]{hyperref}
\usepackage[nameinlink,capitalise,noabbrev]{cleveref}
\allowdisplaybreaks

\newtheorem{theorem}{Theorem}[section]
\newtheorem{proposition}[theorem]{Proposition}
\newtheorem{lemma}[theorem]{Lemma}
\newtheorem{corollary}[theorem]{Corollary}
\newtheorem{conjecture}[theorem]{Conjecture}
\theoremstyle{definition}
\newtheorem{definition}[theorem]{Definition}
\newtheorem{remark}[theorem]{Remark}

\newcommand{\SM}{SM}
\newcommand{\FM}{FM}
\newcommand{\cN}{\mathcal N}
\newcommand{\Rop}{\mathscr R}
\newcommand{\cA}{\mathcal A}
\newcommand{\cB}{\mathcal B}
\newcommand{\cQ}{\mathcal Q}
\newcommand{\cC}{\mathcal C}
\newcommand{\cE}{\mathcal E}
\newcommand{\Sym}{\operatorname{Sym}}
\newcommand{\End}{\operatorname{End}}
\newcommand{\SO}{\operatorname{SO}}
\newcommand{\Spin}{\operatorname{Spin}}
\newcommand{\SU}{\operatorname{SU}}
\newcommand{\tr}{\operatorname{tr}}
\newcommand{\Id}{\operatorname{Id}}
\newcommand{\vol}{\operatorname{vol}}
\newcommand{\secg}{\operatorname{sec}_{g}}
\newcommand{\tf}{\operatorname{tf}}
\newcommand{\bbR}{\mathbb R}
\newcommand{\bbC}{\mathbb C}
\newcommand{\cD}{\mathcal D}
\newcommand{\cP}{\mathcal P}
\newcommand{\cS}{\mathcal S}
\newcommand{\cU}{\mathcal U}
\newcommand{\Harm}{\mathscr H}
\newcommand{\Hook}{\mathbb S_{(3,1)}}
\newcommand{\inner}[2]{\left\langle #1,#2\right\rangle}
\newcommand{\norm}[1]{\left\lVert #1\right\rVert}

\title[Frame Flows]
{Pinching and Tensorial Rigidity for Ergodicity of Frame Flows}

\author{Heng Zhang}
\address{School of Mathematical Sciences, University of Science and Technology of China, Hefei, China}
\email{hengz@mail.ustc.edu.cn}

\author{Shuhao Zhang}
\address{School of Mathematical Sciences, University of Science and Technology of China, Hefei, China}
\email{yichen12@mail.ustc.edu.cn}

\subjclass[2020]{37A25, 37D40, 53C20, 53C21, 53C24}
\keywords{frame flow, negatively curved manifolds, Pestov identity, Weitzenb\"ock formula, Killing tensor}

\begin{document}

\begin{abstract}
Let $(M^n,g)$ denote a closed oriented negatively curved Riemannian manifold. For such manifolds, the oriented frame flow is known to be ergodic for all odd dimensions $n\neq7$. We prove Brin's quarter-pinching conjecture when
$n=4$, and when $n\equiv2\pmod4$ with $n\neq134$: strict $1/4$-pinching
implies ergodicity of the oriented frame flow. We also prove that if
$n\equiv0\pmod4$ and $n\geq12$, then $5/13$-pinching implies
ergodicity. In the exceptional dimensions $7$, $8$, and $134$, we prove
ergodicity under strict $0.4661...$-, $0.5358...$-, and
$2/5$-pinching, respectively.
These results substantially improve the
corresponding bounds obtained by Ceki\'c--Lefeuvre--Moroianu--Semmelmann. 
\end{abstract}

\maketitle

\section{Introduction and main results}\label{sec:introduction}

Let $(M^n,g)$, $n\geq3$, be a smooth closed oriented Riemannian manifold with
negative sectional curvature. We write
$$
  \pi:\SM\longrightarrow M,
  \qquad
  \SM=\{(x,v):x\in M,\ v\in T_xM,\ |v|_g=1\}
$$
for its unit tangent bundle, and denote by $\FM\to M$ the oriented
orthonormal frame bundle.  A point of $\FM$ over $x$ will be represented by
an oriented orthonormal basis
$
  (v,e_2,\ldots,e_n)
$
of $T_xM$.  Retaining only the first vector defines a principal bundle
$$
  p:\FM\longrightarrow\SM,
  \qquad
  p(v,e_2,\ldots,e_n)=(x,v),
$$
with structure group $\SO(n-1)$.

Let $(\varphi_t)_{t\in\bbR}$ be the geodesic flow on $\SM$.  The oriented
frame flow $(\Phi_t)_{t\in\bbR}$ is obtained by moving $(x,v)$ under
$\varphi_t$ and parallel-transporting the remaining vectors along the same
geodesic.  Thus
$
  p\circ\Phi_t=\varphi_t\circ p.
$
The flow preserves the smooth probability measure formed from Liouville
measure on $\SM$ and Haar measure on the $\SO(n-1)$-fibres, and ergodicity
will always refer to this measure.  Because negative curvature makes the
geodesic flow Anosov, the frame flow is a compact isometric extension of an
Anosov flow and is one of the basic examples of partial hyperbolicity; see
\cite{HasselblattPesin06}.

For $\delta\in(0,1]$, we say that $(M,g)$ is \emph{$\delta$-pinched} if there
exists $\kappa_0>0$ such that
\begin{equation}\label{eq:delta-pinching-intro}
  -\kappa_0\leq\secg(\sigma)\leq-\delta\kappa_0
  \qquad\text{for every two-plane }\sigma\subset TM.
\end{equation}
It is \emph{strictly $\delta$-pinched} when the right-hand inequality is
strict for every $\sigma$.  On a closed manifold, compactness of the
Grassmann bundle shows that strict $\delta_0$-pinching is equivalent to
$\delta$-pinching for some $\delta>\delta_0$.  In particular, the strict
quarter-pinching hypothesis can be expressed by choosing a pinching
constant $\delta>1/4$ in \eqref{eq:delta-pinching-intro}.

The value $1/4$ is forced by geometry rather than by the known estimates.
Indeed, a parallel complex structure produces a proper invariant reduction
of the oriented frame bundle, so a negatively curved K\"ahler manifold does
not have an ergodic full frame flow.  Berger's pinching theorem places such
metrics at or below the quarter-pinched threshold, and complex hyperbolic
metrics lie on the weak boundary \cite{Berger60}.  This led Brin to the
following conjecture \cite[Conjecture~2.6]{Brin82}.

\begin{conjecture}[Brin]\label{conj:Brin}
If a closed negatively curved Riemannian manifold is strictly
$1/4$-pinched, then its oriented frame flow is ergodic.
\end{conjecture}

Brin's original formulation is stronger: under the same curvature
assumption he predicts that the frame flow is Bernoulli, and he further
conjectures ergodicity and the Bernoulli property whenever the holonomy group
is $\SO(n)$ \cite[Conjectures~2.6 and 2.9]{Brin82}.  The Bernoulli conclusion
is substantially stronger than ergodicity; a recent general mechanism by
which exponential mixing yields Bernoulli behavior is established in
\cite{DolgopyatKanigowskiRodriguezHertz24}.

The unpinched problem is already settled in most odd dimensions.  Brin and
Gromov proved ergodicity for every odd $n\neq7$ without any curvature
pinching beyond negativity \cite{BrinGromov80}.  Brin also showed that
ergodicity holds on an open dense set of negatively curved metrics in the
$C^3$ topology \cite[Section~5]{Brin82}, although a uniform theorem in even
dimensions, and the expected unpinched statement in dimension $7$, are
considerably subtler.  The first quantitative results required curvature
close to constant: Brin--Karcher obtained the threshold
$0.8649\ldots$ in even dimensions other than $8$, while
Burns--Pollicott obtained $0.9805\ldots$ in dimensions $7$ and $8$
\cite{BrinKarcher84,BurnsPollicott03}.

A different framework was introduced by
Ceki\'c--Lefeuvre--Moroianu--Semmelmann \cite{CLMS24}.  In the dimensions
relevant to the present paper, their estimates give the thresholds
$0.2928\ldots$ for $n=4$ and $0.2823\ldots$ for $n=6$.  For
$n\equiv2\pmod4$, $n\geq10$, they obtain an explicit increasing sequence of
bounds beginning with $0.2725\ldots$ in dimension ten and converging to
$0.2779\ldots$.  Their theorem also treats the complementary cases: the
bounds in dimensions divisible by $4$ decrease from $0.5948\ldots$ in
dimension twelve toward $0.5572\ldots$, and the exceptional dimensions
$7$, $8$, and $134$ are covered by the respective thresholds
$0.4962\ldots$, $0.6212\ldots$, and $0.5788\ldots$.  The conceptual
change is as important as the numerical improvement: the earlier arguments
were either essentially topological or based on the geometry of the
universal cover, whereas the method of \cite{CLMS24} converts
non-ergodicity into invariant tensorial data on $\SM$ and then applies a
twisted Pestov identity \cite{BrinGromov80,BrinKarcher84,BurnsPollicott03}.

The ergodicity question is part of a broader theory of compact isometric
extensions of hyperbolic flows.  For frame flows in dimensions $n\geq4$, ergodicity implies mixing
\cite{Lefeuvre23}. Moreover, since the structure group
$\SO(n-1)$ is compact semisimple in this range, ergodicity is
equivalent to rapid mixing by
\cite{CekicLefeuvreSemiclassical24}. Related quantitative criteria,
applying in particular to some frame flows of negatively curved
manifolds, are established in \cite{PollicottZhang25}.  When a K\"ahler
structure obstructs ergodicity of the full frame flow, the corresponding
unitary frame flow becomes the natural object: if the complex dimension is
$m$, Brin--Gromov's argument gives ergodicity for $m$ odd or $m=2$, while
holomorphic-pinching results cover even $m\neq4,28$
\cite{BrinGromov80,CLMSUnitary24}.  There is also an analogous theory for an
arbitrary Euclidean vector bundle $(E,\nabla)\to M$ with an orthogonal
connection: if $\operatorname{rank}E=r\leq\sqrt n$ and the holonomy group is
$\SO(r)$, then the associated frame flow is ergodic
\cite{CekicLefeuvreConnections25}.  This gives further evidence for Brin's
holonomy conjecture and indicates that the methods extend well beyond the
tangent bundle.  For an overview of this circle of ideas, see
\cite{CLMS22Survey}.

Our results divide naturally into two groups.  The first consists of
stable dimensional families, where the obstruction is an invariant normal
vector or an invariant orthogonal projector.  The second consists of the
three exceptional dimensions in which the transitivity-group classification
permits additional form-valued structures.

\begin{theorem}\label{thm:stable-main}
Let $(M^n,g)$ be a closed oriented Riemannian manifold of dimension $n\geq4$
with negative sectional curvature.
\begin{enumerate}[label=\textup{(\roman*)}]
\item If $g$ is strictly $1/4$-pinched and
$$
  n=4,
  \qquad\text{or}\qquad
  n\equiv2\pmod4\quad\text{with}\quad n\neq134,
$$
then the oriented frame flow on $\FM$ is ergodic.
\item If $g$ is $5/13$-pinched and
$$
  n\equiv0\pmod4,
  \qquad n\geq12,
$$
then the oriented frame flow on $\FM$ is ergodic.
\end{enumerate}
\end{theorem}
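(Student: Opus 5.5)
We will follow the reduction scheme of Ceki\'c--Lefeuvre--Moroianu--Semmelmann \cite{CLMS24}: non-ergodicity is converted into invariant tensorial data on $\SM$, and that data is ruled out by a twisted Pestov identity under the pinching hypothesis. The improvement will come from sharper pointwise curvature estimates than those used in \cite{CLMS24}.

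\emph{Step 1: the transitivity group.} Suppose the oriented frame flow is not ergodic. By Brin's theory, the transitivity group $H\subsetneq\SO(n-1)$ is a proper closed subgroup. By the classification of subgroups of $\SO(n-1)$ acting transitively on $S^{n-2}$ (the CLMS input), $H$ has a nonzero invariant in an associated bundle. For $n\neq 7,8,134$ the stable families give two alternatives:
\begin{itemize}[label=--]
\item if $n$ is even, a nonzero flow-invariant section of the normal bundle $\cN\to\SM$, i.e.\ a smooth $u$ with $u(x,v)\perp v$ and $Xu=0$ for the parallel-transport derivative $X$;
\item if $n\equiv 0\pmod 4$, possibly instead an invariant orthogonal projector $P$ on $\cN$ of rank different from $0$ and $n-1$. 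In practice this is a flow-invariant trace-free symmetric endomorphism of $\cN$, or a $2$-form coming from a unitary reduction.
\end{itemize}
By Hopf--Liv\v{s}ic regularity, these sections are smooth.

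\emph{Step 2: finite degree.} We show that the invariant section has finite Fourier degree in the vertical variable. For this we apply the twisted Pestov identity
\[
\norm{\nabla_{\mathbb V}Xu}^2=\norm{X\nabla_{\mathbb V}u}^2-\inner{R\nabla_{\mathbb V}u}{\nabla_{\mathbb V}u}+(n-1)\norm{Xu}^2+\inner{\text{twist}}{\cdot}.
\]
Here the twist curvature term is the Riemann tensor acting on $\cN$. Pinching controls this term relative to the $-R$ term. In the even-dimensional normal-vector case we then use the mode-by-mode lower bounds on $\norm{X_\pm u_k}$. This will force $u$ to be a polynomial of low degree, and I expect it to be linear: $u(x,v)=A_x v$ with $A$ a skew endomorphism field. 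Equivalently, $u$ corresponds to a $2$-form satisfying the Killing or twistor equation coming from $Xu=0$.

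\emph{Step 3: rigidity via Weitzenb\"ock.} We then feed the low-degree tensor (a Killing $2$-form, or in the projector case a trace-free Killing symmetric tensor) into a Weitzenb\"ock formula on $M$. Integrating gives an identity of the form $\int \langle q(R)A,A\rangle =0$, while pinching makes $q(R)$ definite.

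This step is the main obstacle. Crude bounds recover only the CLMS thresholds. To reach $1/4$ we plan to estimate $q(R)$ on $2$-forms with a Berger-type algebraic lemma: for $\delta>1/4$, $|R(X,Y,Z,W)|$ is bounded by $\tfrac{2}{3}(1-\delta)$ on orthonormal frames. We will also exploit the fact that $A$ arises from an invariant normal vector, so $A$ is nondegenerate and $A^2$ is of constant type, and in the case $n\equiv2\pmod4$ the rank of $A$ is constrained. The idea is to optimize the inequality only over this restricted class of $A$.

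For $n\equiv0\pmod4$ the admissible symmetric-tensor case loses this structure, so we will bound the curvature operator on $\Sym^2_0$ directly. We expect an optimization in the pinching parameter to yield exactly the constant $5/13$. The exclusion of $n=134$ reflects the extra $E_7$-type invariant in that dimension, which is handled separately.
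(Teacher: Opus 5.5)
\textbf{There is a genuine gap, and it sits in your Step~2.}

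You expect the Pestov identity to force the invariant normal field to be linear. At strict quarter pinching the Pestov step does not deliver this. Even the paper's sharpened bound excludes only odd degrees $k\geq5$, and that bound already needs more than you describe. It keeps the Jacobi and twist terms in one expression, and it uses the normality constraint $\tf(\Sym K_k)=0$ through the splitting $F=H+F_q$. This refinement, not a Weitzenb\"ock one, is what brings the high-degree reduction down to $1/4$.

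At $k=3$ the same bound fails. The constant term of its completed square is $\Pi_{n,3}=-13n^2+79n+102$, which is negative for every $n\geq8$, at an admissible value of $u$. So for $n\equiv2\pmod4$, $n\geq10$, the cubic mode survives. The case $f=f_1+f_3$ with $f_3\neq0$ carries the whole theorem, and your Step~3 only treats a linear $A$.

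For the linear case your argument is fine and is a legitimate alternative to the paper's. Normality makes $A$ skew, and $Xf=0$ makes $\nabla A$ totally skew, so $A$ is a Killing $2$-form. The Berger-type bound then makes the $2$-form curvature term definite for $\delta>1/4$. The paper instead uses unit length to obtain a nearly K\"ahler structure and invokes Nagy, Gray and Berger.

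The missing idea is to merge the two surviving modes into
\[
T_{abcd}=K_{abcd}+g_{(ab}A_{c)d}.
\]
By normality $T$ lies in $\Hook$, and it satisfies the generalized Killing equation $\nabla_{(e}T_{abc)d}=0$. The Jucys--Murphy contents $3,0,-2$ give the integrated Weitzenb\"ock sign. One then needs positivity of the curvature term, which the paper obtains as follows:
\begin{itemize}
\item for $n=4$, from a quantitative Finsler--Thorpe lemma together with the $\Spin(4)$ splitting;
\item for $n\geq10$, where $\cQ_{\Hook}(\cA)$ is not pointwise positive, from a separate cubic Weitzenb\"ock identity on the totally trace-free $(3,1)$-part, plus the ordinary formula on the $\Sym^2_0$ and $\Lambda^2$ trace parts;
\item for $n=6$, directly from the $k=3$ Pestov bound, which does close there with gap $27/176$.
\end{itemize}

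\textbf{Part~(ii) is not really addressed either.} A degree-two invariant projector is not a trace-free Killing symmetric $2$-tensor on $M$. Its quadratic part is a $4$-tensor of Young type $(2,2)$, coming from normality plus pair symmetry, and the relation $P^2=P$ is what makes it tractable. The paper has to exclude three regimes separately:
\begin{itemize}
\item degrees $k\geq6$, by the combined symmetric-square Pestov estimate;
\item $k=4$, by exact evaluation of the relevant invariant quadratic forms on the multiplicity-free admissible module $\mathbb V^0_{(4,2)}\oplus\mathbb V^0_{(3,1)}\oplus\Sym^2_0$;
\item $k=2$, by the full (not localized) Pestov identity applied to $P$ itself.
\end{itemize}
In the degree-two step, $P^2=P$ and $P(v)v=0$ fix every block of the vertical derivative except the $E_v$--$F_v$ block. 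The $(2,2)$ symmetry makes the components of that block antisymmetric. The first Bianchi identity then sharpens the mixed term, and the parity of $r+s=n-1$ gives $C_{r,s}<\sqrt2/3<5/8$.

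Bounding a curvature operator on $\Sym^2_0$ captures none of this. Also, $5/13$ does not come out of an optimization: it is a convenient rational value just above the optimized threshold $\delta_*\approx0.3844$.
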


For the exceptional dimensions, set
$$
\delta_7^\sharp:=\frac{385+36\sqrt6}{927+36\sqrt6}=0.4661\ldots, \qquad
 \delta_8^\sharp:=4-2\sqrt3=0.5358\ldots.
$$

\begin{theorem}\label{thm:exceptional-main}
Let $(M^n,g)$ be closed, oriented, and negatively curved.
\begin{enumerate}[label=\textup{(\roman*)}]
\item If $n=7$ and $g$ is strictly $\delta_7^\sharp$-pinched, then the
oriented frame flow is ergodic.
\item If $n=8$ and $g$ is strictly $\delta_8^\sharp$-pinched, then the
oriented frame flow is ergodic.
\item If $n=134$ and $g$ is strictly $2/5$-pinched, then the oriented frame
flow is ergodic.
\end{enumerate}
\end{theorem}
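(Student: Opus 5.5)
The plan is to follow the CLMS reduction and improve only the analytic step. Suppose the frame flow is not ergodic. By the Brin transitivity-group theory, as used in \cite{CLMS24}, the transitivity group $H\subsetneq\SO(n-1)$ is a proper closed subgroup acting on the fibres of $p:\FM\to\SM$. Non-ergodicity then produces a nontrivial flow-invariant smooth section $u$ of an associated bundle $\cE\to\SM$, which is a tensor bundle built from the normal bundle $\cN\to\SM$, $\cN_{(x,v)}=v^\perp$. Smoothness follows from the Liv\v{s}ic/regularity results for isometric extensions.

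In the exceptional dimensions $7$, $8$, $134$, the classification of proper subgroups of $\SO(n-1)$ that act transitively on spheres leaves a few possibilities: $G_2\subset\SO(7)$ gives $n=8$; $\Spin(7)\subset\SO(8)$ gives $n=9$, which is odd and excluded by Brin--Gromov; $\Spin(9)$ does not occur here; and $\SU(m)$, $\mathrm{U}(m)$, $\mathrm{Sp}$-type groups give complex or quaternionic structures. For $n=7$ the residual structures come from $\SO(6)$, that is, $\SU(3)$ and $\mathrm{U}(3)$ reductions. For $n=134$ they come from the $\mathrm{Sp}$-type subgroups of $\SO(133)$ in dimension $133$. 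Accordingly $u$ is one of the following: an invariant $3$-form $\varphi\in C^\infty(\SM,\Lambda^3\cN)$ of $G_2$ type ($n=8$); an invariant orthogonal complex structure $J\in C^\infty(\SM,\Lambda^2\cN)$ with $J^2=-\Id$, or its real part together with a complex volume form ($n=7$); or the analogous form-valued data for $n=134$.

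Each such $u$ is pulled back to a tensor field $u$ on $\SM$ with $Xu=0$ and is expanded in vertical spherical harmonics $u=\sum_k u_k$. The first step is to apply the twisted Pestov identity of \cite{CLMS24} degree by degree, with a sharper curvature term. The CLMS estimate controls $\langle \Rop u,u\rangle$, the curvature operator acting on $\cN$-valued tensors, by crude operator-norm bounds. Instead, we write $\Rop=\Rop_{\mathrm{const}}+\Rop_{\tf}$, where $\Rop_{\mathrm{const}}$ is the contribution of constant curvature $-\tfrac{1+\delta}{2}\kappa_0$. We then bound the trace-free part using the pinching via the algebraic estimate $|R-R_{\mathrm{const}}|\le \tfrac{1-\delta}{2}\kappa_0\cdot c$, where $c$ is a Berger-type constant, applied only on the specific $H$-invariant algebraic type of $u$ (a unit normal, a projector, a $G_2$ $3$-form, and so on). This is where the exceptional thresholds come from: for $\varphi$ of $G_2$ type the relevant quadratic form is diagonalized on $\Lambda^3\bbR^7=\Lambda^3_1\oplus\Lambda^3_7\oplus\Lambda^3_{27}$, and optimizing the Pestov coefficients yields a quadratic equation. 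Its root should be $4-2\sqrt3$ for $n=8$, and a root with $\sqrt6$ for $n=7$.

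The second step, which handles the stable families of \cref{thm:stable-main}, uses tensorial rigidity. For $n\equiv2\pmod4$ and $n=4$, the only possibility is an invariant unit section $w$ of $\cN$ or an invariant almost complex structure on $\cN$. Using the Pestov identity with the sharp constant $1/4$ coming from the Weitzenb\"ock term, we show that the low-degree Fourier modes $u_0,u_1$ of $u$ determine a Killing-type tensor on $M$: $u_1$ corresponds to a symmetric trace-free $2$-tensor or a $2$-form satisfying a Killing equation. A Weitzenb\"ock formula on $M$ then forces this tensor to be parallel under strict $1/4$-pinching. The resulting parallel structure is a complex structure (K\"ahler), which is excluded by Berger's pinching theorem, or it is trivial, which contradicts transitivity. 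For $n\equiv0\pmod4$, $n\ge12$, the obstruction is a projector of rank $k$. The same scheme applies but with worse constants from the hook representation $\Hook$ in $\Sym$-type decompositions, and this is what gives $5/13$.

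I expect the main obstacle to be the sharp curvature estimate on the higher Fourier modes $u_k$, $k\ge2$, in the Pestov identity. I will first try to prove $\|Xu_k\|$-coercivity on each mode with constants that improve with $k$, so that only $k\le 2$ needs the delicate computation. Closing $k=1$ is then a finite algebraic optimization, which I would carry out explicitly for each case.
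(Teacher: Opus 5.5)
There is a genuine gap, on two levels. First, the obstructions you set out to kill are not the ones the dynamical reduction produces (Theorem~\ref{thm:CLMS-input}), and a classification of subgroups transitive on spheres is not the right filter in even dimensions. For $n=134$ the normal space $\bbR^{133}$ is odd-dimensional, so unitary or $\mathrm{Sp}$-type reductions cannot occur. The actual alternatives are an odd invariant unit normal vector field, or an odd invariant three-form pointwise equivalent to the Cartan form $\inner{[a,b]}{c}$ of $\mathfrak e_7$. Its stabilizer $E_7/\{\pm1\}\subset\SO(133)$ is far from transitive on $S^{132}$, so your filter misses it. For $n=8$ you omit the alternative of an even invariant orthogonal projector of rank at most three. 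Each branch needs its own argument. The vector field in dimension $134$ is excluded only through the full quarter-pinched machinery: reduction to $f=f_1+f_3$, then the $(3,1)$ hook Killing tensor and the cubic Weitzenb\"ock identity (Propositions~\ref{prop:degree-reduction} and~\ref{prop:stable-cubic}). Your sketch via $u_0,u_1$ skips the cubic mode, which is the real difficulty. The projector in dimension $8$ is excluded by the $5/13$ projector estimates (Lemma~\ref{lem:projector-eight}) together with $4-2\sqrt3>5/13$.

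Second, the analytic core is not supplied. Centering $\Rop$ at constant curvature and applying a Berger--Karcher constant to the top Fourier coefficient is essentially what \cite{CLMS24} already do. It yields their thresholds $0.4962\ldots$, $0.6212\ldots$, $0.5788\ldots$; asserting that a quadratic ``should'' have root $4-2\sqrt3$ is fitting to the answer.

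For $n=8$ and $n=134$ the constants come from a different mechanism. You homogenize the \emph{whole} finite Fourier expansion into $T\in C^\infty(M,\mathscr E_{k,p})$ with $\cD_{k,p}T=0$, and prove the Weitzenb\"ock sign in the spherical evaluation inner product. Then you use that all values lie in one $\SO(n-1)$-orbit, so the vertical derivative equals $-v^\flat\wedge\iota_a\phi+\rho(\xi_a)\phi$ with $\xi_a\in\mathfrak m=\mathfrak h^\perp\subset\Lambda^2W$. Irreducibility of $\mathfrak m$ turns the curvature term into $\gamma\int(\mathscr A_{\cA}(\xi)-2\mathscr B_{\cA}(\xi)+\Theta_{\cA})$.

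The relevant splitting is $\Lambda^2W=\mathfrak g_2\oplus\Lambda^2_7W$, not the decomposition of $\Lambda^3$. Bianchi orthogonality to the four-form $*\phi$ gives $\Theta_{\cA}=\frac13\tr\cA|_{\Lambda^2W}\geq7\delta$. A $G_2$ tight frame gives $|C_{\cA}|_{\mathrm{HS}}^2\leq\frac{28}{3}(1-\delta)^2$. Completing the square in $\xi$ leaves $7\delta-\frac{28(1-\delta)^2}{3\delta}>0$, i.e.\ $\delta>4-2\sqrt3$. For $E_7$, the tight frame generated by $p\wedge q$ with $p,q$ commuting gives $\Theta_{\cA}\geq8645\delta$ and $|C_{\cA}|_{\mathrm{HS}}^2\leq\frac{4\cdot8645}{9}(1-\delta)^2$, hence $(5\delta-2)(\delta+2)>0$.

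For $n=7$ the section is odd, and $k\geq5$ is already excluded by \cite{CLMS24}. The delicate mode is therefore $k=3$, not $k\leq2$. There you must keep the lowering term $\frac{72}{7}\|X_-u\|^2$ and bound it below by $\|X_+g\|^2$ with $g=\iota_vu$. Then combine the degree-two estimate $\mathcal W_{\cA}(g)\geq(13\delta-2\sqrt6(1-\delta))\|g\|^2$, which is the source of $\sqrt6$, with the sharp inequality $\|g\|^2\leq\frac23\|u\|^2$. Degree one is not an algebraic optimization either. Writing $f(v)=\iota_v\phi$ makes $\phi$ a nearly parallel $G_2$-structure, hence Einstein with nonnegative scalar curvature, which negative curvature forbids.
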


\begin{remark}\label{rem:intro-improvement}
The pinching constants $5/13$, $\delta_7^\sharp$, $\delta_8^\sharp$, and
$2/5$ may be slightly lowered by modifying our method via some new estimates.  The exact optimization
for Theorem \ref{thm:stable-main}\textup{(ii)} is recorded in
Remark \ref{rem:optimized-constant}; we retain $5/13$ in the statement
because it yields rational coefficients and visibly positive gaps in
every irreducible quartic summand.
\end{remark}

We now outline the proof and the main new ingredients.  Let
\(\cN\to\SM\) denote the normal bundle, whose fibre at \((x,v)\) is
\(v^\perp\subset T_xM\).  The initial dynamical reduction is the one
developed by Brin
\cite{Brin75Topology,Brin75Transitivity}
and refined in
\cite{CekicLefeuvreHolonomy25,CLMS24}.  If the frame flow is not
ergodic, then, after passing to a finite Riemannian cover, its
transitivity group fixes one of the tensorial structures listed in
Theorem~\ref{thm:CLMS-input}.  Non-Abelian Liv\v{s}ic theory realizes
this fixed tensor as a smooth flow-invariant section of the normal
bundle, of its symmetric square, or of an exterior power.

Every such invariant section has finite vertical Fourier content
\cite{GPSU16}.  Its highest Fourier coefficient can be viewed on the
base manifold as a symmetric tensor, twisted by the relevant auxiliary
bundle, which satisfies both a conformal-Killing equation and an
algebraic normality constraint.  The localized twisted Pestov identity
then forces an associated curvature form to be nonpositive.  Thus the
analytic part of the proof consists in proving that the same curvature
form is strictly positive under the pinching assumptions.  A recurring
principle in our argument is that the Jacobi and twisting contributions
must be retained as a single expression: estimating them separately,
as in the coarser bounds, loses precisely the interaction needed near
the quarter-pinched threshold.

We first consider an invariant normal vector field.  A new estimate for
the combined tangent-twist Pestov form excludes all odd Fourier degrees
at least five already at the quarter-pinched endpoint.  Consequently,
only the linear and cubic modes remain.  A purely linear obstruction
would define an orthogonal nearly K\"ahler structure, which is ruled out
by the structure theory of nearly K\"ahler manifolds and Berger's
pinching theorem
\cite{Gray76,Nagy02,Berger60}.  When the two remaining modes are
coupled, they assemble naturally into a generalized Killing tensor of
Young type \((3,1)\).  The resulting differential equation is treated
by Weitzenb\"ock methods.  In dimension $4$, the essential ingredients
are a quantitative Finsler--Thorpe lemma and the
\(\Spin(4)=\SU(2)_+\times\SU(2)_-\) decomposition.  Dimension $6$ is
handled by a direct low-dimensional tangent-twist estimate.  In even
dimensions at least ten, the hook tensor splits into a totally
trace-free component and two trace components.  The trace-free part is
controlled by a new cubic Weitzenb\"ock identity which isolates the
orthogonal action on the first three tensor slots, while the two trace
parts are treated by the ordinary Weitzenb\"ock formula.  Together,
these arguments establish the quarter-pinched result for the
normal-vector branch.

For an invariant orthogonal projector, we similarly retain the full
curvature expression associated with the symmetric-square twist.  A
uniform estimate eliminates all even Fourier degrees at least $6$
under \(5/13\)-pinching.  The quartic mode requires a separate
finite-dimensional argument: its admissible coefficient space has a
multiplicity-free decomposition into three irreducible orthogonal-group
modules, and the relevant invariant quadratic forms can be evaluated
exactly on each summand.  The quadratic mode has additional rigidity.
The projector equation and normality produce a $4$-tensor of Young
type \((2,2)\); the resulting derivative antisymmetries, together with
the first Bianchi identity, give a sharper estimate for the mixed
curvature term.  This excludes the projector branch in dimensions
divisible by $4$, and also deals with the projector alternative in
dimension eight.

The exceptional \(G_2\) and \(E_7\) branches require a different use of
the invariant structure.  Here it is advantageous to keep the entire
finite Fourier expansion rather than only its highest coefficient.
After homogenization, the invariant form becomes a generalized Killing
tensor on the base while retaining both its normality and its fixed
orbit type.  We prove an orbit-type curvature identity which decomposes
the relevant curvature term into three pieces: a radial Jacobi term, a
radial--tangential mixed term, and the trace of the curvature operator
on the orthogonal complement of the stabilizer algebra.  In both
applications this complement is irreducible, so the orbit differential
is a homothety and compact-group averaging produces a tight frame.
For the \(G_2\) orbit, the standard decomposition of two-forms gives the
dimension-eight threshold.  For the \(E_7\) Cartan orbit, a tight frame
generated by commuting decomposable two-forms gives the
dimension-\(134\) threshold stated in
Theorem~\ref{thm:exceptional-main}.

Finally, the $7$-dimensional complex-structure branch is treated by
a separate cubic refinement.  The higher odd Fourier modes are first
removed by the tangent-twist estimate.  For the remaining cubic mode,
the normality relation produces a tangential vector harmonic of degree
two.  A sharp \(L^2\) comparison between the cubic tensor and this
contraction, combined with the lowering term retained in a second
localized Pestov estimate, excludes the cubic mode under the constant
in Theorem~\ref{thm:exceptional-main}\textup{(i)}.  The remaining
linear mode induces a nearly parallel \(G_2\)-structure and is
incompatible with negative sectional curvature.

These vanishing results exclude every obstruction supplied by
Theorem~\ref{thm:CLMS-input}, and therefore prove
Theorems~\ref{thm:stable-main} and
\ref{thm:exceptional-main}.

For comparison, Table~\ref{tab:comparison} summarizes the obstruction supplied by
the dynamical reduction, the bounds obtained in \cite{CLMS24}, and the estimates
proved here.

\begin{table}[!ht]
\centering
\footnotesize
\setlength{\tabcolsep}{3pt}
\renewcommand{\arraystretch}{1.18}
\begin{tabularx}{\textwidth}{@{}
  >{\raggedright\arraybackslash}p{0.82in}
  >{\raggedright\arraybackslash}p{0.92in}
  >{\raggedright\arraybackslash}p{1.08in}
  >{\raggedright\arraybackslash}p{0.82in}
  >{\raggedright\arraybackslash}X@{}}
\hline
Dimension(s) & Obstruction & Threshold in \cite{CLMS24} & Present threshold & Principal new ingredient \\
\hline
$4$ & odd normal vector & $0.2928\ldots$ & strict $1/4$ & quantitative Finsler--Thorpe lemma and the $\Spin(4)$ decomposition \\
$n\equiv2\pmod4$, $n\neq134$ & odd normal vector & $0.2823\ldots$ for $n=6$; $0.2725\ldots\nearrow0.2779\ldots$ for $n\geq10$ & strict $1/4$ & tangent-twist Pestov estimate and $(3,1)$-type Weitzenb\"ock identities \\
$4\mid n$, $n\geq12$ & even orthogonal projector & $0.5948\ldots\searrow0.5572\ldots$ & $5/13$ & symmetric-square Pestov curvature form, admissible quartic module, and quadratic projector symmetry \\
$7$ & orthogonal complex structure on $\cN$ & $0.4962\ldots$ & strict $\delta_7^\sharp=0.4661\ldots$ & cubic contraction and retained lowering term \\
$8$ & $G_2$ form or projector & $0.6212\ldots$ & strict $\delta_8^\sharp=0.5358\ldots$ & orbit-type curvature identity and the projector estimate \\
$134$ & normal vector or $E_7$ Cartan form & $0.5788\ldots$ & strict $2/5$ & $(3,1)$-type Weitzenb\"ock estimate and the orbit-type curvature identity \\
\hline
\end{tabularx}
\caption{Comparison of the dimensional branches and pinching thresholds.}
\label{tab:comparison}
\end{table}

\medskip
\noindent\emph{Organization of the paper.}
Section \ref{sec:preliminaries} records the geometric conventions, the vertical
Fourier decomposition, the localized Pestov identities, and the dynamical
reduction to invariant tensorial obstructions.  Section \ref{sec:normal-vector}
treats the invariant normal-vector obstruction: first the high-degree
reduction, then the generalized Killing tensor of Young type $(3,1)$, and
finally the estimates in dimensions $4$, $6$, and the stable range.  Section
\ref{sec:symmetric-projector} treats the invariant projector obstruction in
Fourier degrees at least $6$, $4$, and $2$.  Section
\ref{sec:exceptional-dimensions} develops the general orbit-type curvature
identity and applies it to the $G_2$ and $E_7$ branches, followed by the
separate $7$-dimensional refinement.  Section \ref{sec:mainproof} combines
these vanishing results and proves Theorems \ref{thm:stable-main} and
\ref{thm:exceptional-main}.  Appendices \ref{app:JM}--\ref{app:projector-optimization}
contain the representation-theoretic and coefficient computations used in
the main text.

\section{Dynamical reduction, conventions, and Pestov identities}\label{sec:preliminaries}

\subsection{Pinching, normalization, and curvature conventions}

In algebraic calculations, $V$ denotes a Euclidean vector space, and we use
its metric to identify $V$ with $V^*$. Parentheses and brackets around
indices denote normalized symmetrization and alternation, respectively;
repeated indices are summed.

We use the Riemann curvature convention
$$
  R^g(u,v)w
  :=\nabla_u\nabla_vw-\nabla_v\nabla_uw-\nabla_{[u,v]}w,
$$
and write
$$
  R^g(u,v,w,z):=\inner{R^g(u,v)w}{z},
  \qquad
  \secg(u\wedge v)
  =\frac{R^g(u,v,v,u)}{|u\wedge v|^2}.
$$
Define the associated \emph{positive curvature operator}
\(
  \Rop^g:\Lambda^2TM\to\Lambda^2TM
\)
by
\begin{equation}\label{eq:Rop-definition}
  \inner{\Rop^g(u\wedge v)}{w\wedge z}
  :=-R^g(u,v,z,w)=R^g(u,v,w,z).
\end{equation}
Thus
\begin{equation*}
  \inner{\Rop^g(u\wedge v)}{u\wedge v}
  =-\secg(u\wedge v)|u\wedge v|^2.
\end{equation*}
For a self-adjoint $\cB\in\End(\Lambda^2V)$, write
$$
  \cB(\xi,\eta):=\inner{\cB\xi}{\eta},
  \qquad
  \cB(u,v,w,z):=\inner{\cB(u\wedge v)}{w\wedge z},
$$
and set
$$
  \sec_{\cB}(u\wedge v)
  :=\frac{\cB(u\wedge v,u\wedge v)}{|u\wedge v|^2},
  \qquad
  \sec_{\cB}(u,v):=\sec_{\cB}(u\wedge v).
$$
Relative to an orthonormal basis $(e_i)$, write
$$
  e_{ij}:=e_i\wedge e_j,
  \qquad
  \cB_{ijkl}:=\cB(e_i,e_j,e_k,e_l).
$$
We write $I$ for the identity endomorphism of $\Lambda^2V$; thus
$\sec_I\equiv1$.

Suppose that $g$ is $\delta$-pinched at the scale $\kappa_0>0$ in the sense
of \eqref{eq:delta-pinching-intro}. We then set
\begin{equation}\label{eq:normalized-curvature}
  \cA:=\kappa_0^{-1}\Rop^g.
\end{equation}
The pinching condition becomes
\begin{equation*}
  \delta\leq\sec_{\cA}\leq1.
\end{equation*}
All algebraic curvature estimates below are stated for $\cA$. Since every
curvature expression used in the proof is linear in the curvature operator,
the positive factor $\kappa_0$ never affects a sign.

Let $E$ be a Euclidean $\SO(n)$-module with infinitesimal representation
\(
  \rho:\mathfrak{so}(n)\to\mathfrak{so}(E).
\)
For any self-adjoint $\cB\in\End(\Lambda^2\bbR^n)$ and any orthonormal basis
$(E_\alpha)$ of $\Lambda^2\bbR^n\simeq\mathfrak{so}(n)$, define
\begin{equation*}
  \cQ_E(\cB)
  :=\sum_{\alpha,\beta}
  \inner{\cB E_\alpha}{E_\beta}\,
  \rho(E_\alpha)^*\rho(E_\beta).
\end{equation*}
This is independent of the chosen basis. If $\cB\succeq0$ as an operator on
$\Lambda^2\bbR^n$, then
\begin{equation}\label{eq:Qpositive}
  \cQ_E(\cB)\succeq0.
\end{equation}
Indeed, after diagonalizing
$\cB=\sum_\alpha\lambda_\alpha E_\alpha\otimes E_\alpha$, one has
$$
  \cQ_E(\cB)
  =\sum_\alpha\lambda_\alpha
  \rho(E_\alpha)^*\rho(E_\alpha).
$$

Let $q_E(R^g)$ be the standard Weitzenb\"ock curvature endomorphism on the
bundle associated with $E$. With the conventions above,
\begin{equation}\label{eq:q-Q-exact-convention}
  q_E(R^g)=-\cQ_E(\Rop^g)
  =-\kappa_0\cQ_E(\cA).
\end{equation}
This is the convention of \cite[Section~3]{CLMSW25}.

\subsection{The unit tangent bundle and the normal bundle}

The geodesic vector field on $\SM$ is denoted by $X$, and its flow is
$$
  \varphi_t(x,v)
  =(\gamma_{x,v}(t),\dot\gamma_{x,v}(t)).
$$

\begin{definition}
The normal bundle $\cN\to\SM$ is
$$
  \cN_{(x,v)}:=v^\perp\subset T_xM.
$$
We use $X$ both for the geodesic vector field and for the induced covariant
derivative on pullback tensor bundles over $\SM$.
A section $f\in C^\infty(\SM,\cN)$ is \emph{flow-invariant} if $Xf=0$.
\end{definition}

Thus $Xf=0$ means that $f(\varphi_t(x,v))$ is obtained from $f(x,v)$ by
parallel transport along $\gamma_{x,v}$.

\subsection{Vertical spherical harmonics}

For each $x\in M$, the fibre $S_xM$ is a round $(n-1)$-sphere. Let
$\Omega_k\to M$ be the vector bundle of spherical harmonics of degree $k$ on
these fibres. Every smooth section
$f\in C^\infty(\SM,\pi^*TM)$ has a vertical Fourier expansion
$$
  f=\sum_{k\geq0}f_k,
  \qquad
  f_k\in C^\infty(M,\Omega_k\otimes TM),
$$
with convergence in the smooth topology. The section is odd if
$f(x,-v)=-f(x,v)$, equivalently if only odd values of $k$ occur.

The geodesic vector field splits into raising and lowering operators
$$
  X=X_++X_- ,
  \qquad
  X_+:\Omega_k\otimes TM\longrightarrow\Omega_{k+1}\otimes TM,
  \qquad
  X_-:\Omega_k\otimes TM\longrightarrow\Omega_{k-1}\otimes TM.
$$
If $f$ has finite Fourier degree $k$ and $Xf=0$, then its top coefficient
satisfies
$$
  X_+f_k=0.
$$
Via the standard identification between spherical harmonics and trace-free
symmetric tensors, there is a unique
$$
  K_k\in C^\infty(M,\Sym^k_0T^*M\otimes TM)
$$
whose evaluation on $v^{\otimes k}$ is $f_k(x,v)$. The equation $X_+f_k=0$
is the twisted conformal Killing equation. We refer to
\cite[Sections~2.2 and 4.1]{CLMS24} and \cite{CLMSW25} for the precise
normalizations.  Further background on Killing and conformal Killing symmetric
tensors is given in
\cite{DairbekovSharafutdinov10,HeilMoroianuSemmelmann16}.

\subsection{Normalized localized Pestov identities}
\label{sec:normalized-pestov}

We collect the normalizations and sign conventions for the Pestov identities
used throughout the paper.  Let $(E,\nabla^E)\to M$ be a Euclidean vector
bundle with an orthogonal connection, and equip $\SM$ with the Liouville
measure
$$
  d\mu(x,v)=d\vol_g(x)\,d\sigma_x(v),
  \qquad \int_{S_xM}d\sigma_x=1.
$$
All adjoints and norms in this subsection are taken with respect to this
measure and the bundle metrics.  The geodesic derivative is skew-adjoint,
$$
  X^*=-X,
$$
and, on the vertical harmonic decomposition,
\begin{equation*}
  \left(X_+\big|_{\Omega_k\otimes E}\right)^*
  =-X_-\big|_{\Omega_{k+1}\otimes E}.
\end{equation*}
In particular, $X_-$ vanishes on degree zero.

For $u\in C^\infty(M,\Omega_k\otimes E)$, our vertical Laplacian is the
nonnegative operator
\begin{equation*}
  \Delta_V^E:=(\nabla_V^E)^*\nabla_V^E,
  \qquad
  \Delta_V^E u=\lambda_k u,
  \qquad
  \lambda_k=k(k+n-2).
\end{equation*}
At $(x,v)\in\SM$, choose an orthonormal basis
$e_1,\ldots,e_{n-1}$ of $v^\perp$.  The vertical derivative is normalized by
\begin{equation*}
  D_a^Vu
  :=\left.\frac{D}{dt}\right|_{t=0}
  u\bigl(x,(\cos t)v+(\sin t)e_a\bigr),
  \qquad
  \nabla_V^Eu=\sum_{a=1}^{n-1}e_a\otimes D_a^Vu,
\end{equation*}
and hence
$|\nabla_V^Eu|^2=\sum_a|D_a^Vu|^2$.

Define the Jacobi and twisting curvature operators by
\begin{equation*}
  \mathbf R_{(x,v)}(w\otimes\xi)
  :=R^g(w,v)v\otimes\xi,
  \qquad
  F^E_{(x,v)}\xi
  :=\sum_{a=1}^{n-1}e_a\otimes R^E(v,e_a)\xi.
\end{equation*}
For $k\geq0$, set
\begin{equation*}
  A_{n,k}:=\frac{(n+k-2)(n+2k-4)}{n+k-3},
  \qquad
  B_{n,k}:=\frac{k(n+2k)}{k+1}.
\end{equation*}
The normalized localized twisted Pestov identity is
\begin{equation}\label{eq:localized-pestov-identity}
\begin{split}
  A_{n,k}\|X_-u\|^2-B_{n,k}\|X_+u\|^2+\|Z_k(u)\|^2
  ={}&\inner{\mathbf R\nabla_V^Eu}{\nabla_V^Eu}_{L^2}+\inner{F^Eu}{\nabla_V^Eu}_{L^2}.
\end{split}
\end{equation}
Here $Z_k(u)$ is characterized by the decomposition
\begin{equation*}
  \nabla_H^Eu
  =\frac1{k+1}\nabla_V^EX_+u
   -\frac1{n+k-3}\nabla_V^EX_-u+Z_k(u),
  \qquad \operatorname{div}_V^E Z_k(u)=0.
\end{equation*}
We use only the nonnegativity of $\|Z_k(u)\|^2$.  Formula
\eqref{eq:localized-pestov-identity} is
\cite[Lemma~2.3]{CLMS24}; see also
\cite[Proposition~3.5]{GPSU16} and
\cite[Proposition~6.2]{CLMSW25}.

We now rewrite its curvature side in the positive curvature operator
convention of \eqref{eq:Rop-definition}.  Identify
$\Lambda^2T_xM\simeq\mathfrak{so}(T_xM)$ by
\begin{equation}\label{eq:wedge-skew-convention}
  (a\wedge b)c=\inner{a}{c}b-\inner{b}{c}a.
\end{equation}
For every bundle associated to an orthogonal representation $\rho_E$, one has
\begin{equation}\label{eq:pestov-sign-conversion}
  R^E(v,e_a)=\rho_E\bigl(\Rop^g(v\wedge e_a)\bigr),
  \qquad
  \inner{R^g(e_a,v)v}{e_b}
  =-\Rop^g(v\wedge e_a,v\wedge e_b).
\end{equation}
For a self-adjoint $\cB\in\End(\Lambda^2T_xM)$, define the sign-normalized
Pestov curvature form
\begin{equation}\label{eq:sign-normalized-pestov-form}
\begin{split}
  \mathfrak P_{\cB}^E(u):=\int_{\SM}\bigg[{}
  &\sum_{a,b}\cB(v\wedge e_a,v\wedge e_b)
       \inner{D_a^Vu}{D_b^Vu}-\sum_a\inner{\rho_E(\cB(v\wedge e_a))u}{D_a^Vu}
  \bigg]d\mu.
\end{split}
\end{equation}
The expression is independent of the orthonormal basis of $v^\perp$.
Equations \eqref{eq:localized-pestov-identity} and
\eqref{eq:pestov-sign-conversion} give the single sign-normalized formula
\begin{equation}\label{eq:localized-pestov-positive}
  A_{n,k}\|X_-u\|^2-B_{n,k}\|X_+u\|^2+\|Z_k(u)\|^2
  =-\mathfrak P_{\Rop^g}^E(u).
\end{equation}

For the tangent-bundle twist, let $(e_r)_{r=0}^{n-1}$ be a fixed ambient
orthonormal basis, held fixed during spherical differentiation, and put
$z_r:=\nabla^S\inner{u}{e_r}$.  Since the infinitesimal action on $TM$ is the
standard one, \eqref{eq:sign-normalized-pestov-form} becomes
\begin{equation}\label{eq:tangent-pestov-form}
  \mathfrak P_{\cB}^{TM}(u)
  =\int_{\SM}\sum_{r=0}^{n-1}
  \left\{
    \cB(v\wedge z_r,v\wedge z_r)
    -\cB(u\wedge e_r,v\wedge z_r)
  \right\}d\mu.
\end{equation}
Thus the tangent-twist localized identity is
\begin{equation*}
  A_{n,k}\|X_-u\|^2-B_{n,k}\|X_+u\|^2+\|Z_k(u)\|^2
  =-\mathfrak P_{\Rop^g}^{TM}(u).
\end{equation*}

For the symmetric-square twist, write
$S_{rs}=\inner{S e_s}{e_r}$ and $z_{rs}=\nabla^SS_{rs}$.  The infinitesimal
action is $\rho_{\Sym^2}(A)S=[A,S]$, and the two equal matrix contributions
produce the coefficient two.  Hence
\begin{equation}\label{eq:sym2-pestov-form-normalized}
  \mathfrak P_{\cB}^{\Sym^2}(S)
  =\int_{\SM}\sum_{r,s=0}^{n-1}
  \left\{
    \cB(v\wedge z_{rs},v\wedge z_{rs})
    -2\cB(S(v)e_s\wedge e_r,v\wedge z_{rs})
  \right\}d\mu,
\end{equation}
and the $\Sym^2$-twisted identity is
\begin{equation*}
  A_{n,k}\|X_-S\|^2-B_{n,k}\|X_+S\|^2+\|Z_k(S)\|^2
  =-\mathfrak P_{\Rop^g}^{\Sym^2}(S).
\end{equation*}

For the $\Lambda^p$-twist, let $\rho_{\Lambda^p}$ be the induced infinitesimal
action and write
$J_{\cB}(e_a,e_b)=\cB(v\wedge e_a,v\wedge e_b)$.  Then
\begin{equation*}
\begin{split}
  \mathfrak P_{\cB}^{\Lambda^p}(u)=\int_{\SM}\bigg[{}
  &\sum_{a,b}J_{\cB}(e_a,e_b)
       \inner{D_a^Vu}{D_b^Vu}-\sum_a\inner{
       \rho_{\Lambda^p}(\cB(v\wedge e_a))u}{D_a^Vu}
  \bigg]d\mu,
\end{split}
\end{equation*}
so the $\Lambda^p$-twisted identity is
\begin{equation*}
  A_{n,k}\|X_-u\|^2-B_{n,k}\|X_+u\|^2+\|Z_k(u)\|^2
  =-\mathfrak P_{\Rop^g}^{\Lambda^p}(u).
\end{equation*}

There are two sign consequences used below.  First, if a finite Fourier
expansion $u=\sum_{j=0}^ku_j$ satisfies $Xu=0$, then
$X_+u_k=0$.  Applying \eqref{eq:localized-pestov-positive} to the top
coefficient gives
\begin{equation}\label{eq:top-pestov-sign}
  \mathfrak P_{\Rop^g}^E(u_k)
  =-A_{n,k}\|X_-u_k\|^2-\|Z_k(u_k)\|^2\leq0.
\end{equation}
Second, the full twisted Pestov identity, in the same conventions, is
\cite[Proposition~3.3]{GPSU16}
\begin{equation*}
\begin{split}
  \|X\nabla_V^Eu\|^2-\|\nabla_V^EXu\|^2+(n-1)\|Xu\|^2
  ={}&\inner{\mathbf R\nabla_V^Eu}{\nabla_V^Eu}_{L^2}+\inner{F^Eu}{\nabla_V^Eu}_{L^2}
  =-\mathfrak P_{\Rop^g}^E(u).
\end{split}
\end{equation*}
Consequently, an invariant section itself satisfies
\begin{equation}\label{eq:full-pestov-sign}
  Xu=0
  \quad\Longrightarrow\quad
  \mathfrak P_{\Rop^g}^E(u)
  =-\|X\nabla_V^Eu\|^2\leq0.
\end{equation}
Thus every later nonpositivity statement is obtained directly from either
\eqref{eq:top-pestov-sign} or \eqref{eq:full-pestov-sign}.
\subsection{Dynamical result and the invariant obstructions}

We first record a dynamical result.  For completeness, recall that if
$n=(2a+1)2^b$ and $b=c+4d$ with $0\leq c\leq3$, then the
Radon--Hurwitz number is $\rho(n)=2^c+8d$.

\begin{theorem}
\label{thm:CLMS-input}
Let $(M^n,g)$ be closed, oriented, negatively curved, and suppose that its
oriented frame flow is not ergodic. Then there exists a finite Riemannian
cover $(\widehat M,\widehat g)\to(M,g)$ on which one of the following holds.
\begin{enumerate}[label=\textup{(\roman*)}]
\item If
$$
 n=4,
 \qquad\text{or}\qquad
 n\equiv2\pmod4,\quad n\neq134,
$$
there is an odd unit section
$$
  f\in C^\infty(S\widehat M,\cN),
  \qquad Xf=0,
  \qquad \inner{f(x,v)}v=0.
$$
\item If $n\equiv0\pmod4$ and $n\geq12$, there is a nontrivial even
flow-invariant orthogonal projector
$$
  P\in C^\infty(S\widehat M,\Sym^2\cN),
  \qquad XP=0,
$$
of rank
$$
  1\leq r\leq
  \min\left\{\rho(n)-1,\frac{n-2}{2}\right\}.
$$
\item If $n=7$, there is an odd flow-invariant section
$$
  f\in C^\infty(S\widehat M,\Lambda^2\cN),
  \qquad Xf=0,
$$
whose values are the two-forms of orthogonal complex structures on the
normal $6$-plane.
\item If $n=8$, then either there is an odd flow-invariant $G_2$-structure
$$
  \phi\in C^\infty(S\widehat M,\Lambda^3\cN),
  \qquad X\phi=0,
$$
or there is a nontrivial even flow-invariant orthogonal projector
$$
  P\in C^\infty(S\widehat M,\Sym^2\cN),
  \qquad XP=0,
  \qquad 1\leq\operatorname{rank}P\leq3.
$$
\item If $n=134$, then either there is an odd invariant unit section
$$
  f\in C^\infty(S\widehat M,\cN),
  \qquad Xf=0,
$$
or there is a nonzero odd flow-invariant Lie bracket
$$
  \varphi\in C^\infty(S\widehat M,\Lambda^3\cN),
  \qquad X\varphi=0,
$$
whose values are equivalent to the Cartan three-form of the compact Lie
algebra $\mathfrak e_7$ and whose Fourier degree is at least three.
\end{enumerate}
In all cases the invariant object has finite vertical Fourier degree.
\end{theorem}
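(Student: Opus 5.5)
This theorem is essentially the dynamical input from \cite{CLMS24} (building on \cite{Brin75Topology,Brin75Transitivity,CekicLefeuvreHolonomy25}). We plan to assemble it from three standard ingredients rather than prove anything new: the transitivity-group reduction, a classification of the possible proper closed subgroups $H\subsetneq\SO(n-1)$, and non-Abelian Liv\v{s}ic theory together with finite Fourier degree.

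\emph{Step 1 (transitivity group).} Suppose the frame flow is not ergodic. Brin's theory of compact isometric extensions of Anosov flows gives the \emph{transitivity group} $H\subset\SO(n-1)$. It is defined via holonomies of the frame bundle $p:\FM\to\SM$ along stable/unstable leaves and flow lines of closed concatenations, and it is a closed subgroup. Non-ergodicity is equivalent to $H\neq\SO(n-1)$. Brin's $\SO(n-1)$-reduction then gives an $H$-reduction $Q\subset\FM$, that is, a flow-invariant principal $H$-subbundle. Any $H$-invariant tensor $T_0$ in a representation $W$ of $\SO(n-1)$ therefore produces an invariant section of the associated bundle $\FM\times_{\SO(n-1)}W$ over $\SM$. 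By \cite{CekicLefeuvreHolonomy25} this section is smooth, since the reduction can be taken smooth via the non-Abelian Liv\v{s}ic theorem.

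\emph{Step 2 (group theory).} Brin's topological argument, after passing to a finite cover to kill the component group and make certain index-two phenomena disappear, shows that $H$ acts transitively on the sphere $S^{n-2}\subset\bbR^{n-1}$, or else $H$ fixes a structure coming from a nontrivial splitting of $\cN$. The transitive case uses that the fibre $S_xM$ is contractible in the frame direction, so $H$ must be transitive on the sphere of each normal space. Transitive closed subgroups of $\SO(n-1)$ on $S^{n-2}$ are classified by Montgomery--Samelson--Borel: $\SU(m)$, $U(m)$, $\mathrm{Sp}$-type groups, $G_2\subset\SO(7)$, $\Spin(7)\subset\SO(8)$, and $\Spin(9)\subset\SO(16)$. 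The plan is to go through each dimension and check which lists are possible. When $n-1$ is odd, the classification forces either a fixed normal vector or a sphere-bundle section argument; by Brin--Gromov and the Radon--Hurwitz counting of vector fields on spheres one gets the bound $r\leq\rho(n)-1$. This bound is the key point and yields the normal-vector case (i). In the even case $n\equiv0\pmod4$ the reducible alternative yields the projector onto an invariant subbundle, with $r\leq(n-2)/2$ after replacing $P$ by $\Id-P$ if needed. In $n=7$ the group $U(3)$ gives a complex structure. In $n=8$ the group $G_2$ gives the $3$-form, and the reducible case gives a projector of rank at most $3$. In $n=134$, with $n-1=133=\dim E_7$, the adjoint representation gives the Cartan $3$-form. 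Oddness or evenness comes from the flip $v\mapsto-v$: it is compatible with the reduction and determines the parity of the invariant tensor.

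\emph{Step 3 (finite degree).} Every invariant section of a twisted bundle over $\SM$ on a negatively curved manifold has finite vertical Fourier degree; this is \cite{GPSU16}, using the Pestov identity \eqref{eq:full-pestov-sign} for high modes. For the $E_7$ case, degree at least three holds because a degree-one odd invariant form would be a parallel-type object excluded by negative curvature.

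We expect the main obstacle to be Step 2: the bookkeeping of the subgroup classification in each residue class, including the rank bound via Radon--Hurwitz and the exceptional dimensions. Rather than redo it, we would cite the precise statements in \cite{CLMS24} and \cite{CekicLefeuvreHolonomy25}, which establish exactly this list.
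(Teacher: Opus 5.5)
Your top-level plan matches the paper's proof, which is purely by citation. The paper takes the invariant structures, their parity, the rank bounds, and the degree-three bound in the $E_7$ branch from \cite[Theorem~3.8 and Lemmas~3.12--3.13]{CLMS24}, and finite Fourier degree from \cite[Theorem~4.1]{GPSU16}. If you keep only those citations, the proof is complete. However, the mechanism you sketch in Step~2 is wrong and should not be offered as justification. The dichotomy ``$H$ is transitive on $S^{n-2}$, or else $H$ is reducible'' fails in even dimensions, and your own list contradicts it. In case (v), $E_7/\{\pm1\}$ acts on $\bbR^{133}$ by the adjoint representation. That action is irreducible but not transitive on $S^{132}$, since adjoint orbits have dimension at most $133-7=126$. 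The Brin--Gromov transitivity conclusion is special to odd $n$: restricting the $H$-reduction to a fibre $S_xM\cong S^{n-1}$ reduces the structure group of $TS^{n-1}$ to $H$, and for an even-dimensional sphere this forces transitivity. The phrase ``$S_xM$ is contractible in the frame direction'' does not describe a valid argument. For even $n$, $TS^{n-1}$ has nowhere-vanishing sections and no transitivity is forced. What one uses instead is a topological restriction on which proper subgroups, reducible or irreducible, can carry a reduction of $TS^{n-1}$; the $E_7$ branch is an irreducible, non-transitive case that this does not exclude. Likewise, the bound $r\leq\rho(n)-1$ is not a direct vector-field count. The invariant rank-$r$ subbundle restricted to $S_xM$ need not be trivial, so you need a statement about rank-$r$ subbundles of $TS^{n-1}$, not just about sections.

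The one place where you give your own argument instead of a citation, the degree-three bound in (v), does not work. A degree-one odd section of $\Lambda^3\cN$ has the form $\varphi(x,v)=\iota_v\Psi$ for a $4$-form $\Psi$. The equation $X\varphi=0$ then says $\nabla\Psi$ is totally skew, so $\Psi$ is a Killing form, not a parallel one. Negative curvature does not exclude such objects in general. A compact complex hyperbolic manifold of real dimension $134$ is negatively curved and carries the parallel $4$-form $\omega\wedge\omega$. This gives a nonzero odd degree-one flow-invariant section $\iota_v(\omega\wedge\omega)$ of $\Lambda^3\cN$. Any exclusion must therefore use the $E_7$ orbit type of the values, and the paper simply takes this bound from \cite[Lemmas~3.12--3.13]{CLMS24}. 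Similarly, the flip $v\mapsto-v$ only shows that $f(x,-v)$ is again flow-invariant. Deciding which parity actually occurs needs a further argument, which is also part of what is cited.
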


\begin{proof}
The invariant structures, their parity, the rank restrictions, and the
degree-three lower bound in the $E_7$ branch are supplied by
\cite[Theorem~3.8 and Lemmas~3.12--3.13]{CLMS24}.  Finite Fourier degree
follows from \cite[Theorem~4.1]{GPSU16}.  The pinching constant and scale
pull back unchanged to the finite cover.
\end{proof}

We shall repeatedly use the following four-index estimate of Berger–Bourguignon–Karcher.
\begin{lemma}\label{lem:BBK}
Let $\cB$ be an algebraic curvature operator satisfying
$$
  a\leq\sec_{\cB}\leq b.
$$
Set $\widehat{\cB}:=\cB-\frac{a+b}{2}I$.  Then, for arbitrary vectors
$u,v,w,z$,
\begin{equation}\label{eq:BBK-vectors}
  |\widehat{\cB}(u,v,w,z)|
  \leq\frac23(b-a)|u|\,|v|\,|w|\,|z|.
\end{equation}
\end{lemma}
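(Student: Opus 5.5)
The plan is to reduce to a centred curvature operator, bound its values on repeated pairs of vectors, and then recover a general four-index value by polarization together with the first Bianchi identity. The constant $\tfrac23$ should come out of the arithmetic $\tfrac13\cdot 2$ times $\varepsilon=(b-a)/2$.

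\emph{Step 1: centring.} Since $I$ is itself an algebraic curvature operator, $\widehat\cB=\cB-\frac{a+b}{2}I$ again satisfies the first Bianchi identity and the pair symmetries. Set $\varepsilon:=\frac{b-a}{2}$; then $|\sec_{\widehat\cB}|\le\varepsilon$. In the paper's convention $\widehat\cB(x,y,x,y)=\sec_{\widehat\cB}(x\wedge y)\,|x\wedge y|^2$. Together with $|x\wedge y|\le|x|\,|y|$ this gives the basic bound
\[
  |\widehat\cB(x,y,x,y)|\le\varepsilon|x|^2|y|^2
\]
for all vectors $x,y$, with no orthogonality needed.

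\emph{Step 2: fully polarized form.} Let $S(x,x';y,y')$ be the polarization of the quadratic-in-each-slot form $(x,y)\mapsto\widehat\cB(x,y,x,y)$. Explicitly,
\[
  S(x,x';y,y')=\tfrac12\bigl[\widehat\cB(x,y,x',y')+\widehat\cB(x,y',x',y)\bigr].
\]
Polarizing in both $x$ and $y$ gives
\[
  16\,S=\sum_{\pm,\pm}(\pm)(\pm)\,\widehat\cB(x\pm x',y\pm y',x\pm x',y\pm y').
\]
Bounding each of the four terms with Step 1 yields
\[
  16|S|\le\varepsilon\bigl(|x+x'|^2+|x-x'|^2\bigr)\bigl(|y+y'|^2+|y-y'|^2\bigr)=4\varepsilon\bigl(|x|^2+|x'|^2\bigr)\bigl(|y|^2+|y'|^2\bigr).
\]
Next I rescale $x\mapsto tx$, $x'\mapsto t^{-1}x'$, and similarly in $y$, and minimize over $t$ and $s$. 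This gives $|S(x,x';y,y')|\le\varepsilon|x||x'||y||y'|$.

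\emph{Step 3: Bianchi recombination.} Using the pair symmetries and the first Bianchi identity, I will check that
\[
  3\,\widehat\cB(u,v,w,z)=2\bigl[S(u,w;v,z)-S(u,z;v,w)\bigr],
\]
up to the sign conventions of \eqref{eq:wedge-skew-convention}. To do this, expand the right side as $\widehat\cB(u,v,w,z)+\widehat\cB(u,z,w,v)-\widehat\cB(u,v,z,w)-\widehat\cB(u,w,z,v)$. Then use antisymmetry in the last pair and Bianchi to rewrite $\widehat\cB(u,z,w,v)-\widehat\cB(u,w,z,v)$ as $\widehat\cB(u,v,w,z)$.

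\emph{Step 4: conclusion.} Step 2 bounds each $S$-term by $\varepsilon|u||v||w||z|$. Hence
\[
  |\widehat\cB(u,v,w,z)|\le\tfrac43\varepsilon|u||v||w||z|=\tfrac23(b-a)|u||v||w||z|,
\]
which is \eqref{eq:BBK-vectors}.

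The main obstacle is Step 3: getting the exact Bianchi recombination with coefficient $\tfrac23$ (rather than a cruder $1$) while tracking the paper's sign convention $\cB(u,v,w,z)=\inner{\cB(u\wedge v)}{w\wedge z}$. I would first verify the identity on basis vectors $e_{ij}$ before trusting the general polarization.
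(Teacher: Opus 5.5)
Your proof is correct and complete, and it gives exactly the constant $\frac23(b-a)$. The polarization identity $16S=\sum_{\pm,\pm}(\pm)(\pm)\,\widehat{\cB}(x\pm x',y\pm y',x\pm x',y\pm y')$ holds. So does the factorization $\sum_{\pm,\pm}|x\pm x'|^2|y\pm y'|^2=4(|x|^2+|x'|^2)(|y|^2+|y'|^2)$, and the rescaling then gives $|S(x,x';y,y')|\le\varepsilon|x||x'||y||y'|$. The recombination $3\widehat{\cB}(u,v,w,z)=2[S(u,w;v,z)-S(u,z;v,w)]$ holds as stated.

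The paper gives no argument of its own. It quotes Bourguignon--Karcher's Lemma~3.7 for unit vectors and passes to arbitrary vectors by homogeneity. What you wrote is essentially the classical Karcher polarization-plus-Bianchi proof behind that citation, so your route is a self-contained version of what the paper cites. This buys two things:
\begin{itemize}
\item It shows where $\frac23$ comes from: the factor $\frac23$ in the recombination, times the bound $2\varepsilon=b-a$ on the two $S$-terms.
\item It handles non-unit, non-orthogonal and even zero vectors directly. The bound $|x\wedge y|\le|x||y|$ in Step~1 and the rescaling $x\mapsto tx$, $x'\mapsto t^{-1}x'$ in Step~2 together replace the homogeneity step.
\end{itemize}

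Your worry about sign conventions in Step~3 is unnecessary. The identity uses only skew-symmetry in each pair, pair symmetry and the Bianchi identity. All of these are linear and hold for $\widehat{\cB}$ in the paper's convention, since $I$ is itself an algebraic curvature operator. Concretely, Bianchi in the last three slots gives $\widehat{\cB}(u,z,w,v)=\widehat{\cB}(u,w,z,v)+\widehat{\cB}(u,v,w,z)$, which is exactly the rewriting your expansion needs.
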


\begin{proof}
The unit-vector estimate is \cite[Lemma~3.7]{BourguignonKarcher78};
\eqref{eq:BBK-vectors} follows by homogeneity.  
\end{proof}
Moreover, for decomposable two-forms $\xi$ and $\eta$,
$$
  |\widehat{\cB}(\xi,\eta)|
  \leq\frac23(b-a)|\xi|\,|\eta|.
$$
If in addition $\xi\perp\eta$, then $I(\xi,\eta)=0$ and hence
\begin{equation}\label{eq:BK-simple-form}
  |\cB(\xi,\eta)|
  \leq\frac23(b-a)|\xi|\,|\eta|.
\end{equation}

\section{The invariant normal-vector obstruction}\label{sec:normal-vector}

This section turns the low Fourier obstruction into a generalized Killing
tensor of Young type $(3,1)$ and establishes the two Weitzenb\"ock identities
used in the proof.

\subsection{High-degree reduction}\label{sec:reduction}

\subsubsection{The tangent-twist Pestov curvature form}

Let $V$ be an $n$-dimensional Euclidean space, and choose $c_{n,k}>0$ so that
$$
  |K|^2
  =c_{n,k}\int_{S(V)}|K(x,\ldots,x)|^2\,d\sigma(x)
$$
for $K\in\Sym^k_0V^*\otimes V$. Put
$$
  F(x):=K(x,\ldots,x),
  \qquad
  F_r(x):=\inner{F(x)}{e_r}.
$$
For an algebraic curvature operator $\cB$ on $V$, define
\begin{equation}\label{eq:tangent-twist-form}
  W_{\cB}(K)
  :=c_{n,k}\int_{S(V)}\sum_{r=0}^{n-1}
  \bigl\{\cB(\beta_r,\beta_r)-\cB(\alpha_r,\beta_r)\bigr\}\,d\sigma(x),
  \qquad
  \alpha_r=F\wedge e_r,
  \quad
  \beta_r=x\wedge\nabla^S F_r.
\end{equation}
Here the vectors $(e_r)$ are fixed as ambient constant vectors when the
spherical derivative is taken. The sum is independent of the fixed
orthonormal basis, so at a fixed $x$ one may choose a basis adapted to $x$
and $F(x)$.

Under the tangent-bundle specialization
\eqref{eq:tangent-pestov-form}, the expression
\eqref{eq:tangent-twist-form} is precisely the tangent-twist Pestov curvature form
for the harmonic tensor $K$, with the harmless factor $c_{n,k}>0$ fixing the
harmonic--tensor isometry.  Therefore \eqref{eq:top-pestov-sign} gives, whenever
$K$ is the top Fourier coefficient of an invariant normal section,
\begin{equation*}
  \int_M W_{\Rop^g}(K)\,d\vol_g\leq0.
\end{equation*}
If $g$ is $\delta$-pinched at scale $\kappa_0$ and $\cA$ is defined by
\eqref{eq:normalized-curvature}, linearity gives the equivalent inequality
\begin{equation}\label{eq:tangent-twist-pestov-normalized}
  \int_M W_{\cA}(K)\,d\vol_g\leq0.
\end{equation}

Put
$$
  \phi(x):=\inner{F(x)}x,
  \qquad
  F^\perp(x):=F(x)-\phi(x)x,
$$
and
\begin{equation*}
  N:=c_{n,k}\int|F|^2,
  \qquad
  N_{\parallel}:=c_{n,k}\int \phi^2,
  \qquad
  N_{\perp}:=c_{n,k}\int|F^\perp|^2=N-N_{\parallel}.
\end{equation*}
Unless otherwise indicated, all integrals in this fibrewise calculation are
over $S(V)$.

We estimate the residual for the tangent-twist Pestov form.
\begin{lemma}\label{lem:tangent-twist-residual}
If $0\leq\sec_{\cE}\leq1$, then
\begin{equation*}
  W_{\cE}(K)
  \geq-\frac14\bigl(N_{\perp}+(n-1)N_{\parallel}\bigr)
  -\frac23\sqrt{(n-2)N_{\perp}\,k(k+n-2)N}.
\end{equation*}
\end{lemma}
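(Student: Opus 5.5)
The plan is to work pointwise at a fixed $x\in S(V)$, split each twisting form $\alpha_r$ into a part lying in $x\wedge V$ and a remainder, and handle the two parts differently. The part in $x\wedge V$ is absorbed into $\cE(\beta_r,\beta_r)$ by completing a square. The remainder is estimated by \eqref{eq:BK-simple-form} and then Cauchy--Schwarz, first over $r$ and then in $L^2(S(V))$.

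\emph{Step 1: basis and Jacobi form.} Since the integrand of \eqref{eq:tangent-twist-form} does not depend on the ambient orthonormal basis, at a fixed $x$ I take $e_0=x$ and, when $F^\perp(x)\neq0$, $e_1=F^\perp/|F^\perp|$. Put $z_r:=\nabla^SF_r$, which is tangent to the sphere, so $z_r\perp x$ and $\beta_r=x\wedge z_r$. Then
$$
\alpha_0=F^\perp\wedge x,\qquad \alpha_1=\phi\,x\wedge e_1,\qquad \alpha_r=\phi\,x\wedge e_r+F^\perp\wedge e_r\quad(r\geq2).
$$
Consider the symmetric bilinear form $J(a,b):=\cE(x\wedge a,x\wedge b)$ on $x^\perp$. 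The hypothesis $0\leq\sec_{\cE}\leq1$ gives $0\leq J\leq\langle\cdot,\cdot\rangle$, i.e.\ $J$ is positive semidefinite with norm at most one.

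\emph{Step 2: completing the square.} For every $r$ we have $\beta_r\in x\wedge V$. Whenever $\gamma=x\wedge w$, positivity of $J$ gives
$$
\cE(\beta_r,\beta_r)-\cE(\gamma,\beta_r)\geq-\tfrac14 J(w,w)\geq-\tfrac14|w|^2 .
$$
Apply this as follows:
\begin{itemize}
\item for $r=0$, with $\gamma=\alpha_0=-x\wedge F^\perp$, giving $-\tfrac14|F^\perp|^2$;
\item for $r\geq1$, with $\gamma=\phi\,x\wedge e_r$, giving $-\tfrac14\phi^2$ each, hence $-\tfrac{n-1}4\phi^2$ in total.
\end{itemize}
After integrating, these account exactly for $-\tfrac14(N_\perp+(n-1)N_\parallel)$. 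Each $\cE(\beta_r,\beta_r)$ is used only once, in its own index $r$.

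\emph{Step 3: the remainder.} What is left is $-\sum_{r\geq2}\cE(F^\perp\wedge e_r,\,x\wedge z_r)$. Both two-forms are decomposable, and they are orthogonal because
$$
\langle F^\perp\wedge e_r,\,x\wedge z_r\rangle=\langle F^\perp,x\rangle\langle e_r,z_r\rangle-\langle F^\perp,z_r\rangle\langle e_r,x\rangle=0,
$$
using $F^\perp\perp x$ and $e_r\perp x$. Hence \eqref{eq:BK-simple-form}, with $a=0$ and $b=1$, bounds each term by $\tfrac23|F^\perp|\,|z_r|$. Cauchy--Schwarz over the $n-2$ indices $r\geq2$ then bounds the sum by
$$
\tfrac23\sqrt{n-2}\,|F^\perp|\Bigl(\sum_r|z_r|^2\Bigr)^{1/2}.
$$

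\emph{Step 4: integration.} Integrating over $S(V)$ and applying Cauchy--Schwarz in $L^2$ gives the factor $\sqrt{N_\perp}$ times $\bigl(c_{n,k}\int\sum_r|\nabla^SF_r|^2\bigr)^{1/2}$. Each $F_r$ is a spherical harmonic of degree $k$, so integration by parts on the sphere yields
$$
\int\sum_r|\nabla^SF_r|^2=k(k+n-2)\int|F|^2,
$$
and the remainder contributes at most $\tfrac23\sqrt{(n-2)N_\perp\,k(k+n-2)N}$, which is the second term of the lemma.

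The main obstacle is the bookkeeping in Steps 1--2: one must choose the splitting so that each square completion uses the full $\cE(\beta_r,\beta_r)$ exactly once, and so that the remainder pairs are genuinely orthogonal, since that is what allows the $\tfrac23$ bound of \eqref{eq:BK-simple-form} rather than a weaker one. The choice $e_1\parallel F^\perp$ is what makes $F^\perp\wedge e_1$ vanish and leaves only $n-2$ remainder indices.
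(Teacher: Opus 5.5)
Your proposal is correct and follows the paper's proof essentially step for step. Both use the adapted frame $e_0=x$, $e_1=F^\perp/|F^\perp|$, completion of squares against the Jacobi form (giving $-\tfrac14|F^\perp|^2$ and $-\tfrac14\phi^2$), the orthogonal simple-form bound \eqref{eq:BK-simple-form} on $\cE(F^\perp\wedge e_r,\beta_r)$ for $r\geq2$, and then Cauchy--Schwarz together with the spherical-harmonic energy identity.

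One point in Step~4 should be stated explicitly. Because the adapted frame varies with $x$, first note that $\sum_r|z_r|^2=|\nabla^SF|^2$ is basis-independent. Then pass to a fixed ambient basis, whose components $F_r$ are genuine degree-$k$ harmonics, before integrating by parts. The case $F^\perp(x)=0$ is trivial, since the remainder term is then absent.
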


\begin{proof}
Fix $x\in S(V)$ and write
$$
  F=\phi x+F^\perp,
  \qquad
  F^\perp\perp x.
$$
Since the sum in the definition of $W_{\cE}(K)$ is independent of the
fixed ambient orthonormal basis, we may, when $F^\perp\neq0$, choose
$$
  e_0=x,
  \qquad
  e_1=\frac{F^\perp}{|F^\perp|},
  \qquad
  e_2,\ldots,e_{n-1}\perp\{x,F^\perp\}.
$$
These vectors are kept fixed as ambient vectors when differentiating the
component functions $F_r=\inner{F}{e_r}$. Put
$$
  z_r:=\nabla^S F_r(x)\in x^\perp,
  \qquad
  \beta_r=x\wedge z_r.
$$

Consider the Jacobi form on $x^\perp$,
$$
  \mathsf J_x^{\cE}(a,b)
  :=\cE(x\wedge a,x\wedge b).
$$
For $a\in x^\perp$ the sectional-curvature assumption gives
$$
  0\leq\mathsf J_x^{\cE}(a,a)\leq |a|^2,
$$
and hence
$$
  0\preceq\mathsf J_x^{\cE}\preceq\Id_{x^\perp}.
$$
Consequently, for all $a,z\in x^\perp$,
\begin{align}
  \mathsf J_x^{\cE}(z,z)-\mathsf J_x^{\cE}(a,z)
  &=
  \mathsf J_x^{\cE}(z-a/2,z-a/2)
  -\frac14\mathsf J_x^{\cE}(a,a) \geq-\frac14|a|^2.
  \label{eq:jacobi-complete-square}
\end{align}

We shall use the orthogonal-simple-form consequence
\eqref{eq:BK-simple-form} of Lemma~\ref{lem:BBK}, with $a=0$ and $b=1$.

We now estimate the summands
$$
  Q_r:=\cE(\beta_r,\beta_r)-\cE(\alpha_r,\beta_r).
$$
For $r=0$,
$$
  \alpha_0=F\wedge x=-x\wedge F^\perp,
$$
and therefore
$$
  Q_0
  =\mathsf J_x^{\cE}(z_0,z_0)
   -\mathsf J_x^{\cE}(-F^\perp,z_0)
  \geq-\frac14|F^\perp|^2
$$
by \eqref{eq:jacobi-complete-square}. Similarly,
$$
  \alpha_1=F\wedge e_1=\phi\,x\wedge e_1,
$$
because $F^\perp$ is parallel to $e_1$, and hence
$$
  Q_1
  =\mathsf J_x^{\cE}(z_1,z_1)
   -\mathsf J_x^{\cE}(\phi e_1,z_1)
  \geq-\frac14\phi^2.
$$

For $r\geq2$ we have
$$
  \alpha_r=\phi\,x\wedge e_r+F^\perp\wedge e_r.
$$
Thus
\begin{align*}
  Q_r
  ={}&
  \mathsf J_x^{\cE}(z_r,z_r)
  -\mathsf J_x^{\cE}(\phi e_r,z_r)
  -\cE(F^\perp\wedge e_r,\beta_r).
\end{align*}
The first two terms are bounded below by $-\phi^2/4$. Moreover,
$$
  \inner{F^\perp\wedge e_r}{x\wedge z_r}
  =
  \inner{F^\perp}{x}\inner{e_r}{z_r}
  -\inner{F^\perp}{z_r}\inner{e_r}{x}
  =0.
$$
Hence the two simple forms $F^\perp\wedge e_r$ and $\beta_r$ are
orthogonal, and \eqref{eq:BK-simple-form} gives
$$
  |\cE(F^\perp\wedge e_r,\beta_r)|
  \leq\frac23|F^\perp\wedge e_r|\,|\beta_r|
  =\frac23|F^\perp|\,|\beta_r|.
$$
It follows that
$$
  Q_r\geq-\frac14\phi^2
  -\frac23|F^\perp|\,|\beta_r|,
  \qquad r=2,\ldots,n-1.
$$

Summing the estimates for $r=0,\ldots,n-1$ yields the pointwise bound
\begin{equation}\label{eq:tangent-twist-pointwise}
  \sum_{r=0}^{n-1}
  \bigl\{\cE(\beta_r,\beta_r)-\cE(\alpha_r,\beta_r)\bigr\}
  \geq
  -\frac14\bigl(|F^\perp|^2+(n-1)\phi^2\bigr)
  -\frac23|F^\perp|
     \sum_{r=2}^{n-1}|\beta_r|.
\end{equation}
The same inequality holds when $F^\perp=0$ by continuity.

Multiplying by $c_{n,k}$ and integrating over $S(V)$, the first two
terms give
$$
  -\frac14\bigl(N_{\perp}+(n-1)N_{\parallel}\bigr).
$$
For the remaining term, Cauchy--Schwarz in the variables $(x,r)$ gives
\begin{align*}
  c_{n,k}\int_{S(V)}
  |F^\perp|\sum_{r=2}^{n-1}|\beta_r|\,d\sigma
  &\leq
  \left(
    c_{n,k}\int_{S(V)}
    \sum_{r=2}^{n-1}|F^\perp|^2\,d\sigma
  \right)^{1/2}\cdot
  \left(
    c_{n,k}\int_{S(V)}
    \sum_{r=2}^{n-1}|\beta_r|^2\,d\sigma
  \right)^{1/2}                                      \\
  &\leq
  \sqrt{(n-2)N_{\perp}}\,
  \left(
    c_{n,k}\int_{S(V)}
    \sum_{r=0}^{n-1}|\beta_r|^2\,d\sigma
  \right)^{1/2}.
\end{align*}
Although the adapted basis was chosen pointwise, the full sum
$\sum_r|\beta_r|^2$ is basis-independent. Using any fixed ambient
orthonormal basis,
$$
  \sum_{r=0}^{n-1}|\beta_r|^2
  =\sum_{r=0}^{n-1}|\nabla^S F_r|^2
  =|\nabla^S F|^2.
$$
Each $F_r$ is a spherical harmonic of degree $k$, since the corresponding
component of the homogeneous polynomial $F$ is harmonic. Therefore, with
$$
  \lambda_k=k(k+n-2),
$$
integration by parts on $S(V)$ gives
$$
  c_{n,k}\int_{S(V)}\sum_r|\beta_r|^2\,d\sigma
  =
  c_{n,k}\int_{S(V)}|\nabla^S F|^2\,d\sigma
  =
  \lambda_k c_{n,k}\int_{S(V)}|F|^2\,d\sigma
  =
  k(k+n-2)N.
$$
Substitution into \eqref{eq:tangent-twist-pointwise} proves
$$
  W_{\cE}(K)
  \geq-\frac14\bigl(N_{\perp}+(n-1)N_{\parallel}\bigr)
  -\frac23
   \sqrt{(n-2)N_{\perp}\,k(k+n-2)N},
$$
as claimed.
\end{proof}

We have the following positivity of the tangent-twist Pestov form.
\begin{lemma}
\label{lem:tangent-twist-high-degree}
Let $n\geq4$ and $k\geq5$. Suppose that the covariant tensor obtained from $K$ by identifying its
$V$-valued factor with $V^*$ via the metric, still denoted by $K$,
satisfies
\begin{equation}\label{eq:highest-normal-condition}
  \tf(\Sym K)=0.
\end{equation}
If
$
  \frac14\leq\sec_{\cA}\leq1,
$
then
$
  W_{\cA}(K)>0
$
for every $K\neq0$.
\end{lemma}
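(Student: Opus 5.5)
The plan is to split off a multiple of the identity and feed the remainder into Lemma~\ref{lem:tangent-twist-residual}. Since $\frac14\leq\sec_{\cA}\leq1$, write
$$
  \cA=\tfrac14 I+\tfrac34\cE,\qquad 0\leq\sec_{\cE}\leq1,
$$
so that, by linearity of $W$ in the curvature argument,
$$
  W_{\cA}(K)=\tfrac14W_I(K)+\tfrac34W_{\cE}(K).
$$
The term $W_{\cE}(K)$ is bounded below by Lemma~\ref{lem:tangent-twist-residual}. The term $W_I(K)$ can be computed exactly in terms of $N$, $N_\parallel$, $N_\perp$.

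\emph{Computing $W_I(K)$.} The first piece is
$$
  c_{n,k}\int\sum_r|\beta_r|^2=\lambda_kN,\qquad \lambda_k=k(k+n-2),
$$
as already shown in the proof of Lemma~\ref{lem:tangent-twist-residual}. For the cross term, expand
$$
  \inner{F\wedge e_r}{x\wedge z_r}=\phi\inner{e_r}{z_r}-x_r\inner{F}{z_r}.
$$
Summing over $r$, the first part gives $\phi\,\operatorname{div}_S F^\perp$ up to the normal component. For the second part, use $\sum_r x_r\nabla^SF_r=\nabla^S\phi-F^\perp$, so it equals $\inner{F}{\nabla^S\phi}-|F^\perp|^2$. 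After integrating by parts on $S(V)$, both pieces reduce to combinations of $\int\phi^2$, $\int|F^\perp|^2$ and $\int|\nabla^S\phi|^2$.

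This is where the normality hypothesis \eqref{eq:highest-normal-condition} enters. The condition $\tf(\Sym K)=0$ says that the degree-$(k+1)$ homogeneous polynomial $\inner{K(x,\dots,x)}{x}$ has vanishing harmonic top part. Hence $\phi|_{S(V)}$ is a spherical harmonic of degree $k-1$; by parity there is no degree $k+1$ component, and I expect the degree $k-3,\dots$ components to vanish as well, because $K$ is trace-free in its symmetric slots. Consequently $\int|\nabla^S\phi|^2=\lambda_{k-1}\int\phi^2$, and $W_I(K)$ becomes an explicit expression $\lambda_kN+aN_\parallel+bN_\perp$, with $a,b$ of size $O(k+n)$. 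The same harmonic structure should also give an upper bound $N_\parallel\leq\theta_{n,k}N$ with $\theta_{n,k}$ small, roughly of order $k/(n+k)$. I would obtain it by comparing the norm of a degree-$(k-1)$ harmonic obtained by contraction with that of $F$, which is a standard Clebsch--Gordan type computation.

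\emph{Assembling the bound.} Combining the two pieces,
$$
  W_{\cA}(K)\geq\tfrac14\bigl(\lambda_kN+aN_\parallel+bN_\perp\bigr)
  -\tfrac3{16}\bigl(N_\perp+(n-1)N_\parallel\bigr)
  -\tfrac12\sqrt{(n-2)\lambda_kN_\perp N}.
$$
Dividing by $N$ and setting $t=N_\perp/N\in[1-\theta_{n,k},1]$ leaves a one-variable inequality to check for $n\geq4$, $k\geq5$. The dominant comparison is between $\lambda_k/4\approx k(k+n)/4$ and $\frac12\sqrt{(n-2)k(k+n)}$. When $n\gg k$ this reads $kn/4$ versus $n\sqrt k/2$, which is favorable exactly when $\sqrt k>2$. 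This is why $k\geq5$ is needed and $k=3$ is excluded; for $k\gtrsim n$ the $k^2$ growth wins comfortably.

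\emph{Main obstacle.} The main difficulty is the marginal regime $k=5$ with $n$ large. There the leading terms $\frac54 n$ and $\frac{\sqrt5}{2}n$ differ only by a fixed fraction, so the lower-order contributions $\frac14(aN_\parallel+bN_\perp)$ and $-\frac3{16}(n-1)N_\parallel$ must be tracked exactly. In particular the $(n-1)N_\parallel$ term must be absorbed using the sharp bound on $N_\parallel$ coming from normality. I would first compute $a$, $b$ and $\theta_{n,k}$ exactly. I would then verify positivity of the resulting quadratic in $\sqrt t$ as a rational-function inequality in $(n,k)$, treating small $n$ (say $4\leq n\leq 10$) and $k=5,7$ by direct evaluation and the remaining range by monotonicity in $k$ and $n$. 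Strictness for $K\neq0$ follows since the final bound is a positive multiple of $N>0$.
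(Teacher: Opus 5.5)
Your outline is the paper's proof, and if you carry it out it works. The ingredients match: the splitting $\cA=\frac14I+\frac34\cE$, Lemma~\ref{lem:tangent-twist-residual} for the $\cE$-part, an exact formula for $W_I(K)$, and a one-variable inequality in $z=\sqrt{1-u}$ with $u=N_\parallel/N$.

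Two steps you leave as expectations both follow from one identity, $\Delta_V\inner{F(x)}{x}=2\operatorname{div}_VF$. Its right side is harmonic of degree $k-1$, because the components of $F$ are harmonic. First, it gives $\Delta_V^2\inner{F(x)}{x}=0$, so the Fischer decomposition is $\inner{F(x)}{x}=h_{k+1}+|x|^2q$. It is normality, not parity, that kills $h_{k+1}$, so $\phi=q|_{S(V)}\in\Harm_{k-1}$. Second, your cross term is not determined by $\int\phi^2$, $\int|F^\perp|^2$ and $\int|\nabla^S\phi|^2$ alone. You also need $\operatorname{div}_VF=(n+2k-2)\phi$ and $\operatorname{div}_SF^\perp=(k-1)\phi$ on $S(V)$, both consequences of the same identity and $\inner{F(x)}{x}=|x|^2q(x)$. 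With these, $c_{n,k}\int\sum_r\inner{\alpha_r}{\beta_r}=(n+2k-3)N_\parallel+N_\perp$. So $a=-(n+2k-3)$, $b=-1$, and $W_I(K)=(\lambda_k-1)N-(n+2k-4)N_\parallel$. The paper reaches the same formula through the orthogonal splitting $F=H+F_q$, $F_q=qx-\frac1{n+k-3}\nabla^Sq$.

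What is off in your plan is the role of the bound on $N_\parallel$. The sharp bound is $u\leq\frac{n+k-3}{n+2k-4}$, which tends to $1$ as $n\to\infty$; it is not of order $k/(n+k)$. So $N_\parallel$ need not be small, and its smallness is not what controls the $(n-1)N_\parallel$ loss: that loss is simply dominated by $\frac14\lambda_kN$. In fact no bound on $u$ is needed at all. The lower bound $G_{n,k}$ is a quadratic in $z$, and completing the square gives $G_{n,k}=\frac{\Pi_{n,k}}{16L}+\frac L{16}\bigl(z-\frac{4\sqrt{(n-2)\lambda_k}}{L}\bigr)^2$, with $L=7n+8k-22$ and $\Pi_{n,k}=(12k-49)n^2+(44k^2-192k+259)n+32k^3-184k^2+408k-330$. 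All coefficients of $\Pi_{n,k}$ are positive for $k\geq5$. Hence positivity holds for every real $z$, and no case analysis in $n$ and no monotonicity argument is required. The threshold $k\geq5$ enters only through the sign of $12k-49$, consistent with your heuristic comparison of $\lambda_k/4$ against $\frac12\sqrt{(n-2)\lambda_k}$.
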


\begin{proof}
Assume that $K\neq0$. Let $\Harm_j(V)$ denote the space of homogeneous
harmonic polynomials of degree $j$ on $V$. Since the components of the
homogeneous extension of $F$ are harmonic of degree $k$,
$$
  \Delta_V\inner{F(x)}x
  =2\,\operatorname{div}_V F\in\Harm_{k-1}(V),
  \qquad
  \Delta_V^2\inner{F(x)}x=0.
$$
The Fischer decomposition of the homogeneous polynomial
$\inner{F(x)}x$ therefore has the form
$$
  \inner{F(x)}x=h_{k+1}(x)+|x|^2q(x),
  \qquad
  h_{k+1}\in\Harm_{k+1}(V),
  \quad
  q\in\Harm_{k-1}(V).
$$
The component $h_{k+1}$ is represented by the trace-free complete
symmetrization of $K$. Hence \eqref{eq:highest-normal-condition} gives
$$
  \inner{F(x)}x=|x|^2q(x),
  \qquad
  q\in\Sym^{k-1}_0V^*.
$$

The harmonic vector-valued polynomial with this radial part is
\begin{equation*}
  \widetilde F_q(x)
  =\frac{n+2k-4}{n+k-3}q(x)x
  -\frac{|x|^2}{n+k-3}\nabla^Vq(x).
\end{equation*}
Indeed, homogeneity of $q$ and the identity $\Delta_Vq=0$ give
$$
  \Delta_V\widetilde F_q=0,
  \qquad
  \inner{\widetilde F_q(x)}x=|x|^2q(x).
$$
Restricting to the unit sphere and using
$$
  \nabla^Vq=(k-1)qx+\nabla^Sq
$$
yields
$$
  F_q(x)
  =q(x)x-\frac1{n+k-3}\nabla^Sq(x).
$$

The contraction image generated by $q$ and the hook summand in the kernel
of radial contraction are orthogonal $O(n)$-types. Thus
\begin{equation}\label{eq:radial-contraction-decomposition}
  F=H+F_q,
  \qquad
  \inner{H(x)}x=0.
\end{equation}
Set
$$
  h:=c_{n,k}\int_{S(V)}|H|^2,
  \qquad
  s:=c_{n,k}\int_{S(V)}|F_q|^2.
$$
Since $q$ is a spherical harmonic of degree $k-1$,
$$
  \int_{S(V)}|\nabla^Sq|^2
  =(k-1)(n+k-3)\int_{S(V)}q^2.
$$
Using the orthogonality of the radial and tangential terms in $F_q$,
we obtain
\begin{equation}\label{eq:NPY-decomposition}
  N=h+s,
  \qquad
  N_{\parallel}
  =\frac{n+k-3}{n+2k-4}s,
  \qquad
  N_{\perp}
  =h+\frac{k-1}{n+2k-4}s.
\end{equation}
In particular, for
$
  u:={N_{\parallel}} / {N},
$
we have
\begin{equation*}
  0\leq u\leq\frac{n+k-3}{n+2k-4}<1,
  \qquad
  \frac{N_{\perp}}N=1-u.
\end{equation*}

Put
$$
  \lambda:=k(k+n-2).
$$
For the constant-curvature operator $I$, direct calculation on the two
summands in \eqref{eq:radial-contraction-decomposition} gives
\begin{equation*}
  W_I(K)
  =(\lambda-1)h+(k-1)(k+n-2)s.
\end{equation*}
Equivalently, using \eqref{eq:NPY-decomposition},
\begin{equation}\label{eq:WI-u}
  \frac{W_I(K)}N
  =\lambda-1-(n+2k-4)u.
\end{equation}
Indeed, the Jacobi contribution is $\lambda N$, whereas the twist
contribution is
$$
  -N-(n+2k-4)N_{\parallel}.
$$

Write
$$
  \cA=\frac14I+\frac34\cE,
  \qquad
  0\leq\sec_{\cE}\leq1.
$$
By linearity,
$$
  W_{\cA}(K)=\frac14W_I(K)+\frac34W_{\cE}(K).
$$
Combining \eqref{eq:WI-u} with
Lemma \ref{lem:tangent-twist-residual} and using
$N_{\perp}/N=1-u$, we obtain the stronger estimate
\begin{equation}\label{eq:G-lower-bound}
  \frac{W_{\cA}(K)}N\geq G_{n,k}(u),
  \qquad
  0\leq u\leq\frac{n+k-3}{n+2k-4},
\end{equation}
where
\begin{equation}\label{eq:Gnk}
  G_{n,k}(u)
  :=\frac{4\lambda-7}{16}
  -\frac{7n+8k-22}{16}u
  -\frac12\sqrt{(n-2)\lambda(1-u)}.
\end{equation}

It remains to verify that this lower bound is positive. Set
$$
  L:=7n+8k-22,
  \qquad
  Q:=(n-2)\lambda,
  \qquad
  z:=\sqrt{1-u}.
$$
Completing the square gives
\begin{equation}\label{eq:G-completed-square}
  G_{n,k}(u)
  =\frac{\Pi_{n,k}}{16L}
   +\frac{L}{16}
    \left(z-\frac{4\sqrt Q}{L}\right)^2,
\end{equation}
where
\begin{align}
  \Pi_{n,k}
  ={}&(12k-49)n^2
     +(44k^2-192k+259)n \notag\\
    &+32k^3-184k^2+408k-330.
  \notag
\end{align}
For $k\geq5$, all three coefficients of this polynomial in $n$ are
positive. Indeed,
$$
  12k-49\geq11,
$$
the second coefficient is increasing for $k\geq5$ and equals $399$ at
$k=5$, while the third is increasing and equals $1110$ at $k=5$.
Thus $\Pi_{n,k}>0$. Since $L>0$, \eqref{eq:G-completed-square} implies
$$
  G_{n,k}(u)>0.
$$
The conclusion now follows from \eqref{eq:G-lower-bound}.
\end{proof}

\subsubsection{Reduction to degrees one and three}

\begin{proposition}[Degree reduction]\label{prop:degree-reduction}
Let $n\geq4$ be even and assume that $g$ is strictly $1/4$-pinched. Then any
odd invariant unit normal vector field
$f\in C^\infty(\SM,\cN)$ with $Xf=0$ has the form
$$
  f=f_1+f_3.
$$
If $f_3=0$, then no such field exists.
\end{proposition}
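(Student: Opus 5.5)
The plan is to combine the high-degree positivity of Lemma~\ref{lem:tangent-twist-high-degree} with the sign information \eqref{eq:top-pestov-sign}, which shows that the top Fourier coefficient cannot have degree $k\geq5$. The degree-one case is then excluded separately by a geometric argument.

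\emph{Step 1: bounding the degree.} By Theorem~\ref{thm:CLMS-input}, the field $f$ has finite vertical Fourier degree $k$. Since $f$ is odd, $k$ is odd and $f=\sum_{j\ \mathrm{odd},\,j\leq k}f_j$. Suppose, for contradiction, that $k\geq5$ and $f_k\neq0$. Let $K=K_k\in C^\infty(M,\Sym^k_0T^*M\otimes TM)$ be the tensor representing $f_k$. From $Xf=0$ we get $X_+f_k=0$, so \eqref{eq:top-pestov-sign} applies and yields \eqref{eq:tangent-twist-pestov-normalized}:
$$
  \int_M W_{\cA}(K)\,d\vol_g\leq0 .
$$

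Next I check the normality hypothesis \eqref{eq:highest-normal-condition}. The function $\inner{f(x,v)}{v}$ vanishes identically. Multiplication by $v$ sends the $\Omega_j\otimes TM$ part of $f$ into spherical harmonics of degree $\leq j+1$. Only $f_k$ can contribute to degree $k+1$, and that contribution is exactly the component $h_{k+1}$ in the Fischer decomposition of $\inner{F(x)}{x}$. This component is represented by $\tf(\Sym K)$. Hence $\tf(\Sym K)=0$ pointwise on $M$.

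Strict $1/4$-pinching gives $1/4\leq\sec_{\cA}\leq1$ after normalization. Lemma~\ref{lem:tangent-twist-high-degree} then gives $W_{\cA}(K)>0$ at every point where $K\neq0$. Since $K$ is smooth and not identically zero, this contradicts the integral inequality above. Therefore $k\leq3$, and oddness gives $f=f_1+f_3$.

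\emph{Step 2: the case $f_3=0$.} Now $f=f_1$, which is given by an endomorphism field $J$ with $f(x,v)=Jv$. The condition $\inner{Jv}{v}=0$ says $J$ is skew. The condition $|Jv|=1$ for all unit $v$ says $J$ is orthogonal, so $J^2=-\Id$ and $J$ is an orthogonal almost complex structure. The equation $Xf=0$ reads $(\nabla_vJ)v=0$ for all $v$. Equivalently, the $2$-form $g(J\cdot,\cdot)$ is Killing, so $(M,g,J)$ (on the finite cover) is nearly K\"ahler.

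I then invoke the structure theory of nearly K\"ahler manifolds \cite{Gray76,Nagy02}. A complete strict nearly K\"ahler manifold is locally a product of a K\"ahler factor, $6$-dimensional strict nearly K\"ahler factors, and twistor-type or homogeneous pieces. The $6$-dimensional factors are Einstein with positive scalar curvature, and the other non-K\"ahler pieces carry positive curvature directions as well. Negative sectional curvature forbids all such factors, and also forbids nontrivial products, since a product contains flat mixed planes. Hence $J$ is parallel and $g$ is K\"ahler. Berger's theorem \cite{Berger60} says a negatively curved K\"ahler metric of complex dimension at least $2$ cannot be strictly $1/4$-pinched. This contradicts the hypothesis, so no such $f$ exists.

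\emph{Main obstacle.} I expect the delicate point to be Step 2: verifying that every non-K\"ahler piece in Nagy's decomposition is genuinely incompatible with negative sectional curvature. The fallback is Gray's identity for nearly K\"ahler manifolds,
$$
  R(X,Y,JX,JY)-R(X,Y,X,Y)=|(\nabla_XJ)Y|^2 ,
$$
which directly forces $\nabla J=0$ when the curvature is negative. The normality argument in Step 1 is routine but needs care with the normalization of the Fischer decomposition.
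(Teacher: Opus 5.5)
Your proposal is correct and follows the paper's proof: for $k\geq5$, the normality condition $\tf(\Sym K_k)=0$ and the high-degree positivity lemma contradict the localized Pestov sign, and the case $f=f_1$ is excluded via the nearly K\"ahler structure, the Gray--Nagy splitting with positive scalar curvature on the strict factor, and Berger's theorem. Your suggested fallback does not work, however, because Gray's identity is pointwise and does not by itself force $\nabla J=0$ under negative curvature: in dimension $6$, the tensor $\alpha R^{c}$ ($R^{c}$ the constant-curvature tensor) plus a complex-hyperbolic K\"ahler tensor with sectional curvatures in $[-4\beta,-\beta]$, $\beta>\alpha>0$, satisfies it with the nearly K\"ahler $\nabla J$ of type constant $\alpha$ and is negatively curved, so the global structure theory you cite first (as the paper does) is genuinely needed.
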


\begin{proof}
Choose a pinching presentation with $\delta>1/4$ and let $\cA$ be the
normalized curvature operator from \eqref{eq:normalized-curvature}. Let $k$
be the highest Fourier degree of $f$, and denote its top coefficient by
$K_k\in\Sym^k_0T^*M\otimes TM$. Since $f$ is odd, $k$ is odd, and
$K_k\neq0$. The top-degree part of $Xf=0$ is $X_+f_k=0$, so $K_k$ is a
twisted conformal Killing tensor. Moreover, the degree-$(k+1)$ harmonic
component of the identity
$$
  \inner{f(x,v)}{v}=0
$$
comes only from $f_k$ and, under the harmonic--tensor correspondence, is
precisely $\tf(\Sym K_k)$. Thus $K_k$ satisfies
\eqref{eq:highest-normal-condition}.

If $k\geq5$, then $\frac14\leq\sec_{\cA}\leq1$, and
Lemma \ref{lem:tangent-twist-high-degree} gives
$$
  W_{\cA}(K_k)>0
$$
at every point where $K_k\neq0$. Since $K_k$ is a nonzero smooth section,
its integral is therefore strictly positive, contradicting
\eqref{eq:tangent-twist-pestov-normalized}. Hence $k\leq3$, and oddness gives
$f=f_1+f_3$.

Assume now that $f_3=0$. Then $f(x,v)=J_xv$ for an endomorphism field
$J\in C^\infty(M,\End(TM))$. The normality and unit-length conditions imply
$$
  \inner{Jv}{v}=0,
  \qquad
  |Jv|=|v|
$$
for every unit vector $v$, and hence, by homogeneity, for every $v\in TM$.
Polarizing the first identity gives
$$
  \inner{Ju}{w}+\inner{u}{Jw}=0,
$$
while polarizing the second gives
$$
  \inner{Ju}{Jw}=\inner{u}{w}.
$$
Thus $J^*=-J$ and $J^*J=\Id$, so in particular $J^2=-\Id$. Therefore $J$
is an orthogonal almost complex structure.

For a geodesic $\gamma$ with $\dot\gamma(0)=v$, the equation $Xf=0$ gives
$$
  0=\left.\nabla_{\dot\gamma}
     \bigl(J_{\gamma(t)}\dot\gamma(t)\bigr)\right|_{t=0}
   =(\nabla_vJ)v,
$$
since $\dot\gamma$ is parallel. Hence $(M,g,J)$ is nearly K\"ahler.
Nagy's decomposition theorem \cite{Nagy02} splits the universal cover into
a K\"ahler factor and a strict nearly K\"ahler factor. Negative sectional
curvature excludes a non-trivial Riemannian product, since mixed two-planes
in such a product have zero sectional curvature. Thus the universal cover
is either K\"ahler or strict nearly K\"ahler. The latter has positive scalar
curvature \cite{Gray76,Nagy02}, contradicting negative sectional curvature,
whereas a negatively curved K\"ahler metric cannot be strictly more than
quarter-pinched \cite{Berger60}; see also
\cite[proof of Theorem~4.1]{CLMS24}.
\end{proof}

\subsection{The \texorpdfstring{$(3,1)$}{(3,1)}-type tensor and Weitzenb\"ock identities}\label{sec:cubic}

\subsubsection{From the invariant field to a generalized Killing tensor of Young type \texorpdfstring{$(3,1)$}{(3,1)}}

From now on, let
$$
  f=f_1+f_3
$$
be an odd invariant normal field. Under the standard harmonic--tensor
identification there are unique fields
$$
  A\in C^\infty(M,T^*M\otimes TM),
  \qquad
  K\in C^\infty(M,\Sym^3_0T^*M\otimes TM)
$$
such that
$$
  f_1(x,v)=A_xv,
  \qquad
  f_3(x,v)=K_x(v,v,v).
$$
Using $g$, we identify the $TM$-valued factors of $A$ and $K$ with
$T^*M$, and use the same letters for the vector-valued tensors and
their covariant counterparts.

\begin{definition}\label{def:T}
Define $T\in C^\infty(M,\Sym^3T^*M\otimes T^*M)$ by
\begin{align}
  T(u_1,u_2,u_3,w)
  :={}&K(u_1,u_2,u_3,w)\notag\\
  &+\frac13\bigl(
      g(u_1,u_2)A(u_3,w)
     +g(u_1,u_3)A(u_2,w)
     +g(u_2,u_3)A(u_1,w)
    \bigr).
  \label{eq:Tdef}
\end{align}
\end{definition}
Then
\begin{equation}\label{eq:diagonal-T}
  T(v,v,v,w)=\inner{f(x,v)}{w}
  \qquad\text{for }|v|=1,
\end{equation}
and, by homogeneity,
$$
  T(v,v,v,w)
  =\inner{|v|^2Av+K(v,v,v)}{w}
$$
for arbitrary $v$.

\begin{lemma}[Young symmetry]\label{lem:young-symmetry}
The normality condition $\inner{f(x,v)}{v}=0$ is equivalent to
$
  T_{(abcd)}=0.
$
Moreover,
$$
  T_x\in\cC_x
  :=\ker\bigl(
    \Sym^3T_x^*M\otimes T_x^*M
    \xrightarrow{\Sym}
    \Sym^4T_x^*M
  \bigr).
$$
As a $GL(T_xM)$-module,
$$
  \cC_x\simeq\Hook T_x^*M.
$$
\end{lemma}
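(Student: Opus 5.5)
The plan is to reduce everything to the polynomial identity \eqref{eq:diagonal-T} and to standard facts about the decomposition of $\Sym^3\otimes V^*$.

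\emph{Step 1: normality.} By \eqref{eq:diagonal-T} and homogeneity,
$$
  T(v,v,v,v)=|v|^2\inner{Av}{v}+\inner{K(v,v,v)}{v}
$$
for every $v\in T_xM$. The left side is the quartic polynomial attached to the complete symmetrization $T_{(abcd)}$, so
$$
  T(v,v,v,v)=T_{(abcd)}v^av^bv^cv^d .
$$
A symmetric $4$-tensor is determined by its diagonal values, by polarization. Hence $T_{(abcd)}=0$ if and only if $T(v,v,v,v)=0$ for all $v$. By homogeneity of degree four, this holds if and only if $\inner{f(x,v)}{v}=0$ for all unit $v$, which is the normality condition. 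This gives the first equivalence, and also shows that $T_x$ lies in $\ker(\Sym)=\cC_x$.

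\emph{Step 2: identifying $\cC_x$.} Let $W=T_x^*M$ with $\dim W=n$. Pieri's rule gives the $GL(W)$-decomposition
$$
  \Sym^3W\otimes W\simeq \Sym^4W\oplus\Hook W .
$$
The symmetrization map $\Sym$ is surjective onto $\Sym^4W$; it restricts to the identity on symmetric tensors. By Schur's lemma its kernel is a $GL$-invariant complement to the $\Sym^4W$ summand. Since the decomposition is multiplicity-free, that kernel is exactly the $\Hook W$ isotypic piece.

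\emph{Dimension check.} As a sanity check, $\dim\cC_x=n\binom{n+2}{3}-\binom{n+3}{4}$. This agrees with the hook-content formula for shape $(3,1)$:
$$
  \dim\Hook W=\frac{n(n+1)(n+2)(n-1)}{8}.
$$
If one prefers to avoid Pieri, an alternative is to note that $\cC_x$ is $GL$-stable. One then shows it is irreducible by observing that it is generated by a single highest-weight vector of weight $(3,1,0,\dots)$, for instance $e_1^2e_2\otimes e_1-e_1^3\otimes e_2$ suitably symmetrized. Combined with the dimension count, this identifies it with $\Hook W$.

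\emph{Expected obstacle.} Nothing here is deep. The only point needing care is that the paper's normalized symmetrization conventions make the restriction of $\Sym$ to $\Sym^4W$ the identity, so the kernel is genuinely a complement. With that settled, the kernel is the full hook component rather than a proper subspace.
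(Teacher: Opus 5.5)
Your proposal is correct and follows the paper's proof: the equivalence comes from \eqref{eq:diagonal-T}, homogeneity, and polarization of the diagonal quartic, and the identification of $\cC_x$ is the Pieri decomposition $\Sym^3V^*\otimes V^*\simeq\Sym^4V^*\oplus\Hook V^*$. Your additional details go beyond the paper's bare citation and are correct: the kernel is an invariant complement of $\Sym^4$, hence the hook summand by multiplicity one, and the hook-content dimension check agrees. One small correction is that the complement property comes from $\Sym$ being an equivariant projection onto $\Sym^4V^*$, not from Schur's lemma.
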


\begin{proof}
By \eqref{eq:diagonal-T}, normality gives
$$
  T(v,v,v,v)=0
$$
for all unit vectors. Homogeneity gives the same identity for every vector.
A symmetric four-linear form is determined by its diagonal polynomial, so the
complete symmetrisation of $T$ vanishes. Conversely, vanishing of the complete
symmetrisation gives $T(v,v,v,v)=0$ and hence normality.

The Pieri decomposition
$$
  \Sym^3V^*\otimes V^*
  \simeq \Sym^4V^*\oplus\Hook V^*
$$
shows that the kernel is precisely the Schur module of shape $(3,1)$; see
\cite[Chapter~6]{FultonHarris}.
\end{proof}

We henceforth identify $\cC$ with the hook bundle $\Hook T^*M$.

\begin{lemma}[Generalized Killing equation]\label{lem:killing-equation}
The flow-invariance equation $Xf=0$ is equivalent to
\begin{equation}\label{eq:generalized-killing}
  \nabla_{(e}T_{abc)d}=0.
\end{equation}
\end{lemma}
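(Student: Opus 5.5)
We compute $Xf$ pointwise in terms of $T$ and then use the fact that a symmetric form is determined by its diagonal. Fix $(x,v)\in\SM$ and let $\gamma$ be the unit-speed geodesic with $\dot\gamma(0)=v$. Since $\dot\gamma$ is parallel along $\gamma$, identity \eqref{eq:diagonal-T} gives, for every parallel vector field $W$ along $\gamma$,
\[
  \inner{f(\gamma(t),\dot\gamma(t))}{W(t)}=T_{\gamma(t)}(\dot\gamma,\dot\gamma,\dot\gamma,W).
\]
Differentiating at $t=0$, and using that all of $\dot\gamma$ and $W$ are parallel, yields
\[
  \inner{(Xf)(x,v)}{w}=(\nabla_vT)(v,v,v,w)
\]
for every $w\in T_xM$. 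Hence $Xf=0$ holds if and only if $(\nabla_vT)(v,v,v,w)=0$ for all unit $v$ and all $w$.

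Both sides of the last condition are homogeneous of degree $4$ in $v$, so by scaling it holds equally for all $v\in T_xM$. For fixed $w$, the map
\[
  (u_1,\dots,u_4)\mapsto(\nabla_{u_1}T)(u_2,u_3,u_4,w)
\]
is a $4$-linear form on $T_xM$. Its diagonal vanishes identically exactly when its complete symmetrization vanishes, which is the step already used in the proof of Lemma~\ref{lem:young-symmetry}. Since $T$ is already symmetric in its first three slots, this symmetrization is precisely $\nabla_{(e}T_{abc)d}w^d$. Letting $w$ range over $T_xM$ gives \eqref{eq:generalized-killing}, and reading the argument backwards gives the converse.

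The only point that needs care is making sure the homogeneous extension used in \eqref{eq:diagonal-T} is compatible with differentiation along $\SM$. This is harmless here: along the curve $t\mapsto(\gamma(t),\dot\gamma(t))$ we only evaluate at unit vectors, so the factor $|v|^2$ in front of $Av$ equals $1$ and contributes no derivative. The identification of $Xf$ with the covariant derivative of $f$ along $\gamma$ is exactly the meaning of the induced derivative $X$ on pullback bundles. No Fourier-mode bookkeeping is needed, because both the $A$-part and the $K$-part are absorbed into $T$.
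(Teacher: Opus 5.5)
Your proposal is correct and is essentially the paper's proof. Both differentiate $T(\dot\gamma,\dot\gamma,\dot\gamma,W)$ along a geodesic with $W$ parallel to get $\inner{(Xf)(x,v)}{w}=(\nabla_vT)(v,v,v,w)$, then polarize this degree-four polynomial in $v$, using that $T$ is already symmetric in its first three slots. Your extra care about the factor $|v|^2$ is harmless but unnecessary, since $T$ is a genuine tensor field and $\nabla g=0$.
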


\begin{proof}
Let $\gamma$ be a geodesic with $\dot\gamma(0)=v$, and let $w(t)$ be parallel
along $\gamma$. Since $\dot\gamma$ is parallel,
$$
  \frac{d}{dt}
  T_{\gamma(t)}(\dot\gamma,\dot\gamma,\dot\gamma,w(t))
  =(\nabla_vT)(v,v,v,w)
\quad\text{at }t=0.
$$
Hence $Xf=0$ implies
$$
  (\nabla_vT)(v,v,v,w)=0
$$
for every $v,w$. Polarising the degree-$4$ polynomial in $v$ gives
\eqref{eq:generalized-killing}. The converse follows by restriction to the diagonal.
\end{proof}

Equivalently, if
$$
  \cD:C^\infty(M,\cC)
  \longrightarrow C^\infty(M,\Sym^4T^*M\otimes T^*M)
$$
is defined by
$
  (\cD T)_{eabcd}:=\nabla_{(e}T_{abc)d},
$
then $\cD T=0$.
\subsubsection{A second Weitzenb\"ock identity on the totally trace-free \texorpdfstring{$(3,1)$}{(3,1)}-component}\label{sec:tracefree-hook-weitzenbock}

The ordinary formula of Appendix \ref{sec:weitzenbock} is sufficient in dimension
$4$, but its curvature term is not pointwise positive on the full
$(3,1)$-module in higher dimension.  The purpose of this subsection is to
extract a second formula on the totally trace-free $(3,1)$-component.  Throughout this
subsection $V$ is a Euclidean vector space of dimension $n\geq8$, and
$$
  \cU:=\mathbb V^0_{(3,1)}V^*,
$$
where $\mathbb V^0_\lambda V^*$ denotes the irreducible trace-free
$O(V)$-module of highest weight $\lambda$. Restriction from $GL(V)$ to
$O(V)$ gives the parallel orthogonal
decomposition
\begin{equation}\label{eq:orthogonal-hook-decomposition}
  \Hook V^*
  =\cU\oplus\Sym^2_0V^*\oplus\Lambda^2V^*.
\end{equation}
Write $\operatorname{pr}_{\cU}$ for the first projection. If
$X\in\mathfrak{so}(V)$, decompose the infinitesimal action on a hook as
$$
  \rho_X=\mathsf D_X+\mathsf O_X,
$$
where $\mathsf D_X$ is the sum of the actions in the first three symmetric
slots and $\mathsf O_X$ is the action in the fourth slot.

\paragraph{The cubic formula.}

In the $\Lambda^2V$-normalization of
\cite[\S3.2]{SemmelmannWeingart10}, the Casimir eigenvalue of
$\cU$ is $-4n$. Substitution into \cite[(4.18)]{SemmelmannWeingart10}
gives the odd cubic Weitzenb\"ock polynomial
\begin{equation}\label{eq:p3-polynomial}
  p_3(z)=z^3+(n-1)z^2+
  \left(\frac{n(n-2)}4-8\right)z-4n.
\end{equation}
Put
\begin{equation}\label{eq:partial-formula}
  \beta_n:=\frac{n^2-10n-8}{4},
  \qquad
  F_{\cU}:=\frac{p_3(B)-\beta_nB}{4n}
  \quad\text{on }V^*\otimes\cU.
\end{equation}

For an algebraic curvature operator $\cB$ on $V$, define the quadratic form
\begin{equation}\label{eq:wR-definition}
  w_{\cB}(h,h)
  :=\sum_{\alpha,\beta}\cB_{\alpha\beta}
  \inner{\mathsf D_{E_\alpha}h}{(\mathsf D_{E_\beta}+\mathsf O_{E_\beta})h},
  \qquad
  \cB_{\alpha\beta}:=\inner{\cB E_\alpha}{E_\beta}.
\end{equation}
The definition is independent of the orthonormal basis $(E_\alpha)$.

\begin{lemma}\label{lem:first-three-slot-action}
For $a\in V^*$ and $h\in\cU$,
\begin{equation}\label{eq:first-three-slot-action}
  F_{\cU}(a\otimes h)
  =\sum_i e^i\otimes\operatorname{pr}_{\cU}
     \mathsf D_{e_i\wedge a}h.
\end{equation}
\end{lemma}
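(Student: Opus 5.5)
The plan is to prove \eqref{eq:first-three-slot-action} by Schur's lemma: show that both sides are $O(V)$-equivariant endomorphisms of $V^*\otimes\cU$, decompose $V^*\otimes\cU$ into irreducibles, and match scalars on each summand. Write $\Phi(a\otimes h):=\sum_i e^i\otimes\operatorname{pr}_{\cU}\mathsf D_{e_i\wedge a}h$. This expression is basis independent. It is equivariant because $\mathsf D$ is the $\mathfrak{so}(V)$-action on the first three slots, and the embedding $\cU\subset\Sym^3V^*\otimes V^*$ and the projection $\operatorname{pr}_{\cU}$ are $O(V)$-equivariant. The left side $F_{\cU}$ is a polynomial in the conformal weight operator $B(a\otimes h)=\sum_i e^i\otimes\rho_{e_i\wedge a}h$, so it is equivariant too. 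For $n\geq8$, the Pieri-type rule for $O(n)$ gives the multiplicity-free decomposition
$$
  V^*\otimes\cU\simeq\mathbb V^0_{(4,1)}\oplus\mathbb V^0_{(3,2)}\oplus\mathbb V^0_{(3,1,1)}\oplus\mathbb V^0_{(3)}\oplus\mathbb V^0_{(2,1)}.
$$
Hence both operators act by a scalar on each of the five summands, and it suffices to compare five pairs of numbers.

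For $F_{\cU}$, I will use the standard Casimir formula: on the summand $\mu$,
$$
  B=\tfrac12\bigl(\operatorname{Cas}(\mu)-\operatorname{Cas}(\cU)-\operatorname{Cas}(V)\bigr),
$$
in the $\Lambda^2V$-normalization of \cite{SemmelmannWeingart10}, where $\operatorname{Cas}(\cU)=-4n$. This gives five explicit eigenvalues $b_\mu$, each linear in $n$. I then evaluate $(p_3(b_\mu)-\beta_n b_\mu)/(4n)$ using \eqref{eq:p3-polynomial} and \eqref{eq:partial-formula}. As a consistency check, $p_3$ should be the odd polynomial of \cite[(4.18)]{SemmelmannWeingart10}, whose defining relation annihilates the expected combination. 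The subtraction of $\beta_nB$ is designed precisely to isolate the $\mathsf D$-part.

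For $\Phi$, I will compute the scalars by a Casimir argument on the first three slots. Regard $\cU\subset\Sym^3V^*\otimes V^*$ and let $\mathfrak{so}(V)$ act only on the $\Sym^3$ factor. Then $\sum_i e^i\otimes\mathsf D_{e_i\wedge a}$ is the conformal weight operator of $V^*\otimes\Sym^3V^*$, tensored with the identity on the fourth slot. On the components of $V^*\otimes\Sym^3_0V^*$, namely $(4),(3,1),(2)$, together with the trace part $V^*\otimes|x|^2V^*$, this operator has known Casimir scalars. Composing with $\operatorname{pr}_{\cU}$ then means computing, for each $\mu$, the compression of this operator to the $\mu$-isotypic line inside $V^*\otimes\cU$. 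Concretely, I will take an explicit highest-weight (or trace-type) test vector $a\otimes h$ in each summand $\mu$, expand $h$ in the slot-three basis, apply $\mathsf D$, project back, and read off the scalar. The summands $(4,1)$ and $(3,2)$ are easy: the vector $a\otimes h$ can be taken with $h$ a Cartan-type product, and $\operatorname{pr}_{\cU}$ acts trivially on the relevant term.

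The main obstacle is the two trace summands $\mathbb V^0_{(3)}$ and $\mathbb V^0_{(2,1)}$, and to a lesser extent $(3,1,1)$. There, $\mathsf D_{e_i\wedge a}h$ leaves $\cU$ and acquires components in $\Sym^4$, $\Sym^2_0$ and $\Lambda^2$, so the projection must be computed honestly, using the explicit trace-removal formulas for the decomposition \eqref{eq:orthogonal-hook-decomposition}. I would first handle these by contracting: apply the two independent equivariant contractions $V^*\otimes\cU\to\Sym^3_0V^*$ and $V^*\otimes\cU\to\mathbb V^0_{(2,1)}V^*$ to both sides, and verify the resulting identities in coordinates on a single test element. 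Once all five scalars agree, Schur's lemma yields \eqref{eq:first-three-slot-action}.
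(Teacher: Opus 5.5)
Your Schur-lemma route is sound, and it differs genuinely from the paper's. The paper proves the identity at the operator level without decomposing $V^*\otimes\cU$. It expands $B^2$ and $B^3$ via $B=\sum_r(\tau_{0r}-\operatorname{tr}_{0r}^*\operatorname{tr}_{0r})$ into words in $\mathsf D$ and $\mathsf O$, and compresses each word by $\operatorname{pr}_{\cU}$. It then uses the twist-oddness of $p_3$ to keep only the alternating parts of $(B^m)_{ab}$. Finally it reads off that $p_3(B)$ has $\mathsf M_{ab}$-coefficient $4n$ and $\rho_{ab}$-coefficient exactly $\beta_n$.

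That argument is uniform, but it rests on a twelve-word coefficient table, several entries of which are only indicated. Your comparison needs only three things: equivariance of $\Phi$, the multiplicity-free decomposition (which the paper uses in the next lemma anyway), and five scalars. The $F_{\cU}$-side scalars are exactly the second row of \eqref{eq:partial-spectrum-table}, so you prove both lemmas at once. I checked that the scalars of $\Phi$ are $\tfrac{11}{4},-1,-1,-\tfrac{n^2-2n+8}{4n},-\tfrac{n^2+n-1}{n}$, so the plan closes.

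There are two things to fix when you carry it out.

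\textbf{The sign in the Casimir formula.} In the paper's conventions $B=3$ on $(4,1)$. With $\operatorname{Cas}(\cU)=-4n$, the formula must therefore read $B=\frac12(\operatorname{Cas}(V)+\operatorname{Cas}(\cU)-\operatorname{Cas}(\mu))$; your sign gives $-3$.

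\textbf{The projection.} $\operatorname{pr}_{\cU}$ does not act trivially on your test vectors: on $(4,1)$, $\tau_{04}u$ already has a nonzero component in $V^*\otimes\Sym^4V^*$. Also, a contraction defined on $V^*\otimes\Sym^3V^*\otimes V^*$ does not commute with $\operatorname{pr}_{\cU}$. You can avoid computing the projection entirely. Because it is orthogonal, the scalar of $\Phi$ on the $\mu$-summand equals $\langle\widehat\Phi u,u\rangle/|u|^2$, where $\widehat\Phi=\sum_{r=1}^3(\tau_{0r}-\operatorname{tr}_{0r}^*\operatorname{tr}_{0r})=B-(\tau_{04}-\operatorname{tr}_{04}^*\operatorname{tr}_{04})$ is the unprojected operator.

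On the summands $(4,1)$, $(3,2)$, $(3,1,1)$:
\begin{itemize}
\item All traces vanish.
\item Under Schur--Weyl duality, the $S_5$-part of $u$ is the Young vector of the standard tableau with $1,2,3$ in the first row, $4$ in the second row, and slot $0$ in the added box.
\item Young's orthogonal form then gives $\langle\tau_{04}u,u\rangle/|u|^2=1/(c_5+1)$, with $c_5=3,0,-2$.
\item Hence $\Phi=3-\tfrac14,\ 0-1,\ -2+1$ on these three summands.
\end{itemize}

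On the two trace summands, use explicit embeddings:
\begin{itemize}
\item For $(3)$, with $\sigma\in\Harm_3(V)$: $\mathcal F(y,x)=3n\sigma y-2\langle y,\nabla\sigma\rangle x-n\langle x,y\rangle\nabla\sigma+|x|^2\nabla^2\sigma\,y$.
\item For $(2,1)$, with $G$ the harmonic, divergence-free, tangential quadratic field: $\mathcal F(y,x)=n(n+2)\langle x,y\rangle G-(n+2)\langle G,y\rangle x-n|x|^2dG_x(y)+|x|^2dG_x^{\top}(y)$.
\end{itemize}
The fourth-slot quotient $\langle(\tau_{04}-\operatorname{tr}_{04}^*\operatorname{tr}_{04})u,u\rangle/|u|^2$ comes out as $\frac{-3n^2+6n+8}{4n}$ on $(3)$ and $-\frac1n$ on $(2,1)$. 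Subtracting these from $B=2-n$ and $B=-n-1$ gives exactly the required values.
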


\begin{proof}
We give the contraction which fixes the normalisation. For
$u\in\Sym^3V^*\otimes V^*$, let $\Sym_4$ denote normalized symmetrization
over its $4$ tensor slots and set
$$
  \mathsf Yu:=u-\Sym_4u,
  \qquad C(\mathsf Yu)_{cd}:=(\mathsf Yu)_{aa,cd},
$$
and define
$$
  C^*q:=\mathsf Y\!\left[\frac13
  (g_{ab}q_{cd}+g_{ac}q_{bd}+g_{bc}q_{ad})\right].
$$
For a two-tensor $q$, write
$q_{\mathrm s}:=(q+q^\top)/2$ and $q_{\mathrm a}:=(q-q^\top)/2$.
A direct trace gives
$$
  CC^*|_{\Sym^2_0}=\frac n6\Id,
  \qquad
  CC^*|_{\Lambda^2}=\frac{n+2}{3}\Id,
$$
and hence
\begin{equation}\label{eq:tracefree-hook-projection}
  \operatorname{pr}_{\cU}u
  =\mathsf Yu-\frac6nC^*((C\mathsf Yu)_{\mathrm s})
     -\frac3{n+2}C^*((C\mathsf Yu)_{\mathrm a}).
\end{equation}
Write $\rho_{ab}:=\rho(e_a\wedge e_b)$ and
$$
  \mathsf M_{ab}:=\operatorname{pr}_{\cU}\mathsf D_{e_a\wedge e_b}
                  \operatorname{pr}_{\cU}.
$$
For $m\geq1$, let
$$
  (B^m)_{ab}h
  :=(e_a\mathbin{\lrcorner}\otimes\Id)B^m(e^b\otimes h),
$$
and write $\operatorname{Alt}_{ab}Q_{ab}:=(Q_{ab}-Q_{ba})/2$.
Expanding $B^2$ and $B^3$ by \eqref{eq:Bformula}, applying
\eqref{eq:tracefree-hook-projection} after each word in $\mathsf D$ and
$\mathsf O$, and using the Young relation and the vanishing traces gives
\begin{align}
 \operatorname{Alt}_{ab}(B^2)_{ab}
   &=-\frac{n-2}{2}\rho_{ab},\label{eq:alt-B2}\\
 \operatorname{Alt}_{ab}(B^3)_{ab}
   &=4n\mathsf M_{ab}+\frac{n^2-7n+14}{2}\rho_{ab}.
   \label{eq:alt-B3}
\end{align}
The word-by-word coefficients are recorded in
Appendix~\ref{app:first-three-slot}. Since $p_3$ is twist-odd, only these alternating
parts occur. The coefficient of $\rho_{ab}$ in $p_3(B)$ is
$$
 \frac{n^2-7n+14}{2}
 -\frac{(n-1)(n-2)}2+\frac{n(n-2)}4-8
 =\frac{n^2-10n-8}{4}=\beta_n.
$$
Consequently, for $a\otimes h\in V^*\otimes\cU$,
$$
  p_3(B)(a\otimes h)
  =4n\sum_i e^i\otimes\operatorname{pr}_{\cU}
       \mathsf D_{e_i\wedge a}h+\beta_nB(a\otimes h),
$$
which proves \eqref{eq:first-three-slot-action}.
\end{proof}

Orthogonal Pieri gives
\begin{equation}\label{eq:Vtensor-tracefree-hook}
\begin{split}
 V^*\otimes\cU={}&
 \mathbb V^0_{(4,1)}\oplus\mathbb V^0_{(3,2)}
 \oplus\mathbb V^0_{(3,1,1)}\oplus\Sym^3_0V^*\oplus\mathbb V^0_{(2,1)}.
\end{split}
\end{equation}

\begin{lemma}\label{lem:first-three-slot-spectrum}
On the five summands in \eqref{eq:Vtensor-tracefree-hook}, in the displayed
order, the eigenvalues of $B$ and $F_{\cU}$ are
\begin{equation}\label{eq:partial-spectrum-table}
\begin{array}{c|ccccc}
 & (4,1)&(3,2)&(3,1,1)&(3)&(2,1)\\ \hline
 B&3&0&-2&2-n&-n-1\\[2pt]
 F_{\cU}&\dfrac{11}{4}&-1&-1&
 -\dfrac{n^2-2n+8}{4n}&-\dfrac{n^2+n-1}{n}.
\end{array}
\end{equation}
In particular $F_{\cU}\preceq-\Id$ off the $(4,1)$ summand.

If $T\in\Hook V^*$ satisfies the generalized Killing symbol
equation and $H=\operatorname{pr}_{\cU}T$, then the $(4,1)$ component of $\nabla H$ vanishes.
\end{lemma}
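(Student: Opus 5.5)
The proof has two parts: the spectral table \eqref{eq:partial-spectrum-table}, and the vanishing of the $(4,1)$ component of $\nabla H$.

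\textbf{Eigenvalues of $B$.} I take $B$ to be the standard Weitzenb\"ock/Casimir-difference operator on $V^*\otimes\cU$ from Appendix~\ref{sec:weitzenbock}, i.e.\ $B=\sum_{\alpha}E_\alpha\otimes\rho_{\cU}(E_\alpha)$ in the normalization of \cite{SemmelmannWeingart10}. On each irreducible summand $\mathbb V^0_\mu\subset V^*\otimes\cU$ it then acts by
\[
\tfrac12\bigl(\mathrm{Cas}(\mu)-\mathrm{Cas}(V)-\mathrm{Cas}(\cU)\bigr),
\]
with the sign fixed by the convention in which $\mathrm{Cas}(\cU)=-4n$. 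For highest weight $\mu$, $\mathrm{Cas}(\mu)=-\langle\mu,\mu+2\varrho\rangle$, and $\mathrm{Cas}(V)=-(n-1)$. The difference $\tfrac12(\dots)$ is then the usual content-type formula: it equals the content $j-i$ of a box added to $(3,1)$, or $-(n-2)-(\text{content})$ for a removed box. Concretely:
\begin{itemize}
\item adding at $(1,5)$ gives $3$;
\item adding at $(2,3)$ gives $0$;
\item adding at $(3,1)$ gives $-2$;
\item removing $(2,2)$ (content $0$) gives $-(n-2)=2-n$;
\item removing $(1,4)$ (content $3$) gives $-(n-2)-3=-n-1$.
\end{itemize}
These reproduce the first row of the table, and I will verify them quickly against the stated $\mathrm{Cas}(\cU)$.

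\textbf{Eigenvalues of $F_{\cU}$.} Since $F_{\cU}=(p_3(B)-\beta_nB)/(4n)$ is a polynomial in $B$, its eigenvalue on each summand is $(p_3(b)-\beta_n b)/(4n)$, where $b$ is the corresponding eigenvalue of $B$. Using \eqref{eq:p3-polynomial}, I compute $p_3(b)-\beta_nb=b^3+(n-1)b^2+\bigl(\tfrac{n(n-2)}4-8-\beta_n\bigr)b-4n$. The linear coefficient simplifies to $2n-6$, so
\[
p_3(b)-\beta_nb=b^3+(n-1)b^2+(2n-6)b-4n.
\]
Evaluating at each $b$ in the first row (for instance, $b=3$ gives $27+9(n-1)+3(2n-6)-4n=11n$, hence $11/4$) yields the second row; the remaining four values are routine. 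The bound $F_{\cU}\preceq-\Id$ off $(4,1)$ amounts to $\frac{n^2-2n+8}{4n}\ge1$, which follows from $(n-3)^2\ge1$, and $\frac{n^2+n-1}{n}\ge1$, which is clear.

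\textbf{Vanishing of the $(4,1)$ component.} I will identify the $(4,1)$ summand of $V^*\otimes\cU$ with the image of the projection $\nabla H\mapsto\operatorname{pr}_{(4,1)}$ obtained by symmetrizing the derivative slot with the three symmetric slots of the hook and then removing traces. Since the $(4,1)$-isotypic part of $V^*\otimes\Hook V^*$ has multiplicity one, this projection factors through $\cD$: the symmetrization $\nabla_{(e}T_{abc)d}$ lands in $\Sym^4\otimes V^*=\Sym^5\oplus\mathbb S_{(4,1)}$, and the $O(V)$-type $(4,1)$ inside $V^*\otimes\cU$ appears there with multiplicity one. So $\operatorname{pr}_{(4,1)}\nabla H$ is a nonzero multiple of $\operatorname{pr}_{(4,1)}\cD T$, provided the trace parts $\Sym^2_0$ and $\Lambda^2$ of $T$ contribute no $(4,1)$ component. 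They cannot, because $V^*\otimes\Sym^2_0$ and $V^*\otimes\Lambda^2$ contain no $(4,1)$ summand. Hence $\cD T=0$ forces the $(4,1)$ component of $\nabla H$ to vanish.

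\textbf{Main obstacle.} The delicate step is this multiplicity-one/Schur argument. I must check that $\mathbb V^0_{(4,1)}$ occurs exactly once in $V^*\otimes\Hook V^*$ (it arises only from $V^*\otimes\cU$), and that the map induced by $\cD$ on that summand is nonzero.
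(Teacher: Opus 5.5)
Your proposal is correct and follows the paper's proof: eigenvalues of $B$ from orthogonal Pieri/Casimir differences, substitution into $(p_3(b)-\beta_nb)/(4n)$, and the observation that $\mathbb V^0_{(4,1)}$ occurs in neither trace block $V^*\otimes\Sym^2_0V^*$ nor $V^*\otimes\Lambda^2V^*$. The nonvanishing you flag is automatic, because this type is the trace-free part of the $GL(V)$-summand $\mathbb S_{(4,1)}V^*$, on which the symbol is an isomorphism; your box bookkeeping slips (the added boxes are $(1,4),(2,2),(3,1)$, a removed corner of content $c$ gives $-(n-1)-c$, and with negative Casimirs the half-difference needs an overall minus sign), but every eigenvalue you list is correct.
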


\begin{proof}
The conformal weights in the first row of
\eqref{eq:partial-spectrum-table} are the standard orthogonal Pieri weights.
Substitution into \eqref{eq:p3-polynomial}--\eqref{eq:partial-formula}
gives the second row.  The only positive eigenvalue is therefore the one on
$\mathbb V^0_{(4,1)}$.

The generalized Killing symbol is the $GL(V)$-projection onto
$\mathbb S_{(4,1)}V^*$.  The trace-free $O(V)$-type
$\mathbb V^0_{(4,1)}$ occurs in $V^*\otimes\cU$, but not in
$$
 V^*\otimes\Sym^2_0V^*
 =\Sym^3_0V^*\oplus\mathbb V^0_{(2,1)}\oplus V^*
$$
or in
$$
 V^*\otimes\Lambda^2V^*
 =\mathbb V^0_{(2,1)}\oplus\Lambda^3V^*\oplus V^*.
$$
It therefore cannot be cancelled by either trace block, and its projection
in $\nabla H$ is zero.\end{proof}

Let now $T$ be a smooth generalized Killing tensor of Young type $(3,1)$ on a closed manifold and
$H=\operatorname{pr}_{\cU}T$. Let $\widetilde F_{\cU}$ be the contraction associated with
$F_{\cU}$ as in Appendix \ref{sec:weitzenbock}. By
\eqref{eq:first-three-slot-action}, the skew-adjointness of the infinitesimal
orthogonal action gives the pointwise identity
\begin{equation*}
  \inner{\widetilde F_{\cU}(\nabla^2H)}{H}
  =-w_{\Rop^g}(H,H).
\end{equation*}
On the other hand, integration by parts gives
$$
  \int_M\inner{\widetilde F_{\cU}(\nabla^2H)}{H}\,d\vol_g
  =-\int_M\inner{F_{\cU}\nabla H}{\nabla H}\,d\vol_g.
$$
By Lemma \ref{lem:first-three-slot-spectrum}, the $(4,1)$ component of $\nabla H$ vanishes
and $F_{\cU}\preceq-\Id$ on all remaining components. Therefore
\begin{equation}\label{eq:partial-integral-sign}
  \int_M w_{\Rop^g}(H,H)\,d\vol_g
  =\int_M\inner{F_{\cU}\nabla H}{\nabla H}\,d\vol_g
  \leq0.
\end{equation}

\paragraph{The pointwise twisted estimate.}
We have the tangent-twist estimate on the totally trace-free $(3,1)$-component.
\begin{lemma}\label{lem:tracefree-hook-twist-estimate}
Let $n\geq8$ be even, let $H\in\mathbb V^0_{(3,1)}V^*$, and let $\cE$ be an
algebraic curvature tensor satisfying $0\leq\sec_{\cE}\leq1$.  Then
\begin{equation}\label{eq:tracefree-hook-twist-bound}
  w_{\cE}(H,H)\geq-B_n|H|^2,
  \qquad
  B_n:=\frac12\sqrt{\frac{n(n-2)(n+2)}{n-1}}
       +\frac23\sqrt{\frac{n(n-2)}2}.
\end{equation}
Moreover,
\begin{equation}\label{eq:wI-tracefree-hook}
  w_I(H,H)=(3n+2)|H|^2.
\end{equation}
Consequently, for $\cA=\frac14I+\frac34\cE$,
\begin{equation}\label{eq:tracefree-hook-twist-positive}
  w_{\cA}(H,H)\geq c_n|H|^2,
  \qquad
  c_n:=\frac{3n+2-3B_n}{4}>0.
\end{equation}
\end{lemma}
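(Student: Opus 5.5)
The plan is to treat the constant-curvature identity \eqref{eq:wI-tracefree-hook} by Casimir algebra, to prove the lower bound \eqref{eq:tracefree-hook-twist-bound} by a fibrewise argument modelled on Lemma~\ref{lem:tangent-twist-residual}, and then to combine the two by linearity.

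\emph{Step 1: the identity for $I$.} For $\cB=I$, the definition \eqref{eq:wR-definition} gives
$$
  w_I(H,H)=\sum_\alpha|\mathsf D_{E_\alpha}H|^2+\sum_\alpha\inner{\mathsf D_{E_\alpha}H}{\mathsf O_{E_\alpha}H}.
$$
Write $\mathrm{Cas}_\rho:=\sum_\alpha\rho_{E_\alpha}^*\rho_{E_\alpha}$, and similarly $\mathrm{Cas}_{\mathsf D}$ and $\mathrm{Cas}_{\mathsf O}$. Since $\rho=\mathsf D+\mathsf O$, the mixed sum equals $\tfrac12\bigl(\mathrm{Cas}_\rho-\mathrm{Cas}_{\mathsf D}-\mathrm{Cas}_{\mathsf O}\bigr)$. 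I will use the following Casimir values:
\begin{itemize}
\item $\mathrm{Cas}_\rho=4n$ on $\cU$, matching the Casimir value $-4n$ already quoted.
\item $\mathrm{Cas}_{\mathsf D}=3(n+1)$, because $\cU$ sits inside $\Sym^3_0V^*\otimes V^*$. I need to check that the first three slots of a totally trace-free hook carry a trace-free cubic.
\item $\mathrm{Cas}_{\mathsf O}=n-1$ on the vector slot.
\end{itemize}
These give
$$
  w_I=(3n+3)+\tfrac12\bigl(4n-(3n+3)-(n-1)\bigr)=3n+2,
$$
which is \eqref{eq:wI-tracefree-hook}.

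\emph{Step 2: the lower bound for $\cE$.} I split $w_{\cE}$ into the $\mathsf D\mathsf D$ part and the $\mathsf D\mathsf O$ part.

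\emph{The $\mathsf D\mathsf D$ part.} Write $H=\sum_d h_d\otimes e^d$ with $h_d\in\Sym^3V^*$. This part equals $\sum_d\inner{\cQ_{\Sym^3}(\cE)h_d}{h_d}$. By the standard spherical-integral representation of the Weitzenb\"ock term on symmetric tensors, it is an integral of Jacobi-type expressions $\mathsf J^{\cE}_x(\nabla^S p,\nabla^S p)$. It is therefore nonnegative when $\sec_{\cE}\geq0$.

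\emph{The $\mathsf D\mathsf O$ part.} I pass to the polynomial picture $F(x)=H(x,x,x,\cdot)$, exactly as in Lemma~\ref{lem:tangent-twist-residual}. Normality of a hook gives $\inner{F(x)}{x}$ equal to a trace term. The cross term then becomes $-c\int\sum_r\cE(\alpha_r,\beta_r)$ with $\alpha_r=F\wedge e_r$ and $\beta_r=x\wedge\nabla^SF_r$. At each point I choose the adapted basis $e_0=x$, $e_1=F^\perp/|F^\perp|$.
\begin{itemize}
\item The pieces along $x\wedge\cdot$ are Jacobi terms. I absorb them into the nonnegative $\mathsf D\mathsf D$ part by completing squares, using $0\preceq\mathsf J_x^{\cE}\preceq\Id$. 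These should contribute the first square root in $B_n$. The factor $(n+2)/(n-1)$ should arise from the Fischer/Pieri splitting of $F$ into radial and tangential parts, with spherical-harmonic norms as in \eqref{eq:NPY-decomposition}, followed by Cauchy--Schwarz against $\lambda_3=3(n+1)$.
\item The genuinely off-Jacobi pieces $\cE(F^\perp\wedge e_r,\beta_r)$ with $r\geq2$ pair orthogonal simple forms. So \eqref{eq:BK-simple-form} bounds each by $\tfrac23|F^\perp||\beta_r|$. A Cauchy--Schwarz over $(x,r)$, as in Lemma~\ref{lem:tangent-twist-residual}, then produces $\tfrac23\sqrt{n(n-2)/2}\,|H|^2$. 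Here the factor $1/2$ is the ratio of $\int|F^\perp|^2$ to $|H|^2$ on $\cU$.
\end{itemize}

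\emph{Step 3: combining.} By linearity, $w_{\cA}=\tfrac14w_I+\tfrac34w_{\cE}\geq\tfrac14(3n+2)|H|^2-\tfrac34B_n|H|^2=c_n|H|^2$. It remains to check $3B_n<3n+2$ for even $n\geq8$. For large $n$, $B_n\approx\tfrac n2+\tfrac{\sqrt2}{3}n<n$. At $n=8$, $B_8\approx4.47+3.27<8.67$. Since the gap $3n+2-3B_n$ is increasing in $n$, the inequality holds for all even $n\geq8$.

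\emph{Main obstacle.} The hard step is the exact bookkeeping in Step 2. I must identify the fibre integral of $w_{\cE}$ on $\cU$ with a $W$-type form, including the normalising constants $c_{n,3}$ and the radial/tangential norm ratios forced by total trace-freeness. That identification is what produces the specific constants $\tfrac{n(n-2)(n+2)}{n-1}$ and $\tfrac{n(n-2)}2$. If the completed-square absorption of the Jacobi cross terms is too lossy, the fallback is to bound the entire $\mathsf D\mathsf O$ term by Cauchy--Schwarz with $\|\cE\|$ controlled by Lemma~\ref{lem:BBK}. That should still give a bound of the same shape, though with a worse constant.
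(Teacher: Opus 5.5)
Your Step~1 is correct. It is the ``direct Casimir calculation'' the paper mentions; the paper itself reads $w_I$ off the spherical energy $\kappa_n\int|\mathsf A|^2=(3n+2)|H|^2$. The nonnegativity of the $\mathsf D\mathsf D$ (Jacobi) part is also right.

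The gap is in Step~2, in how you handle the mixed term.
\begin{itemize}
\item For $H\in\cU$ the Young relation gives $\inner{F(x)}{x}=H(x,x,x,x)=0$ identically, not a trace term. Hence $F=F^\perp$ and $N_\parallel=0$.
\item With $e_0=x$ one has $\nabla^SF_0=-F$, so $\alpha_0=\beta_0$ and the $r=0$ summand vanishes exactly.
\item For $r\ge1$ the form $\alpha_r=F\wedge e_r$ is purely tangential.
\end{itemize}
So there are no radial cross pieces to absorb by completing squares, and the Fischer splitting you invoke is trivial. Nothing in your plan can produce the first square root $\frac12\sqrt{n(n-2)(n+2)/(n-1)}$.

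What actually remains is $\sum_{r\ge2}\cE(F\wedge e_r,x\wedge z_r)$. The termwise bound \eqref{eq:BK-simple-form} followed by Cauchy--Schwarz over $(x,r)$ gives only $\frac23\sqrt{(n-2)(3n+2)}\,|H|^2$. This is because the available energy is $\kappa_n\int|\nabla^SF|^2=(3n+2)|H|^2$ with $\kappa_n\int|F^\perp|^2=|H|^2$; there is no factor $1/2$ coming from $F^\perp$. The result is
$w_{\cA}\ge\frac14\sqrt{3n+2}\,\bigl(\sqrt{3n+2}-2\sqrt{n-2}\bigr)|H|^2$.
This is positive only for $n<10$, zero at $n=10$, and negative beyond. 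That is exactly the range $n\ge10$ where Proposition~\ref{prop:stable-cubic} needs the lemma. Your fallback with a worse constant fails for the same reason: the whole content of the lemma is the sharp inequality $B_n<n+\frac23$.

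The missing idea is to regroup the mixed term before estimating it. Write $\mathsf A=\nabla^SF=\Sigma+\Omega$ (symmetric plus skew) and $\mathfrak m=\sum_a\cE(F\wedge e_a,x\wedge\mathsf A^*e_a)$. The first Bianchi identity turns this into $\sum_a s_a\cE(F,e_a,x,e_a)+\cE(F\wedge x,\omega_\Omega)$, where $s_a$ are the eigenvalues of $\Sigma$.
\begin{itemize}
\item \emph{Symmetric part.} Use the centred Jacobi bound $|\cE(F,e_a,x,e_a)|\le\frac12|F|$, with constant $\frac12$ rather than $\frac23$. Combine it with $\tr\Sigma=0$, which follows from $\operatorname{div}_{S^{n-1}}F=0$ (a second consequence of total trace-freeness). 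Then apply the trace-free $\ell^1$ bound $\sum_a|s_a|\le\sqrt{n(n-2)/(n-1)}\,|\Sigma|$, which uses that $n-1$ is odd.
\item \emph{Skew part.} This pairs one simple form with a two-form of rank at most $n-2$, giving $\frac23\sqrt{(n-2)/2}\,|F|\,|\omega_\Omega|$. This rank count, not a norm ratio, is the true origin of the $1/2$.
\item \emph{Energies.} The Hodge--Bochner identity on $S^{n-1}$ splits the energy exactly: $\kappa_n\int|\Sigma|^2=(n+2)|H|^2$ and $\kappa_n\int|\omega_\Omega|^2=n|H|^2$.
\end{itemize}
Inserting these energies gives precisely $B_n$.

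Two minor points in Step~3. First, $B_8\approx4.14+3.27$; you dropped the factor $(n-2)/(n-1)$ under the first root, though the conclusion is unaffected. Second, the claimed monotonicity of $3n+2-3B_n$ needs an argument. The paper uses $\frac12\sqrt{n(n-2)(n+2)/(n-1)}<\frac n2+\frac14$ and $\frac23\sqrt{n(n-2)/2}<\frac{\sqrt2}{3}(n-1)$ instead.
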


\begin{proof}
Use the spherical model
$$
  F(x):=H(x,x,x,\cdot)^\sharp,
  \qquad x\in S^{n-1},
$$
and choose $\kappa_n>0$ so that
$|H|^2=\kappa_n\int_{S^{n-1}}|F|^2$.  The totally trace-free $(3,1)$ conditions give
\begin{equation}\label{eq:pure-spherical-conditions}
  \inner{F(x)}x=0,
  \qquad \operatorname{div}_{S^{n-1}}F=0.
\end{equation}
At a fixed $x$, put $e_0=x$ and choose an orthonormal basis
$e_1,\ldots,e_{n-1}$ of $x^\perp$.  Let
$$
  \mathsf A(u):=\nabla^S_uF,
  \qquad \mathsf A=\Sigma+\Omega,
$$
be the symmetric/skew decomposition. Then $\tr\mathsf A=0$.  For the fixed
ambient components $F_r=\inner F{e_r}$, set
$$
  \alpha_r=F\wedge e_r,
  \qquad \beta_r=x\wedge\nabla^SF_r.
$$
Directly expanding \eqref{eq:wR-definition} gives
\begin{equation}\label{eq:spherical-w-formula}
 w_{\cE}(H,H)=\kappa_n\int_{S^{n-1}}
 \sum_{r=0}^{n-1}
 \{\cE(\beta_r,\beta_r)-\cE(\alpha_r,\beta_r)\}\,d\sigma(x).
\end{equation}
The $r=0$ term vanishes, because
$\nabla^SF_0=-F$ and hence $\alpha_0=\beta_0=F\wedge x$.  For $a\geq1$,
$$
 \beta_a=x\wedge\mathsf A^*e_a,
 \qquad \alpha_a=F\wedge e_a.
$$
The first part of every remaining summand in
\eqref{eq:spherical-w-formula} is nonnegative.  Thus it remains to bound
$$
 \mathfrak m:=\sum_{a=1}^{n-1}\cE(F\wedge e_a,x\wedge\mathsf A^*e_a).
$$
Diagonalise $\Sigma$, with eigenvalues $s_a$.  The first Bianchi identity gives
\begin{equation}\label{eq:M-decomposition}
 \mathfrak m=\sum_a s_a\cE(F,e_a,x,e_a)
   +\cE(F\wedge x,\omega_\Omega),
 \qquad
 \omega_\Omega:=\sum_{a<b}\Omega_{ab}e_a\wedge e_b.
\end{equation}
For each unit $e$, the Jacobi form
$\mathsf J_e^{\cE}(u,v):=\cE(u,e,v,e)$ satisfies
$0\preceq\mathsf J_e^{\cE}\preceq\Id$.  Since the
relevant vectors are orthogonal, centering at $\frac12\Id$ gives
$$
  |\cE(F,e_a,x,e_a)|\leq\frac12|F|.
$$
If $m=n-1$ and $\sum_as_a=0$, then, because $m$ is odd,
\begin{equation}\label{eq:tracefree-l1}
  \sum_a|s_a|\leq\chi_m|\Sigma|,
  \qquad \chi_m^2=\frac{n(n-2)}{n-1}.
\end{equation}
Indeed, if $p$ and $q$ are the numbers of positive and negative eigenvalues,
respectively, trace-freeness and Cauchy--Schwarz on the two sign classes give
$$
 \left(\sum_a|s_a|\right)^2
 \leq\frac{4pq}{p+q}|\Sigma|^2.
$$
Zero eigenvalues may be omitted.  Put $r=p+q\leq m=n-1$.  If $r=m$,
oddness gives $4pq\leq m^2-1$; if $r\leq m-1$, then
$4pq/r\leq r\leq m-1$.  In either case
$4pq/(p+q)\leq(m^2-1)/m=n(n-2)/(n-1)$, proving
\eqref{eq:tracefree-l1}.
Put $\omega_\Omega$ in canonical form.  Its rank is at most $n-2$, and
each of its simple summands is orthogonal to the simple form $F\wedge x$.
Applying Lemma~\ref{lem:BBK} to the orthogonal simple summands in a
canonical form of $\omega_\Omega$ gives
\begin{equation*}
 |\cE(F\wedge x,\omega_\Omega)|
 \leq\frac23\sqrt{\frac{n-2}{2}}\,|F|\,|\omega_\Omega|.
\end{equation*}
It remains to insert the exact spherical energies.  Each fixed ambient
component of $F$ is a degree-three spherical harmonic, whose eigenvalue is
$\lambda_3=3(n+1)$.  Since the ambient derivative of the tangent field is
$$
 \nabla^V_uF=\nabla^S_uF-\inner{F}{u}x,
$$
we obtain
$$
 \kappa_n\int|\mathsf A|^2
 =\{3(n+1)-1\}|H|^2=(3n+2)|H|^2.
$$
Let $\eta=F^\flat$. By \eqref{eq:pure-spherical-conditions},
$d^*\eta=0$, while $d\eta=2\omega_\Omega$ up to the immaterial sign fixed
by the matrix convention.  The Hodge--Bochner identity on $S^{n-1}$,
whose Ricci tensor is $(n-2)g$, therefore gives
$$
 4\kappa_n\int|\omega_\Omega|^2
 =\kappa_n\int|d\eta|^2
 =\kappa_n\int\bigl(|\mathsf A|^2+(n-2)|F|^2\bigr)
 =4n|H|^2.
$$
Finally, our norm conventions are
$$
 |\Omega|_{\End}^2=2|\omega_\Omega|^2,
 \qquad |\mathsf A|^2=|\Sigma|^2+2|\omega_\Omega|^2.
$$
Thus
\begin{equation}\label{eq:tracefree-hook-energies}
\begin{split}
 \kappa_n\int|\Sigma|^2&=(n+2)|H|^2,\\
 \kappa_n\int|\omega_\Omega|^2&=n|H|^2,\\
 \kappa_n\int|\mathsf A|^2&=(3n+2)|H|^2.
\end{split}
\end{equation}
Combining \eqref{eq:M-decomposition}--\eqref{eq:tracefree-hook-energies} and
Cauchy--Schwarz yields exactly
$$
 \kappa_n\int|\mathfrak m|
 \leq\left\{
 \frac12\sqrt{\frac{n(n-2)(n+2)}{n-1}}
 +\frac23\sqrt{\frac{n(n-2)}2}\right\}|H|^2.
$$
This proves \eqref{eq:tracefree-hook-twist-bound}.  The last identity in
\eqref{eq:tracefree-hook-energies}, or a direct Casimir calculation, gives
\eqref{eq:wI-tracefree-hook}.

Finally,
$$
 \frac12\sqrt{\frac{n(n-2)(n+2)}{n-1}}<\frac n2+\frac14,
 \qquad
 \frac23\sqrt{\frac{n(n-2)}2}<\frac{\sqrt2}{3}(n-1),
$$
and hence
$$
 n+\frac23-B_n
 >n\left(\frac12-\frac{\sqrt2}{3}\right)
   +\frac5{12}+\frac{\sqrt2}{3}>0.
$$
This proves \eqref{eq:tracefree-hook-twist-positive}.
\end{proof}

\subsection{Dimensions 4, 6, and the stable range}\label{sec:dimension-estimates}

The stable-range argument is contained in Proposition \ref{prop:stable-cubic}. We now
prove the two low-dimensional estimates needed for dimensions $4$ and $6$.

\subsubsection{The 4-dimensional curvature estimate}\label{sec:curvature}

Let $V$ be an oriented Euclidean $4$-space. Denote by
$$
  *:\Lambda^2V\longrightarrow\Lambda^2V
$$
the Hodge star. We prove the pointwise inequality needed to contradict
\eqref{eq:integral-negative-Q}.

\paragraph{A quantitative Finsler--Thorpe lemma.}

\begin{lemma}[Finsler--Thorpe with a parameter bound]\label{lem:FT}
Let $\cA:\Lambda^2V\to\Lambda^2V$ be an algebraic curvature operator
satisfying
$$
  \delta\leq\sec_{\cA}\leq1
$$
for some $\delta>1/4$. Then there exists $t\in(-1/2,1/2)$ such that
\begin{equation}\label{eq:FTpositive}
  \cP:=\cA-\frac14I+t*\succeq0.
\end{equation}
\end{lemma}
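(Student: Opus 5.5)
The plan is to derive the statement from the classical Finsler--Thorpe mechanism, applied to the shifted operator
$$
  \cA':=\cA-\tfrac14I,
  \qquad
  \sec_{\cA'}\geq\delta-\tfrac14=:\varepsilon>0,
$$
and then to use the upper bound $\sec_{\cA}\leq1$ to locate the Thorpe parameter. For $\xi\in\Lambda^2V$ write $Q(\xi):=\inner{\cA'\xi}{\xi}$ and $q(\xi):=\inner{*\xi}{\xi}$. In dimension four, $\xi$ is decomposable exactly when $q(\xi)=0$, and $q$ is indefinite of signature $(3,3)$. The lower pinching says $Q\geq\varepsilon|\xi|^2$ on the null cone of $q$.

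Finsler's lemma (strict form), or Thorpe's trick done by hand via the minimax characterization below, then shows that the set
$$
  \mathcal T:=\{t\in\bbR:\ \cA'+t*\succeq0\}
$$
is a closed interval $[t_-,t_+]$ with $t_-<t_+$. It contains an open interval of parameters for which $\cA'+t*\succ0$. Explicitly,
$$
  t_-=\sup_{q(\xi)>0}\frac{-Q(\xi)}{q(\xi)},
  \qquad
  t_+=\inf_{q(\xi)<0}\frac{Q(\xi)}{-q(\xi)}.
$$
It therefore suffices to prove $t_-<\tfrac12$ and $t_+>-\tfrac12$. By the symmetry $*\mapsto-*$ (reversing orientation exchanges $\Lambda^+$ and $\Lambda^-$), only $t_-<\tfrac12$ needs proof. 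This is equivalent to
$$
  Q(\xi)+\tfrac12q(\xi)>0
  \qquad\text{whenever } q(\xi)>0,
$$
with a margin that is uniform after normalization.

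To prove this inequality, decompose $\xi=\alpha x+\beta y$ with $x\in\Lambda^+$ and $y\in\Lambda^-$ unit vectors and $\alpha>\beta\geq0$. The key inputs are the two decomposable forms $\eta_\pm=(x\pm y)/\sqrt2$. For them, pinching gives
$$
  2\delta\leq\cA(x,x)+\cA(y,y)\leq2,
  \qquad
  |\cA(x,y)|\leq1-\delta .
$$
The target quantity is
$$
  \alpha^2\bigl(\cA(x,x)+\tfrac14\bigr)
  +\beta^2\bigl(\cA(y,y)-\tfrac34\bigr)
  +2\alpha\beta\,\cA(x,y).
$$
Using the upper bound $\cA(x,x)+\cA(y,y)\leq2$ in the favourable direction, the lower bound $\cA(x,x)+\cA(y,y)\geq2\delta$, and $\alpha>\beta$, I will try to show that this $2\times2$ quadratic form in $(\alpha,\beta)$ is positive on the region $\alpha>\beta$. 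The only unknown is the split of $\cA(x,x)+\cA(y,y)$ between its two summands. To control it, I will also use the decomposable forms $(x\pm y')/\sqrt2$ built from the eigenvectors of $\cA$ restricted to $\Lambda^\pm$. These bound $\cA(y,y)$ from below by the smallest eigenvalue of $\cA$ on $\Lambda^-$, and that eigenvalue is at least $\delta$ minus a mixed term, by averaging over the three orthonormal decomposable pairs. The strict inequality $\delta>1/4$ should supply exactly the margin needed at the endpoint $\alpha=\beta$, where the condition reduces to the pinching statement on $\eta_+$.

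The main obstacle is this last step. Pinching controls $\cA$ only on decomposable forms, whereas $\xi$ with $q(\xi)>0$ is never decomposable. The bound must therefore be transported off the null cone by comparison with the nearby null vectors $\eta_\pm$. I will first try the direct $2\times2$ reduction above, since $\cA'+t*$ acts diagonally on the $\Lambda^\pm$ splitting up to the cross block $\cA|_{\Lambda^+\to\Lambda^-}$. If the individual diagonal entries prove too weak, I will instead argue by contradiction at $t=\tfrac12$. Assuming $t_-\geq\tfrac12$, there is a $\xi$ with $q(\xi)>0$ and $Q(\xi)+\tfrac12q(\xi)\leq0$; moving $\xi$ within the pencil spanned by $x$ and $y$ to a null vector should then produce a decomposable form with $\sec_{\cA}>1$ or $\sec_{\cA}<\delta$, which is impossible.
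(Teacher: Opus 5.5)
\textbf{Verdict: there is a genuine gap at the decisive step.} Your set-up is sound: the Finsler interval $[t_-,t_+]$, the formula for $t_-$, the reduction to $t_-<\tfrac12$ by orientation reversal, the compactness margin, and the expression for $Q(\xi)+\tfrac12q(\xi)$ with $\xi=\alpha x+\beta y$ are all correct. The gap is the positivity step itself. You leave it as a plan, and the tools you propose cannot carry it out.

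The reason is that every tool you list probes $\cA$ only on decomposable forms, and none of them uses the first Bianchi identity. Without Bianchi the lemma is false. Adding a multiple of $*$ changes no sectional curvature, because $\inner{*\eta}{\eta}=0$ for decomposable $\eta$, but it translates the Thorpe interval. For example, $\cB=I-2*$ has the following properties:
\begin{itemize}
\item $\sec_{\cB}\equiv1$, $\cB(x,x)+\cB(y,y)=2$ and $\cB(x,y)=0$, so it satisfies every constraint you list;
\item $\cB(x,x)+\tfrac14=-\tfrac34$ for unit $x\in\Lambda^+$, and $\{t:\cB-\tfrac14I+t*\succeq0\}=[\tfrac54,\tfrac{11}4]$;
\item no plane has $\sec_{\cB}\notin[\delta,1]$, so your fallback contradiction argument has nothing to find.
\end{itemize}
The ``split'' you correctly single out is $\cA(x,x)-\cA(y,y)=2\cA(\eta_+,\eta_-)$, which is exactly the quantity such a shift moves. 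Eigenvectors of $\cA|_{\Lambda^\pm}$, further test forms $(x\pm y')/\sqrt2$, and pencil arguments cannot control it.

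Even Bianchi-based bounds on the individual diagonal entries are not enough. Berger's estimate gives $\tfrac{5\delta-2}{3}\le\cA(x,x),\cA(y,y)\le\tfrac{5-2\delta}{3}$. Take $\delta=\tfrac14+\varepsilon$ and the values $\cA(x,x)=-\tfrac14+\tfrac53\varepsilon$, $\cA(y,y)=\tfrac32-\tfrac23\varepsilon$, $\cA(x,y)=-\tfrac38+\tfrac12\varepsilon$. These satisfy all of those bounds and both $\eta_\pm$ constraints (they give $\sec(\eta_+)=\delta$ and $\sec(\eta_-)=1$). Yet your quadratic form equals $\alpha^2(-\tfrac3{16}+2\varepsilon)$ at $\beta=\alpha/2$.

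The missing idea is to apply Lemma~\ref{lem:BBK} to the orthogonal simple forms $\eta_+\perp\eta_-$ themselves, which gives $|\cA(\eta_+,\eta_-)|\le\tfrac23(1-\delta)<\tfrac12$. Write $\xi=A\eta_++B\eta_-$ with $A=(\alpha+\beta)/\sqrt2\ge B=(\alpha-\beta)/\sqrt2>0$. Then $q(\xi)=2AB$, $|\xi|^2=A^2+B^2$, and
\[
 Q(\xi)+\tfrac12q(\xi)=\bigl(\sec_{\cA}(\eta_+)-\tfrac14\bigr)A^2+\bigl(\sec_{\cA}(\eta_-)-\tfrac14\bigr)B^2+\bigl(1+2\cA(\eta_+,\eta_-)\bigr)AB\geq\varepsilon|\xi|^2 .
\]
This closes your argument with a uniform margin, giving $t_-\le\tfrac12-\varepsilon$.

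This completed argument is essentially the paper's proof in other coordinates. The paper writes the kernel vector of $\cS+t_-*$ as $ae_{12}+be_{34}$ with $a,b>0$; your $\eta_\pm$ are precisely $e_{12},e_{34}$. Its two projected equations give $t_-=-\cS_{1234}-\sqrt{\cS_{1212}\cS_{3434}}\le|\cS_{1234}|$. The same Berger--Bourguignon--Karcher bound then gives $|\cS_{1234}|\le\tfrac23(1-\delta)<\tfrac12$.
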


\begin{proof}
Set $\cS:=\cA-\frac14I$. Then
$$
  \delta-\frac14\leq\sec_{\cS}\leq\frac34,
$$
so $\cS$ has positive sectional curvature. The Finsler--Thorpe lemma states that
$$
  \mathcal I(\cS):=\{t\in\bbR:\cS+t*\succeq0\}
$$
is a nonempty closed interval; see, for example,
\cite[Section~2]{BettiolGoodman24}. Write $\mathcal I(\cS)=[t_-,t_+]$.

At the lower endpoint there is a nonzero
$\alpha\in\ker(\cS+t_-*)$ with $\inner{*\alpha}{\alpha}>0$. To justify the sign,
note that increasing $t$ moves into the positive-semidefinite region. If the
star quadratic form vanished on a kernel vector, that vector would be a
simple two-form in dimension $4$; then
$\inner{\cS\alpha}{\alpha}=0$, contradicting the strict positivity of sectional
curvature. After an oriented orthogonal change of basis,
$$
  \alpha=a e_{12}+b e_{34},
  \qquad a,b>0.
$$
Put
$$
  x=\cS_{1212},\qquad y=\cS_{3434},\qquad z=\cS_{1234}.
$$
The equations obtained by projecting $(\cS+t_-*)\alpha=0$ onto
$e_{12}$ and $e_{34}$ give
$$
  xa+(z+t_-)b=0,
  \qquad
  (z+t_-)a+yb=0.
$$
Hence $z+t_-=-\sqrt{xy}$ and therefore
$$
  t_-=-z-\sqrt{xy}\leq|z|.
$$
At the upper endpoint the analogous kernel vector has the form
$a e_{12}-b e_{34}$, and one obtains
$$
  t_+=-z+\sqrt{xy}\geq-|z|.
$$

Lemma~\ref{lem:BBK}, applied to the orthogonal simple forms
$e_{12}$ and $e_{34}$ and to the sectional spread of $\cS$, gives
$$
  |z|
  \leq\frac23\bigl(\max\sec_{\cS}-\min\sec_{\cS}\bigr)
  =\frac23(1-\delta)<\frac12.
$$
Here the extrema are taken over unit simple two-forms.
Thus $t_-<1/2$ and $t_+>-1/2$. Since $t_-\leq t_+$, the interval
$\mathcal I(\cS)$ meets $(-1/2,1/2)$. Any $t$ in the intersection satisfies
\eqref{eq:FTpositive}.
\end{proof}

\paragraph{The \texorpdfstring{$\Spin(4)$}{Spin(4)} decomposition.}

Use the double cover
$$
  \Spin(4)=\SU(2)_+\times\SU(2)_-.
$$
Let
$$
  V_{p,q}:=\Sym^p\bbC^2\boxtimes\Sym^q\bbC^2.
$$
The complexified standard representation is $V_{1,1}$. Since
$$
  \Sym^3V_{1,1}=V_{3,3}\oplus V_{1,1}
$$
and
$$
  \Sym^4V_{1,1}=V_{4,4}\oplus V_{2,2}\oplus V_{0,0},
$$
the identity
$$
  \Sym^3V_\bbC^*\otimes V_\bbC^*
  =\Sym^4V_\bbC^*\oplus\Hook V_\bbC^*
$$
gives
\begin{equation}\label{eq:spin-decomposition}
  E_\bbC
  \simeq
  V_{4,2}\oplus V_{2,4}\oplus V_{2,2}
  \oplus V_{2,0}\oplus V_{0,2}.
\end{equation}
The last two summands are the complexifications of
$\Lambda^2_+V$ and $\Lambda^2_-V$.

Let $C_+$ and $C_-$ be the positive Casimir endomorphisms of the two
$\mathfrak{su}(2)$ factors. Up to the same positive scalar normalisation,
\begin{equation*}
  \cQ_E(I)=C_++C_-,
  \qquad
  \cQ_E(*)=C_+-C_-.
\end{equation*}
On $V_{p,q}$, denote the scalar ratio by
\begin{equation}\label{eq:casimir-ratio}
  \eta_{p,q}:=\frac{\cQ_E(*)}{\cQ_E(I)}
  =\frac{p(p+2)-q(q+2)}{p(p+2)+q(q+2)}.
\end{equation}
In particular,
\begin{equation*}
  \eta_{4,2}=\frac12,
  \qquad
  \eta_{2,4}=-\frac12,
  \qquad
  \eta_{2,2}=0,
  \qquad
  \eta_{2,0}=1,
  \qquad
  \eta_{0,2}=-1.
\end{equation*}

\paragraph{Positivity on the first three summands.}

Let $t$ and $\cP$ be supplied by Lemma \ref{lem:FT}. Since
$$
  \cA=\cP+\frac14I-t*,
$$
linearity gives
\begin{equation}\label{eq:Qsplit}
  \cQ_E(\cA)
  =\cQ_E(\cP)+\frac14\cQ_E(I)-t\cQ_E(*).
\end{equation}
The first term is nonnegative by \eqref{eq:Qpositive}. On $V_{4,2}$,
\eqref{eq:casimir-ratio} gives
$$
  \cQ_E(\cA)
  \succeq
  \left(\frac14-\frac t2\right)\cQ_E(I)
  \succ0,
$$
because $t<1/2$. Similarly,
$$
  \cQ_E(\cA)|_{V_{2,4}}
  \succeq
  \left(\frac14+\frac t2\right)\cQ_E(I)
  \succ0,
$$
and
$$
  \cQ_E(\cA)|_{V_{2,2}}
  \succeq\frac14\cQ_E(I)
  \succ0.
$$

\paragraph{The two-form summands.}

It remains to treat $V_{2,0}\oplus V_{0,2}$, which is the ordinary two-form
representation. Every real two-form can be written in an oriented orthonormal
basis as
$$
  \omega=a e_{12}+b e_{34}.
$$
A direct calculation of the curvature term gives
\begin{equation}\label{eq:two-form-Q}
  \inner{\cQ_{\Lambda^2}(\cA)\omega}{\omega}
  =\Xi_{\cA}(a^2+b^2)-4ab\,\cA_{1234},
\end{equation}
where
$$
  \Xi_{\cA}:=\cA_{1313}+\cA_{1414}+\cA_{2323}+\cA_{2424}.
$$
Indeed, this is the usual Bochner curvature term on two-forms; for constant
curvature one it reduces to $4|\omega|^2$, as it should in dimension $4$.

By the sectional lower bound, $\Xi_{\cA}\geq4\delta$.  Lemma~\ref{lem:BBK}
gives
$$
  |\cA_{1234}|\leq\frac23(1-\delta).
$$
Using $4|ab|\leq2(a^2+b^2)$ in \eqref{eq:two-form-Q}, we obtain
\begin{align}
  \inner{\cQ_{\Lambda^2}(\cA)\omega}{\omega}
  &\geq
  \left(4\delta-2|\cA_{1234}|\right)|\omega|^2\notag\\
  &\geq
  \frac{4(4\delta-1)}3|\omega|^2>0.
  \label{eq:two-form-positive}
\end{align}

Combining the preceding subsections gives the central algebraic statement.

\begin{proposition}
\label{prop:curvature-positive}
Let $V$ be an oriented Euclidean $4$-space and let $\cA$ be an algebraic
curvature operator satisfying
$$
  \frac14<\sec_{\cA}\leq1.
$$
Then
$$
  \cQ_{\Hook V^*}(\cA)\succ0.
$$
\end{proposition}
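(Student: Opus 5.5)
The plan is to assemble the pieces just established. Fix $\cA$ with $\frac14<\sec_{\cA}\leq1$. By compactness of the Grassmannian of two-planes, there is $\delta>1/4$ with $\delta\leq\sec_{\cA}\leq1$, so Lemma~\ref{lem:FT} applies. It supplies $t\in(-1/2,1/2)$ such that $\cP=\cA-\frac14I+t*\succeq0$, and hence the splitting \eqref{eq:Qsplit} holds. Since $\cQ_E(\cdot)$ is built from the $\SO(4)$-equivariant infinitesimal action, $\cQ_E(I)$ and $\cQ_E(*)$ commute with the action of $\Spin(4)$. They therefore act by scalars on each summand of the multiplicity-free decomposition \eqref{eq:spin-decomposition}. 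This is what makes the ratio \eqref{eq:casimir-ratio} meaningful, and it lets us treat $\cQ_E(\cA)$ summand by summand on the part coming from $\frac14I-t*$.

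The first step handles $V_{4,2}$, $V_{2,4}$ and $V_{2,2}$. On these summands \eqref{eq:Qsplit}, together with $\cQ_E(\cP)\succeq0$ from \eqref{eq:Qpositive}, gives $\cQ_E(\cA)\succeq(\frac14-t\eta_{p,q})\cQ_E(I)$. This coefficient is positive because $|\eta_{p,q}|\leq\frac12$ and $|t|<\frac12$. Also $\cQ_E(I)$ is a positive Casimir scalar on any nontrivial irreducible, so the bound is strictly positive there.

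There is one subtlety: $\cQ_E(\cP)$ need not preserve the decomposition, since $\cP$ is not invariant. So I will not restrict a quadratic form to summands and add; instead I will estimate directly on the whole space $E$. Write $\cQ_E(\cA)=\cQ_E(\cP)+D$ with $D=\frac14\cQ_E(I)-t\cQ_E(*)$ block-diagonal. On the complement $V_{2,0}\oplus V_{0,2}=\Lambda^2V$ the coefficient $\frac14-t\eta$ becomes $\frac14\mp t$, which is still positive for $|t|<\frac12$. Hence $D\succ0$ on all of $E$, and $\cQ_E(\cA)\succeq D\succ0$. This gives the proposition outright. As a consistency check, and as a fallback if the equivariance bookkeeping is off, the two-form summand is also covered independently by \eqref{eq:two-form-positive}.

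To be safe, I would present the two-form part via the explicit bound \eqref{eq:two-form-positive} and the rest via $D$. That route would require splitting $\cQ_E(\cA)$ into the two-form part and the rest, and the cross terms from $\cQ_E(\cP)$ are exactly what forbids naive splitting. For that reason the cleanest argument is the global one: $\cQ_E(\cP)\succeq0$ on all of $E$, and $D$ is positive definite with smallest eigenvalue $\min\{(\frac14-\frac{|t|}2),(\frac14-|t|)\}$ times the relevant Casimirs.

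The main obstacle I expect is verifying the normalization claim $\cQ_E(*)=C_+-C_-$ with the same constant as $\cQ_E(I)=C_++C_-$. I would do this by choosing an orthonormal basis of $\Lambda^2$ adapted to $\Lambda^2_\pm$, on which $*=\pm1$, and noting that $\Lambda^2_\pm\simeq\mathfrak{su}(2)_\pm$ act only on the respective factor. The sign conventions in $\cQ_E$, namely $\rho^*\rho=-\rho^2$, then give positive Casimirs.
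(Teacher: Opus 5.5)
Your final argument fails at the claim that $D=\frac14\cQ_E(I)-t\cQ_E(*)$ is positive on $\Lambda^2V=V_{2,0}\oplus V_{0,2}$. There $\eta_{2,0}=1$ and $\eta_{0,2}=-1$, so $D$ acts by $(\frac14\mp t)$ times a positive Casimir scalar. This is positive only when $|t|<\frac14$, not $|t|<\frac12$ as you assert; your own smallest eigenvalue $\frac14-|t|$ already shows the problem. Lemma~\ref{lem:FT} only gives $|t|<\frac12$: its proof controls $|z|\leq\frac23(1-\delta)$, which is below $\frac14$ only when $\delta>\frac58$.

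No better choice of $t$ rescues this. Work in $\Lambda^2V=\Lambda^2_+V\oplus\Lambda^2_-V$, with the first basis vector of $\Lambda^2_+V$ along the K\"ahler form. Let $\cA_{\bbC}=\operatorname{diag}(\frac32,0,0)\oplus\frac12\Id$ be the normalized curvature operator of the complex hyperbolic plane, and put $\cA=(1-\epsilon)\cA_{\bbC}+\epsilon I+\Gamma$ with $\Gamma=\operatorname{diag}(0,-\theta,\theta)\oplus0$ and $\epsilon<\theta<\frac32\epsilon$. Both blocks have equal trace, so $\cA$ is an algebraic curvature operator. It satisfies $\frac14+\frac{3\epsilon-2\theta}{4}\leq\sec_{\cA}\leq1$, hence is strictly quarter-pinched. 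Yet $\{t:\cA-\frac14I+t*\succeq0\}=[\frac14+\theta-\epsilon,\frac14+\frac\epsilon2]\subset(\frac14,\frac12)$. So $D$ is negative definite on $\Lambda^2_+V$ for every admissible $t$, and $\cQ_E(\cA)\succeq D$ cannot give positivity on the two-form part.

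The cross-term worry that led you to abandon the summand-by-summand route is unfounded. For any self-adjoint $\cB$, invariant or not, $\cQ_E(\cB)=\sum_{\alpha,\beta}\cB_{\alpha\beta}\rho(E_\alpha)^*\rho(E_\beta)$ is built from infinitesimal actions. Each of these preserves every $\SO(4)$-invariant subspace of $E$. Hence $\cQ_E(\cP)$ and $\cQ_E(\cA)$ are block diagonal with respect to \eqref{eq:spin-decomposition}, even though they are not scalars on the blocks. Positivity can therefore be checked block by block, and that is exactly the paper's proof. On $V_{4,2}$, $V_{2,4}$, $V_{2,2}$ one has $|\eta_{p,q}|\leq\frac12$, so $|t|<\frac12$ is precisely what is needed; your treatment of these three summands is correct. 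On $\Lambda^2V$ the paper drops the Finsler--Thorpe splitting and uses the direct estimate \eqref{eq:two-form-positive}, namely $\inner{\cQ_{\Lambda^2}(\cA)\omega}{\omega}\geq\frac43(4\delta-1)|\omega|^2$. So the route you called a fallback is the correct proof, and the global one you commit to does not work.
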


\begin{proof}
The decomposition \eqref{eq:spin-decomposition} is orthogonal and invariant.
Positivity on $V_{4,2}$, $V_{2,4}$ and $V_{2,2}$ follows from
\eqref{eq:Qsplit} and Lemma \ref{lem:FT}. Positivity on $V_{2,0}$ and $V_{0,2}$
follows from \eqref{eq:two-form-positive}. Therefore the operator is positive
on every irreducible summand.
\end{proof}

\subsubsection{The 6-dimensional cubic estimate}\label{sec:six-dimensional}

In dimension $6$ the stable orthogonal Pieri calculation of
Section \ref{sec:tracefree-hook-weitzenbock} has low-rank coincidences.  It is simpler to retain
the full cubic space satisfying the normality constraint in the estimate of
Section \ref{sec:reduction}.

\begin{lemma}\label{lem:six-dimensional-cubic}
Let $n=6$, $k=3$, and let
$K\in\Sym^3_0V^*\otimes V$ satisfy
\eqref{eq:highest-normal-condition}. If
$\frac14\leq\sec_{\cA}\leq1$, then
\begin{equation}\label{eq:six-dimensional-gap}
  W_{\cA}(K)\geq\frac{27}{176}|K|^2.
\end{equation}
Consequently, a nonzero top cubic Fourier coefficient satisfying the
normality constraint and the twisted conformal Killing equation cannot occur on a closed $\delta$-pinched $6$-manifold with
$\delta>1/4$.
\end{lemma}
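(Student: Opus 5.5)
The plan is to reuse, verbatim, the fibrewise machinery from the proof of Lemma~\ref{lem:tangent-twist-high-degree}, specialised to $n=6$, $k=3$. The hypothesis $k\geq5$ in that lemma enters only at the final step, where the positivity of $\Pi_{n,k}$ is checked. The earlier steps carry over unchanged for $k=3$:
\begin{itemize}
\item the Fischer decomposition of $\inner{F(x)}x$, using \eqref{eq:highest-normal-condition};
\item the splitting $F=H+F_q$ of \eqref{eq:radial-contraction-decomposition} and the energy identities \eqref{eq:NPY-decomposition};
\item the constant-curvature formula \eqref{eq:WI-u};
\item the residual bound of Lemma~\ref{lem:tangent-twist-residual} after writing $\cA=\frac14I+\frac34\cE$.
\end{itemize}
Together these give the lower bound \eqref{eq:G-lower-bound},
$$
  \frac{W_{\cA}(K)}N\geq G_{6,3}(u),\qquad 0\leq u\leq\frac{n+k-3}{n+2k-4}=\frac34 .
$$
By the choice of $c_{n,k}$, $N=|K|^2$.

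Next, evaluate \eqref{eq:Gnk} explicitly. Here $\lambda=21$, $L=7n+8k-22=44$ and $(n-2)\lambda=84$, so
$$
  G_{6,3}(u)=\frac{77}{16}-\frac{44}{16}u-\frac12\sqrt{84(1-u)} .
$$
Substituting $z=\sqrt{1-u}$ turns this into the quadratic
$$
  \frac{33}{16}+\frac{11}{4}z^2-\sqrt{21}\,z .
$$
Its minimum over all real $z$ is attained at $z=2\sqrt{21}/11$, with value
$$
  \frac{33}{16}-\frac{21}{11}=\frac{27}{176}.
$$
Equivalently, this is the completed square \eqref{eq:G-completed-square}, with $\Pi_{6,3}/(16L)=27/176$. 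Since the unconstrained minimum is already $27/176$, the restriction $u\in[0,3/4]$ only helps, and we obtain \eqref{eq:six-dimensional-gap}. One can also check that the minimiser $u=37/121$ lies in the admissible range, so the bound obtained by this route cannot be improved.

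For the consequence, take $\delta>1/4$. The normalised operator $\cA$ then satisfies $\frac14\leq\sec_{\cA}\leq1$. If $K$ is a nonzero top cubic coefficient, \eqref{eq:six-dimensional-gap} holds pointwise, so $\int_M W_{\cA}(K)\,d\vol_g>0$. This contradicts \eqref{eq:tangent-twist-pestov-normalized}.

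The main obstacle is bookkeeping rather than an idea. One must confirm that no step of the high-degree proof used $k\geq5$ before the positivity of $\Pi_{n,k}$. Two points need checking:
\begin{itemize}
\item the Fischer decomposition and the formula for $\widetilde F_q$, which require $q\in\Harm_{k-1}$ with $k-1=2$;
\item the orthogonality of the two $O(n)$-types in \eqref{eq:radial-contraction-decomposition}, which should be valid for $n=6$ with no low-dimensional coincidence.
\end{itemize}
Once this is confirmed, the estimate is exactly the arithmetic above.
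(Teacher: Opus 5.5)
Your proposal is correct and follows the paper's proof essentially verbatim. The paper likewise notes that the derivation of \eqref{eq:Gnk} uses $k\geq5$ only in the positivity of $\Pi_{n,k}$, specializes to $G_{6,3}(u)=\frac{77}{16}-\frac{11}{4}u-\sqrt{21(1-u)}$, completes the square in $z=\sqrt{1-u}$ to obtain $27/176$, and concludes via \eqref{eq:tangent-twist-pestov-normalized}. Your remark is also accurate: the fact that the minimiser $u=37/121$ lies in $[0,3/4]$ matters only for sharpness, not for the lower bound itself.
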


\begin{proof}
The derivation of \eqref{eq:Gnk} is valid for $k=3$; only the positivity
argument using $\Pi_{n,k}$ required $k\geq5$.  For $(n,k)=(6,3)$,
\eqref{eq:NPY-decomposition} gives
$0\leq u=N_{\parallel}/N\leq3/4$, and
$$
 G_{6,3}(u)=\frac{77}{16}-\frac{11}{4}u-\sqrt{21(1-u)}.
$$
Writing $z=\sqrt{1-u}$ gives the exact square
$$
 G_{6,3}(u)=\frac{27}{176}
 +\frac{11}{4}\left(z-\frac{2\sqrt{21}}{11}\right)^2.
$$
The minimiser $u=37/121$ lies in $[0,3/4]$, proving
\eqref{eq:six-dimensional-gap}. The final assertion follows from
\eqref{eq:tangent-twist-pestov-normalized}.
\end{proof}

\subsubsection{The stable cubic vanishing statement}

\begin{lemma}\label{lem:two-form-general}
Let $n$ be even and $0\leq\sec_{\cE}\leq1$.  On $\Lambda^2V^*$,
\begin{equation}\label{eq:two-form-general}
  \cQ_{\Lambda^2}(\cE)
  \succeq-\frac{2(n-2)}3\Id.
\end{equation}
Consequently, if $\delta>1/4$ and
$\cA=\delta I+(1-\delta)\cE$, then
\begin{equation}\label{eq:strict-two-form-general}
 \cQ_{\Lambda^2}(\cA)
 \succeq\frac{2(n-2)}3(4\delta-1)\Id\succ0.
\end{equation}
\end{lemma}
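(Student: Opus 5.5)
The plan is to evaluate the quadratic form $\inner{\cQ_{\Lambda^2}(\cE)\omega}{\omega}$ in a normal form adapted to $\omega$, exactly as in the four-dimensional computation \eqref{eq:two-form-Q}. Set $m=n/2$. Every real two-form can be put in canonical form
$$
\omega=\sum_{i=1}^{m}a_i\,e_{2i-1,2i}
$$
in a suitable orthonormal basis, with $|\omega|^2=\sum_i a_i^2$; this is where evenness of $n$ makes the blocks fill $V$. Expanding $\cQ_{\Lambda^2}(\cE)=\sum_{\alpha,\beta}\cE_{\alpha\beta}\rho(E_\alpha)^*\rho(E_\beta)$ in the basis $e_{ij}$, only terms that move one block index into another block survive. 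I expect the form to decompose as a sum over pairs of blocks, each pair reproducing the four-dimensional formula \eqref{eq:two-form-Q}:
$$
\inner{\cQ_{\Lambda^2}(\cE)\omega}{\omega}
=\sum_{i<j}\Bigl[\Xi^{ij}_{\cE}(a_i^2+a_j^2)-4a_ia_j\,\cE(e_{2i-1,2i},e_{2j-1,2j})\Bigr].
$$
Here $\Xi^{ij}_{\cE}$ is the sum of the four sectional curvatures $\sec_\cE(e_p,e_q)$ with $p\in\{2i-1,2i\}$ and $q\in\{2j-1,2j\}$. I would verify this in two ways:
\begin{itemize}
\item its specialization to $n=4$ is \eqref{eq:two-form-Q};
\item for $\cE=I$ it gives $\cQ_{\Lambda^2}(I)=4(m-1)\Id=2(n-2)\Id$, the standard Bochner constant on two-forms.
\end{itemize}
Establishing this expansion carefully, in particular checking that no further mixed terms survive after using the first Bianchi identity and the block structure, is the main computational obstacle.

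Given the expansion, the diagonal part $\Xi^{ij}_{\cE}(a_i^2+a_j^2)$ is nonnegative because $\sec_{\cE}\geq0$, so it can simply be dropped. For the cross terms, the simple forms $e_{2i-1,2i}$ and $e_{2j-1,2j}$ are orthogonal unit forms. Hence \eqref{eq:BK-simple-form} with $a=0$, $b=1$ gives
$$
|\cE(e_{2i-1,2i},e_{2j-1,2j})|\leq\tfrac23 .
$$
Using $\sum_{i<j}|a_ia_j|\leq\frac{m-1}2\sum_ia_i^2$, the cross terms are bounded below by
$$
-\tfrac83\cdot\tfrac{m-1}{2}|\omega|^2=-\tfrac{2(n-2)}3|\omega|^2,
$$
which is \eqref{eq:two-form-general}.

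For the consequence, write $\cA=\delta I+(1-\delta)\cE$ with $0\leq\sec_{\cE}\leq1$. Linearity of $\cQ_{\Lambda^2}$ and $\cQ_{\Lambda^2}(I)=2(n-2)\Id$ give
$$
\cQ_{\Lambda^2}(\cA)\succeq\Bigl[2(n-2)\delta-(1-\delta)\tfrac{2(n-2)}3\Bigr]\Id=\tfrac{2(n-2)}3(4\delta-1)\Id .
$$
This is strictly positive for $\delta>1/4$, proving \eqref{eq:strict-two-form-general}.
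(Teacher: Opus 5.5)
Your proposal is correct and is essentially the paper's own proof: the same canonical form $\omega=\sum_i a_i e_{2i-1,2i}$, the same pairwise block formula, dropping the nonnegative diagonal part, bounding $|\cE(e_{2i-1,2i},e_{2j-1,2j})|\leq 2/3$ via Lemma~\ref{lem:BBK}, counting each $a_i^2$ exactly $n/2-1$ times, and concluding by linearity with $\cQ_{\Lambda^2}(I)=2(n-2)\Id$. The paper only asserts the expansion you flag as the main obstacle; it follows by writing $\inner{\cQ_{\Lambda^2}(\cE)\omega}{\omega}=\tr\bigl(\cE\circ(-\operatorname{ad}_\omega^2)\bigr)$, noting that $\operatorname{ad}_\omega$ kills every $e_{2i-1,2i}$ and preserves each four-dimensional mixed block, on which $-\operatorname{ad}_\omega^2$ has eigenvalues $(a_i+a_j)^2$ and $(a_i-a_j)^2$ (each with multiplicity two), and then applying the first Bianchi identity once.
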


\begin{proof}
Write
$\omega=\sum_{a=1}^{n/2}\lambda_a e_{2a-1}\wedge e_{2a}$.
For $a<b$, put
$$
 \Theta_{ab}:=\sum_{u\in\{2a-1,2a\}}
              \sum_{v\in\{2b-1,2b\}}\sec_{\cE}(e_u,e_v),
 \qquad
 r_{ab}:=\cE_{2a-1,2a,2b-1,2b}.
$$
The two-form curvature formula is
\begin{equation*}
 \inner{\cQ_{\Lambda^2}(\cE)\omega}{\omega}
 =\sum_{a<b}\left\{
 \Theta_{ab}(\lambda_a^2+\lambda_b^2)
 -4r_{ab}\lambda_a\lambda_b\right\}.
\end{equation*}
Here $\Theta_{ab}\geq0$, while Lemma~\ref{lem:BBK} gives
$|r_{ab}|\leq2/3$.  Hence each pair is bounded below by
$$
 -\frac83|\lambda_a\lambda_b|
 \geq-\frac43(\lambda_a^2+\lambda_b^2).
$$
After summing, every $\lambda_a^2$ occurs $n/2-1$ times, proving
\eqref{eq:two-form-general}.  Since
$\cQ_{\Lambda^2}(I)=2(n-2)\Id$, linearity gives
\eqref{eq:strict-two-form-general}.
\end{proof}

\begin{proposition}\label{prop:stable-cubic}
Let $n\geq10$ be even, let $(M^n,g)$ be closed and $\delta$-pinched for some
$\delta>1/4$, and let
$T\in C^\infty(M,\cC)$ satisfy $\cD T=0$. Then $T=0$.
\end{proposition}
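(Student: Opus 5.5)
We decompose $T$ according to the parallel orthogonal splitting \eqref{eq:orthogonal-hook-decomposition},
$$
  T=H+C^*\sigma+C^*\alpha,
  \qquad
  H=\operatorname{pr}_{\cU}T,\quad \sigma\in C^\infty(M,\Sym^2_0T^*M),\quad \alpha\in C^\infty(M,\Lambda^2T^*M).
$$
We then show that each piece vanishes. The trace-free part is handled by the cubic Weitzenb\"ock identity. By \eqref{eq:partial-integral-sign}, $\int_M w_{\Rop^g}(H,H)\,d\vol_g\leq0$. After normalizing, $\Rop^g=\kappa_0\cA$ with $\cA=\frac14I+\frac34\cE$ and $0\leq\sec_{\cE}\leq1$; this uses $\delta>1/4$, so that $\cA-\frac14 I$ has sectional curvature in $[0,\frac34]$. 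Lemma~\ref{lem:tracefree-hook-twist-estimate} then gives the pointwise bound $w_{\cA}(H,H)\geq c_n|H|^2$ with $c_n>0$. Combining the two inequalities forces $\int_M|H|^2\leq0$, hence $H=0$.

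With $H=0$, the tensor $T=C^*(\sigma+\alpha)$ is built from a two-tensor $q=\sigma+\alpha$. We rewrite $\cD T=0$ as a first-order system for $q$. By Lemma~\ref{lem:first-three-slot-spectrum}, the $(4,1)$-symbol already disappears, so the remaining equation is the projection of $\nabla q$ onto the $GL$-type $\mathbb S_{(4,1)}$ seen through $C^*$. Concretely, $\nabla_{(e}g_{ab}q_{c)d}$, with the fourth slot kept free, must vanish modulo the totally symmetric part. Taking the relevant components, we expect to obtain:
\begin{itemize}
\item for $\alpha$, the conformal Killing $2$-form equation (the $\Lambda^3$ and $V$ components of $\nabla\alpha$ survive, but the $\mathbb V^0_{(2,1)}$ component vanishes);
\item for $\sigma$, the vanishing of the $\Sym^3_0$ and $\mathbb V^0_{(2,1)}$ components of $\nabla\sigma$ up to trace terms, together with a coupling between the two shared $\mathbb V^0_{(2,1)}$ components.
\end{itemize}
We then apply the ordinary Weitzenb\"ock formula of Appendix~\ref{sec:weitzenbock} to the pair $(\sigma,\alpha)$, choosing the Weitzenb\"ock polynomial so that its value is nonpositive on the components of $\nabla(\sigma,\alpha)$ allowed by the system. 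This yields an integral inequality $\int_M\inner{\cQ_E(\Rop^g)(\sigma,\alpha)}{(\sigma,\alpha)}\leq0$, or a version in which the two curvature terms appear with positive weights.

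The curvature side is made positive separately on each piece. On $\Lambda^2$, Lemma~\ref{lem:two-form-general} gives $\cQ_{\Lambda^2}(\cA)\succeq\frac{2(n-2)}3(4\delta-1)\Id\succ0$. On $\Sym^2_0$, we write $\cA=\delta I+(1-\delta)\cE$ and bound $\cQ_{\Sym^2_0}(\cE)$ from below using the eigenbasis of $\sigma$. The quadratic form is $\sum_{i<j}\sec_{\cE}(e_i,e_j)(s_i-s_j)^2\geq0$, since the off-diagonal contributions of $\cQ$ on symmetric matrices diagonalize this way. This gives $\cQ_{\Sym^2_0}(\cA)\succeq\delta\,\cQ_{\Sym^2_0}(I)=2n\delta\,\Id\succ0$. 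Both pieces therefore vanish, and $T=0$.

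The main obstacle is the trace step. We must correctly extract, from $\cD T=0$ with $H=0$, which components of $\nabla\sigma$ and $\nabla\alpha$ are forced to vanish. We must also verify that the two $\mathbb V^0_{(2,1)}$ components coming from $\sigma$ and from $\alpha$ do not merely satisfy a linear relation that destroys the sign of the Weitzenb\"ock term. To handle this, we would first test on the symbol level whether each trace block separately satisfies a Killing-type equation: for instance, whether $\alpha$ is Killing and $\sigma$ is a trace-free Killing tensor. If that holds, we apply the standard integral formulas ($\int\inner{q(R)\alpha}{\alpha}\leq0$ for Killing forms, and similarly for trace-free Killing $2$-tensors), whose curvature terms are exactly the positive operators estimated above. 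If instead the $\mathbb V^0_{(2,1)}$ blocks are only coupled, we would form a combined Weitzenb\"ock quadratic form on $V^*\otimes(\Sym^2_0\oplus\Lambda^2)$ and choose the mixing coefficient so that it remains nonpositive on the kernel of the symbol.
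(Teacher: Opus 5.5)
Your treatment of $H=\operatorname{pr}_{\cU}T$ is correct and agrees with the paper. So are your two positivity estimates: $\cQ_{\Sym^2_0}(\cA)$ via the eigenbasis of $\sigma$, and $\cQ_{\Lambda^2}(\cA)$ via Lemma~\ref{lem:two-form-general}. The gap is the step you yourself call the main obstacle. You never produce a nonpositive integral for the trace part $C^*(\sigma+\alpha)$, and your first proposed route cannot work.

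For $T=C^*(\sigma+\alpha)$ one has $T(x,x,x,w)=|x|^2\bigl(\tfrac12\sigma(x,w)+\alpha(x,w)\bigr)-\tfrac12\inner{x}{w}\sigma(x,x)$. Hence $\cD T=0$ reads $|x|^2\bigl(\tfrac12(\nabla_x\sigma)(x,w)+(\nabla_x\alpha)(x,w)\bigr)=\tfrac12\inner{x}{w}(\nabla_x\sigma)(x,x)$. This says that $\sigma$ is conformal Killing and that $\nabla_{(a}\alpha_{b)c}$ is determined by $\nabla\sigma$. Representation-theoretically, $\mathbb V^0_{(2,1)}$ occurs once in the $O(n)$-restriction $\mathbb V^0_{(4,1)}\oplus\mathbb V^0_{(3)}\oplus\mathbb V^0_{(2,1)}\oplus V^*$ of $\mathbb S_{(4,1)}V^*$, but twice in $V^*\otimes(\Sym^2_0V^*\oplus\Lambda^2V^*)$. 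So the $(2,1)$-components of $\nabla\sigma$ and $\nabla\alpha$ are coupled by a single relation, and the two trace blocks do not satisfy separate Killing-type equations. In particular, the $\mathbb V^0_{(2,1)}$-component of $\nabla\alpha$ need not vanish, contrary to your first bullet, and the conformal weight there is $+1$. The separate Killing-form integral formulas are therefore unavailable. Your fallback is the right idea: a combined form with a mixing coefficient that is nonpositive on the symbol kernel. But the proposal does not show that such a coefficient exists.

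What you are missing is that Appendix~\ref{sec:weitzenbock} already settles this, with no component analysis. Corollary~\ref{cor:integral-sign} applies to every $T\in C^\infty(M,\cC)$ with $\cD T=0$. The reason is Proposition~\ref{prop:symbol-sign}: on the entire kernel of $\sigma(\cD)$, $\inner{BU}{U}=\inner{J_5U}{U}-\sum_r\norm{\operatorname{tr}_{0r}U}^2\leq0$, since $J_5$ has contents $0$ and $-2$ there. Apply it to $T$ itself once $H=0$ is known. This gives $\int_M\inner{\cQ_{\Hook}(\cA)T}{T}\,d\vol_g\leq0$, which is exactly your combined form, with the mixing dictated by the embedding $C^*$.

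Since $\Sym^2_0V^*$ and $\Lambda^2V^*$ are $O(n)$-invariant summands of $\Hook V^*$, each $\rho(E_\alpha)$ preserves them. So $\cQ_{\Hook}(\cA)$ has no cross terms. On each summand its quadratic form, transported by $C^*$, is a positive multiple (coming from $CC^*$) of that of $\cQ_{\Sym^2_0}(\cA)$, respectively $\cQ_{\Lambda^2}(\cA)$. Your two positivity estimates then force $\sigma=\alpha=0$; this is precisely the paper's argument.
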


\begin{proof}
Let $\kappa_0$ be a pinching scale and let $\cA=\kappa_0^{-1}\Rop^g$ be the
normalized curvature operator. Write $T=H+S_0+\omega$ according to
\eqref{eq:orthogonal-hook-decomposition}. Since
$$
  \cA=\frac14I+\frac34\cE,
  \qquad 0\leq\sec_{\cE}\leq1,
$$
\eqref{eq:partial-integral-sign} and
\eqref{eq:tracefree-hook-twist-positive} give
$$
  0\geq\int_Mw_{\Rop^g}(H,H)\,d\vol_g
  =\kappa_0\int_Mw_{\cA}(H,H)\,d\vol_g
  \geq\kappa_0c_n\int_M|H|^2\,d\vol_g.
$$
Hence $H=0$.

Return to the ordinary formula of Appendix \ref{sec:weitzenbock}. Since $\cD T=0$,
Corollary \ref{cor:integral-sign} gives
\begin{equation}\label{eq:trace-block-integral}
  0\leq-\kappa_0\int_M
  \inner{\cQ_{\Hook}(\cA)T}{T}\,d\vol_g.
\end{equation}
The two remaining summands in
\eqref{eq:orthogonal-hook-decomposition} are invariant under every
infinitesimal orthogonal action, so the curvature term has no cross term
between them. For $S_0\in\Sym^2_0V^*$, diagonalized with eigenvalues
$\lambda_i$,
\begin{equation*}
  \inner{\cQ_{\Sym^2}(\cA)S_0}{S_0}
  =2\sum_{i<j}\sec_{\cA}(e_i,e_j)(\lambda_i-\lambda_j)^2>0
\end{equation*}
unless $S_0=0$.

Choose $\delta_1$ with $1/4<\delta_1<\delta$. Then
$$
  \cA=\delta_1I+(1-\delta_1)\cE_0,
  \qquad 0\leq\sec_{\cE_0}\leq1.
$$
By Lemma \ref{lem:two-form-general}, the curvature term on $\Lambda^2V^*$ is
strictly positive. Therefore the right-hand side of
\eqref{eq:trace-block-integral} is non-positive and can vanish only when
$S_0=\omega=0$. Thus $T=0$.
\end{proof}

\section{The invariant projector obstruction}\label{sec:symmetric-projector}

We now treat the obstruction furnished by an invariant orthogonal projector.
Throughout this Section, integration over a Euclidean unit sphere is taken
with respect to normalized spherical measure.
\subsection{The symmetric-square Pestov curvature form}

Let $V$ be an $n$-dimensional Euclidean space and let
$$
  S:S(V)\longrightarrow\Sym^2V
$$
be a smooth map. We regard $S(v)$ as a self-adjoint endomorphism of $V$.
For a fixed orthonormal basis $(e_a)_{a=0}^{n-1}$, held fixed during
spherical differentiation, put
$$
  S_{ab}(v):=\inner{S(v)e_b}{e_a},
  \qquad
  z_{ab}:=\nabla^S S_{ab},
$$
and
$$
  \beta_{ab}:=v\wedge z_{ab},
  \qquad
  \alpha_{ab}:=S(v)e_b\wedge e_a.
$$
For an algebraic curvature operator $\cB$ on $V$, define
\begin{equation}\label{eq:sym2-fibrewise-pestov-form}
  \mathscr W^{(2)}_{\cB}(S)
  :=\int_{S(V)}\sum_{a,b=0}^{n-1}
  \left\{
    \cB(\beta_{ab},\beta_{ab})
    -2\cB(\alpha_{ab},\beta_{ab})
  \right\}\,d\sigma(v).
\end{equation}
The sum is independent of the fixed orthonormal basis. When $S$ is the
restriction of a homogeneous harmonic polynomial of degree $k$, we use the same
notation for this degree-$k$ coefficient. The definition is made for arbitrary
smooth $S$; in particular, for
$P\in C^\infty(\SM,\Sym^2\cN)$, the expression
$\mathscr W^{(2)}_{\cB}(P)$ is obtained fibrewise from the full map
$v\mapsto P(x,v)$, without first separating its Fourier modes.

\begin{lemma}
\label{lem:sym2-pestov-sign}
Let $S$ be the highest Fourier coefficient of a flow-invariant section of
$\Sym^2\cN$. Then
\begin{equation}\label{eq:sym2-localized-sign}
  \int_M\mathscr W^{(2)}_{\Rop^g}(S)\,d\vol_g\leq0.
\end{equation}
If $P\in C^\infty(\SM,\Sym^2\cN)$ itself satisfies $XP=0$, then
\begin{equation}\label{eq:sym2-full-sign}
  \int_M\mathscr W^{(2)}_{\Rop^g}(P)\,d\vol_g\leq0.
\end{equation}
The same statements hold with $\Rop^g$ replaced by the normalized operator
$\cA=\kappa_0^{-1}\Rop^g$.
\end{lemma}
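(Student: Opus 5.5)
The plan is to identify the fibrewise expression \eqref{eq:sym2-fibrewise-pestov-form}, integrated over $M$, with the sign-normalized $\Sym^2$-twisted Pestov curvature form \eqref{eq:sym2-pestov-form-normalized}. Once this is done, the two assertions follow directly from \eqref{eq:top-pestov-sign} and \eqref{eq:full-pestov-sign}. The identification has two parts.

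\textbf{The twist term.} Treat $S(v)$ at $x$ as a section $u$ of $\pi^*\Sym^2TM$ over $S_xM$, with $E=\Sym^2TM$ and the action $\rho_{\Sym^2}(A)S=[A,S]$. Write $D_a^Vu$ in a fixed ambient basis $(e_r)$. Then
$$\sum_a\cB(v\wedge e_a,v\wedge e_b)\inner{D_a^Vu}{D_b^Vu}=\sum_{r,s}\cB(v\wedge z_{rs},v\wedge z_{rs}),$$
because $z_{rs}=\nabla^SS_{rs}=\sum_a (D_a^VS_{rs})e_a$. For the twist term, expand
$$\inner{\rho(\cB(v\wedge e_a))S}{D_a^VS}$$
using the convention \eqref{eq:wedge-skew-convention}. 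The two terms of the commutator $[A,S]$ contribute equally by symmetry of $S$ and of $D_a^VS$. Combining them gives $2\sum_{r,s}\cB(S(v)e_s\wedge e_r,\,v\wedge z_{rs})$. This is exactly the computation asserted in the derivation of \eqref{eq:sym2-pestov-form-normalized}, so I take it as given. The only thing to check is that the normalizations agree: the spherical measure $d\sigma_x$ has total mass one, matching the Liouville normalization $d\mu=d\vol_g\,d\sigma_x$. Under this check, $\mathfrak P^{\Sym^2}_{\cB}(u)=\int_M\mathscr W^{(2)}_{\cB}(u)\,d\vol_g$.

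\textbf{Restricting from $\Sym^2TM$ to $\Sym^2\cN$.} The invariant section takes values in $\Sym^2\cN\subset\pi^*\Sym^2TM$. Because $v$ is parallel along geodesics, $\cN$ is $X$-invariant. Hence $XP=0$ as a section of $\Sym^2\cN$ is the same as $XP=0$ as a section of $\pi^*\Sym^2TM$. The same holds for the vertical Fourier coefficients: these are computed in $\pi^*\Sym^2TM$, where the top coefficient $S=P_k$ satisfies $X_+P_k=0$. So both Pestov identities apply in the untwisted-base bundle $\Sym^2TM$.

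\textbf{Applying the sign results.} Apply \eqref{eq:top-pestov-sign} to $u=S$, the top coefficient, to get \eqref{eq:sym2-localized-sign}. Apply \eqref{eq:full-pestov-sign} to $u=P$ itself to get \eqref{eq:sym2-full-sign}. This is consistent with the remark that $\mathscr W^{(2)}_{\cB}(P)$ is computed on the full map $v\mapsto P(x,v)$; no harmonic decomposition is needed, since \eqref{eq:full-pestov-sign} holds for arbitrary smooth invariant sections. Finally, $\mathscr W^{(2)}_{\cB}$ is linear in $\cB$ and $\Rop^g=\kappa_0\cA$ with $\kappa_0>0$, so both inequalities persist for $\cA$.

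The main obstacle is the bookkeeping of the factor $2$ and of the sign in the twist term. Specifically, one must confirm that $\inner{\rho(\cB(v\wedge e_a))S}{D_a^VS}$ reproduces $+2\cB(\alpha_{ab},\beta_{ab})$ with $\alpha_{ab}=S e_b\wedge e_a$ in that order, under the convention \eqref{eq:wedge-skew-convention} and $R^E=\rho_E(\Rop^g)$ from \eqref{eq:pestov-sign-conversion}. I would verify this on a single matrix entry, using $(a\wedge b)c=\inner ac b-\inner bc a$ and the symmetry $z_{rs}=z_{sr}$.
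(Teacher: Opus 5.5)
Your proposal is correct and follows the paper's proof. You identify the fibrewise integrand of $\mathscr W^{(2)}_{\cB}$ with that of the $\Sym^2$-twisted Pestov curvature form, invoke \eqref{eq:top-pestov-sign} for the top coefficient and \eqref{eq:full-pestov-sign} for $P$ itself, and rescale by $\kappa_0>0$. The sign and factor-two bookkeeping you flag is the one-line computation the paper carries out: since $S$ and $D_iS$ are symmetric and $A=\Rop^g(v\wedge e_i)$ is skew, $\inner{[A,S]}{D_iS}=2\tr(A\,S\,D_iS)$. Under \eqref{eq:wedge-skew-convention} this gives $+2\sum_{a,b}\Rop^g(\alpha_{ab},\beta_{ab})$ with $\alpha_{ab}=S(v)e_b\wedge e_a$, as you expected.
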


\begin{proof}
For the induced connection on $\Sym^2TM$, the curvature acts by
$$
  R^{\Sym^2}(u,w)S=[R^g(u,w),S].
$$
If $D_iS:=\nabla^V_{e_i}S$, then, using that $S$ and $D_iS$ are
self-adjoint and $R^g(v,e_i)$ is skew-adjoint,
\begin{align*}
 \sum_i\inner{[R^g(v,e_i),S]}{D_iS}
 &=2\sum_{i,a,b}
   \inner{R^g(v,e_i)S e_b}{e_a}(D_iS)_{ab} \\
 &=2\sum_{a,b}\Rop^g(\alpha_{ab},\beta_{ab}).
\end{align*}
The Jacobi term in the twisted Pestov identity is
$-\sum_{a,b}\Rop^g(\beta_{ab},\beta_{ab})$.  Thus
\eqref{eq:sym2-fibrewise-pestov-form} is exactly the fibrewise integrand of the symmetric-square Pestov curvature form
\eqref{eq:sym2-pestov-form-normalized}.  The localized sign
\eqref{eq:sym2-localized-sign} follows from \eqref{eq:top-pestov-sign}, and the
full sign \eqref{eq:sym2-full-sign} follows from \eqref{eq:full-pestov-sign}.
Multiplication of the curvature operator by the positive scalar
$\kappa_0^{-1}$ does not change either sign.
\end{proof}

For a highest even Fourier coefficient coming from an orthogonal projector,
\cite[Lemma~4.2]{CLMS24} gives the normal conditions
\begin{equation}\label{eq:sym2-normal-conditions}
  \tr S=0,
  \qquad
  S(v)v\ \text{has degree }k-1,
  \qquad
  \inner{S(v)v}{v}\ \text{has degree }k-2.
\end{equation}
The first identity also follows directly from the fact that the trace of the
projector is constant.

Put
\begin{equation*}
  N:=\int_{S(V)}|S|^2,
  \qquad
  N_1:=\int_{S(V)}|S(v)v|^2,
  \qquad
  \lambda_k:=k(n+k-2).
\end{equation*}

\begin{lemma}[General estimate for the symmetric-square Pestov form]
\label{lem:sym2-general-estimate}
Assume that $S$ satisfies the first two conditions in
\eqref{eq:sym2-normal-conditions}. Then
\begin{equation}\label{eq:sym2-WI}
  \mathscr W^{(2)}_I(S)
  =(\lambda_k-2)N-2(n+2k-4)N_1.
\end{equation}
If $0\leq\sec_{\cE}\leq1$, then
\begin{equation}\label{eq:sym2-residual}
 \mathscr W^{(2)}_{\cE}(S)
 \geq
 -N-(n-2)N_1
 -\frac43\sqrt{(n-2)(N-N_1)\lambda_kN}.
\end{equation}
\end{lemma}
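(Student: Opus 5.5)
The plan is to mirror the proofs of \eqref{eq:WI-u} and Lemma~\ref{lem:tangent-twist-residual}, now for the symmetric-square twist. The two inequalities are handled separately: \eqref{eq:sym2-WI} by an exact computation for $\cB=I$, and \eqref{eq:sym2-residual} by a pointwise estimate in an adapted basis followed by Cauchy--Schwarz and the spherical-harmonic energy identity.

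\emph{Constant curvature.} For $\cB=I$ the Jacobi term is
$\sum_{a,b}|v\wedge z_{ab}|^2=\sum_{a,b}|z_{ab}|^2=|\nabla^SS|^2$, since each $z_{ab}$ is tangent to the sphere. Each entry $S_{ab}$ is a degree-$k$ spherical harmonic, so integration gives $\lambda_kN$. For the twist term,
$$
\sum_{a,b}\inner{S(v)e_b\wedge e_a}{v\wedge z_{ab}}
=\sum_{a,b}\bigl(\inner{S e_b}{v}\inner{e_a}{z_{ab}}-\inner{Se_b}{z_{ab}}\inner{e_a}{v}\bigr).
$$
Resummed, this is $\inner{S(v)}{\operatorname{div}\text{-type contraction of }\nabla^SS}$. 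Concretely, it equals $\sum_b (Sv)_b\,(\operatorname{div}^S S)_b-\sum_b\inner{S e_b}{\nabla^S (S v)_b-\ldots}$.

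I would evaluate these terms using the homogeneous extension. On the sphere, $\operatorname{div}^S$ of the rows is expressed through the ambient divergence and the Euler operator, and the ambient divergence is handled using $\tr S=0$. The condition that $S(v)v$ has degree $k-1$ means its degree-$(k+1)$ harmonic part vanishes. Hence $S(v)v=|v|^2q(v)$ with $q$ vector-valued of degree $k-1$, exactly as in the proof of Lemma~\ref{lem:tangent-twist-high-degree}. Integrating by parts then gives the twist contribution $-2N-2(n+2k-4)N_1$, where the factor $2$ comes from the two equal matrix contributions. This yields \eqref{eq:sym2-WI}. The coefficient $n+2k-4$ should arise exactly as in \eqref{eq:WI-u}, from the Fischer identity relating $\int|\nabla^S q|^2$ and $\int|q|^2$.

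\emph{Residual estimate.} Fix $v$ and take $e_0=v$, with $e_1,\dots,e_{n-1}$ an orthonormal basis of $v^\perp$ that diagonalizes the compression of $S$ to $v^\perp$. Split $S(v)e_b=\inner{Se_b}{v}v+(Se_b)^\perp$, so that
$$
\alpha_{ab}=\inner{Se_b}{v}\,v\wedge e_a+(Se_b)^\perp\wedge e_a .
$$
The first piece pairs with $\beta_{ab}=v\wedge z_{ab}$ through the Jacobi form $\mathsf J_v^{\cE}$. Completing the square as in \eqref{eq:jacobi-complete-square}, for each $(a,b)$,
$$
\mathsf J(z_{ab},z_{ab})-2\mathsf J(\inner{Se_b}{v}e_a^\perp,z_{ab})\ge-|\inner{Se_b}{v}|^2|e_a^\perp|^2 .
$$
Summed over $a\ge1$ and $b$, this gives at least $-(n-1)|Sv|^2$. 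The $a=0$ row needs separate treatment: its $\alpha_{0b}=(Se_b)^\perp\wedge v$ is again a Jacobi pairing, and it contributes $-|S|^2$ minus a multiple of $|Sv|^2$. Assembling these should produce $-N-(n-2)N_1$. The remaining cross terms $\cE((Se_b)^\perp\wedge e_a,v\wedge z_{ab})$ with $a\ge1$ involve orthogonal simple forms, because $\inner{(Se_b)^\perp}{v}=0$ and $\inner{e_a}{v}=0$. So \eqref{eq:BK-simple-form} bounds each by $\tfrac23|(Se_b)^\perp\wedge e_a|\,|z_{ab}|$.

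Since $S$ is diagonal on $v^\perp$ in the chosen basis, only the diagonal entries and the $v$-row survive. Cauchy--Schwarz over $(v,a,b)$ then bounds the total by $\tfrac43\sqrt{(n-2)(N-N_1)}\sqrt{\lambda_kN}$. Here the factor $n-2$ counts the admissible $a\ne b$ and $a\ne 0$, and $N-N_1$ is the mass of $S$ off the $v$-direction, in analogy with $N_\perp$.

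\emph{Main obstacle.} I expect the hardest step to be the exact bookkeeping that turns the pointwise lower bounds into precisely $-N-(n-2)N_1$ with the factor $(n-2)(N-N_1)$ under the square root. In particular, the off-diagonal terms must be arranged so that the symmetric contributions $a\leftrightarrow b$ combine into the coefficient $\tfrac43$, rather than yielding a worse constant.
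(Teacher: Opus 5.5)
Your proof of \eqref{eq:sym2-residual} is the paper's argument. You complete the square against $\mathsf J_v^{\cE}$ for the radial parts of the $\alpha_{ab}$. You apply \eqref{eq:BK-simple-form} to the orthogonal pairs $y_b\wedge e_a$, $v\wedge z_{ab}$ with $a\geq1$, where $y_b$ is the component of $S(v)e_b$ orthogonal to $v$. You close with Cauchy--Schwarz and $\int|\nabla^SS|^2=\lambda_kN$.

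Two simplifications are available. Diagonalizing $C$ is unnecessary, because $\sum_{a\geq1}|y\wedge e_a|^2=(n-2)|y|^2$ for every $y\perp v$. The constant $\frac43$ is just $2\cdot\frac23$, so the symmetrization issue you flag does not arise.

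There is one sign slip. The $a=0$ row is bounded below by $-\sum_b|y_b|^2=-|S|^2+|S(v)v|^2$, so it carries $+|S(v)v|^2$, not ``minus a multiple'' of it. This is exactly what turns $-(n-1)N_1$ into $-(n-2)N_1$.

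The identity \eqref{eq:sym2-WI} is where your proposal is incomplete. The paper does not derive it; it imports the commutator computation from \cite[Lemma~4.6]{CLMS24}. Your direct computation is viable, but your sketch stops at an expression containing ``$\ldots$''. Two of its signposts are also wrong: $\tr S=0$ plays no role, and $n+2k-4$ does not come from comparing $\int|\nabla^Sq|^2$ with $\int|q|^2$.

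Write $(\operatorname{div}S)_b:=\sum_a\partial_aS_{ab}$ for the homogeneous harmonic extension. On the unit sphere one has
\[
 \sum_a\inner{e_a}{z_{ab}}=(\operatorname{div}S)_b-k\,(S(v)v)_b,
 \qquad
 \sum_a v_a z_{ab}=\nabla^S\bigl((S(v)v)_b\bigr)-y_b,
\]
together with $\operatorname{div}_{S(V)}y_b=(\operatorname{div}S)_b-(n+k-1)(S(v)v)_b$. Note that the first expression is not the intrinsic divergence of $y_b$. One integration by parts then gives
\[
 \int_{S(V)}\sum_{a,b}\inner{\alpha_{ab}}{\beta_{ab}}\,d\sigma
 =N+2\int_{S(V)}\inner{S(v)v}{\operatorname{div}S}\,d\sigma-(n+2k)N_1 .
\]
The divergence is controlled by $\Delta\bigl(S(x)x\bigr)=2\operatorname{div}S$. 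The second normal condition gives $S(x)x=|x|^2q$ with $q$ harmonic of degree $k-1$. Hence $\operatorname{div}S=(n+2k-2)q$, which equals $(n+2k-2)S(v)v$ on the sphere.

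The pairing is therefore $N+(n+2k-4)N_1$, with $n+2k-4=2(n+2k-2)-(n+2k)$. Subtracting twice this from $\lambda_kN$ gives \eqref{eq:sym2-WI}. With these identities supplied, your route is a self-contained proof of \eqref{eq:sym2-WI}. Incidentally, neither part of the lemma uses $\tr S=0$.
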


\begin{proof}
For $I$, the first term in \eqref{eq:sym2-fibrewise-pestov-form} is
$\lambda_kN$.  The constant-curvature computation for the commutator action
on $\Sym^2V$ is
$$
  2N+2(n+2k-4)N_1;
$$
this is exactly the symmetric-square identity in
\cite[Lemma~4.6]{CLMS24}. Subtraction gives
\eqref{eq:sym2-WI}.

We prove \eqref{eq:sym2-residual} directly. Fix $v\in S(V)$, put
$e_0=v$, and write
$$
  S(v)=
  \begin{pmatrix}
    a&b^\top\\ b&C
  \end{pmatrix}
$$
relative to $V=\bbR v\oplus v^\perp$.  If
$S(v)e_j=s_{0j}v+y_j$ with $y_j\perp v$, then the radial part of
$\alpha_{0j}$ is $-v\wedge y_j$, whereas the radial part of
$\alpha_{ij}$ is $s_{0j}v\wedge e_i$ for $i\geq1$.  The Jacobi form
$$
  \mathsf J_v^{\cE}(u,w):=\cE(v\wedge u,v\wedge w)
$$
satisfies $0\preceq\mathsf J_v^{\cE}\preceq\Id$. Hence
$$
  \mathsf J_v^{\cE}(z,z)-2\mathsf J_v^{\cE}(q,z)
  =\mathsf J_v^{\cE}(z-q,z-q)-\mathsf J_v^{\cE}(q,q)
  \geq-|q|^2.
$$
After summing all radial terms, the total loss is
$$
  \sum_j|y_j|^2+(n-1)\sum_j s_{0j}^2
  =|S|^2+(n-2)|S(v)v|^2.
$$

The remaining terms are
$$
 -2\sum_{i=1}^{n-1}\sum_{j=0}^{n-1}
 \cE(y_j\wedge e_i,v\wedge z_{ij}).
$$
The two displayed simple forms are orthogonal.  Lemma~\ref{lem:BBK} gives
$$
  |\cE(y_j\wedge e_i,v\wedge z_{ij})|
  \leq\frac23|y_j\wedge e_i|\,|z_{ij}|.
$$
Moreover,
$$
  \sum_{i,j}|y_j\wedge e_i|^2
  =(n-2)\sum_j|y_j|^2
  =(n-2)(|S|^2-|S(v)v|^2).
$$
Cauchy--Schwarz in $(v,i,j)$ and
$$
  \int_{S(V)}\sum_{a,b}|z_{ab}|^2
  =\int_{S(V)}|\nabla^SS|^2
  =\lambda_kN
$$
now prove \eqref{eq:sym2-residual}.
\end{proof}

\subsection{Fourier degrees at least six}

\begin{proposition}
\label{prop:sym2-high-degree}
Let $n\geq12$ and $k\geq6$ be even. If
$$
  \frac5{13}\leq\sec_{\cA}\leq1,
$$
then every nonzero $S$ satisfying \eqref{eq:sym2-normal-conditions} obeys
$$
  \mathscr W^{(2)}_{\cA}(S)>0.
$$
\end{proposition}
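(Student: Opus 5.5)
The plan mirrors the proof of Lemma~\ref{lem:tangent-twist-high-degree}, with Lemma~\ref{lem:sym2-general-estimate} in place of Lemma~\ref{lem:tangent-twist-residual}. Write
$$
  \cA=\tfrac5{13}I+\tfrac8{13}\cE,\qquad 0\leq\sec_{\cE}\leq1 .
$$
By linearity $\mathscr W^{(2)}_{\cA}(S)=\frac5{13}\mathscr W^{(2)}_I(S)+\frac8{13}\mathscr W^{(2)}_{\cE}(S)$. Inserting \eqref{eq:sym2-WI} and \eqref{eq:sym2-residual}, and setting $u:=N_1/N$, gives
$$
  \frac{\mathscr W^{(2)}_{\cA}(S)}{N}\geq
  \frac{5\lambda_k-18}{13}
  -\frac{10(n+2k-4)+8(n-2)}{13}\,u
  -\frac{32}{39}\sqrt{(n-2)\lambda_k(1-u)}=:H_{n,k}(u).
$$
Only the first two conditions of \eqref{eq:sym2-normal-conditions} are needed for this bound. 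The third condition will be used to restrict the range of $u$.

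The first step is to bound $u$. The second normal condition says $S(v)v$ is a vector-valued harmonic of degree $k-1$ rather than $k+1$. Exactly as in \eqref{eq:radial-contraction-decomposition}--\eqref{eq:NPY-decomposition}, I would decompose $v\mapsto S(v)v$ into a radial part and a tangential part. The radial part $\inner{S(v)v}{v}$ has degree $k-2$ by the third condition. The degree-lowering contraction $S\mapsto S(v)v$ is then an $O(n)$-equivariant map out of the spherical-harmonic model. Computing its norm on each irreducible piece via the Fischer decomposition, with eigenvalue ratios analogous to $\frac{n+k-3}{n+2k-4}$, gives an explicit bound $u\leq u_{\max}(n,k)<1$, roughly of order $\frac{1}{2}$ or less.

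Even without a sharp bound, $u\leq1$ trivially, and the square-root term vanishes at $u=1$. Since $H_{n,k}$ is concave in $z=\sqrt{1-u}$ only after accounting for the linear term, I would instead complete the square in $z$ as in \eqref{eq:G-completed-square}:
$$
  H=c_0-L\,(1-z^2)-M z,\qquad L=\tfrac{18n+20k-56}{13},\quad M=\tfrac{32}{39}\sqrt{(n-2)\lambda_k}.
$$
This equals $(c_0-L)+L\bigl(z-\tfrac{M}{2L}\bigr)^2-\tfrac{M^2}{4L}$. Positivity therefore reduces to the polynomial inequality
$$
  4L\,(c_0-L)>M^2,\qquad c_0=\tfrac{5\lambda_k-18}{13},
$$
provided the minimizer $z=M/(2L)$ lies in the admissible range. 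Otherwise an endpoint check suffices, and this is where $u_{\max}$ may be needed.

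After clearing denominators, this is a polynomial $\Pi(n,k)$ in $n$ and $k$. The leading terms come from $\lambda_k\sim k^2+nk$. The term $20\lambda_k(18n+20k)$ is compared with $\frac{1024}{9}\cdot\frac{169}{169}(n-2)\lambda_k$, and the coefficient $360$ versus $\approx 113.8$ on $n\lambda_k$ suggests comfortable positivity. I would verify positivity by expanding in $n$ and showing every coefficient is positive for even $k\geq6$, using monotonicity in $k$ and checking $k=6$ and $n=12$ explicitly.

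The main obstacle is the negative constant $-L$ inside $c_0-L$, together with the $u$-term. For small $k=6$ these may eat into the margin, so the crude bound $u\leq1$ may fail there. In that case I would compute the sharp $u_{\max}$ from the harmonic decomposition of $S(v)v$ forced by the second and third conditions of \eqref{eq:sym2-normal-conditions}, and redo the endpoint analysis with it.
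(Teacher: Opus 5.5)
Your proposal is correct and is essentially the paper's proof. It uses the same splitting $\cA=\frac5{13}I+\frac8{13}\cE$, the same substitution of Lemma~\ref{lem:sym2-general-estimate}, and the same completion of the square in $z=\sqrt{1-u}$; your condition $4L(c_0-L)>M^2$ equals, up to the positive factor $8/1521$, the paper's $\Psi_{n,k}>0$. That inequality makes the bound positive for every real $z$, so no range restriction on $u$ is needed, and neither the third normal condition nor a sharp $u_{\max}$ is needed. Your coefficient check in $n$ already succeeds at $k=6$, where the coefficients of $n^2$, $n$, $1$ are $204$, $4998$, $17088$.
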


\begin{proof}
Write
$$
  \cA=\frac5{13}I+\frac8{13}\cE,
  \qquad 0\leq\sec_{\cE}\leq1.
$$
Let $u=N_1/N$, $z=\sqrt{1-u}$, and put
$$
  Q:=(n-2)\lambda_k,
  \qquad
  L:=10k+9n-28.
$$
By Lemma \ref{lem:sym2-general-estimate},
\begin{align}
 \frac{13}{8N}\mathscr W^{(2)}_{\cA}(S)
 &\geq
 \frac58\{\lambda_k-2-2(n+2k-4)u\}
 -1-(n-2)u-\frac43\sqrt{Q(1-u)}\notag\\
 &=\frac L4z^2-\frac43\sqrt Q\,z
   +\frac{5k^2+5kn-30k-18n+38}{8}\notag\\
 &=\frac L4\left(z-\frac{8\sqrt Q}{3L}\right)^2
   +\frac{\Psi_{n,k}}{72L},
 \label{eq:sym2-high-square}
\end{align}
where
\begin{align}
 \Psi_{n,k}={}&450k^3+727k^2n-3704k^2
 +277kn^2-4798kn\notag\\
 &+10468k-1458n^2+7614n-9576.
 \label{eq:sym2-Psi}
\end{align}
Writing $n=12+a$ and $k=6+b$ gives
\begin{align*}
 \Psi_{12+a,6+b}={}&277a^2b+204a^2+727ab^2+10574ab+9894a\\
 &+450b^3+13120b^2+101620b+106440.
\end{align*}
Every coefficient is positive. Thus \eqref{eq:sym2-high-square} is strictly
positive whenever $S\neq0$.
\end{proof}

\subsection{The quartic mode}

For an admissible quartic coefficient, fix $v\in S(V)$, put $e_0=v$, and write
$$
  S(v)=\begin{pmatrix}a&b^\top\\b&C\end{pmatrix}.
$$
With $z_{ab}=\nabla^SS_{ab}$, define the following invariant quadratic
forms:
\begin{align}
 A_0&:=\int\bigl((n-1)a^2+n|b|^2+|C|^2\bigr),
 &B&:=\int|b|^2,\notag\\
 A_C&:=\int\bigl((n-1)|C|^2-a^2\bigr),
 &Z_b&:=\int\sum_{\alpha=1}^{n-1}|z_{\alpha0}|^2,\notag\\
 &&Z_C&:=\frac12\int\sum_{\alpha,\beta=1}^{n-1}
 |z_{\alpha\beta}|^2.\notag
\end{align}
Here and below Greek indices range from $1$ to $n-1$.

\begin{lemma}
\label{lem:quartic-residual}
If $0\leq\sec_{\cE}\leq1$, then
\begin{equation}\label{eq:quartic-residual}
 \mathscr W^{(2)}_{\cE}(S)
 \geq-A_0-\frac43
 \left(\sqrt{A_CZ_C}+\sqrt{(n-2)BZ_b}\right).
\end{equation}
\end{lemma}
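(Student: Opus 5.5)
The plan is to rerun the proof of \eqref{eq:sym2-residual} from Lemma~\ref{lem:sym2-general-estimate}, keeping its radial completion of squares unchanged. The only change is in the non-radial cross term. There, the contributions with $j=0$ (the column $b$) are separated from those with $i,j\geq1$ (the block $C$), and the block part is estimated in a basis that diagonalizes $C$. Since the integrand in \eqref{eq:sym2-fibrewise-pestov-form} is independent of the fixed orthonormal basis, at each $v$ we may take $e_0=v$ and choose $e_1,\dots,e_{n-1}$ to be an orthonormal eigenbasis of $C=C(v)$ on $v^\perp$, with eigenvalues $c_1,\dots,c_{n-1}$. The quantities $\sum_\alpha|z_{\alpha0}|^2$ and $\sum_{\alpha,\beta}|z_{\alpha\beta}|^2$ are invariant under rotations of $v^\perp$, so $Z_b$ and $Z_C$ are unaffected by this pointwise choice.

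\emph{Radial terms.} Exactly as before, the Jacobi completion of squares costs $|S|^2+(n-2)|S(v)v|^2$. We have $|S|^2=a^2+2|b|^2+|C|^2$ and $|S(v)v|^2=a^2+|b|^2$, so this loss equals $(n-1)a^2+n|b|^2+|C|^2$. Integrating gives $-A_0$.

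\emph{Column term.} Here $y_0=b$, and the relevant term is $-2\sum_{i\geq1}\cE(b\wedge e_i,v\wedge z_{i0})$. Each pair of simple forms is orthogonal, so \eqref{eq:BK-simple-form} bounds each summand by $\frac23|b\wedge e_i|\,|z_{i0}|$. We have $\sum_i|b\wedge e_i|^2=(n-2)|b|^2$. Cauchy--Schwarz in $(v,i)$ then gives the bound $\frac43\sqrt{(n-2)BZ_b}$.

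\emph{Block term.} For $j\geq1$ we have $y_j=Ce_j=c_je_j$, so the term is $-2\sum_{i,j\geq1}c_j\,\cE(e_j\wedge e_i,v\wedge z_{ij})$. The matrix $z_{ij}$ is symmetric, so we pair $(i,j)$ with $(j,i)$. This rewrites the sum as $-\sum_{i,j\geq1}(c_j-c_i)\,\cE(e_j\wedge e_i,v\wedge z_{ij})$. The diagonal terms drop out, and each remaining term pairs two orthogonal simple forms. By \eqref{eq:BK-simple-form}, its absolute value is at most $\frac23\sum_{i,j}|c_i-c_j|\,|z_{ij}|$.

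Now use $\tr S=0$, which gives $\tr C=-a$. Then $\sum_{i,j}(c_i-c_j)^2=2(n-1)|C|^2-2(\tr C)^2=2\bigl((n-1)|C|^2-a^2\bigr)$. Also $\sum_{i,j}|z_{ij}|^2$ integrates to $2Z_C$. Cauchy--Schwarz therefore gives $\frac23\sqrt{2A_C\cdot2Z_C}=\frac43\sqrt{A_CZ_C}$.

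Adding the three contributions gives \eqref{eq:quartic-residual}. The main point needing care is the block step. Pairing $(i,j)$ with $(j,i)$ only produces simple forms, to which Lemma~\ref{lem:BBK} applies, because $C$ is diagonal in the chosen basis. Carrying out that diagonalization pointwise is legitimate only because the full integrand and the quadratic forms $Z_b,Z_C$ are basis-invariant in the sense described above, and this should be checked explicitly.
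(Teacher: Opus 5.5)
Your proposal is correct and follows essentially the same route as the paper's proof. You reuse the radial completion of squares to get $-A_0$, split the tangential cross term into the $b$-column and the $C$-block, diagonalize $C$ pointwise and antisymmetrize using $z_{ij}=z_{ji}$, and finish with $\tr C=-a$, Lemma~\ref{lem:BBK}, and Cauchy--Schwarz; the only cosmetic difference is that you sum over ordered pairs $(i,j)$ where the paper sums over $\alpha<\beta$, which gives the same constants.
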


\begin{proof}
The completion of squares in the proof of
Lemma \ref{lem:sym2-general-estimate} gives the radial loss $-A_0$.  For the
remaining terms, diagonalize $C$ at the chosen point, say
$Ce_\alpha=\lambda_\alpha e_\alpha$. Since $z_{\alpha\beta}=z_{\beta\alpha}$,
\begin{align*}
 &\sum_{\alpha,\beta=1}^{n-1}
 \cE(Ce_\beta\wedge e_\alpha,v\wedge z_{\alpha\beta})=
 \sum_{\alpha<\beta}(\lambda_\alpha-\lambda_\beta)
 \cE(e_\alpha\wedge e_\beta,v\wedge z_{\alpha\beta}).
\end{align*}
The admissible coefficient is trace-free, so $\tr C=-a$, and therefore
$$
 \sum_{\alpha<\beta}(\lambda_\alpha-\lambda_\beta)^2
 =(n-1)|C|^2-a^2.
$$
The sum of the squares of the corresponding off-diagonal
$z_{\alpha\beta}$ is at most the integrand defining $Z_C$.  Lemma~\ref{lem:BBK}
and Cauchy--Schwarz thus bound this part of the mixed
term by $\frac23\sqrt{A_CZ_C}$.

The terms involving $b$ are
$$
 \sum_\alpha\cE(b\wedge e_\alpha,v\wedge z_{\alpha0}).
$$
Since $\sum_\alpha|b\wedge e_\alpha|^2=(n-2)|b|^2$, their absolute value is
at most $\frac23\sqrt{(n-2)BZ_b}$.  The coefficient $2$ in
\eqref{eq:sym2-fibrewise-pestov-form} produces the factor $4/3$ in
\eqref{eq:quartic-residual}.
\end{proof}

\begin{proposition}
\label{prop:sym2-quartic}
Let $n\geq12$. If
$\frac5{13}\leq\sec_{\cA}\leq1$, then
$
  \mathscr W^{(2)}_{\cA}(S)>0
$
for every nonzero $S\in\mathscr N^{(2)}_4$.
\end{proposition}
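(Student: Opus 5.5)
Write $\cA=\frac5{13}I+\frac8{13}\cE$ with $0\leq\sec_{\cE}\leq1$, so that
$$
  \mathscr W^{(2)}_{\cA}(S)=\tfrac5{13}\mathscr W^{(2)}_I(S)+\tfrac8{13}\mathscr W^{(2)}_{\cE}(S).
$$
We bound the first term exactly by \eqref{eq:sym2-WI} with $k=4$, $\lambda_4=4(n+2)$, and the second from below by Lemma~\ref{lem:quartic-residual}. The general estimate of Lemma~\ref{lem:sym2-general-estimate} is too lossy at $k=4$. The sharper residual \eqref{eq:quartic-residual} separates the radial loss $A_0$ and the two mixed terms according to the blocks $(a,b,C)$. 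Thus everything reduces to comparing the finitely many invariant quadratic forms $N$, $N_1$, $A_0$, $B$, $A_C$, $Z_b$, $Z_C$ on the finite-dimensional space $\mathscr N^{(2)}_4$ of admissible quartic coefficients, meaning degree-four harmonic maps satisfying \eqref{eq:sym2-normal-conditions}.

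First we decompose $\mathscr N^{(2)}_4$ into $O(n)$-irreducibles. The intro asserts that this decomposition is multiplicity-free with three summands, so we make it explicit. Take a degree-$4$ harmonic $S$ valued in $\Sym^2_0V$. The conditions that $S(v)v$ have degree $3$ and $\inner{S(v)v}{v}$ have degree $2$ kill the top components of the contractions. Orthogonal Pieri for $\Sym^4_0\otimes\Sym^2_0$ minus those top pieces leaves three types. Plausibly they are $\mathbb V^0_{(4,2)}$ (kernel of contraction), a $(4,1)$-type whose contraction lands in degree $3$ appropriately, and a $\Sym^4_0$-type whose double contraction lands in degree $2$.

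Because the decomposition is multiplicity-free, Schur's lemma makes each of the invariant forms $N,N_1,A_0,B,A_C,Z_b,Z_C$ a scalar multiple of $N$ on each summand $\mathscr V_i$, with no cross terms. We compute these scalars explicitly, rational functions of $n$, by evaluating on a convenient highest-weight vector or by spherical integration of explicit polynomial models, in the spirit of the radial-contraction decomposition used in Lemma~\ref{lem:tangent-twist-high-degree}. For instance, on the contraction-free summand $N_1=0$, $b\equiv0$, $a\equiv0$, so $A_0=\int|C|^2=N$ and $A_C=(n-1)N$.

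There is one catch: the square roots in \eqref{eq:quartic-residual} are not quadratic forms. We deal with this by writing, for a general $S=\sum S_i$, $\sqrt{A_CZ_C}\leq \tfrac{\mu}{2}A_C+\tfrac1{2\mu}Z_C$, with a single parameter $\mu>0$ and likewise $\nu$ for the $b$-term. The lower bound then becomes a genuine invariant quadratic form, hence diagonal on the summands. Positivity reduces to a scalar inequality on each summand, and we choose $(\mu,\nu)$ to make all three positive simultaneously for $n\geq12$; the coefficient $5/13$ is chosen so that the resulting gaps are visibly positive rational expressions.

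The main obstacle is the exact evaluation of the constants on each summand, above all $Z_b$ and $Z_C$, which involve spherical derivatives of the blocks relative to a moving frame $e_0=v$. The cleanest route is probably to express $Z_b$ and $Z_C$ through $\int|\nabla^SS|^2=\lambda_4N$ minus the $z_{00}$ contribution, together with the vertical derivatives of $S(v)v$ and $\inner{S(v)v}{v}$, whose degrees are controlled by the normal conditions. After that, one checks that the resulting polynomial in $n$ has positive coefficients after the shift $n=12+a$, exactly as in Proposition~\ref{prop:sym2-high-degree}. If a single choice of $(\mu,\nu)$ fails on some summand, the fallback is this: on a summand where $b\equiv0$ or $C$ is constrained, the corresponding square-root term drops out, and the parameters can then be optimized summand-by-summand. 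That optimization is legitimate, because on an irreducible summand the square roots are themselves multiples of $N$.
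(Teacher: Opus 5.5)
Your architecture matches the paper's. You write $\cA=\frac5{13}I+\frac8{13}\cE$, invoke Lemma~\ref{lem:quartic-residual}, and linearize both square roots by Young's inequality so the lower bound becomes an $O(n)$-invariant quadratic form. You then evaluate that form on the summands of the multiplicity-free module $\mathscr N^{(2)}_4$.

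However, you identify those summands incorrectly. No $(4,1)$ summand can occur: $-\Id$ acts by $+1$ on $\Harm_4(V)\otimes\Sym^2_0V$ but by $-1$ on $\mathbb V^0_{(4,1)}$ (compare \eqref{eq:quartic-full-pieri}). The unique $\mathbb V^0_{(4)}$ is also excluded. The degree-five harmonic part of $S(x)x$ maps it nontrivially into $\Harm_5(V)\otimes V$, so it violates the second condition in \eqref{eq:sym2-normal-conditions}. The admissible module is $\mathbb V^0_{(4,2)}\oplus\mathbb V^0_{(3,1)}\oplus\Sym^2_0V^*$, realized as follows:
\begin{itemize}
\item the kernel of $S\mapsto S(v)v$;
\item $S_H$, with $H$ a tangential divergence-free cubic vector harmonic;
\item $S_q$, with $q\in\Harm_2(V)$;
\end{itemize}
see \eqref{eq:SH-embedding}--\eqref{eq:Sq-embedding}. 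Evaluating the scalars on highest-weight vectors of the types you guessed would compute on modules that are not in $\mathscr N^{(2)}_4$.

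Your fallback is also invalid. The forms $A_C,Z_C,B,Z_b$ are indeed diagonal on the summands. But for $S=\sum_iS_i$, Cauchy--Schwarz gives $\sqrt{A_C(S)Z_C(S)}\geq\sum_i\sqrt{A_C(S_i)Z_C(S_i)}$, in general strictly. So Young parameters chosen separately on each summand do not sum to a lower bound for mixed $S$. Likewise, $B$ vanishing on the $(4,2)$ summand does not remove that summand's contribution to $Z_b(S)$. A single pair of parameters must work on all of $\mathscr N^{(2)}_4$. The substance of the proof is exhibiting such a pair together with the exact values of the forms on each summand, and you supply neither.

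The paper takes $\mu=3$ and $\nu=8/3$ in your notation, that is,
\[
  \tfrac43\sqrt{A_CZ_C}\leq2A_C+\tfrac29Z_C,
  \qquad
  \tfrac43\sqrt{(n-2)BZ_b}\leq\tfrac{16}9(n-2)B+\tfrac14Z_b.
\]
It computes Table~\eqref{eq:quartic-table} as you anticipate: from the energy split $4(n+2)N=Z_{00}+2Z_b+2Z_C$ and from differentiating $S(v)v$ on the explicit models. The three gaps are then $\frac{n+69}{18}$, $\frac{43n+220}{9(n+2)}$ and $\frac{2(8n^2+67n+110)}{9n(n+1)}$, all positive.
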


\begin{proof}
Write $\cA=\frac5{13}I+\frac8{13}\cE$.  From
Lemma \ref{lem:quartic-residual} and the two Young inequalities
\begin{align}
 \frac43\sqrt{A_CZ_C}
 &\leq2A_C+\frac29Z_C,\label{eq:quartic-young-one}\\
 \frac43\sqrt{(n-2)BZ_b}
 &\leq\frac{16}{9}(n-2)B+\frac14Z_b,
 \notag
\end{align}
we obtain
\begin{align}
 \frac{13}{8}\mathscr W^{(2)}_{\cA}(S)
 \geq{}&\frac58\mathscr W_I^{(2)}-A_0-2A_C-\frac29Z_C\notag\\
 &-\frac{16}{9}(n-2)B-\frac14Z_b.
 \label{eq:quartic-master-gap}
\end{align}
All terms on the right are $O(n)$-invariant quadratic forms.  Since
\eqref{eq:quartic-decomposition} is multiplicity-free, there are no cross
terms between its three summands. Substitution of
Table \eqref{eq:quartic-table} gives the respective gaps
\begin{equation}\label{eq:quartic-gaps}
 \frac{n+69}{18},
 \qquad
 \frac{43n+220}{9(n+2)},
 \qquad
 \frac{2(8n^2+67n+110)}{9n(n+1)}.
\end{equation}
Every number in \eqref{eq:quartic-gaps} is strictly positive. Therefore the
right-hand side of \eqref{eq:quartic-master-gap} is positive unless all
three components of $S$ vanish.
\end{proof}

\subsection{The quadratic mode}

We finish the symmetric-projector branch by exploiting the additional
algebraic symmetry of a degree-two projector.

\begin{lemma}
\label{lem:quadratic-young}
Let $Q:V\to\Sym^2V$ be a homogeneous quadratic polynomial satisfying
$Q(x)x=0$ for every $x$. Write $Q(x)=\mathsf B(x,x)$, where
$\mathsf B$ is symmetric bilinear with values in $\Sym^2V$, and set
$$
 \mathsf A(u,w,y,z):=\inner{\mathsf B(u,w)y}{z}.
$$
Then
\begin{equation}\label{eq:quadratic-young-relations}
 \mathsf A(u,w,y,z)=\mathsf A(w,u,y,z)
 =\mathsf A(u,w,z,y),
 \qquad
 \mathsf A_{(uwy)z}=0,
\end{equation}
and, in addition,
\begin{equation}\label{eq:quadratic-pair-symmetry}
 \mathsf A(u,w,y,z)=\mathsf A(y,z,u,w).
\end{equation}
\end{lemma}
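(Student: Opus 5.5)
The relations in \eqref{eq:quadratic-young-relations} follow directly from the definitions and from polarizing $Q(x)x=0$. The pair symmetry \eqref{eq:quadratic-pair-symmetry} is a representation-theoretic consequence of those relations, in the spirit of the proof of pair symmetry for algebraic curvature tensors.

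\emph{Step 1: the first two symmetries.} $\mathsf B$ is symmetric bilinear, so $\mathsf A(u,w,y,z)=\mathsf A(w,u,y,z)$. Each $\mathsf B(u,w)$ is a self-adjoint endomorphism, so $\mathsf A(u,w,y,z)=\mathsf A(u,w,z,y)$. Thus $\mathsf A\in\Sym^2V^*\otimes\Sym^2V^*$.

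\emph{Step 2: the cyclic relation.} The hypothesis $Q(x)x=0$ gives $\mathsf A(x,x,x,z)=0$ for all $x,z$. For fixed $z$ this is the diagonal of the trilinear form $(u,w,y)\mapsto\mathsf A(u,w,y,z)$. Polarizing this cubic polynomial in $x$, exactly as in Lemma~\ref{lem:young-symmetry}, yields $\mathsf A_{(uwy)z}=0$. Because $\mathsf A$ is already symmetric in $(u,w)$, this is equivalent to the cyclic identity
$$\mathsf A(u,w,y,z)+\mathsf A(w,y,u,z)+\mathsf A(y,u,w,z)=0.$$

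\emph{Step 3: pair symmetry via $GL(V)$-decomposition.} I would decompose
$$\Sym^2V^*\otimes\Sym^2V^*=\Sym^2(\Sym^2V^*)\oplus\Lambda^2(\Sym^2V^*),$$
where the first summand is $\mathbb S_{(4)}\oplus\mathbb S_{(2,2)}$ and the second is $\mathbb S_{(3,1)}$. Pair symmetry is exactly the vanishing of the $\mathbb S_{(3,1)}$-component. Consider the equivariant map
$$\Phi:\Sym^2V^*\otimes\Sym^2V^*\to\Sym^3V^*\otimes V^*,\qquad \mathsf A\mapsto\mathsf A_{(uwy)z}.$$
Its target is $\mathbb S_{(4)}\oplus\mathbb S_{(3,1)}$, with each type occurring once. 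By Step~2, $\mathsf A\in\ker\Phi$. Since the source is multiplicity-free, Schur's lemma shows that $\ker\Phi$ is a sum of isotypic summands. Hence it suffices to check that $\Phi$ is nonzero on $\mathbb S_{(3,1)}=\Lambda^2(\Sym^2V^*)$; it is clearly nonzero on $\mathbb S_{(4)}$, since it does not kill a totally symmetric tensor. Then $\ker\Phi=\mathbb S_{(2,2)}\subset\Sym^2(\Sym^2V^*)$, which gives \eqref{eq:quadratic-pair-symmetry}.

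The main point to verify is this nonvanishing. I would test $\Phi$ on the explicit element
$$t=(e^1e^1)\otimes(e^1e^2)-(e^1e^2)\otimes(e^1e^1).$$
Evaluate $\Phi(t)$ at $(u,w,y,z)=(e_1,e_1,e_1,e_2)$. The first term contributes $1$ before symmetrization. The second term contributes only when $e_2$ sits in one of the first three slots, which does not happen for this slot choice. So $\Phi(t)\neq0$.

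As a fallback, the pair symmetry can also be derived by hand. Write the cyclic identity four times, with last slot $z,y,u,w$ respectively. Then combine them with signs, as in the classical derivation of $R_{abcd}=R_{cdab}$ from the first Bianchi identity, using the pair symmetries of Step~1 in place of skew-symmetries.
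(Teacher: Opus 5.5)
Your proof is correct. Steps 1 and 2 match the paper, but for the pair symmetry your main argument takes a different route.

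\textbf{The paper's route.} The paper never decomposes $\Sym^2V^*\otimes\Sym^2V^*$. Writing $Y(u,w,y;z)$ for the cyclic sum, it adds $Y(u,w,y;z)=0$ and $Y(u,w,z;y)=0$ to get $2\mathsf A(u,w,y,z)$ plus four terms. It then adds $Y(y,z,u;w)=0$ and $Y(y,z,w;u)=0$ to get $2\mathsf A(y,z,u,w)$ plus the same four terms, using the within-pair symmetries, and subtracts. This is exactly your fallback, with signs $+,+,-,-$.

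\textbf{Your route.} You use the plethysms $\Sym^2(\Sym^2V^*)=\mathbb S_{(4)}\oplus\mathbb S_{(2,2)}$ and $\Lambda^2(\Sym^2V^*)=\mathbb S_{(3,1)}$, together with multiplicity-freeness and a test vector. All of this is sound:
\begin{itemize}
\item The kernel of an equivariant map out of a multiplicity-free module is a sum of some of its irreducible summands.
\item Your $t$ does lie in $\Lambda^2(\Sym^2V^*)$.
\item $\Phi(t)(e_1,e_1,e_1,e_2)=t(e_1,e_1,e_1,e_2)\neq0$, since the first three arguments coincide. This holds for either normalization of the symmetric product.
\item For $\dim V=1$ the $(3,1)$ summand vanishes, so nothing is needed there.
\end{itemize}

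\textbf{Comparison.} Your approach gives a sharper structural statement. Since $\mathbb S_{(2,2)}$ does not occur in $\Sym^3V^*\otimes V^*$, you get $\ker\Phi=\mathbb S_{(2,2)}V^*$ exactly. So the tensors $\mathsf A$ coming from such $Q$ are precisely the Young type $(2,2)$ tensors mentioned in the paper's introduction, not merely pair-symmetric ones. The paper's approach is a three-line identity check that needs no plethysm, complete reducibility, or Schur's lemma. That is all the later projector estimate in Proposition~\ref{prop:quadratic-projector} actually uses.
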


\begin{proof}
The first two symmetries are immediate. Polarizing
$\mathsf A(x,x,x,z)=0$ gives the Young relation in
\eqref{eq:quadratic-young-relations}.  Write
$$
 Y(u,w,y;z):=\mathsf A(u,w,y,z)+\mathsf A(w,y,u,z)
              +\mathsf A(y,u,w,z).
$$
Thus $Y=0$.  Adding $Y(u,w,y;z)=0$ and $Y(u,w,z;y)=0$ gives
$2\mathsf A(u,w,y,z)$ plus four further terms.  Adding
$Y(y,z,u;w)=0$ and $Y(y,z,w;u)=0$ gives
$2\mathsf A(y,z,u,w)$ plus the same four terms, in a different order, by
the within-pair symmetries.  Subtracting the two identities proves
\eqref{eq:quadratic-pair-symmetry}.
\end{proof}

\begin{proposition}[Elimination of degree-two projectors]
\label{prop:quadratic-projector}
Let $n$ be even, let $(M^n,g)$ be closed, and suppose that
$\frac5{13}\leq\sec_{\cA}\leq1$. There is no flow-invariant orthogonal
projector
$$
 P\in C^\infty(\SM,\Sym^2\cN)
$$
of constant rank
$$
  1\leq r\leq n-2
$$
and vertical Fourier degree at most two.
\end{proposition}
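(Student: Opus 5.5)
We will argue by contradiction, applying the full sign \eqref{eq:sym2-full-sign} to $P$ itself rather than only to its top coefficient. Suppose $P$ exists.

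\textbf{Reduction to the quadratic tensor.} Since the constant-rank projector $P$ is even with Fourier degree at most two, $P=P_0+P_2$. The degree-zero case is impossible: $P_0$ would be a $v$-independent endomorphism $Q_x$ with $Q_xv=0$ for every unit $v$, forcing $Q_x=0$ and contradicting $r\geq1$. We therefore homogenize, setting
$$
  \widetilde P_x(v):=|v|^2P_0(x)+P_2(x,v),
$$
which is a homogeneous quadratic $\Sym^2T_xM$-valued polynomial with $\widetilde P_x(v)v=0$. Lemma~\ref{lem:quadratic-young} then produces a $4$-tensor $\mathsf A$ on $M$ with three properties: it is symmetric within each pair, it satisfies the Young relation $\mathsf A_{(uwy)z}=0$, and it has the pair symmetry \eqref{eq:quadratic-pair-symmetry}. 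These are exactly the symmetries of a tensor of Young type $(2,2)$, i.e.\ a symmetric-pair analogue of an algebraic curvature tensor. As in Lemma~\ref{lem:killing-equation}, $XP=0$ becomes the generalized Killing equation $(\nabla_v\mathsf A)(v,v,\cdot,\cdot)=0$. Polarizing it and combining with the pair symmetry, we would extract antisymmetry relations for $\nabla\mathsf A$, of the form $\nabla_{(e}\mathsf A_{uw)yz}=0$ together with its pair-swapped version. The projector identities $P^2=P$, $\tr P=r$ and $|P|^2=r$ pointwise on $\SM$ give fibrewise normalizations of $\int|P|^2$ and $\int|P(v)v|^2$ in terms of $r$.

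\textbf{The Pestov form.} Next we write $\cA=\frac5{13}I+\frac8{13}\cE$ with $0\leq\sec_\cE\leq1$ and evaluate $\mathscr W^{(2)}_{\cA}(P)$ fibrewise.
\begin{enumerate}[label=\textup{(\alph*)}]
\item For the constant-curvature part $\mathscr W^{(2)}_I(P)$, we would compute exactly, using the harmonic splitting $P=P_0+P_2$, the eigenvalues $\lambda_0=0$ and $\lambda_2=2n$, and the commutator-action identity of \cite[Lemma~4.6]{CLMS24} applied to each mode. The cross terms vanish by orthogonality of Fourier degrees.
\item For the radial losses in the $\cE$-part, we complete squares with the Jacobi form exactly as in Lemma~\ref{lem:sym2-general-estimate}.
\item For the mixed term $-2\sum\cE(\alpha_{ab},\beta_{ab})$, which is the key point, we would not use the crude Lemma~\ref{lem:BBK} bound of \eqref{eq:sym2-residual}. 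Since $\beta_{ab}=v\wedge z_{ab}$ and $z_{ab}$ is linear in $\mathsf A(v,\cdot,a,b)$, the pair symmetry lets us rewrite $\sum_{a,b}\cE(P e_b\wedge e_a,\,v\wedge z_{ab})$ as a contraction of $\cE$ against a tensor built from $\mathsf A\otimes\mathsf A$. The first Bianchi identity for $\cE$, combined with the Young relation of $\mathsf A$, should then cancel one of the three cyclic contributions. This leaves a term that can be bounded by a Jacobi form (centered at $\frac12\Id$, with loss $\frac12$) and a smaller off-diagonal piece controlled by \eqref{eq:BK-simple-form}.
\end{enumerate}

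\textbf{Conclusion.} Collecting the three estimates, we expect a lower bound of the form $\mathscr W^{(2)}_{\cA}(P)\geq c(n,r)\,|P_2|^2$ with $c(n,r)>0$ for $\delta=5/13$. The hypothesis $r\leq n-2$ enters here. It excludes $P=\Id_{v^\perp}$, which is genuinely invariant and has $P_2\neq0$ but lies in the kernel of all these forms. We would use it through the normalization $\int|P(v)v|^2$ versus $r$ to keep the constant-curvature gain positive; compare the rank bound in Theorem~\ref{thm:CLMS-input}(ii). Integrating over $M$ then contradicts \eqref{eq:sym2-full-sign} unless $P_2=0$, and the degree-zero case has already been excluded.

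\textbf{Main obstacle.} The main difficulty is step (c): identifying precisely which Bianchi/Young cancellation improves the constant $\frac43$ of \eqref{eq:sym2-residual} enough to beat $5/13$. We would first test the computation on the model $\mathsf A=$ Kulkarni--Nomizu-type tensors and on $P=\Id_{v^\perp}-w w^\top$-type examples to calibrate the coefficients before attempting the general bound.
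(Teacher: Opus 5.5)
\textbf{There is a gap.} Your frame is right: the full sign \eqref{eq:sym2-full-sign} applied to $P$, the splitting $\cA=\frac5{13}I+\frac8{13}\cE$, and the $(2,2)$ symmetries of Lemma~\ref{lem:quadratic-young}. Step (a) is also fine and gives $\mathscr W^{(2)}_I(P)=2rs$. But the proof lives in step (c), which you leave as ``should then cancel'', and step (b) as you state it is too lossy. The missing idea is to use the projector structure pointwise rather than through Fourier modes. At fixed $v$ split $v^\perp=E_v\oplus F_v$ with $E_v=\operatorname{Im}P(v)$, $\dim E_v=r$, and $\dim F_v=s=n-1-r\geq1$. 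Differentiating $P^2=P$ shows that $\nabla^SP$ has only $E_v$--$\ker P(v)$ blocks. Differentiating $P(v)v=0$ forces $z_{\alpha0}=-e_\alpha$ and $z_{p0}=0$. In \eqref{eq:sym2-fibrewise-pestov-form} the radial quadratic terms from $z_{\alpha0}$ and $z_{0\alpha}$ then cancel \emph{exactly} against the $(0,\alpha)$ twist term, which leaves
\[
 \mathscr W^{(2)}_{\cB}(P)=2\int_{S(V)}\sum_{\alpha,p}\bigl\{\cB(v\wedge z_{\alpha p},v\wedge z_{\alpha p})-\cB(e_\alpha\wedge e_p,v\wedge z_{\alpha p})\bigr\},
\]
with exact energy $Z=\int\sum|z_{\alpha p}|^2=rs$ and $\mathscr W^{(2)}_I(P)=2Z$. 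Completing squares as in Lemma~\ref{lem:sym2-general-estimate} replaces this exact zero by a loss of $|P|^2=r$ per point. That loss is fatal. For $s=1$ the constant-curvature gain is only $\frac{10}{13}r$ against a radial loss of $\frac8{13}r$, so you would need $\int|M_{\cE}|<\frac18Z$. Even the refined estimate below gives only $\frac13Z$ there.

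For the mixed term $M_{\cE}=\sum_{\alpha,p}\cE(e_\alpha\wedge e_p,v\wedge z_{\alpha p})$, the working mechanism is not a Jacobi piece centred at $\frac12\Id$. Set $T_{\alpha\beta p}=\inner{z_{\alpha p}}{e_\beta}$ and $U_{\alpha pq}=\inner{z_{\alpha p}}{e_q}$. The Young relation and pair symmetry, evaluated for instance on $(v,e_\beta,e_p;e_\alpha)$ and combined with the vanishing $E_v$--$E_v$ and $F_v$--$F_v$ derivative blocks, give $T_{\alpha\beta p}=-T_{\beta\alpha p}$ and $U_{\alpha pq}=-U_{\alpha qp}$. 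Consequently every Jacobi-type diagonal component vanishes. The first Bianchi identity then rewrites the mixed term as $\sum_\alpha\cE(e_\alpha\wedge v,\upsilon_\alpha)+\sum_p\cE(\tau_p,v\wedge e_p)$, with $\upsilon_\alpha\in\Lambda^2F_v$ and $\tau_p\in\Lambda^2E_v$. These tangential forms have at most $\lfloor s/2\rfloor$ and $\lfloor r/2\rfloor$ canonical simple summands respectively, each orthogonal to its radial partner. Using $\sum|z_{\alpha p}|^2=2\sum|\tau_p|^2+2\sum|\upsilon_\alpha|^2$, Lemma~\ref{lem:BBK} and Cauchy--Schwarz give $\int|M_{\cE}|\leq C_{r,s}Z$ with $C_{r,s}<\sqrt2/3<\frac58$, since $r+s$ is odd. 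This beats $\frac5{13}$, whereas the crude bound gives $\frac23>\frac58$. Three further corrections:
\begin{itemize}
\item These antisymmetries concern the vertical derivative, that is, $\mathsf A$ itself at a point. The relations for $\nabla\mathsf A$ you plan to extract from the Killing equation play no role, because $XP=0$ enters only through \eqref{eq:sym2-full-sign}.
\item $P(v)v\equiv0$, so there is no normalization of $\int|P(v)v|^2$ to exploit.
\item The hypothesis $r\leq n-2$ enters simply as $s\geq1$, i.e.\ $Z=rs>0$.
\end{itemize}
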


\begin{proof}
Suppose that such a projector $P$ exists, and denote its rank by $r$.
Fix $x\in M$ and carry out the following algebra on $V=T_xM$.  By
\cite[Lemma~4.2]{CLMS24},
\begin{equation*}
  P=P_0+P_2,
  \qquad
  P_0=\frac rn\Id_V.
\end{equation*}
For completeness, the last identity follows by representing $P$ by a
homogeneous quadratic tensor and using the polarized normal relation
$K(v,v,w,w)=K(w,w,v,v)$; tracing in $w$ gives
$r=n\inner{P_0v}{v}$.

Fix $v\in S(V)$ and decompose
$$
 V=\bbR v\oplus E_v\oplus F_v,
 \qquad
 E_v=\operatorname{Im}P(v),
 \qquad
 \dim E_v=r,
 \qquad
 \dim F_v=s:=n-1-r.
$$
The rank assumption gives $r\geq1$ and $s\geq1$. Choose an adapted
orthonormal basis
$e_0=v$, $(e_\alpha)$ of $E_v$, and $(e_p)$ of $F_v$.  Differentiating
$P^2=P$ shows that $\nabla^V_wP$ has only off-diagonal blocks between
$E_v$ and $\ker P(v)=\bbR v\oplus F_v$. Differentiating $P(v)v=0$ gives
$$
  z_{\alpha0}=-e_\alpha,
  \qquad
  z_{p0}=0.
$$
The only free components are therefore $z_{\alpha p}=z_{p\alpha}$.  In
\eqref{eq:sym2-fibrewise-pestov-form}, the two quadratic terms associated with
$z_{\alpha0}$ and $z_{0\alpha}$ are exactly cancelled by the twist term
with indices $(0,\alpha)$.  The two copies of each free quadratic term
remain, while only the indices $(p,\alpha)$ contribute a free twist term.
Consequently,
\begin{equation}\label{eq:projector-exact-W}
 \mathscr W^{(2)}_{\cB}(P)
 =2\int_{S(V)}\sum_{\alpha,p}
 \left\{
  \cB(v\wedge z_{\alpha p},v\wedge z_{\alpha p})
  -\cB(e_\alpha\wedge e_p,v\wedge z_{\alpha p})
 \right\}.
\end{equation}

The degree-two energy is exact.  Since $\lambda_2=2n$,
\begin{align*}
 \int|\nabla^VP|^2
 &=2n\int|P_2|^2
 =2n\left(r-\frac{r^2}{n}\right)
 =2r(n-r).
\end{align*}
On the other hand, the forced components contribute $2r$ and the free
components occur twice. Hence, with
\begin{equation*}
 Z:=\int\sum_{\alpha,p}|z_{\alpha p}|^2,
\end{equation*}
we have
\begin{equation*}
  Z=r(n-1-r)=rs.
\end{equation*}

It remains to improve the estimate of the mixed term.  Define the quadratic
homogeneous extension
$$
  Q(x):=|x|^2P\left(\frac{x}{|x|}\right).
$$
Lemma \ref{lem:quadratic-young} applies to $Q$.  Set
$$
 T_{\alpha\beta p}:=\inner{z_{\alpha p}}{e_\beta},
 \qquad
 U_{\alpha pq}:=\inner{z_{\alpha p}}{e_q}.
$$
The Young relation, pair symmetry, and the off-diagonal property of
$\nabla^VP$ imply
\begin{equation*}
  T_{\alpha\beta p}=-T_{\beta\alpha p},
  \qquad
  U_{\alpha pq}=-U_{\alpha qp}.
\end{equation*}
Indeed, for example, the polarized identity applied to
$(v,e_\beta,e_p;e_\alpha)$ has first two nonzero terms
$\frac12T_{\alpha\beta p}$ and
$\frac12T_{\beta\alpha p}$, while its third term is an $E_v$--$E_v$ block
of a derivative of $P$ and vanishes.  The proof for $U$ is identical, using
the vanishing $F_v$--$F_v$ block.

Put
$$
 \tau_p:=\sum_{\alpha<\beta}T_{\alpha\beta p}
 e_\alpha\wedge e_\beta,
 \qquad
 \upsilon_\alpha:=\sum_{p<q}U_{\alpha pq}e_p\wedge e_q.
$$
The first Bianchi identity rewrites the mixed term as
\begin{equation*}
 M_{\cE}:=\sum_{\alpha,p}
 \cE(e_\alpha\wedge e_p,v\wedge z_{\alpha p})
 =\sum_\alpha\cE(e_\alpha\wedge v,\upsilon_\alpha)
  +\sum_p\cE(\tau_p,v\wedge e_p).
\end{equation*}
Putting each two-form in canonical form and applying Lemma~\ref{lem:BBK}
to every simple summand yields
\begin{equation*}
 |M_{\cE}|
 \leq\frac23\left(
 \sqrt{\left\lfloor\frac s2\right\rfloor}
 \sum_\alpha|\upsilon_\alpha|
 +\sqrt{\left\lfloor\frac r2\right\rfloor}
 \sum_p|\tau_p|
 \right).
\end{equation*}
Moreover,
\begin{equation}\label{eq:projector-TU-energy}
 \sum_{\alpha,p}|z_{\alpha p}|^2
 =2\sum_p|\tau_p|^2+2\sum_\alpha|\upsilon_\alpha|^2.
\end{equation}
Two applications of Cauchy--Schwarz, first in the indices and then between
the two summands in \eqref{eq:projector-TU-energy}, give
\begin{equation*}
 \int|M_{\cE}|
 \leq C_{r,s}Z,
 \qquad
 C_{r,s}:=\frac{2}{3\sqrt2}
 \sqrt{
  \frac{\lfloor r/2\rfloor}{r}
  +\frac{\lfloor s/2\rfloor}{s}
 }.
\end{equation*}
Since $r+s=n-1$ is odd, $r$ and $s$ cannot both be even. Therefore
\begin{equation*}
  C_{r,s}<\frac{\sqrt2}{3}<\frac58.
\end{equation*}

Finally write $\cA=\frac5{13}I+\frac8{13}\cE$.  In
\eqref{eq:projector-exact-W}, the $I$-mixed term vanishes because a
tangential two-form is orthogonal to a radial two-form, while the first
$\cE$-term is nonnegative.  Consequently,
\begin{align*}
 \mathscr W^{(2)}_{\cA}(P)
 &\geq\frac{16}{13}
 \left(\frac58Z-\int|M_{\cE}|\right)\\
 &\geq\frac{16}{13}\left(\frac58-C_{r,s}\right)Z>0.
\end{align*}
The estimate is fibrewise in $x$ and is strict because $Z=rs>0$.
Integrating over $M$ contradicts the full Pestov sign
\eqref{eq:sym2-full-sign}.
\end{proof}

\begin{proposition}
\label{prop:symmetric-projector-vanishing}
Let $n\geq12$ be divisible by $4$ and let $(M^n,g)$ be closed and
$5/13$-pinched. Then there is no nontrivial (that is, neither $0$ nor
$\Id_{\cN}$) even flow-invariant orthogonal projector in
$C^\infty(\SM,\Sym^2\cN)$.
\end{proposition}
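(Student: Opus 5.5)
Suppose $P$ is a nontrivial even flow-invariant orthogonal projector in $C^\infty(\SM,\Sym^2\cN)$. We argue by contradiction and exclude $P$ by a descent on the top Fourier degree. We regard $P$ as a section of $\Sym^2(\pi^*TM)$ that vanishes on $v$. By \cref{thm:CLMS-input}, or directly by \cite[Theorem~4.1]{GPSU16}, $P$ has finite vertical Fourier degree; since $P$ is even, that degree $k$ is even. If the rank of $P$ is not constant, connectedness of $\SM$ gives a contradiction, because $P^2=P$ makes $\tr P$ a continuous integer-valued function. So the rank is a constant $r$. Nontriviality means $r\neq0$ and $r\neq n-1$, that is, $1\leq r\leq n-2$. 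Passing to $\Id_{\cN}-P$ does not change this range and may be used if convenient, but it is not needed.

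Let $S$ be the top coefficient $P_k$, with $k\geq2$ (the case $k=0$ is addressed below). The top-degree part of $XP=0$ is $X_+P_k=0$. By \cite[Lemma~4.2]{CLMS24}, the projector identity $P^2=P$ together with $P(v)v=0$ implies that $S$ satisfies the normal conditions \eqref{eq:sym2-normal-conditions}. Lemma~\ref{lem:sym2-pestov-sign} then gives $\int_M\mathscr W^{(2)}_{\cA}(S)\,d\vol_g\leq0$.

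\textbf{Case $k\geq6$.} Since $n\geq12$ and $5/13\leq\sec_{\cA}\leq1$, Proposition~\ref{prop:sym2-high-degree} shows that $\mathscr W^{(2)}_{\cA}(S)>0$ at every point where $S\neq0$. Because $S$ is a nonzero smooth section, the integral is strictly positive, which is a contradiction.

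\textbf{Case $k=4$.} We check that the top quartic coefficient lies in the admissible space $\mathscr N^{(2)}_4$; this is precisely the space cut out by \eqref{eq:sym2-normal-conditions} in degree four. Proposition~\ref{prop:sym2-quartic} then gives the same pointwise positivity and the same contradiction.

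\textbf{Case $k\leq2$.} If $k=0$, then $P(x,v)=P_x$ is independent of $v$. The condition $P_xv=0$ for all $v$ forces $P=0$, contradicting $r\geq1$. If $k=2$, then $P$ has vertical Fourier degree at most two and constant rank $1\leq r\leq n-2$. Since $n$ is even, Proposition~\ref{prop:quadratic-projector} applies directly and excludes such a projector. Note that this proposition uses the full Pestov sign \eqref{eq:sym2-full-sign} for $P$ itself, not only its top coefficient.

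The main point needing care is matching hypotheses. First, the top coefficient in degree four must genuinely land in $\mathscr N^{(2)}_4$. Second, the degree-two case must be applied to $P$, which has constant rank, rather than to $P_2$. Third, the strict inequalities must be fibrewise and must integrate to a strict contradiction, which requires the top coefficient to be nonzero on an open set. The hypothesis $4\mid n$ enters only through the rank restriction supplied by \cref{thm:CLMS-input}. The argument above uses only $n\geq12$ and $n$ even, so it is in fact uniform in $r$.
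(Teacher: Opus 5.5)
Your proof is correct and follows the paper's argument exactly. You use finite even Fourier degree, exclude top degree $k\geq6$ by Proposition~\ref{prop:sym2-high-degree} and $k=4$ by Proposition~\ref{prop:sym2-quartic} via the localized Pestov sign, dispose of $k=0$ using $P_xv=0$, and exclude $k=2$ by applying Proposition~\ref{prop:quadratic-projector} to $P$ itself. The connectedness argument for constant rank and the observation that only $n\geq12$ and $n$ even are used are details the paper leaves implicit.
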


\begin{proof}
Let $P$ be such a projector. It has finite even Fourier degree. If its
highest degree is $k\geq6$, Proposition \ref{prop:sym2-high-degree} and the
localized Pestov sign give a contradiction. If the highest degree is $4$,
Proposition \ref{prop:sym2-quartic} gives the same contradiction. Hence
$\deg P\leq2$. Degree zero is impossible because an endomorphism independent
of $v$ and annihilating every $v$ must vanish. Thus $P$ has degree two. Since
$P$ is neither $0$ nor $\Id_{\cN}$, its rank satisfies $1\leq r\leq n-2$,
and Proposition \ref{prop:quadratic-projector} gives the final contradiction.
\end{proof}

\section{Exceptional form-valued obstructions}\label{sec:exceptional-dimensions}

The form-valued obstructions in dimensions $8$ and $134$ are most
efficiently treated without separating their highest Fourier coefficients.
We first establish a Weitzenb\"ock sign for the spherical evaluation inner
product and the resulting orbit-type curvature identity.  The $7$-dimensional obstruction is of a
different type and is treated at the end of the section by a short cubic
refinement.

\subsection{The spherical evaluation inner product and the orbit-type curvature identity}
\label{sec:orbit-type-framework}

Let $V$ be an $n$-dimensional Euclidean vector space, and for integers
$k\geq1$ and $1\leq p\leq n-1$ put
\begin{equation}\label{eq:normal-form-module}
 E_{k,p}(V):=
 \left\{
 T\in\Sym^kV^*\otimes\Lambda^pV^*:
 \iota_vT(v,\ldots,v)=0\ \text{for every }v\in V
 \right\}.
\end{equation}
This is an $\mathrm O(V)$-module.  On it we use the spherical evaluation inner
product
\begin{equation}\label{eq:evaluation-inner-product}
 \inner{S}{T}_{\mathrm{ev}}
 :=\int_{S(V)}
 \inner{S(v,\ldots,v)}{T(v,\ldots,v)}_{\Lambda^pV^*}
 \,d\sigma(v),
\end{equation}
where $d\sigma$ is normalized spherical measure.  This inner product is
positive definite and $\mathrm O(V)$-invariant.  We use the induced product
on $V^*\otimes E_{k,p}(V)$.

Let $\rho$ be the infinitesimal orthogonal action on $E_{k,p}(V)$ and define
\begin{equation}\label{eq:exceptional-B}
 B_{k,p}(a\otimes T)
 :=\sum_i e^i\otimes\rho(e_i\wedge a)T.
\end{equation}
The generalized-Killing symbol is
\begin{equation*}
 \sigma_{k,p}(U)(v^{k+1})=U_v(v^k),
 \qquad U\in V^*\otimes E_{k,p}(V).
\end{equation*}

\begin{lemma}
\label{lem:exceptional-symbol-sign}
If $U\in V^*\otimes E_{k,p}(V)$ satisfies $\sigma_{k,p}(U)=0$, then
\begin{equation}\label{eq:exceptional-symbol-sign}
 \inner{B_{k,p}U}{U}_{\mathrm{ev}}\leq0.
\end{equation}
More precisely, for a unit vector $v$, put $W=v^\perp$, choose an
orthonormal basis $e_0=v,e_1,\ldots,e_{n-1}$, write
$U=\sum_i e^i\otimes U_i$, and set
$$
 u_a:=U_a(v^k)\in\Lambda^pW^*,
 \qquad
 \eta_v:=\sum_{a=1}^{n-1}e^a\otimes u_a
 \in W^*\otimes\Lambda^pW^*.
$$
Then the integrand of the left-hand side of
\eqref{eq:exceptional-symbol-sign} is
\begin{equation*}
 -|\varepsilon\eta_v|^2-|\iota\eta_v|^2,
 \qquad
 \varepsilon\eta_v:=\sum_a e^a\wedge u_a,
 \quad
 \iota\eta_v:=\sum_a\iota_{e_a}u_a.
\end{equation*}
\end{lemma}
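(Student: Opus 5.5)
We compute the integrand of $\inner{B_{k,p}U}{U}_{\mathrm{ev}}$ pointwise on the sphere, in the adapted basis, and then use the symbol condition to discard every term involving the radial direction. Since the evaluation inner product is defined by integrating $\inner{S(v^k)}{T(v^k)}$ over $S(V)$, we have
$$
 \inner{B_{k,p}U}{U}_{\mathrm{ev}}
 =\int_{S(V)}\sum_i\inner{(\rho(e_i\wedge e_j)U_j)(v^k)}{U_i(v^k)}\,d\sigma(v),
$$
with summation over $i,j$. The plan is to evaluate $(\rho(X)T)(v^k)$ for $X\in\mathfrak{so}(V)$ by splitting $\rho(X)$ into its action on the $k$ symmetric slots and its action on the $\Lambda^p$ factor. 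This gives
$$
 (\rho(X)T)(v^k)=-k\,T(Xv,v^{k-1})+\rho_{\Lambda^p}(X)\bigl(T(v^k)\bigr),
$$
with the sign fixed by \eqref{eq:wedge-skew-convention}. We take $e_0=v$ and split the double sum over $i,j$ according to whether the indices are $0$ or tangential.

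\emph{Step 1: use the symbol condition.} The condition $\sigma_{k,p}(U)=0$ says $U_v(v^k)=0$. At our point this is $U_0(v^k)=0$, so every term with $i=0$ vanishes. For $i=a\geq1$ and $j=0$, the term involves $(\rho(e_a\wedge v)U_0)(v^k)$. Its $\Lambda^p$-part is $\rho_{\Lambda^p}(e_a\wedge v)U_0(v^k)=0$. Its slot part is $\mp k\,U_0(e_a,v^{k-1})$, and this should be eliminated by polarizing the symbol identity once. Differentiating $U_w(w^k)=0$ at $w=v$ in direction $e_a$ gives
$$
 U_a(v^k)+k\,U_0(e_a,v^{k-1})=0,
$$
so $k\,U_0(e_a,v^{k-1})=-u_a$. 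The $(a,0)$ terms therefore become $\pm|u_a|^2$.

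\emph{Step 2: the tangential terms.} For $i=a$ and $j=b$ both tangential, $e_a\wedge e_b$ fixes $v$, so the slot part of $\rho(e_a\wedge e_b)$ vanishes when evaluated on $v^k$. What remains is
$$
 \sum_{a,b}\inner{\rho_{\Lambda^p}(e_a\wedge e_b)u_b}{u_a}.
$$
By \eqref{eq:wedge-skew-convention}, $\rho_{\Lambda^p}(e_a\wedge e_b)=e^b\wedge\iota_{e_a}-e^a\wedge\iota_{e_b}$ up to sign. The normality condition $\iota_vT(v^k)=0$ gives $u_a\in\Lambda^pW^*$, so the $v$-component can be ignored. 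Expanding the sum then gives $\pm\bigl(\sum_a|u_a|^2-|\varepsilon\eta_v|^2\bigr)$ together with a term in $|\iota\eta_v|^2$. For this we use the standard identities on $W^*\otimes\Lambda^pW^*$:
$$
 \sum_{a,b}\inner{e^b\wedge\iota_{e_a}u_b}{u_a}=|\iota\eta|^2,
 \qquad
 \sum_{a,b}\inner{e^a\wedge\iota_{e_b}u_b}{u_a}=\sum_a|u_a|^2-|\varepsilon\eta|^2-\cdots
$$
(more precisely, $\sum_a\iota_{e_a}(e^a\wedge\cdot)$ together with the Cartan relation $e^a\wedge\iota_{e_b}+\iota_{e_b}e^a\wedge=\delta_{ab}$).

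\emph{Main obstacle.} The delicate point is the sign bookkeeping. We must check that the $|\eta_v|^2=\sum_a|u_a|^2$ contribution from the $(a,0)$ terms of Step 1 exactly cancels the $\sum_a|u_a|^2$ left over from the Cartan relation in Step 2. Once it does, the integrand is
$$
 -|\varepsilon\eta_v|^2-|\iota\eta_v|^2,
$$
and this is pointwise nonpositive. Integrating over $S(V)$ then gives \eqref{eq:exceptional-symbol-sign}. To calibrate the signs we would test the computation on the case $p=1$, $k=1$, where $U$ is a Killing-type symbol and the answer is known, before trusting the general cancellation.
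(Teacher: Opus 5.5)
Your route is the paper's: evaluate pointwise in a basis with $e_0=v$, kill the $i=0$ terms with $U_0(v^k)=0$, and treat the $(a,0)$ terms with the once-polarized symbol identity $u_a+kU_0(e_a,v^{k-1})=0$. The tangential block then reduces to the $\Lambda^pW^*$ action, where the paper simply invokes $\Id-B^{(p)}_W=\varepsilon^*\varepsilon+\iota^*\iota$. As written, however, the proposal does not yet prove the lemma. The signs, which are the entire content of the statement, are left as $\pm$ or "up to sign". Moreover, the two contraction identities you display are interchanged. For $p=1$, regard $\eta$ as the matrix $\eta_{ac}=(u_a)_c$. Your first identity would then assert $\tr(\eta^2)=(\tr\eta)^2$, which is false in general.

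No calibration on $p=k=1$ is needed, because every sign is fixed by \eqref{eq:wedge-skew-convention}. Since $(e_a\wedge v)v=-e_a$, your own slot formula gives $(\rho(e_a\wedge v)U_0)(v^k)=kU_0(e_a,v^{k-1})=-u_a$. So the $(a,0)$ terms contribute exactly $-\sum_a|u_a|^2$. On $\Lambda^pW^*$ one has exactly $\rho(e_a\wedge e_b)=e^b\wedge\iota_{e_a}-e^a\wedge\iota_{e_b}$, with no sign ambiguity. The correct contractions are
\[
\sum_{a,b}\inner{e^a\wedge\iota_{e_b}u_b}{u_a}=|\iota\eta_v|^2,
\qquad
\sum_{a,b}\inner{e^b\wedge\iota_{e_a}u_b}{u_a}=\sum_{a,b}\inner{\iota_{e_a}u_b}{\iota_{e_b}u_a}=\sum_a|u_a|^2-|\varepsilon\eta_v|^2 .
\]
The second follows by expanding $|\varepsilon\eta_v|^2=\sum_{a,b}\inner{\iota_{e_b}(e^a\wedge u_a)}{u_b}$ with $\iota_{e_b}(e^a\wedge\omega)=\delta_{ab}\omega-e^a\wedge\iota_{e_b}\omega$. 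Hence the tangential block equals $\sum_a|u_a|^2-|\varepsilon\eta_v|^2-|\iota\eta_v|^2$. Adding $-\sum_a|u_a|^2$ from the $(a,0)$ terms gives $-|\varepsilon\eta_v|^2-|\iota\eta_v|^2$, which is precisely the identity $\Id-B_W^{(p)}=\varepsilon^*\varepsilon+\iota^*\iota$ used in the paper. With these corrections your argument is complete.
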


\begin{proof}
The normal relation in \eqref{eq:normal-form-module} gives
$U_i(v^k)\in\Lambda^pW^*$ for every $i$.  At $v=e_0$, the symbol equation
first gives
\begin{equation*}
 u_0:=U_0(v^k)=0.
\end{equation*}
Differentiate the polynomial identity $U_y(y^k)=0$ at $y=v$ in the
direction $e_a\in W$.  This gives
\begin{equation*}
 u_a+kU_0(e_a,v^{k-1})=0.
\end{equation*}
With the convention \eqref{eq:wedge-skew-convention}, one has
$(e_a\wedge e_0)v=-e_a$.  Hence
\begin{equation*}
 \bigl(\rho(e_a\wedge e_0)U_0\bigr)(v^k)
 =kU_0(e_a,v^{k-1})=-u_a.
\end{equation*}
For $a,b\geq1$, the skew endomorphism $e_a\wedge e_b$ fixes $v$, so its
action on $U_b(v^k)$ is only the ordinary action on
$\Lambda^pW^*$.  It follows from \eqref{eq:exceptional-B} that
\begin{align}
 \sum_i\inner{(B_{k,p}U)_i(v^k)}{u_i}
 &=-\sum_a|u_a|^2
   +\sum_{a,b}
    \inner{\rho_{\Lambda^pW}(e_a\wedge e_b)u_b}{u_a}.
 \label{eq:orbit-pointwise-B}
\end{align}
Let $B_W^{(p)}$ denote the conformal-weight operator on
$W^*\otimes\Lambda^pW^*$.  Direct expansion of exterior multiplication and
contraction gives the standard identity
\begin{equation*}
 \Id-B_W^{(p)}=\varepsilon^*\varepsilon+\iota^*\iota.
\end{equation*}
Thus the right-hand side of \eqref{eq:orbit-pointwise-B} is exactly
$-|\varepsilon\eta_v|^2-|\iota\eta_v|^2$.  Integrating over $S(V)$ proves
the assertion.
\end{proof}

Let $\mathscr E_{k,p}\to M$ be the associated bundle whose fibre at $x$
is $E_{k,p}(T_xM)$.  For a smooth section $T$ define
\begin{equation}\label{eq:exceptional-Killing-operator}
 (\cD_{k,p}T)(v^{k+1}):=(\nabla_vT)(v^k).
\end{equation}
The expression on the right is homogeneous of degree $k+1$ in $v$, so
\eqref{eq:exceptional-Killing-operator} determines the full symmetrized
covariant derivative.

\begin{corollary}
\label{cor:exceptional-integral-sign}
Let $(M^n,g)$ be closed and let
$T\in C^\infty(M,\mathscr E_{k,p})$ satisfy $\cD_{k,p}T=0$.  If $g$ is
$\delta$-pinched at scale $\kappa_0$ and
$\cA=\kappa_0^{-1}\Rop^g$, then
\begin{equation*}
 \int_M
 \inner{\cQ_{\mathscr E_{k,p}}(\cA)T}{T}_{\mathrm{ev}}
 \,d\vol_g\leq0.
\end{equation*}
\end{corollary}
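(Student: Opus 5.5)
The plan is the standard Weitzenb\"ock integration-by-parts argument, carried out in the spherical evaluation inner product and using Lemma~\ref{lem:exceptional-symbol-sign} for the sign.

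\emph{Second-order contraction.} First, define on $T^*M\otimes\mathscr E_{k,p}$ the parallel endomorphism field $B_{k,p}$, fibrewise given by \eqref{eq:exceptional-B}. Let $\widetilde B$ be the associated contraction on second derivatives,
$$
\widetilde B(\nabla^2T):=\sum_{i,j}\rho(e_i\wedge e_j)\nabla^2_{e_i,e_j}T .
$$
Since $B_{k,p}$ is built from the metric and the $\mathrm O(n)$-action, it is parallel. Both $\rho$ and the evaluation product \eqref{eq:evaluation-inner-product} are $\mathrm O(n)$-invariant, so they are preserved by the Levi-Civita connection. Hence $\rho(e_i\wedge e_j)$ is skew for $\inner{\cdot}{\cdot}_{\mathrm{ev}}$, and integration by parts gives
$$
\int_M\inner{\widetilde B(\nabla^2T)}{T}_{\mathrm{ev}}\,d\vol_g
=-\int_M\inner{B_{k,p}\nabla T}{\nabla T}_{\mathrm{ev}}\,d\vol_g .
$$
The sign must be fixed with the convention \eqref{eq:wedge-skew-convention}.

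\emph{Pointwise curvature identity.} Next, exploit the antisymmetry of $e_i\wedge e_j$ in $i,j$. Only the alternating part of $\nabla^2T$ survives, and that part is $\frac12 R(e_i,e_j)T$. By \eqref{eq:pestov-sign-conversion},
$$
R(e_i,e_j)T=\rho\bigl(\Rop^g(e_i\wedge e_j)\bigr)T .
$$
Therefore
$$
\inner{\widetilde B(\nabla^2T)}{T}_{\mathrm{ev}}
=c\sum_{\alpha,\beta}\inner{\Rop^gE_\alpha}{E_\beta}\,\inner{\rho(E_\alpha)\rho(E_\beta)T}{T}_{\mathrm{ev}}
=-c\,\inner{\cQ_{\mathscr E_{k,p}}(\Rop^g)T}{T}_{\mathrm{ev}},
$$
with a universal constant $c>0$; it is $1$ or $2$ depending on how $e_i\wedge e_j$ is normalized relative to the basis $E_\alpha$. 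Note $\rho(E)^*=-\rho(E)$. Combining with the first step,
$$
c\int_M\inner{\cQ(\Rop^g)T}{T}_{\mathrm{ev}}\,d\vol_g=\int_M\inner{B_{k,p}\nabla T}{\nabla T}_{\mathrm{ev}}\,d\vol_g .
$$

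\emph{Symbol sign.} The hypothesis $\cD_{k,p}T=0$ says exactly that $U:=\nabla T\in T^*_xM\otimes E_{k,p}(T_xM)$ satisfies $\sigma_{k,p}(U)=0$ at every point $x$. Lemma~\ref{lem:exceptional-symbol-sign} then gives $\inner{B_{k,p}\nabla T}{\nabla T}_{\mathrm{ev}}\le0$ pointwise. Dividing by $c\kappa_0>0$ and using linearity, $\cA=\kappa_0^{-1}\Rop^g$, yields the stated inequality.

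\emph{Main obstacle.} The delicate point is the sign and normalization bookkeeping in the second step. One must check that the same operator $B_{k,p}$ appearing in Lemma~\ref{lem:exceptional-symbol-sign} produces $-\cQ$, not $+\cQ$, given the identification \eqref{eq:wedge-skew-convention} and \eqref{eq:q-Q-exact-convention}. I would verify this on the constant-curvature model $\Rop^g=I$, where $\cQ(I)$ is the positive Casimir and $\sum_i e^i\otimes\rho(e_i\wedge\cdot)$ contracts to $-\frac12$ times it. This matches the convention of \cite{CLMSW25} used in Appendix~\ref{sec:weitzenbock}, whose Corollary~\ref{cor:integral-sign} is the analogue for the hook module.
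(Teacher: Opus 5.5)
Your proposal is correct and follows the paper's proof. Both arguments use the integrated Weitzenb\"ock identity $\int_M\langle q_{\mathscr E_{k,p}}(R^g)T,T\rangle_{\mathrm{ev}}=-\int_M\langle B_{k,p}\nabla T,\nabla T\rangle_{\mathrm{ev}}$ in the parallel evaluation inner product, take the sign from Lemma~\ref{lem:exceptional-symbol-sign}, and finish with $q=-\kappa_0\cQ(\cA)$. The only difference is that you derive the universal identity $\widetilde B_{k,p}(\nabla^2T)=q_{\mathscr E_{k,p}}(R^g)T$ by hand, whereas the paper cites it from \cite{CLMSW25}; with the paper's orthonormal basis $E_\alpha$ your constant is exactly $c=1$.
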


\begin{proof}
The inner product \eqref{eq:evaluation-inner-product} is
$\mathrm O(n)$-invariant and hence parallel on the associated bundle.  In
particular every infinitesimal orthogonal action is skew-adjoint, and
$B_{k,p}$ is self-adjoint.  The universal identity, with the type-correct
contraction introduced in Appendix~\ref{sec:weitzenbock}, is
$$
 \widetilde B_{k,p}(\nabla^2T)=q_{\mathscr E_{k,p}}(R^g)T.
$$
Integration by parts and Lemma~\ref{lem:exceptional-symbol-sign} give
$$
 \int_M\inner{q_{\mathscr E_{k,p}}(R^g)T}{T}_{\mathrm{ev}}
 =-\int_M\inner{B_{k,p}\nabla T}{\nabla T}_{\mathrm{ev}}\geq0.
$$
Now use $q_{\mathscr E_{k,p}}(R^g)=-\kappa_0\cQ_{\mathscr E_{k,p}}(\cA)$.
\end{proof}

\begin{lemma}[Homogeneous tensor associated with a finite Fourier expansion]
\label{lem:finite-fourier-homogenization}
Let
$$
 \varphi\in C^\infty(\SM,\Lambda^p\cN),
 \qquad X\varphi=0,
$$
have finite Fourier degree and pure parity, and let $k$ be its highest
Fourier degree.  If $\widehat\varphi_j$ denotes the homogeneous harmonic
extension of its degree-$j$ coefficient, then
\begin{equation}\label{eq:finite-fourier-homogenization}
 F_x(y):=
 \sum_{\substack{j\leq k\\j\equiv k\,\mathrm{mod}\,2}}
 |y|^{k-j}\widehat\varphi_{j,x}(y)
\end{equation}
is a homogeneous polynomial of degree $k$.  Polarizing $F$ defines
\begin{equation*}
 T\in C^\infty
 \bigl(M,\Sym^kT^*M\otimes\Lambda^pT^*M\bigr),
 \qquad T_x(v^k)=F_x(v),
\end{equation*}
with
$$
 T\in C^\infty(M,\mathscr E_{k,p}),
 \qquad \cD_{k,p}T=0.
$$
\end{lemma}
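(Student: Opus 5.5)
The argument has three steps: polynomiality of $F$, normality of $T$, and the Killing equation for $T$.

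\emph{Polynomiality.} Since $k-j$ is even for every index in \eqref{eq:finite-fourier-homogenization}, each factor $|y|^{k-j}=(|y|^2)^{(k-j)/2}$ is a polynomial. Each $\widehat\varphi_{j,x}$ is a homogeneous harmonic polynomial of degree $j$ with values in $\Lambda^pT^*_xM$. Hence $F_x$ is a homogeneous polynomial of degree $k$, and it depends smoothly on $x$. On the unit sphere, $F_x(v)=\sum_j\varphi_j(x,v)=\varphi(x,v)$ because of the parity assumption. Polarizing $F_x$ gives a smooth section $T$ of $\Sym^kT^*M\otimes\Lambda^pT^*M$ with $T_x(v^k)=F_x(v)$.

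\emph{Normality.} By homogeneity, $T_x(v^k)=|v|^k\varphi(x,v/|v|)$ for $v\ne0$. Since $\varphi$ takes values in $\Lambda^p\cN$, that is, in forms on $v^\perp$, we get $\iota_v\varphi(x,v)=0$. Therefore $\iota_vT_x(v^k)=0$ for all $v$, with $v=0$ trivial, and so $T\in C^\infty(M,\mathscr E_{k,p})$.

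\emph{Killing equation.} Fix a unit $v$ and let $\gamma$ be the geodesic with $\dot\gamma(0)=v$. Because $\dot\gamma$ is parallel and unit,
$$
T_{\gamma(t)}(\dot\gamma(t)^k)=\varphi(\varphi_t(x,v)),
$$
where the left side is viewed in $\Lambda^pT^*M$ along $\gamma$. Differentiating covariantly at $t=0$, as in the proof of Lemma~\ref{lem:killing-equation}, gives
$$
(\nabla_vT)(v^k)=(X\varphi)(x,v)=0.
$$
The one point needing care is that $X$ acting on sections of $\Lambda^p\cN$ should agree with the pullback covariant derivative on $\Lambda^pTM$ restricted to $\Lambda^p\cN$. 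This holds because $\cN$ is preserved by parallel transport along the geodesic, since $\dot\gamma$ is parallel. The identity $(\nabla_vT)(v^k)=0$ extends from unit $v$ to all $v$ by homogeneity, since both sides have degree $k+1$. By \eqref{eq:exceptional-Killing-operator}, this says exactly $\cD_{k,p}T=0$.

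I expect the main obstacle to be the polynomiality and parity bookkeeping rather than anything analytic. One must check that the odd or even parity of $\varphi$ removes every Fourier mode whose degree has the wrong parity relative to $k$; otherwise the factor $|y|^{k-j}$ would not be a polynomial. Once this is in place, the remaining steps follow directly from the definitions.
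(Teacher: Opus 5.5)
Your proposal is correct and follows the paper's proof: the even powers $|y|^{k-j}$ give polynomiality, pure parity gives $F_x=\varphi(x,\cdot)$ on the unit sphere, normality of $\varphi$ gives $T\in\mathscr E_{k,p}$, and covariant differentiation along a geodesic (with $\dot\gamma$ parallel and $|\dot\gamma|$ constant) turns $X\varphi=0$ into $(\nabla_vT)(v^k)=0$, which is $\cD_{k,p}T=0$. The extra points you make explicit are consistent with the paper's conventions: the identification of $X$ on $\Lambda^p\cN$ with the pullback connection, and the extension from unit $v$ to all $v$ by homogeneity of degree $k+1$.
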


\begin{proof}
The powers $k-j$ are even, so \eqref{eq:finite-fourier-homogenization} is
polynomial and homogeneous of degree $k$.  On $S_xM$ one has
$F_x(v)=\varphi(x,v)$.  The normality relation
$\iota_v\varphi(x,v)=0$ therefore gives
$T\in\mathscr E_{k,p}$.  Along a geodesic, $|v|$ is constant and $v$ is
parallel, so $X\varphi=0$ is equivalent to
$(\nabla_vT)(v^k)=0$.  This is precisely $\cD_{k,p}T=0$.
\end{proof}

We now isolate the common curvature calculation.  Assume in addition that
the values of $\varphi$ have one fixed $\SO(n-1)$-orbit type.  Let $T$ be supplied by
Lemma~\ref{lem:finite-fourier-homogenization}.  Fix $(x,v)\in\SM$, put
$W=v^\perp$, and abbreviate
$$
 \phi:=T_x(v^k)=\varphi(x,v)\in\Lambda^pW^*.
$$
Let $H\subset\SO(W)$ be the stabilizer of $\phi$, let
$\mathfrak h\subset\Lambda^2W$ be its Lie algebra, and put
\begin{equation*}
 \Lambda^2W=\mathfrak h\oplus\mathfrak m,
 \qquad \mathfrak m:=\mathfrak h^\perp.
\end{equation*}
The orbit differential is
\begin{equation}\label{eq:orbit-differential}
 \Phi_\phi:\mathfrak m\longrightarrow\Lambda^pW^*,
 \qquad \Phi_\phi(Z)=\rho(Z)\phi.
\end{equation}
In the two applications below $\mathfrak m$ is irreducible, and hence
Schur's lemma gives a constant $\gamma>0$ such that
\begin{equation}\label{eq:orbit-homothety}
 \inner{\Phi_\phi(Z)}{\Phi_\phi(Y)}
 =\gamma\inner{Z}{Y},
 \qquad Z,Y\in\mathfrak m.
\end{equation}

For $a\in W$, set
$$
 R_t:=\exp(t(v\wedge a)),\qquad v_t:=R_tv,
$$
and use $R_t^{-1}$ to identify $v_t^\perp$ with $W$.  Equivalently, put
$$
 \widetilde\varphi_a(t):=R_t^{-1}\cdot\varphi(x,v_t)
 \in\Lambda^pW^*.
$$
The fixed orbit-type assumption gives
$\widetilde\varphi_a(t)\in\SO(W)\cdot\phi$, so there is a unique
$\xi_a\in\mathfrak m$ such that
$$
 \widetilde\varphi_a'(0)=\Phi_\phi(\xi_a).
$$
Since
$$
 \widetilde\varphi_a'(0)
 =d\varphi_v(a)-\rho_{\Lambda^p}(v\wedge a)\phi,
$$
where $d\varphi_v(a)$ is taken in the fixed ambient space
$\Lambda^pT_x^*M$, we obtain
\begin{equation}\label{eq:vertical-orbit-derivative}
 d\varphi_v(a)
 =\rho_{\Lambda^p}(v\wedge a)\phi+\Phi_\phi(\xi_a)
 =-v^\flat\wedge\iota_a\phi+\Phi_\phi(\xi_a).
\end{equation}
The resulting tensor $\xi\in W^*\otimes\mathfrak m$ is the
$\mathfrak m$-component of the vertical derivative of the orbit-valued map.

For an algebraic curvature operator $\cA$ on $T_xM$, define
\begin{align}
 J_{\cA}(a,b)
 &:=\cA(v\wedge a,v\wedge b),
 \label{eq:orbit-Jacobi-block}\\
 C_{\cA}(a)
 &:=P_{\mathfrak m}P_{\Lambda^2W}\bigl(\cA(v\wedge a)\bigr),
 \notag
\end{align}
and put
\begin{align}
 \mathscr A_{\cA}(\xi)
 &:=\sum_{a,b}J_{\cA}(e_a,e_b)\inner{\xi_a}{\xi_b},
 \label{eq:orbit-A-form}\\
 \mathscr B_{\cA}(\xi)
 &:=\sum_a\inner{C_{\cA}(e_a)}{\xi_a},
 \notag\\
 \Theta_{\cA}
 &:=\operatorname{tr}_{\mathfrak m}
 \bigl(P_{\mathfrak m}\cA|_{\mathfrak m}\bigr).
 \label{eq:orbit-trace-form}
\end{align}

\begin{proposition}[Curvature identity for a fixed orbit type]
\label{prop:orbit-type-curvature}
Assume that \eqref{eq:orbit-homothety} holds.  Then
\begin{equation}\label{eq:orbit-type-Q-evaluation}
 \inner{\cQ_{\mathscr E_{k,p}}(\cA)T}{T}_{\mathrm{ev},x}
 =\gamma\int_{S_xM}
 \left(
  \mathscr A_{\cA}(\xi)-2\mathscr B_{\cA}(\xi)+\Theta_{\cA}
 \right)
 \,d\sigma_x(v).
\end{equation}
Consequently, for $\cA=\kappa_0^{-1}\Rop^g$,
\begin{equation}\label{eq:orbit-type-integrated-inequality}
 \int_{\SM}
 \left(
  \mathscr A_{\cA}(\xi)-2\mathscr B_{\cA}(\xi)+\Theta_{\cA}
 \right)
 \,d\mu\leq0.
\end{equation}
\end{proposition}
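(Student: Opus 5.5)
The plan is to compute $\inner{\cQ_{\mathscr E_{k,p}}(\cA)T}{T}_{\mathrm{ev},x}$ directly from the definition of $\cQ$ in a basis adapted to a fixed unit vector $v$. Write $\cQ(\cA)=\sum_{\alpha,\beta}\cA_{\alpha\beta}\rho(E_\alpha)^*\rho(E_\beta)$. Since the evaluation inner product is an integral over $S_xM$ of pointwise pairings, we need $(\rho(E_\alpha)T)(v^k)$ for each basis element. Choose the orthonormal basis of $\Lambda^2T_xM$ adapted to $v$: the radial forms $v\wedge e_a$ ($a=1,\dots,n-1$), an orthonormal basis of $\mathfrak h$, and one of $\mathfrak m$.

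The first step is to evaluate $\rho(Z)T$ at $v^k$ for each type of $Z$. The action on $T\in\Sym^kV^*\otimes\Lambda^pV^*$ evaluated at $v^k$ splits into a symmetric-slot part and the form-slot part. For $Z\in\Lambda^2W$, $Z$ annihilates $v$, so $(\rho(Z)T)(v^k)=\rho_{\Lambda^p}(Z)\phi$. This vanishes for $Z\in\mathfrak h$ and equals $\Phi_\phi(Z)$ for $Z\in\mathfrak m$. For $Z=v\wedge a$, the symmetric-slot part is $-kT(a,v^{k-1})$ up to the sign fixed by \eqref{eq:wedge-skew-convention}. By Euler/homogeneity this equals the directional derivative $-dF_v(a)=-d\varphi_v(a)$, the spherical derivative, since $F$ is homogeneous of degree $k$ and $dF_v(v)$ is radial. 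Adding the form-slot action $\rho_{\Lambda^p}(v\wedge a)\phi$ and using \eqref{eq:vertical-orbit-derivative}, the $\rho_{\Lambda^p}$ terms cancel and we obtain $(\rho(v\wedge a)T)(v^k)=-\Phi_\phi(\xi_a)$. This cancellation is the heart of the identity, and I expect it to be the main obstacle. One must check the signs and the factor $k$ carefully, and the answer hinges on how $T(a,v^{k-1})$ relates to $d\varphi_v(a)$ once the $|y|^{k-j}$ factors in Lemma~\ref{lem:finite-fourier-homogenization} are included. On the sphere $d|y|(a)=0$ for $a\perp v$, so those factors are harmless.

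With these formulas, all images $(\rho(E_\alpha)T)(v^k)$ lie in $\Phi_\phi(\mathfrak m)$. We expand $\sum\cA_{\alpha\beta}\inner{\rho(E_\alpha)T(v^k)}{\rho(E_\beta)T(v^k)}$ in blocks. Radial–radial gives $\sum_{a,b}J_\cA(e_a,e_b)\inner{\Phi\xi_a}{\Phi\xi_b}=\gamma\mathscr A_\cA(\xi)$ by \eqref{eq:orbit-homothety}. The two cross blocks, radial with $\mathfrak m$, give $-2\gamma\sum_a\inner{P_{\mathfrak m}\cA(v\wedge e_a)}{\xi_a}=-2\gamma\mathscr B_\cA(\xi)$; here only the $\mathfrak m$-component of $P_{\Lambda^2W}\cA(v\wedge e_a)$ survives, because $\mathfrak h$ acts trivially and radial-to-radial is already counted. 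The $\mathfrak m$–$\mathfrak m$ block gives $\gamma\sum\cA_{\alpha\beta}\inner{Z_\alpha}{Z_\beta}=\gamma\Theta_\cA$. All $\mathfrak h$ terms vanish. Integrating over $S_xM$ with normalized measure yields \eqref{eq:orbit-type-Q-evaluation}.

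For \eqref{eq:orbit-type-integrated-inequality}, Lemma~\ref{lem:finite-fourier-homogenization} gives $\cD_{k,p}T=0$. Corollary~\ref{cor:exceptional-integral-sign} then gives $\int_M\inner{\cQ(\cA)T}{T}_{\mathrm{ev}}\le0$. Substituting \eqref{eq:orbit-type-Q-evaluation}, using $d\mu=d\vol_g\,d\sigma_x$, and dividing by the constant $\gamma>0$ finishes the argument. Here $\gamma$ is independent of the point because the orbit type is fixed.
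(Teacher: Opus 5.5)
Your proposal is correct and follows the paper's proof essentially step for step. You compute the two evaluations $(\rho(v\wedge a)T)(v^k)=-\Phi_\phi(\xi_a)$ and $(\rho(Z)T)(v^k)=\Phi_\phi(P_{\mathfrak m}Z)$ for $Z\in\Lambda^2W$, expand $\cQ_{\mathscr E_{k,p}}(\cA)$ in a $v$-adapted basis into radial, mixed, and tangential blocks via the homothety \eqref{eq:orbit-homothety}, and then conclude with Corollary~\ref{cor:exceptional-integral-sign}, exactly as the paper does; your added remarks, that the factors $|y|^{k-j}$ do not affect derivatives along directions $a\perp v$ and that $\gamma$ is constant because the orbit type is fixed, supply details the paper leaves implicit.
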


\begin{proof}
The infinitesimal action on the homogeneous tensor has two simple forms.
From \eqref{eq:vertical-orbit-derivative},
\begin{equation}\label{eq:orbit-radial-representation}
 \bigl(\rho(v\wedge a)T\bigr)(v^k)
 =\rho_{\Lambda^p}(v\wedge a)\phi-d\varphi_v(a)
 =-\Phi_\phi(\xi_a).
\end{equation}
If $Z\in\Lambda^2W$, then $Zv=0$, and hence
\begin{equation}\label{eq:orbit-tangential-representation}
 \bigl(\rho(Z)T\bigr)(v^k)
 =\rho_{\Lambda^p}(Z)\phi
 =\Phi_\phi(P_{\mathfrak m}Z).
\end{equation}
Choose orthonormal bases $(v\wedge e_a)$ of $v\wedge W$ and $(Z_\mu)$ of
$\Lambda^2W$.  In the quadratic form defining
$\cQ_{\mathscr E_{k,p}}(\cA)$, the radial--radial block is, by
\eqref{eq:orbit-radial-representation} and
\eqref{eq:orbit-homothety}, equal to
$\gamma\mathscr A_{\cA}(\xi)$.  The two mixed blocks together give
$-2\gamma\mathscr B_{\cA}(\xi)$.  Finally,
\eqref{eq:orbit-tangential-representation} gives the tangential block
$\gamma\Theta_{\cA}$.  Integration over the sphere proves
\eqref{eq:orbit-type-Q-evaluation};
\eqref{eq:orbit-type-integrated-inequality} follows from
Corollary~\ref{cor:exceptional-integral-sign}.
\end{proof}

\subsection{Dimension 8 and the \texorpdfstring{$G_2$}{G2} orbit}
\label{sec:dimension-eight}

Put
\begin{equation*}
  \delta_8^\sharp:=4-2\sqrt3
  =0.535898384862246\ldots.
\end{equation*}
Let $W$ be a Euclidean $7$-space with a $G_2$ three-form $\phi$.  Its
stabilizer decomposition is
\begin{equation*}
  \Lambda^2W=\mathfrak g_2\oplus\mathfrak m,
  \qquad \mathfrak m=\Lambda^2_7W,
  \qquad \dim\mathfrak m=7.
\end{equation*}
The orbit differential is a homothety on $\mathfrak m$, so
Proposition~\ref{prop:orbit-type-curvature} applies.

\begin{lemma}[$G_2$ trace and mixed-block estimates]
\label{lem:G2-block-estimates}
Let $V=\bbR v\oplus W$ and let $\cA$ be an algebraic curvature operator on
$V$ satisfying
$$
  \delta\leq\sec_{\cA}\leq1.
$$
For the blocks in \eqref{eq:orbit-A-form}--\eqref{eq:orbit-trace-form},
\begin{align}
  \mathscr A_{\cA}(\xi)&\geq\delta|\xi|^2,
  \notag\\
  \Theta_{\cA}&\geq7\delta,
  \notag\\
  |C_{\cA}|_{\mathrm{HS}}^2
  &\leq\frac{28}{3}(1-\delta)^2.
  \label{eq:G2-C-upper}
\end{align}
\end{lemma}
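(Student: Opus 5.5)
The three bounds are independent, so we prove them separately, each with the two-sided sectional bound and Lemma~\ref{lem:BBK}.

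\emph{Jacobi block.} By \eqref{eq:orbit-Jacobi-block}, $J_{\cA}(a,a)=\cA(v\wedge a,v\wedge a)=\sec_{\cA}(v,a)|a|^2\geq\delta|a|^2$ for $a\in W$, so $J_{\cA}\succeq\delta\,\Id_W$. Diagonalize $J_{\cA}$ in an orthonormal basis $(e_a)$ of $W$ with eigenvalues $j_a\geq\delta$. Then \eqref{eq:orbit-A-form} becomes $\mathscr A_{\cA}(\xi)=\sum_a j_a|\xi_a|^2\geq\delta|\xi|^2$.

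\emph{Trace block.} We avoid a full eigen-analysis of $\cA$ on $\mathfrak m=\Lambda^2_7W$ by using an explicit orthonormal basis of $\Lambda^2_7$ made of non-simple forms. For $u\in W$, the form $\iota_u\phi$ lies in $\Lambda^2_7$, and with the normalization $\phi=e^{123}+\dots$ (seven terms, each index pair appearing once) the forms $\omega_i:=\iota_{e_i}\phi/\sqrt3$, $i=1,\dots,7$, are orthonormal. Each $\iota_{e_i}\phi$ is a sum of three mutually orthogonal simple forms $e_{jk}$, namely the three pairs $\{j,k\}$ with $(i,j,k)$ a line of the Fano plane. Hence
\[
  \Theta_{\cA}=\sum_i\cA(\omega_i,\omega_i)=\frac13\sum_i\Bigl(\sum_{\text{3 pairs}}\sec_{\cA}(e_j,e_k)\pm\text{cross terms}\Bigr).
\]
The diagonal part is $\frac13\sum_{j<k}\sec_{\cA}(e_j,e_k)\geq\frac13\cdot21\delta=7\delta$, since every pair $\{j,k\}$ occurs for exactly one $i$. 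The cross terms are $\cA(e_{jk},e_{lm})$ with $\{j,k\}$ and $\{l,m\}$ disjoint and both on lines through $i$. I would like them to cancel in the sum over $i$. Using the first Bianchi identity on the quadruple $\{j,k,l,m\}$, the three terms $\cA_{jklm}$, $\cA_{jlmk}$, $\cA_{jmkl}$ sum to zero. The complementary quadruple of a Fano line is covered by the three lines through the points of that line, so each such quadruple should appear with the three pairings carrying equal signs, and the Bianchi sum then kills them. Checking the signs is a finite verification with the standard $G_2$ form. If the signs fail to align, the fallback is to average over $G_2$ (or over $\SO(W)$ after adding the $\mathfrak g_2$ trace), which gives $\Theta_{\cA}=\frac13\operatorname{tr}_{\Lambda^2W}(\ldots)$ plus a Weyl-type correction that is again killed by Bianchi.

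\emph{Mixed block.} Here $|C_{\cA}|^2_{\mathrm{HS}}=\sum_a\sum_i\cA(v\wedge e_a,\omega_i)^2$. Each $\omega_i$ is a sum of three orthogonal simple tangential forms, each orthogonal to the simple form $v\wedge e_a$, so \eqref{eq:BK-simple-form} applies to every piece. The crude bound $|\cA(v\wedge e_a,\omega_i)|\leq\frac23(1-\delta)\cdot3/\sqrt3$ gives only $49\cdot\frac43(1-\delta)^2$, which is too large. To reach $\frac{28}{3}$, I would instead fix $a$ and bound $\sum_i\cA(v\wedge e_a,\omega_i)^2$, which is the squared norm of the $\mathfrak m$-projection of the tangential part of $\cA(v\wedge e_a)$, by $\frac43(1-\delta)^2$. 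The aim is to show this norm is attained by a unit element of $\mathfrak m$ that decomposes into at most two orthogonal simple pieces effectively, exploiting $G_2$-equivariance to put that element into a normal form adapted to $e_a$. The stabilizer $\SU(3)$ of $e_a$ acts on $\Lambda^2_7\cong W$ with orbits $\{e_a\}$ and its complement. One way is to take $\omega=\iota_u\phi/\sqrt3$ as the maximizer; then $u\perp e_a$ and $u\parallel e_a$ contribute separately, and BBK is applied per simple summand after using $\cA(v\wedge e_a,e_{jk})$ with the Bianchi relations. Summing over the $7$ values of $a$ gives $7\cdot\frac43=\frac{28}3$.

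This last step, getting the sharp constant $\frac43$ per $a$ rather than the naive $7\cdot\frac43$ per $a$, is the main obstacle. I expect it requires combining the Bianchi identity with BBK in the style of the proof of Lemma~\ref{lem:tracefree-hook-twist-estimate}, rather than crude BBK on each simple summand.
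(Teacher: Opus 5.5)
The gap is in the mixed block. You leave the per-row bound unproved and call it the main obstacle, but that obstacle comes from how you applied the estimate. No Bianchi refinement is needed.

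\textbf{The first two blocks.} Your Jacobi block is exactly the paper's argument. Your trace block is a correct coordinate version of the paper's. The sign check you defer is the standard contraction identity $\sum_i\phi_{ijk}\phi_{ilm}=\delta_{jl}\delta_{km}-\delta_{jm}\delta_{kl}\mp\psi_{jklm}$ with $\psi=*\phi$. So the cross terms are a multiple of $\psi_{jklm}\cA_{jklm}$, which vanishes by the first Bianchi identity. The paper uses the same mechanism in the form $P_{\mathfrak m}=\frac13(\Id+\mathcal T_\phi)$, where $\mathcal T_\phi$ realizes the $4$-form $\psi$.

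\textbf{The mixed block.} Your estimate $|\cA(v\wedge x,\omega_i)|\le\frac{2}{\sqrt3}(1-\delta)$ holds for every unit $x\in W$, not only for the basis vectors $e_a$. This is because $v\wedge x$ is a unit simple form orthogonal to every tangential simple form. For fixed $i$, the sum $\sum_a\cA(v\wedge e_a,\omega_i)^2$ is the squared length of the vector $C_{\cA}^*\omega_i\in W$. That length equals $\sup_{|x|=1}\cA(v\wedge x,\omega_i)$. Hence each of the seven columns contributes at most $\frac43(1-\delta)^2$, and so $|C_{\cA}|_{\mathrm{HS}}^2\le7\cdot\frac43(1-\delta)^2=\frac{28}3(1-\delta)^2$. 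Your factor $49$ came only from bounding the $49$ matrix entries one at a time.

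This is precisely the paper's proof. The paper writes the column sum via the tight frame $P_{\mathfrak m}=7\int_{G_2}(gZ_0)\otimes(gZ_0)\,dg$ with $Z_0=\omega_1$; your orthonormal basis $(\omega_i)$ works equally well.

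Your row version, with $a$ fixed, also closes at once. The maximizing unit $Z\in\mathfrak m$ has the form $\iota_u\phi/\sqrt3$ for some unit $u$. This is again $\frac1{\sqrt3}$ times a sum of three mutually orthogonal unit simple tangential forms. You can see this from transitivity of $G_2$ on the unit sphere of $W$. Alternatively, $\iota_u\phi$ restricted to $u^\perp$ is the K\"ahler form of the complex structure $x\mapsto u\times x$. So the crude three-piece estimate you already wrote gives exactly $\frac43(1-\delta)^2$ per row. You need no ``two-piece'' normal form, no splitting into $u\perp e_a$ versus $u\parallel e_a$, and no Bianchi manipulation. As written, though, your proposal stops before this step and does not establish the third inequality.
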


\begin{proof}
The first inequality follows from the Jacobi lower bound.

Let $\psi=*_{W}\phi$ and define
$$
  \mathcal T_\phi(\alpha):=*_{W}(\phi\wedge\alpha).
$$
The standard $G_2$ identities give eigenvalue $2$ on $\Lambda^2_7$ and
$-1$ on $\Lambda^2_{14}$, hence
$$
  P_{\mathfrak m}=\frac13(\Id+\mathcal T_\phi).
$$
The operator $\mathcal T_\phi$ is the curvature-operator realization of the
$4$-form $\psi$.  Since an algebraic curvature operator is orthogonal to
$4$-forms by the first Bianchi identity,
$$
  \Theta_{\cA}
  =\tr_{\mathfrak m}(P_{\mathfrak m}\cA|_{\mathfrak m})
  =\frac13\tr_{\Lambda^2W}(\cA)
  \geq\frac13\binom72\delta=7\delta.
$$

Choose
$$
  Z_0=\frac1{\sqrt3}(e_{23}+e_{45}+e_{67})\in\Lambda^2_7W.
$$
The $G_2$-action on the unit sphere of $\Lambda^2_7W$ is transitive, and
Schur's lemma gives the tight-frame identity
$$
  P_{\mathfrak m}
  =7\int_{G_2}(gZ_0)\otimes(gZ_0)\,dg.
$$
Consequently
$$
  |C_{\cA}|_{\mathrm{HS}}^2
  =7\int_{G_2}|C_{\cA}^*(gZ_0)|^2\,dg.
$$
Every $gZ_0$ is a normalized sum of three mutually orthogonal unit simple
two-forms.  The constant-curvature operator has no radial--tangential
block, so Lemma~\ref{lem:BBK} gives, for every unit $a\in W$,
$$
  |\cA(v\wedge a,gZ_0)|
  \leq\frac1{\sqrt3}\,3\,\frac23(1-\delta)
  =\frac2{\sqrt3}(1-\delta).
$$
Thus $|C_{\cA}^*(gZ_0)|^2\leq\frac43(1-\delta)^2$, which proves
\eqref{eq:G2-C-upper}.
\end{proof}

\begin{proposition}
\label{prop:G2-eight-vanishing}
Let $(M^8,g)$ be closed and $\delta$-pinched for some
$\delta>\delta_8^\sharp$.  Then there is no odd flow-invariant
$G_2$-structure
$$
  \phi\in C^\infty(\SM,\Lambda^3\cN).
$$
\end{proposition}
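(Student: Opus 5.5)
The plan is to combine the homogenization Lemma~\ref{lem:finite-fourier-homogenization}, the orbit-type curvature identity of Proposition~\ref{prop:orbit-type-curvature}, and the $G_2$ block estimates of Lemma~\ref{lem:G2-block-estimates}. The goal is a strictly positive integrand in \eqref{eq:orbit-type-integrated-inequality}, which contradicts that inequality.

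\textbf{Setup.} Suppose such a $\phi$ exists, and let $k$ be its (odd, finite) highest Fourier degree.
\begin{itemize}
\item Lemma~\ref{lem:finite-fourier-homogenization} gives a section $T\in C^\infty(M,\mathscr E_{k,3})$ with $\cD_{k,3}T=0$ and $T_x(v^k)=\phi(x,v)$ on $S_xM$.
\item The values $\phi(x,v)$ are $G_2$ three-forms on $v^\perp$ of a fixed normalization, so they lie in a single $\SO(7)$-orbit.
\item The stabilizer algebra is $\mathfrak g_2$, and $\mathfrak m=\Lambda^2_7W$ is irreducible. Hence \eqref{eq:orbit-homothety} holds with some $\gamma>0$, and Proposition~\ref{prop:orbit-type-curvature} applies to $\cA=\kappa_0^{-1}\Rop^g$.
\end{itemize}

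\textbf{Pointwise lower bound.} Fix $(x,v)$ and bound the integrand using Lemma~\ref{lem:G2-block-estimates}:
\begin{itemize}
\item $\mathscr A_\cA(\xi)\geq\delta|\xi|^2$ and $\Theta_\cA\geq7\delta$.
\item By Cauchy--Schwarz in the index $a$, $|\mathscr B_\cA(\xi)|\leq|C_\cA|_{\mathrm{HS}}|\xi|\leq c\,|\xi|$ with $c:=\sqrt{28/3}\,(1-\delta)$.
\end{itemize}
Writing $s=|\xi|$, this gives
$$
 \mathscr A_\cA(\xi)-2\mathscr B_\cA(\xi)+\Theta_\cA\geq\delta s^2-2cs+7\delta
 =\delta\Bigl(s-\frac c\delta\Bigr)^2+\frac{7\delta^2-c^2}{\delta}.
$$

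\textbf{Identifying the threshold.} The constant term $(7\delta^2-c^2)/\delta$ is strictly positive precisely when $\tfrac{28}{3}(1-\delta)^2<7\delta^2$. This is equivalent to $\tfrac{2}{\sqrt3}(1-\delta)<\delta$, that is,
$$
 \delta>\frac{2}{2+\sqrt3}=4-2\sqrt3=\delta_8^\sharp .
$$
So for $\delta>\delta_8^\sharp$ the integrand is bounded below by a positive constant on all of $\SM$. Its integral is then strictly positive, contradicting \eqref{eq:orbit-type-integrated-inequality}.

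\textbf{Main obstacle.} The main thing to check is that the hypotheses of Proposition~\ref{prop:orbit-type-curvature} genuinely hold. First, the orbit type must be fixed, including constant norm of $\phi$; this I would take from the normalization in Theorem~\ref{thm:CLMS-input}(iv). Second, the mixed term must be estimated with the full Hilbert--Schmidt bound \eqref{eq:G2-C-upper}, not a cruder operator-norm bound.

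Once these hypotheses are in place, the argument reduces to the one-variable quadratic minimization above, and that minimization is exactly what produces $4-2\sqrt3$.
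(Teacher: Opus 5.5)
Your proposal is correct and follows the paper's proof essentially step for step. You homogenize via Lemma~\ref{lem:finite-fourier-homogenization}, apply the orbit-type identity of Proposition~\ref{prop:orbit-type-curvature}, insert the bounds of Lemma~\ref{lem:G2-block-estimates} with Cauchy--Schwarz on the mixed term, and minimize the quadratic in $|\xi|$ to obtain $7\delta-\tfrac{28(1-\delta)^2}{3\delta}>0$, which holds exactly when $\delta>4-2\sqrt3$.
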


\begin{proof}
Apply Lemma~\ref{lem:finite-fourier-homogenization} to the full finite odd
Fourier expansion of $\phi$, and let $\xi$ be the tensor defined by
\eqref{eq:vertical-orbit-derivative}.  Proposition~\ref{prop:orbit-type-curvature} gives the integrated
orbit-type curvature inequality.  Lemma~\ref{lem:G2-block-estimates} and
Cauchy--Schwarz give pointwise
\begin{align*}
 \mathscr A_{\cA}(\xi)-2\mathscr B_{\cA}(\xi)+\Theta_{\cA}
 &\geq
 \delta|\xi|^2-2|C_{\cA}|_{\mathrm{HS}}|\xi|+7\delta\\
 &\geq
 7\delta-\frac{|C_{\cA}|_{\mathrm{HS}}^2}{\delta}\\
 &\geq
 7\delta-\frac{28(1-\delta)^2}{3\delta}\\
 &=\frac7{3\delta}\bigl(-\delta^2+8\delta-4\bigr)>0,
\end{align*}
where the last inequality is equivalent to
$\delta>4-2\sqrt3$.  This contradicts
\eqref{eq:orbit-type-integrated-inequality}.
\end{proof}

\begin{lemma}
\label{lem:projector-eight}
Let $(M^8,g)$ be closed and $5/13$-pinched.  There is no nontrivial even
flow-invariant orthogonal projector in
$C^\infty(\SM,\Sym^2\cN)$ of rank at most three.
\end{lemma}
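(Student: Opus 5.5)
The plan is to repeat the proof of Proposition~\ref{prop:symmetric-projector-vanishing} with $n=8$, checking that each ingredient still applies in this dimension. Let $P$ be a nontrivial even flow-invariant projector of rank $r$ with $1\leq r\leq3$. By Theorem~\ref{thm:CLMS-input} it has finite even Fourier degree. Let $k$ be its highest degree and $S$ the top coefficient. The coefficient $S$ satisfies the normal conditions \eqref{eq:sym2-normal-conditions}, and by Lemma~\ref{lem:sym2-pestov-sign} we have $\int_M\mathscr W^{(2)}_{\cA}(S)\leq0$. So it suffices to show that $\mathscr W^{(2)}_{\cA}(S)>0$ pointwise wherever $S\neq0$, separately for $k\geq6$, $k=4$ and $k=2$. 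Degree zero is excluded exactly as before, since $P(v)v=0$ forces a $v$-independent endomorphism to vanish.

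\emph{Degree two.} Proposition~\ref{prop:quadratic-projector} is stated for every even $n$. Its rank hypothesis $1\leq r\leq n-2=6$ holds, so this case is already settled. Moreover $r+s=7$ is odd, so $C_{r,s}<\sqrt2/3<5/8$ as in that proof.

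\emph{Degrees at least six.} Here I would re-run the proof of Proposition~\ref{prop:sym2-high-degree} with $n=8$. The bound \eqref{eq:sym2-high-square} holds for all $n$, so positivity reduces to $\Psi_{8,k}>0$ for even $k\geq6$. Substituting $n=8$ into \eqref{eq:sym2-Psi} gives
\[
 \Psi_{8,k}=450k^3+2112k^2-19248k+58104 .
\]
This is positive for every $k\geq6$, because $450k^3\geq2700k^2$ there and $4812k^2-19248k=4812k(k-4)>0$. Hence this case is immediate.

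\emph{Degree four.} This is the main obstacle. Proposition~\ref{prop:sym2-quartic} was stated for $n\geq12$, but its proof only uses the multiplicity-free decomposition \eqref{eq:quartic-decomposition} of the admissible quartic space and the table \eqref{eq:quartic-table}. I would first check that the orthogonal Pieri decomposition into three irreducible summands is still multiplicity-free for $n=8$, since no low-rank coincidences are expected in $\mathfrak{so}(8)$ for these small highest weights. I would then evaluate the gaps \eqref{eq:quartic-gaps} at $n=8$:
\[
 \frac{77}{18},\qquad \frac{564}{90},\qquad \frac{2(512+536+110)}{9\cdot72}.
\]
All three are positive. If the appendix table were derived only under $n\geq12$, the fallback is to recompute $A_0$, $A_C$, $B$, $Z_b$ and $Z_C$ on each summand for $n=8$ from the same Casimir formulas. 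As a further fallback, one could redo the Young inequalities in \eqref{eq:quartic-master-gap} with weights optimized for $n=8$.

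With all three degree cases giving $\mathscr W^{(2)}_{\cA}(S)>0$, integration over $M$ contradicts Lemma~\ref{lem:sym2-pestov-sign}, which proves the lemma.
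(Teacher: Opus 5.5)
Your outline is the paper's own proof. It uses the same split into top degree $\geq6$, $4$ and $2$, the square completion of Proposition~\ref{prop:sym2-high-degree}, the quartic gaps \eqref{eq:quartic-gaps}, and Proposition~\ref{prop:quadratic-projector}, which is valid for all even $n$ with $r\leq3\leq n-2$. Your quartic values $77/18$, $564/90=94/15$ and $193/54$ match the paper's. However, the two verifications that are specific to $n=8$ are not yet in order.

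\textbf{The high-degree polynomial is miscomputed.} Substituting $n=8$ into \eqref{eq:sym2-Psi} gives
\[
 \Psi_{8,k}=450k^3+2112k^2-10188k-41976,
\]
not $450k^3+2112k^2-19248k+58104$. The true constant term is negative, so the inequality chain you wrote does not apply to the actual polynomial. The conclusion still holds. Writing $k=6+b$ gives $\Psi_{8,6+b}=450b^3+10212b^2+63756b+70128$, whose coefficients are all positive; this is exactly the check the paper records.

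\textbf{The quartic decomposition is not verified.} Saying ``no low-rank coincidences are expected'' is an expectation, not a verification. This is the main new content of the lemma beyond Proposition~\ref{prop:symmetric-projector-vanishing}. The paper closes it by a dimension count:
\begin{itemize}
\item Weyl's formula at $n=8$ gives $\dim\mathbb V^0_\lambda=1386,3696,4312,294,567,35$ for $\lambda=(6),(5,1),(4,2),(4),(3,1),(2)$.
\item These sum to $10290=294\cdot35=\dim\Harm_4(\bbR^8)\dim\Sym^2_0(\bbR^8)$, so \eqref{eq:quartic-full-pieri} holds with no extra summand and no multiplicity.
\item Likewise $1386+3696+294=5376=8\dim\Harm_5(\bbR^8)$ confirms the decomposition of $\Harm_5\otimes V$. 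Hence $\ker\pi_5$ is still the sum of the $(4,2)$, $(3,1)$ and $(2)$ summands.
\end{itemize}
The models \eqref{eq:SH-embedding}--\eqref{eq:Sq-embedding} and the entries of Table~\eqref{eq:quartic-table} are derived for general $n$. So the table applies verbatim at $n=8$, and your fallbacks are not needed.

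\textbf{Two minor points.} Finiteness of the Fourier degree of an arbitrary invariant projector comes from \cite[Theorem~4.1]{GPSU16}, not from Theorem~\ref{thm:CLMS-input}. Also, the degree-two case is settled by the full Pestov sign \eqref{eq:sym2-full-sign} applied to $P$ itself, not by positivity of $\mathscr W^{(2)}_{\cA}$ on the top coefficient. Since you only invoke Proposition~\ref{prop:quadratic-projector} there, this is harmless.
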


\begin{proof}
The high-degree proof of Proposition~\ref{prop:sym2-high-degree} remains
valid when $n=8$: writing $k=6+b$ in \eqref{eq:sym2-Psi} gives
$$
  \Psi_{8,6+b}
  =450b^3+10212b^2+63756b+70128>0.
$$
For completeness, Weyl's dimension formula at $n=8$ gives
$$
\begin{array}{c|rrrrrr}
 \lambda&(6)&(5,1)&(4,2)&(4)&(3,1)&(2)\\ \hline
 \dim\mathbb V^0_\lambda&1386&3696&4312&294&567&35.
\end{array}
$$
Their sum is
$$
 10290=294\cdot35
 =\dim\Harm_4(\bbR^8)\,\dim\Sym^2_0(\bbR^8),
$$
so the full orthogonal Pieri decomposition has neither an additional
low-rank summand nor a multiplicity.  Likewise,
$1386+3696+294=5376=8\dim\Harm_5(\bbR^8)$; the same highest-weight
contractions are nonzero, and the explicit embeddings
\eqref{eq:SH-embedding}--\eqref{eq:Sq-embedding} remain valid.  Hence
\eqref{eq:quartic-decomposition} holds for $O(8)$.  The three gaps in
\eqref{eq:quartic-gaps} become
$$
  \frac{77}{18},\qquad \frac{94}{15},\qquad \frac{193}{54}.
$$
Thus degrees at least $4$ are impossible.  Proposition
\ref{prop:quadratic-projector}, which is valid in every even dimension,
excludes the remaining degree-two projector of rank $1\leq r\leq3$.
\end{proof}

\begin{corollary}
\label{cor:dimension-eight-threshold}
If $(M^8,g)$ is closed, oriented, negatively curved, and strictly
$\delta_8^\sharp$-pinched, then its oriented frame flow is ergodic.
\end{corollary}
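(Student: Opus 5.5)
The plan is to argue by contradiction through the dynamical reduction. Suppose the oriented frame flow of $(M^8,g)$ is not ergodic. Since $8\equiv0\pmod4$ but $n<12$, case~(iv) of Theorem~\ref{thm:CLMS-input} applies. It supplies a finite Riemannian cover $(\widehat M,\widehat g)$ carrying one of two objects: an odd flow-invariant $G_2$-structure $\phi\in C^\infty(S\widehat M,\Lambda^3\cN)$, or a nontrivial even flow-invariant orthogonal projector $P\in C^\infty(S\widehat M,\Sym^2\cN)$ with $1\leq\operatorname{rank}P\leq3$. In both cases the object has finite vertical Fourier degree.

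The cover is closed and negatively curved. It inherits the pinching constant and scale, because these are local curvature conditions. By compactness of the Grassmann bundle, strict $\delta_8^\sharp$-pinching means $g$ (hence $\widehat g$) is $\delta$-pinched for some $\delta>\delta_8^\sharp$.

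In the $G_2$ alternative, Proposition~\ref{prop:G2-eight-vanishing} applies directly to $(\widehat M,\widehat g)$ with this $\delta$ and excludes the invariant three-form.

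In the projector alternative, note first that $\delta_8^\sharp=0.5358\ldots>5/13=0.3846\ldots$. Thus a $\delta$-pinched metric with $\delta>\delta_8^\sharp$ is in particular $5/13$-pinched: the normalized operator $\cA$ satisfies $5/13<\delta\leq\sec_{\cA}\leq1$. Lemma~\ref{lem:projector-eight} then rules out any nontrivial even invariant projector of rank at most three on $\widehat M$.

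Both alternatives are therefore contradictory, so the frame flow is ergodic. There is no genuine obstacle at this stage; all the analytic work sits in Proposition~\ref{prop:G2-eight-vanishing} and Lemma~\ref{lem:projector-eight}. The only points needing care are:
\begin{enumerate}[label=\textup{(\alph*)}]
\item converting strict pinching into a definite $\delta>\delta_8^\sharp$;
\item checking that the rank condition and parity in Theorem~\ref{thm:CLMS-input}(iv) match the hypotheses of Lemma~\ref{lem:projector-eight}. Since $1\leq\operatorname{rank}P\leq3$, the projector is neither $0$ nor $\Id_{\cN}$, so it is nontrivial as that lemma requires.
\end{enumerate}
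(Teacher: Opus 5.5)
Your proposal is correct and follows essentially the same route as the paper. You invoke Theorem~\ref{thm:CLMS-input}(iv) to get either an odd invariant $G_2$-structure or an even projector of rank at most three. You exclude the first by Proposition~\ref{prop:G2-eight-vanishing}, and the second by Lemma~\ref{lem:projector-eight} using $4-2\sqrt3>5/13$. Your remarks on passing strict pinching to a definite $\delta$ on the finite cover and on nontriviality of the projector are appropriate bookkeeping.
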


\begin{proof}
If the frame flow were not ergodic, Theorem~\ref{thm:CLMS-input} would give,
after a finite cover, either an odd invariant $G_2$-structure or an even
invariant projector of rank at most three.  Proposition
\ref{prop:G2-eight-vanishing} excludes the first.  Since
$4-2\sqrt3>5/13$, Lemma~\ref{lem:projector-eight} excludes the second.
\end{proof}

\subsection{Dimension 134 and the \texorpdfstring{$E_7$}{E7} Cartan orbit}
\label{sec:dimension-134}

Assume that the dynamical reduction supplies a nonzero flow-invariant Lie
bracket
\begin{equation*}
 \varphi\in C^\infty(\SM,\Lambda^3\cN),
 \qquad X\varphi=0.
\end{equation*}
At every $(x,v)$, the three-form $\varphi(x,v)$ is equivalent to the Cartan
three-form
\begin{equation*}
 \varphi_0(a,b,c)=\inner{[a,b]}c
\end{equation*}
on the compact Lie algebra $\mathfrak e_7$.  Its stabilizer in
$\mathrm O(133)$ is $E_7/\{\pm1\}\subset\SO(133)$; see
\cite[Theorem~3.8 and Lemma~3.12]{CLMS24}.

Fix $(x,v)\in\SM$, put $W=v^\perp$, and abbreviate
$\phi:=\varphi(x,v)$.  The bracket identifies a copy
$\mathfrak h\simeq\mathfrak e_7$ inside $\Lambda^2W$.  After
complexification, \cite[Proposition~5.1]{Reeder97} gives
$$
 \Lambda^2(\mathfrak e_7\otimes\bbC)
 \simeq(\mathfrak e_7\otimes\bbC)\oplus U_2,
$$
where $U_2$ is irreducible.  Since
$\dim U_2=\binom{133}{2}-133=8645$, the compact real form has the
irreducible orthogonal decomposition
\begin{equation*}
 \Lambda^2W=\mathfrak h\oplus\mathfrak m,
 \qquad
 \mathfrak h\simeq\mathfrak e_7,
 \qquad
 \dim\mathfrak m=8645.
\end{equation*}
Hence the orbit differential \eqref{eq:orbit-differential} is a homothety
and the orbit-type curvature identity applies.

\begin{lemma}
\label{lem:E7-tight-frame}
Let $\cA$ be an algebraic curvature operator on $V=\bbR v\oplus W$ satisfying
$$
 \delta\leq\sec_{\cA}\leq1.
$$
For the quantities in
\eqref{eq:orbit-Jacobi-block}--\eqref{eq:orbit-trace-form}, one has
\begin{align}
 \mathscr A_{\cA}(\xi)&\geq\delta|\xi|^2,
 \notag\\
 \Theta_{\cA}&\geq8645\,\delta,
 \notag\\
 |C_{\cA}|_{\mathrm{HS}}^2
 &\leq\frac{4\cdot8645}{9}(1-\delta)^2.
 \label{eq:E7-C-upper}
\end{align}
\end{lemma}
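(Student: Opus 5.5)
The plan is to mirror the proof of Lemma~\ref{lem:G2-block-estimates}, replacing the $G_2$ ingredients by their $E_7$ analogues. The first inequality is immediate: for every $a\in W$ the Jacobi lower bound gives $J_{\cA}(a,a)=\cA(v\wedge a,v\wedge a)\geq\delta|a|^2$. Hence $J_{\cA}\succeq\delta\Id_W$ on $W$, and tensoring with the positive form $\inner{\cdot}{\cdot}$ on $\mathfrak m$ yields $\mathscr A_{\cA}(\xi)\geq\delta|\xi|^2$.

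For the trace bound we need a trace identity, since there is no analogue of the $G_2$ four-form identity $P_{\mathfrak m}=\frac13(\Id+\mathcal T_\phi)$. We write $\Theta_{\cA}=\tr_{\Lambda^2W}(\cA)-\tr_{\mathfrak h}(P_{\mathfrak h}\cA|_{\mathfrak h})$. The first term is the sum of the $\binom{133}{2}$ sectional curvatures of an orthonormal basis $(e_{ab})$ of $\Lambda^2W$. It is therefore $\binom{133}{2}\tau$, where $\tau\in[\delta,1]$ is the average sectional curvature on $W$.

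To handle $\tr_{\mathfrak h}(\cA)$, the plan is to average over $E_7$. The operator $\int_{E_7}g^*\cA|_{\Lambda^2W}\,dg$ commutes with the $E_7$-action. Since $\Lambda^2W=\mathfrak h\oplus\mathfrak m$ is multiplicity-free with both summands irreducible of real type, Schur's lemma makes it $c_{\mathfrak h}P_{\mathfrak h}+c_{\mathfrak m}P_{\mathfrak m}$. The averaged operator still satisfies the Bianchi identity on $W$, because averaging preserves the space of algebraic curvature operators. The key claim is that an $E_7$-invariant algebraic curvature operator on $\mathfrak e_7$ is a multiple of $I$. The candidate operator $P_{\mathfrak h}$ itself, which is the symmetric-space curvature $\cA(x\wedge y)=\mathrm{ad}$-type $[x,y]$, fails Bianchi in the relevant combination only through the $\Lambda^4$ component, and $\Lambda^4$ contains no trivial summand that could rescue it. Equivalently, $\Sym^2(\Lambda^2W)^{E_7}$ intersected with the Bianchi kernel should be one-dimensional.

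If the claim holds, $c_{\mathfrak h}=c_{\mathfrak m}=\tau$, so $\Theta_{\cA}=8645\,\tau\geq8645\,\delta$. This is the main obstacle. If the invariant-theoretic claim fails, there is a fallback: use that $P_{\mathfrak h}$ is itself the curvature of the bi-invariant metric, namely the $\mathrm{ad}$-Casimir form $-\tfrac14|[x,y]|^2$. We then bound $\tr_{\mathfrak h}\cA\leq 133\cdot(\text{max})$ by writing $\mathfrak h$ through a tight frame of decomposable forms, as described next.

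For \eqref{eq:E7-C-upper}, we choose a unit $Z_0\in\mathfrak m$ that is a normalized sum $\frac1{\sqrt N}\sum_{j=1}^N\omega_j$ of $N$ mutually orthogonal unit simple two-forms. The introduction suggests building it from commuting decomposable forms, so that the span of $\omega_1,\dots,\omega_N$ lies in a Cartan-type abelian subspace orthogonal to $\mathfrak h$. Schur's lemma on the irreducible $\mathfrak m$ then gives the tight frame $P_{\mathfrak m}=8645\int_{E_7}(gZ_0)\otimes(gZ_0)\,dg$. It follows that $|C_{\cA}|^2_{\mathrm{HS}}=8645\int|C_{\cA}^*(gZ_0)|^2dg$.

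For unit $a\in W$, the form $v\wedge a$ is orthogonal to each $g\omega_j$, so \eqref{eq:BK-simple-form} gives $|\cA(v\wedge a,gZ_0)|\leq\frac{N}{\sqrt N}\frac23(1-\delta)$. This bound is too weak unless we control $N$. To fix this, we first take $N=1$: a single simple form $\omega$ whose $\mathfrak h$-component is zero. This works if $\mathfrak m$ contains a decomposable vector, i.e.\ if there are $a,b$ with $[a,b]=0$ and hence $\varphi_0(a,b,\cdot)=0$. Commuting orthonormal $a,b$ in a Cartan subalgebra give $e_a\wedge e_b\perp\mathfrak h$. Then $|C^*_{\cA}(gZ_0)|\leq\frac23(1-\delta)$, and integrating yields $|C_{\cA}|^2_{\mathrm{HS}}\leq\frac{4\cdot8645}{9}(1-\delta)^2$, exactly as claimed.
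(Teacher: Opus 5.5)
Your first bound and your Hilbert--Schmidt bound are correct and follow the paper's argument: the Jacobi lower bound $J_{\cA}\succeq\delta\Id_W$; the tight frame $P_{\mathfrak m}=8645\int_H(gz)\otimes(gz)\,dg$ built from a unit decomposable $z=p\wedge q\in\mathfrak m$ with $[p,q]=0$; and the orthogonal-simple-form estimate.

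The gap is in the trace bound $\Theta_{\cA}\geq8645\delta$. Your key claim, that every $E_7$-invariant algebraic curvature operator on $W\simeq\mathfrak e_7$ is a multiple of $I$, is false.

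Consider $\cB(x\wedge y,z\wedge w):=\inner{[x,y]}{[z,w]}$, which up to a factor is the curvature of the bi-invariant metric. It satisfies the first Bianchi identity exactly: $\inner{[x,y]}{[z,w]}=\inner{[[x,y],z]}{w}$, and the cyclic sum vanishes by the Jacobi identity. It is also a positive multiple of $P_{\mathfrak h}$, because the bracket $\Lambda^2W\to W$ is equivariant and surjective with kernel $\mathfrak m$. So the invariant algebraic curvature operators fill the whole plane spanned by $P_{\mathfrak h}$ and $P_{\mathfrak m}$, and in general $c_{\mathfrak h}\neq c_{\mathfrak m}$.

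Here is a concrete counterexample. Extend $\cB$ by zero on radial forms, giving the curvature of $\bbR\times E_7$. Take $\cA=\delta I+s\cB$ with $s>0$ chosen so that $\max\sec_{\cA}=1$. Then $\Theta_{\cA}=8645\delta$, while the average sectional curvature $\tau$ on $W$ is strictly larger than $\delta$. Hence your identity $\Theta_{\cA}=8645\tau$ fails.

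The fallback does not rescue the argument either. First, $\mathfrak h$ contains no decomposable two-forms: its elements are the $\operatorname{ad}_u$, whose rank is far above $2$. Second, any bound $\tr_{\mathfrak h}\cA\leq133M$ valid under the hypotheses must have $M\geq1$ (take $\cA=I$). Subtracting it from $\tr_{\Lambda^2W}\cA\geq8778\delta$ then gives a lower bound of at most $8645\delta-133(1-\delta)$, which is short of the claim.

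The missing step is already in your last paragraph: apply the same tight frame to the trace. Since $P_{\mathfrak m}=8645\int_H(gz)\otimes(gz)\,dg$,
\[
 \Theta_{\cA}=\tr(P_{\mathfrak m}\cA)=8645\int_H\cA(gz,gz)\,dg\geq8645\,\delta .
\]
The inequality holds because each $gz=(gp)\wedge(gq)$ is a unit decomposable two-form, so $\cA(gz,gz)=\sec_{\cA}(gz)\geq\delta$. In your averaging language, this reads $\Theta_{\cA}=8645\,c_{\mathfrak m}$ with $c_{\mathfrak m}=\bar\cA(z,z)\geq\delta$. This is exactly how the paper proves the trace bound. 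What makes it work is the existence of a decomposable vector in $\mathfrak m$, not any invariant-theoretic rigidity of curvature operators.
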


\begin{proof}
The first inequality follows because the Jacobi operator satisfies
$J_{\cA}\succeq\delta\Id_W$.

Identify $W$ with the compact Lie algebra $\mathfrak e_7$ using $\phi$.
Choose orthonormal commuting vectors $p,q$ in a Cartan subalgebra and put
$z=p\wedge q$.  The copy of $\mathfrak h$ in $\Lambda^2W$ is generated by
the two-forms $\iota_u\phi$, $u\in W$.  Since $[p,q]=0$,
$$
 \inner{p\wedge q}{\iota_u\phi}
 =\phi(u,p,q)=\inner{[u,p]}q
 =\inner{u}{[p,q]}=0,
$$
so $z\in\mathfrak m$.  It is a unit decomposable two-form.

Let $H=E_7/\{\pm1\}$ and normalize its Haar measure to have mass one.
Irreducibility of $\mathfrak m$ and Schur's lemma give
\begin{equation*}
 P_{\mathfrak m}
 =8645\int_H(gz)\otimes(gz)\,dg.
\end{equation*}
Taking traces fixes the coefficient.  Therefore
$$
 \Theta_{\cA}
 =8645\int_H\cA(gz,gz)\,dg
 \geq8645\,\delta.
$$
Similarly,
\begin{equation}\label{eq:E7-C-tight-frame}
 |C_{\cA}|_{\mathrm{HS}}^2
 =8645\int_H|C_{\cA}^*(gz)|^2\,dg.
\end{equation}
Write $gz=(gp)\wedge(gq)$.  For a unit vector $x\in W$,
$$
 \inner{C_{\cA}^*(gz)}{x}=\cA(v,x,gp,gq).
$$
The constant-curvature operator has zero pairing between the radial simple
form $v\wedge x$ and the tangential simple form $(gp)\wedge(gq)$.  Hence
Lemma~\ref{lem:BBK} gives
$$
 |\cA(v,x,gp,gq)|\leq\frac23(1-\delta).
$$
Taking the supremum over $x$ and using
\eqref{eq:E7-C-tight-frame} proves \eqref{eq:E7-C-upper}.
\end{proof}

\begin{proposition}
\label{prop:E7-vanishing}
Let $(M^{134},g)$ be closed and $\delta$-pinched for some
$\delta>2/5$.  Then there is no nonzero flow-invariant Lie bracket
$$
 \varphi\in C^\infty(\SM,\Lambda^3\cN)
$$
whose values are equivalent to the Cartan three-form of the compact Lie
algebra $\mathfrak e_7$.
\end{proposition}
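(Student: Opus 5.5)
We argue exactly as in Proposition~\ref{prop:G2-eight-vanishing}, with the $E_7$ tight-frame estimates of Lemma~\ref{lem:E7-tight-frame} replacing the $G_2$ estimates. Suppose such a $\varphi$ exists. By Theorem~\ref{thm:CLMS-input} it has finite vertical Fourier degree and pure parity, and we may assume it is odd. Apply Lemma~\ref{lem:finite-fourier-homogenization} to the full finite Fourier expansion of $\varphi$ (not only its top coefficient). This produces a homogeneous $T\in C^\infty(M,\mathscr E_{k,3})$ with $\cD_{k,3}T=0$ and $T_x(v^k)=\varphi(x,v)$ on $\SM$.

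The values of $\varphi$ all lie in the single $\SO(133)$-orbit of the Cartan three-form. Hence the vertical derivative has the orbit form \eqref{eq:vertical-orbit-derivative}, with $\xi\in W^*\otimes\mathfrak m$. As recorded before Lemma~\ref{lem:E7-tight-frame}, Reeder's decomposition makes $\mathfrak m$ irreducible of dimension $8645$, so \eqref{eq:orbit-homothety} holds for some $\gamma>0$. Proposition~\ref{prop:orbit-type-curvature} then yields the integrated inequality \eqref{eq:orbit-type-integrated-inequality} for $\cA=\kappa_0^{-1}\Rop^g$, where $\delta\le\sec_{\cA}\le1$ and $\delta>2/5$.

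Next we bound the integrand pointwise from below using Lemma~\ref{lem:E7-tight-frame} and Cauchy--Schwarz on the mixed term, $|\mathscr B_{\cA}(\xi)|\le|C_{\cA}|_{\mathrm{HS}}|\xi|$. Minimizing the resulting quadratic in $|\xi|$ gives
$$
 \mathscr A_{\cA}(\xi)-2\mathscr B_{\cA}(\xi)+\Theta_{\cA}
 \geq\delta|\xi|^2-2|C_{\cA}|_{\mathrm{HS}}|\xi|+8645\,\delta
 \geq8645\,\delta-\frac{|C_{\cA}|_{\mathrm{HS}}^2}{\delta}.
$$
Inserting \eqref{eq:E7-C-upper} gives the lower bound
$$
 \frac{8645}{9\delta}\bigl(9\delta^2-4(1-\delta)^2\bigr)
 =\frac{8645}{9\delta}(3\delta-2+2\delta)(3\delta+2-2\delta)
 =\frac{8645}{9\delta}(5\delta-2)(\delta+2).
$$
This is strictly positive exactly when $\delta>2/5$. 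The pointwise bound is positive on all of $\SM$, so its integral is positive, which contradicts \eqref{eq:orbit-type-integrated-inequality}.

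All the substantive work is already contained in Proposition~\ref{prop:orbit-type-curvature} and Lemma~\ref{lem:E7-tight-frame}, so the only delicate points are checking hypotheses. First, the fixed-orbit-type assumption must hold at every point, including any $(x,v)$ where one might worry that $\varphi$ degenerates. This follows because the reduction gives values equivalent to the Cartan form everywhere, so $\varphi$ is nowhere zero. Second, the pointwise argument does not depend on the parity of $\varphi$ or on its Fourier degree, so the condition "degree at least three" from Theorem~\ref{thm:CLMS-input} plays no role here.
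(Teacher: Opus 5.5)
Your proposal is correct and follows the paper's proof essentially verbatim. Like the paper, you homogenize the full finite Fourier expansion via Lemma~\ref{lem:finite-fourier-homogenization}, invoke Proposition~\ref{prop:orbit-type-curvature}, and bound the integrand below by $8645\,\delta-|C_{\cA}|_{\mathrm{HS}}^2/\delta\geq 8645(5\delta-2)(\delta+2)/(9\delta)>0$ using Lemma~\ref{lem:E7-tight-frame}; your observation that pure (odd) parity is supplied by the dynamical reduction rather than by the proposition's stated hypotheses is accurate, and the paper's proof relies on it tacitly as well.
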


\begin{proof}
Apply Lemma~\ref{lem:finite-fourier-homogenization} to $\varphi$ and let $\xi$
be the tensor defined by \eqref{eq:vertical-orbit-derivative}.  By
Lemma~\ref{lem:E7-tight-frame} and
Cauchy--Schwarz,
\begin{align}
 \mathscr A_{\cA}(\xi)-2\mathscr B_{\cA}(\xi)+\Theta_{\cA}
 &\geq
 \delta|\xi|^2-2|C_{\cA}|_{\mathrm{HS}}|\xi|+8645\,\delta
 \notag\\
 &\geq
 8645\,\delta-\frac{|C_{\cA}|_{\mathrm{HS}}^2}{\delta}
 \notag\\
 &\geq
 8645\left(
  \delta-\frac{4(1-\delta)^2}{9\delta}
 \right)
 \notag\\
 &=8645\,
 \frac{(5\delta-2)(\delta+2)}{9\delta}>0.
 \label{eq:E7-final-positive}
\end{align}
This pointwise strict positivity contradicts
\eqref{eq:orbit-type-integrated-inequality}.
\end{proof}

\begin{remark}[Endpoint of the orbit-type curvature estimate]
\label{rem:E7-endpoint}
At $\delta=2/5$ the final $E_7$ lower bound in
\eqref{eq:E7-final-positive} vanishes.  Likewise, the $G_2$ lower bound
vanishes at $\delta=4-2\sqrt3$. 
\end{remark}

\subsection{Dimension 7: a cubic refinement}
\label{sec:dimension-seven}

Put
\begin{equation*}
  \delta_7^\sharp
  :=\frac{385+36\sqrt6}{927+36\sqrt6}
  =0.466105390810889\ldots.
\end{equation*}
Throughout this subsection, norms are the standard fibrewise or global
$L^2$ norms.  For a degree-two vector harmonic, write
$\mathcal W_{\cB}:=c_{7,2}^{-1}W_{\cB}$; this removes the harmless algebraic
normalization in \eqref{eq:tangent-twist-form} and is the normalization
appearing directly in the localized Pestov identity.

\begin{proposition}[The $7$-dimensional cubic refinement]
\label{prop:dimension-seven-vanishing}
Let $(M^7,g)$ be closed and negatively curved.  If $g$ is strictly
$\delta_7^\sharp$-pinched, then there is no odd flow-invariant orthogonal
complex structure
$$
  f\in C^\infty(\SM,\Lambda^2\cN).
$$
Consequently the oriented frame flow is ergodic.
\end{proposition}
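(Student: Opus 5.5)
Suppose, for a contradiction, that such an $f$ exists on a finite cover, and choose a pinching presentation $\delta\le\sec_{\cA}\le1$ with $\delta>\delta_7^\sharp$. By \cref{thm:CLMS-input} the section $f$ has finite odd Fourier degree; let $k$ be the top degree. I would pass from the $\Lambda^2$-valued structure to a vector-valued one. Since $n-1=6$, the Pfaffian-type contraction $w\mapsto\iota_w(f\wedge f)^{*_6}$ does not work directly, so instead I would use the transitivity reduction behind \cref{thm:CLMS-input}(iii): contracting $f$ against the normal plane yields tangent-twist data. Concretely, I would treat the top coefficient $f_k$ with the $\Lambda^2$-twisted localized Pestov identity \eqref{eq:top-pestov-sign}. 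I would then prove the $\Lambda^2$ analogue of \cref{lem:tangent-twist-residual}: complete squares in the Jacobi block, giving a loss proportional to $N$ plus a radial part, and bound the tangential mixed term with \eqref{eq:BK-simple-form}, giving a Cauchy--Schwarz term $\frac23\sqrt{c\,N_\perp\lambda_kN}$. The aim is to show $\mathfrak P^{\Lambda^2}_{\cA}(f_k)>0$ for $k\ge5$ at $\delta=\delta_7^\sharp$ (indeed already near $1/4$). This would be a completed-square polynomial check of the kind done for $\Pi_{n,k}$. That leaves $f=f_1+f_3$.

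\textbf{Cubic mode.} Here the refinement described in the introduction enters. The top coefficient $f_3$ satisfies $X_+f_3=0$. The lower-degree part of $Xf=0$ gives $X_-f_3=-X_+f_1$, and I would retain, rather than discard, the term $A_{7,3}\|X_-f_3\|^2$ in \eqref{eq:localized-pestov-positive}. The normality relation $\iota_vf(x,v)=0$ at degree $4$ forces the contraction $\iota_vf_3$ to have degree two. It therefore defines a tangential vector harmonic $Y$ of degree $2$, whose tangent-twist form $\mathcal W_{\cB}(Y)$ is controlled by a second localized Pestov estimate applied in degree two. The key steps are as follows.
\begin{enumerate}[label=(\alph*)]
\item Prove a sharp fibrewise $L^2$ comparison $\|Y\|^2=\theta\|f_3\|^2$, or an inequality with an explicit constant, by an $O(7)$ Pieri decomposition of the admissible cubic $\Lambda^2$-valued space and Schur on each summand.
\item Split $\cA=\delta I+(1-\delta)\cE$, compute the $I$-part exactly via Casimir values, and bound the $\cE$-part by Jacobi completion of squares plus \cref{lem:BBK}. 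The radial loss is expressed through $\|Y\|^2$.
\item Use the retained lowering term $A_{7,3}\|X_-f_3\|^2$ together with the Pestov inequality for the degree-two harmonic $Y$ to absorb the mixed cross term.
\end{enumerate}
Optimizing one Young parameter yields a quadratic inequality in $\delta$ whose root should be $\delta_7^\sharp=(385+36\sqrt6)/(927+36\sqrt6)$. The $\sqrt6$ suggests that a Cauchy--Schwarz constant like $\sqrt{(n-1)}$ with $n-1=6$ appears. This step is the main obstacle: both the exact comparison constant and the correct way to couple the two Pestov identities are delicate, and I would first try to get a clean scalar inequality of the form $a(\delta)\|f_3\|^2-b(\delta)\|f_3\|\,\|X_-f_3\|+A_{7,3}\|X_-f_3\|^2>0$.

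\textbf{Linear mode.} If $f=f_1$, then $f(x,v)=\Psi_x(v,\cdot,\cdot)$ for a three-tensor which, by normality, is a $3$-form $\psi$. The condition $Xf=0$ gives $(\nabla_v\psi)(v,\cdot,\cdot)=0$, so $\psi$ is a nearly parallel (Killing) $3$-form. Because each $\iota_v\psi$ is an orthogonal complex structure on $v^\perp$, $\psi$ is a $G_2$ form, hence a nearly parallel $G_2$-structure. Such a structure is Einstein with nonnegative scalar curvature (positive if strictly nearly parallel, Ricci-flat if parallel). Either case contradicts negative sectional curvature. Finally, with all branches excluded, \cref{thm:CLMS-input}(iii) gives ergodicity of the oriented frame flow.
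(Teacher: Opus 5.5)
Your linear-mode argument and your plan for degrees $k\ge5$ are fine in outline. The paper simply imports the $k\ge5$ exclusion from Ceki\'c--Lefeuvre--Moroianu--Semmelmann. A $\Sym^2$-style residual bound for the commutator action on $\Lambda^2$ would also close at $\delta_7^\sharp$, though not ``already near $1/4$'': at $k=5$ that estimate fails there.

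The genuine gap is the cubic mode, which is the whole content of the refinement, and it is exactly the step you flag as the main obstacle. Nothing in (a)--(c) relates the retained term $\frac{72}{7}\|X_-f_3\|^2$ to $Y=\iota_vf_3$. The inequality you aim for, with a cross term $-b(\delta)\|f_3\|\,\|X_-f_3\|$, does not match the structure, because the localized Pestov identity for $f_3$ has no such term. After your step (b), with the Berger--Bourguignon--Karcher mixed term bounded by $\|f_3\|^2$, one only has
\[
 0\ \ge\ p(\delta)\|f_3\|^2-q(\delta)\|Y\|^2+\tfrac{72}{7}\|X_-f_3\|^2,
 \qquad q(\delta)>0 .
\]
To conclude, one needs a lower bound of $\|X_-f_3\|^2$ by a positive multiple of $\|Y\|^2$.

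The missing step is the following chain.
\begin{enumerate}[label=(\roman*)]
\item Since $X_+f_3=0$, one has $Xf_3=X_-f_3$. Contraction with the tautological vector commutes with $X$, because the velocity is parallel along geodesics, and $|\iota_v\omega|\le|\omega|$. Hence $\|X_-f_3\|^2\ge\|XY\|^2\ge\|X_+Y\|^2$ by orthogonality of Fourier modes. Equivalently, normality forces $\iota_vf_1=-Y$, and your identity $X_-f_3=-X_+f_1$ then gives $XY=\iota_vX_-f_3$.
\item The degree-two localized Pestov identity gives $\mathcal W_{\cA}(Y)\le\frac{22}{3}\|X_+Y\|^2$.
\item You then need a separate curvature \emph{lower} bound for normal degree-two vector harmonics, $\mathcal W_{\cA}(Y)\ge\bigl(13\delta-2\sqrt6(1-\delta)\bigr)\|Y\|^2$. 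The paper proves it from $\operatorname{div}Y=0$, a Bianchi splitting of the mixed term into trace-free symmetric and two-form parts, the bound $\sum_a|s_a|\le\sqrt6\,|\Sigma|$ on a $6$-space, and the Hodge--Bochner energies $\int|\Sigma|^2=4\|Y\|^2$, $\int|\omega_\Omega|^2=\frac92\|Y\|^2$. This, not an optimized Young parameter, is where $\sqrt6$ comes from.
\item Only then does the lowering term become $\frac{108}{77}\bigl(13\delta-2\sqrt6(1-\delta)\bigr)\|Y\|^2$.
\item Your comparison (a) then finishes the argument. It must be the inequality $\|Y\|^2\le\frac23\|f_3\|^2$, not an equality, because the contraction has a kernel on the admissible cubic space.
\end{enumerate}

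The resulting condition is linear in $\delta$, namely $(927+36\sqrt6)\delta>385+36\sqrt6$. So there is no quadratic whose root is $\delta_7^\sharp$.

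Your Pieri/Schur route to (a) is an acceptable variant. So is your $\delta I+(1-\delta)\cE$ bookkeeping in (b); the paper instead centers at $\frac{1+\delta}{2}I$, with $24\delta$ from the Jacobi term and $16(1-\delta)$ for the centered twist. But without the coupling above, the cubic case, and hence the proposition, is not proved.
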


\begin{proof}
Choose a pinching presentation with
$\delta>\delta_7^\sharp$ and
$\cA=\delta I+(1-\delta)\cE$, where
$0\leq\sec_{\cE}\leq1$.  Let $k$ be the highest odd Fourier degree of
$f$.  The estimates of \cite[Proposition~4.4 and Lemma~4.11]{CLMS24}
already exclude every $k\geq5$ under $\delta\geq2/5$: at
$(n,p,k)=(7,2,5)$ the two relevant gaps are
$$
  \frac{93}{5}-8\sqrt3>0,
  \qquad
  \frac{2807}{108}-8\sqrt3-\frac{26}{9}\sqrt6>0,
$$
and the corresponding thresholds decrease with $k$.

It remains to exclude $k=3$.  Put $u=f_3$ and $g=\iota_vu$.  We use the
localized Pestov calculation of \cite[Proposition~4.4]{CLMS24}, but retain
the degree-two vector term.  At $(n,p,k)=(7,2,3)$ the lowering coefficient
is
$$
 \frac{(n+k-2)(n+2k-4)}{n+k-3}=\frac{72}{7}.
$$
After moving the curvature terms to the left, the Jacobi term contributes
$24\delta\|u\|^2$, the constant-curvature part of the $\Lambda^2$ twist
subtracts
$$
 (1+\delta)\|u\|^2+\frac92(1+\delta)\|g\|^2,
$$
and the centered twist is bounded by $16(1-\delta)\|u\|^2$.  Discarding
the nonnegative $Z(u)$ term and using
$$
 24\delta-(1+\delta)-16(1-\delta)=39\delta-17
$$
gives
\begin{equation}\label{eq:seven-cubic-master}
 0\geq(39\delta-17)\|u\|^2
 -\frac92(1+\delta)\|g\|^2
 +\frac{72}{7}\|X_-u\|^2.
\end{equation}
Since $X_+u=0$ and tautological contraction commutes with $X$,
$$
  \|X_-u\|^2\geq\|Xg\|^2\geq\|X_+g\|^2.
$$
For a degree-two vector harmonic, the localized Pestov identity gives
$$
  \mathcal W_{\cA}(g)\leq\frac{22}{3}\|X_+g\|^2.
$$
Lemma
\ref{lem:seven-degree-two-estimate} therefore gives
$$
  \|X_+g\|^2
  \geq\frac3{22}
  \bigl(13\delta-2\sqrt6(1-\delta)\bigr)\|g\|^2.
$$
Substitution in \eqref{eq:seven-cubic-master} yields
\begin{equation}\label{eq:seven-cubic-final-inequality}
  0\geq B_7(\delta)\|u\|^2+C_7(\delta)\|g\|^2,
\end{equation}
where
$$
  B_7(\delta)=39\delta-17,
$$
and
$$
  C_7(\delta)
  =\frac{108}{77}
    \bigl(13\delta-2\sqrt6(1-\delta)\bigr)
   -\frac92(1+\delta).
$$
Now $B_7(\delta)>0$.  If $C_7(\delta)\geq0$,
\eqref{eq:seven-cubic-final-inequality} is already impossible for $u\neq0$.
If $C_7(\delta)<0$, Lemma~\ref{lem:seven-contraction} gives
$$
  B_7(\delta)\|u\|^2+C_7(\delta)\|g\|^2
  \geq
  \left(B_7(\delta)+\frac23C_7(\delta)\right)\|u\|^2.
$$
A direct simplification gives
\begin{equation*}
  B_7(\delta)+\frac23C_7(\delta)
  =\frac4{77}
   \left((927+36\sqrt6)\delta-(385+36\sqrt6)\right)>0.
\end{equation*}
Thus the cubic coefficient vanishes.

The only remaining possibility is degree one.  Then
$f(v)=\iota_v\phi$ for a three-form $\phi$ on $M$, and the condition that
$f(v)$ be an orthogonal complex structure on $v^\perp$ makes $\phi$ a
$G_2$-structure.  The equation $Xf=0$ makes this structure nearly parallel;
such a metric is Einstein with nonnegative scalar curvature, contradicting
negative sectional curvature.  This final degree-one argument is the one
recorded in \cite[proof of Theorem~4.1]{CLMS24}.  Theorem
\ref{thm:CLMS-input} now implies ergodicity.
\end{proof}

\section{Proofs of the main theorems}\label{sec:mainproof}

\begin{proof}[Proof of Theorem \ref{thm:stable-main}]
Assume first that we are in one of the dimensions of
Theorem \ref{thm:stable-main}\textup{(i)} and that the oriented frame flow
is not ergodic.  By Theorem \ref{thm:CLMS-input}, after passing to a finite
Riemannian cover we obtain an odd invariant unit normal vector field $f$.
Strict quarter-pinching pulls back to the cover. Choose a pinching
presentation with $\delta>1/4$ and scale $\kappa_0$, and put
$\cA=\kappa_0^{-1}\Rop^g$.

By Proposition \ref{prop:degree-reduction},
$$
  f=f_1+f_3,
  \qquad f_3\neq0.
$$
Construct $T\in C^\infty(M,\cC)$ by \eqref{eq:Tdef}.  Lemmas
\ref{lem:young-symmetry} and \ref{lem:killing-equation} give
$$
  T\in C^\infty(M,\cC),
  \qquad
  \cD T=0.
$$

If $n=4$, Proposition \ref{prop:curvature-positive} makes
$\cQ_{\Hook}(\cA)$ pointwise positive definite. Compactness gives $c>0$ such
that
$$
  \inner{\cQ_{\Hook}(\cA)T}{T}\geq c|T|^2.
$$
Together with \eqref{eq:integral-negative-Q}, this implies $T=0$.

If $n=6$, let $K$ be the cubic coefficient corresponding to $f_3$.
Lemma \ref{lem:six-dimensional-cubic} and
\eqref{eq:tangent-twist-pestov-normalized} imply $K=0$, contradicting
$f_3\neq0$.

If $n\equiv2\pmod4$ and $n\geq10$, Proposition
\ref{prop:stable-cubic} gives $T=0$.  In the first and third cases,
\eqref{eq:diagonal-T} then gives $f=0$; in the second case we already have a
contradiction to $f_3\neq0$.  This proves part \textup{(i)}.

Now suppose that $n\equiv0\pmod4$, $n\geq12$, and that $g$ is
$5/13$-pinched.  If the frame flow were not ergodic, Theorem
\ref{thm:CLMS-input} would give, on a finite Riemannian cover, a nontrivial
even invariant orthogonal projector
$$
  P\in C^\infty(\SM,\Sym^2\cN).
$$
Choose a normalized pinching presentation with
$\frac5{13}\leq\sec_{\cA}\leq1$.  Proposition
\ref{prop:symmetric-projector-vanishing} excludes precisely such a
projector.  This contradiction proves part \textup{(ii)}.
\end{proof}

\begin{proof}[Proof of Theorem \ref{thm:exceptional-main}]
Part \textup{(i)} is Proposition~\ref{prop:dimension-seven-vanishing}, and
part \textup{(ii)} is Corollary~\ref{cor:dimension-eight-threshold}.

For part \textup{(iii)}, let $n=134$ and assume that $g$ is strictly
$2/5$-pinched.  If the frame flow were not ergodic,
Theorem~\ref{thm:CLMS-input} would give on a finite Riemannian cover either
an odd invariant unit normal vector field or an invariant $E_7$
Lie-bracket three-form.  In the first case strict $2/5$-pinching implies
strict quarter-pinching, and Propositions \ref{prop:degree-reduction} and
\ref{prop:stable-cubic} eliminate the vector field.  In the second case
Proposition~\ref{prop:E7-vanishing} eliminates the Lie-bracket three-form.
This proves part \textup{(iii)}.
\end{proof}

\appendix

The next two appendices record the two short algebraic verifications used above, so
that the paper remains self-contained.

\section{Jucys--Murphy and conformal weights}\label{sec:weitzenbock}\label{app:JM}

This appendix proves the differential half of the argument. We work first in a
Euclidean vector space $V$ of arbitrary dimension at least $4$ and put
$$
  E:=\Hook V^*.
$$

\subsection{The conformal-weight operator}

For $a\in V^*$ and $u\in E$, define
\begin{equation*}
  B(a\otimes u)
  :=\sum_i e^i\otimes\rho(e_i\wedge a)u,
\end{equation*}
where $(e_i)$ is an orthonormal basis and $\rho$ is the infinitesimal
$\mathfrak{so}(V)$-action on $E$. This definition is independent of the basis.

Realise $V^*\otimes E$ inside $V^{*\otimes5}$. Slot $0$ is the first factor and
slots $1,2,3,4$ are the $4$ tensor slots of $E$. Let $\tau_{0r}$ exchange slots
$0$ and $r$, and let $\operatorname{tr}_{0r}$ denote contraction of these
slots.

\begin{lemma}\label{lem:Bformula}
On $V^*\otimes E$,
\begin{equation}\label{eq:Bformula}
  B=\sum_{r=1}^4
  \left(
    \tau_{0r}-\operatorname{tr}_{0r}^*\operatorname{tr}_{0r}
  \right).
\end{equation}
\end{lemma}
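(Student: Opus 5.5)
The plan is to compute $\rho(e_i\wedge a)$ slot by slot and then carry out the sum over $i$, so that $B$ becomes a sum of four single-slot contributions.

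\emph{Step 1: reduce to one slot.} The infinitesimal action on $E\subset V^{*\otimes4}$ is a derivation, so
\[
  \rho(e_i\wedge a)=\sum_{r=1}^4\rho_r(e_i\wedge a),
\]
where $\rho_r$ acts by the standard representation in slot $r$ alone. Since $B$ is linear in $\rho$, it is enough to show for each fixed $r$ that
\[
  \sum_i e^i\otimes\rho_r(e_i\wedge a)u=(\tau_{0r}-\operatorname{tr}_{0r}^*\operatorname{tr}_{0r})(a\otimes u).
\]
This is a statement on all of $V^{*\otimes5}$, and the restriction to $V^*\otimes E$ follows afterwards. Both sides are linear in $u$, so we may take $u$ decomposable with $b$ in slot $r$.

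\emph{Step 2: the single-slot identity.} We identify $V\simeq V^*$. By \eqref{eq:wedge-skew-convention},
\[
  (e_i\wedge a)b=\inner{e_i}{b}a-\inner{a}{b}e_i.
\]
Summing $e^i\otimes(\cdot)$ over $i$ gives two terms:
\[
  \sum_i\inner{e_i}{b}\,e^i\otimes a=b\otimes a,
  \qquad
  \sum_i\inner ab\,e^i\otimes e_i=\inner ab\sum_i e^i\otimes e^i .
\]
In the first term, $b$ now sits in slot $0$ and $a$ in slot $r$, so this term is $\tau_{0r}(a\otimes u)$. The second term is $\operatorname{tr}_{0r}^*\operatorname{tr}_{0r}(a\otimes u)$, because $\operatorname{tr}_{0r}$ contracts $a$ with $b$ and its adjoint reinserts the identity $\sum_i e^i\otimes e^i$ into slots $0$ and $r$. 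Taking the second term with a minus sign yields the single-slot identity, and summing over $r$ gives \eqref{eq:Bformula}.

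\emph{Step 3: well-definedness on $V^*\otimes E$.} The right-hand side is computed on $V^{*\otimes5}$ and need not preserve $V^*\otimes E$ term by term. However, the left-hand side is built from $\rho$, which preserves $E$, so the full sum over $r$ automatically lands in $V^*\otimes E$. The formula should therefore be read as an identity of operators on $V^{*\otimes5}$ restricted to $V^*\otimes E$.

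The only real obstacle is the sign and index bookkeeping: matching \eqref{eq:wedge-skew-convention}, the order $e_i\wedge a$ versus $a\wedge e_i$, and whether the action is on $V$ or on $V^*$ (the two agree via the metric, since the action is orthogonal). One sanity check is that on $V^*\otimes V^*$ the identity gives $B=\tau-\operatorname{tr}^*\operatorname{tr}$. This has eigenvalue $1$ on trace-free symmetric tensors, $-1$ on $\Lambda^2$ and $1-n$ on the trace part, which are the standard conformal weights. That agreement confirms the sign convention.
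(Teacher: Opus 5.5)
Your proposal is correct and is essentially the paper's proof: both compute the action of $e_i\wedge a$ on the $r$-th slot via \eqref{eq:wedge-skew-convention}, sum over $i$ to get the exchange $\tau_{0r}$ minus the contract-and-reinsert term $\operatorname{tr}_{0r}^*\operatorname{tr}_{0r}$, and then sum over the four slots. Your justification for restricting to $V^*\otimes E$ is, if anything, cleaner than the paper's appeal to $\mathrm O(V)$-equivariance of the individual terms: the identity holds on all of $V^{*\otimes5}$, and the left-hand side preserves $V^*\otimes E$ because $\rho$ preserves $E$.
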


\begin{proof}
It is enough to compute on a pure covariant tensor
$a\otimes u_1\otimes\cdots\otimes u_4$. The infinitesimal action of
$e_i\wedge a$ on the $r$-th covariant factor is
$$
  u_r\longmapsto
  \inner{e_i}{u_r}a-\inner{a}{u_r}e_i.
$$
After summing over $i$, the first term exchanges $a$ and $u_r$, while the
second contracts them and inserts the metric tensor. This gives
$\tau_{0r}-\operatorname{tr}_{0r}^*\operatorname{tr}_{0r}$ in the $r$-th
slot. Summing over the $4$ slots proves the formula. Restriction to the
Young-symmetry subspace $E$ is legitimate because all terms are
$\mathrm O(V)$-equivariant.
\end{proof}

\subsection{The Jucys--Murphy eigenvalue calculation}

Over $\bbC$, Pieri's rule gives
\begin{equation}\label{eq:pieri}
  V_\bbC^*\otimes\Hook V_\bbC^*
  \simeq
  \mathbb S_{(4,1)}V_\bbC^*
  \oplus
  \mathbb S_{(3,2)}V_\bbC^*
  \oplus
  \mathbb S_{(3,1,1)}V_\bbC^*.
\end{equation}
The principal symbol of $\cD$ is the projection onto
$\mathbb S_{(4,1)}V_\bbC^*$. Hence
\begin{equation}\label{eq:kernel-symbol}
  \ker\sigma(\cD)
  =
  \mathbb S_{(3,2)}V_\bbC^*
  \oplus
  \mathbb S_{(3,1,1)}V_\bbC^*.
\end{equation}

After relabelling the distinguished tensor slot $0$ as $5$, set
$$
  J_5:=\sum_{r=1}^4\tau_{0r}.
$$
Under the place-permutation action of $\bbC[S_5]$ on
$V_\bbC^{*\otimes5}$, this is the image of the fifth Jucys--Murphy element
$$
  (1\,5)+(2\,5)+(3\,5)+(4\,5).
$$
On the Pieri summand obtained by adding one box to $(3,1)$,
$J_5$ acts by the content
$\operatorname{column}-\operatorname{row}$ of the added box;
see \cite[\S\S3.2--3.3]{CeccheriniScarabottiTolli10}.
Thus its eigenvalues on the three summands in \eqref{eq:pieri} are
\begin{equation}\label{eq:contents}
  3,\qquad 0,\qquad -2,
\end{equation}
respectively; see also Subsection~\ref{app:JM-verification}.

\begin{proposition}\label{prop:symbol-sign}
If $U\in V^*\otimes E$ satisfies $\sigma(\cD)U=0$, then
$$
  \inner{BU}{U}\leq0.
$$
\end{proposition}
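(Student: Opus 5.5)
We would prove this directly from Lemma~\ref{lem:Bformula} and the content calculation \eqref{eq:contents}. The plan is to decompose $B$ into its Jucys--Murphy part and its trace part, and to show that each part is nonpositive on $\ker\sigma(\cD)$.

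By \eqref{eq:Bformula},
\begin{equation*}
  B=J_5-\sum_{r=1}^4\operatorname{tr}_{0r}^*\operatorname{tr}_{0r}.
\end{equation*}
Here $J_5$ is a sum of transpositions of tensor slots. Each transposition is an orthogonal involution of $V^{*\otimes5}$, hence self-adjoint. The maps $\operatorname{tr}_{0r}^*\operatorname{tr}_{0r}$ are positive semidefinite. The second group therefore contributes
\begin{equation*}
  -\sum_r|\operatorname{tr}_{0r}U|^2\leq0
\end{equation*}
to $\inner{BU}{U}$ for every $U$.

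It remains to treat $\inner{J_5U}{U}$. The Pieri decomposition \eqref{eq:pieri} is a decomposition into $GL(V)$-isotypic pieces that are images of commuting Young-type idempotents in $\bbC[S_5]$ acting by place permutations. The point to check is that these summands are mutually orthogonal for the real inner product on $V^{*\otimes5}$ restricted to $V^*\otimes E$. This holds because they are eigenspaces of the self-adjoint operator $J_5$ with distinct eigenvalues $3,0,-2$. That $J_5$ preserves $V^*\otimes E$ and acts on each summand by its content is precisely the content of \eqref{eq:contents}; it follows from the Jucys--Murphy theory, since $J_5$ commutes with $\bbC[S_4]$ acting on slots $1$--$4$ and so preserves the $(3,1)$-isotypic subspace.

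Given this, \eqref{eq:kernel-symbol} places $U$ in the sum of the eigenspaces for $0$ and $-2$, and so
\begin{equation*}
  \inner{J_5U}{U}=-2|U_{(3,1,1)}|^2\leq0.
\end{equation*}
Combining this with the trace part proves the claim.

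The main obstacle is the identification of $\ker\sigma(\cD)$ with the $J_5$-eigenspaces in a way compatible with the real Euclidean structure. On the real form one must check that $\sigma(\cD)$, which is symmetrization of slot $0$ with the first three slots followed by restriction to the Young kernel, kills exactly the content-$3$ eigenspace. We would verify this by noting that the $(4,1)$ summand is the unique summand on which slot $0$ is symmetric with slots $1,2,3$. Equivalently, the content-$3$ eigenspace is the image of the symmetrizer over $\{0,1,2,3\}$ intersected with $V^*\otimes E$. The generalized Killing equation \eqref{eq:generalized-killing} is exactly the vanishing of this projection.
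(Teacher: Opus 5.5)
Your main argument is the paper's proof. You split $B=J_5-\sum_{r}\operatorname{tr}_{0r}^*\operatorname{tr}_{0r}$ by Lemma~\ref{lem:Bformula}, drop the nonpositive trace part, and use \eqref{eq:kernel-symbol} together with the contents \eqref{eq:contents} to get $\inner{J_5U}{U}\leq0$. Your extra remarks are correct details the paper leaves implicit: the Pieri summands are orthogonal because they are eigenspaces of the self-adjoint $J_5$, and $J_5$ preserves $V^*\otimes E$ because it commutes with $S_4$ on slots $1$--$4$.

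The verification of \eqref{eq:kernel-symbol} sketched in your last paragraph is wrong, so do not rely on it. It is also phrased backwards: $\sigma(\cD)$ must be injective on the content-$3$ eigenspace, not kill it. More seriously, no nonzero $U\in V^*\otimes E$ is symmetric in slots $0,1,2,3$. To see this, put $F(x)(w):=U(x,x,x,x,w)$. The Young condition in slots $1$--$4$ gives $(D_aF)(u)(u)=4U(a,u,u,u,u)=0$ for all $a,u$. Taking $a=u$ and using Euler's identity gives $F(u)(u)=0$. Differentiating this in the direction $a$ then gives $F(u)(a)=0$, so $F=0$ and $U=0$. Thus the image of the symmetrizer over $\{0,1,2,3\}$ meets $V^*\otimes E$ only in $0$, and the $(4,1)$ summand cannot be characterized by that symmetry.

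The correct justification uses Schur's lemma. The symbol $\sigma(\cD)$ is $GL(V)$-equivariant with values in $\Sym^4V^*\otimes V^*\simeq\Sym^5V^*\oplus\mathbb S_{(4,1)}V^*$, so it vanishes on the $(3,2)$ and $(3,1,1)$ summands. It is not identically zero: for $a\neq0$ and $h\neq0$ one has $\sigma(a\otimes h)(v,v,v,v;w)=a(v)\,h(v,v,v,w)$, a nonzero polynomial. Hence $\sigma(\cD)$ is nonzero on the irreducible $(4,1)$ summand, therefore injective there, and this gives \eqref{eq:kernel-symbol}.
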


\begin{proof}
By \eqref{eq:kernel-symbol} and \eqref{eq:contents},
$$
  \inner{J_5U}{U}\leq0.
$$
On the other hand, Lemma \ref{lem:Bformula} gives
$$
  \inner{BU}{U}
  =\inner{J_5U}{U}
   -\sum_{r=1}^4\norm{\operatorname{tr}_{0r}U}^2
  \leq\inner{J_5U}{U}.
$$
The conclusion follows.
\end{proof}

\subsection{Integrated Weitzenb\"ock formula}

Let $q_E(R^g)$ denote the Weitzenb\"ock curvature endomorphism on the bundle
associated with $E$. For an equivariant endomorphism
$B\in\End(V^*\otimes E)$, let
$$
  \widetilde B(a\otimes b\otimes u)
  :=(a\mathbin{\lrcorner}\otimes\Id)B(b\otimes u),
$$ where \(a\mathbin{\lrcorner}\) denotes contraction of the
first \(V^*\)-factor with \(a^\sharp\), using the Euclidean metric.
We use $(\nabla^2T)(a,b):=\nabla_a\nabla_bT-\nabla_{\nabla_a b}T$.
The universal conformal-weight identity is
\begin{equation*}
  \widetilde B(\nabla^2T)=q_E(R^g)T;
\end{equation*}
see \cite[Lemma~3.1]{CLMSW25}. Since $B$ is parallel and self-adjoint,
integration by parts on a closed manifold yields
\begin{equation}\label{eq:integrated-W}
  \int_M\inner{q_E(R^g)T}{T}\,d\vol_g
  =-\int_M\inner{B\nabla T}{\nabla T}\,d\vol_g.
\end{equation}

\begin{corollary}\label{cor:integral-sign}
If $T\in C^\infty(M,\cC)$ satisfies the generalized Killing equation
\eqref{eq:generalized-killing}, then
\begin{equation*}
  \int_M\inner{q_E(R^g)T}{T}\,d\vol_g\geq0.
\end{equation*}
\end{corollary}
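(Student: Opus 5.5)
The plan is to combine the integrated Weitzenb\"ock formula \eqref{eq:integrated-W} with the pointwise sign in Proposition~\ref{prop:symbol-sign}. The object being integrated is $\inner{B\nabla T}{\nabla T}$ at each point of $M$, and the generalized Killing equation is precisely a statement about the symbol component of $\nabla T$.

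First I fix a point $x\in M$ and view $\nabla T|_x$ as an element $U\in T_x^*M\otimes\cC_x=V^*\otimes E$, with $V=T_xM$. By definition, $(\cD T)_{eabcd}=\nabla_{(e}T_{abc)d}$. This is the image of $\nabla T$ under the $GL(V)$-equivariant projection
\[
V^*\otimes\Hook V^*\to\Sym^4V^*\otimes V^*,
\]
obtained by symmetrizing the derivative slot together with the first three slots. By Pieri, the only summand of $V^*\otimes\Hook V^*$ that maps nontrivially into $\Sym^4V^*\otimes V^*$ is $\mathbb S_{(4,1)}V^*$. The summands $\mathbb S_{(3,2)}$ and $\mathbb S_{(3,1,1)}$ do not occur in $\Sym^4\otimes V^*=\mathbb S_{(5)}\oplus\mathbb S_{(4,1)}$, and $\mathbb S_{(5)}$ does not occur in $V^*\otimes\Hook V^*$ either.

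So the symmetrization map, restricted to $\mathbb S_{(4,1)}$, is a nonzero equivariant map between irreducibles, hence injective. The kernel of $\cD$'s symbol is therefore exactly \eqref{eq:kernel-symbol}. The generalized Killing equation $\cD T=0$ then says $\sigma(\cD)(\nabla T|_x)=0$ at every $x$, and Proposition~\ref{prop:symbol-sign} gives $\inner{B\nabla T}{\nabla T}\le 0$ pointwise.

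Next I insert this into \eqref{eq:integrated-W}. Since $B$ is parallel and self-adjoint, the integration by parts is valid, and it gives
\[
\int_M\inner{q_E(R^g)T}{T}\,d\vol_g=-\int_M\inner{B\nabla T}{\nabla T}\,d\vol_g\ \ge 0,
\]
which is the claim. Parallelism of $B$ holds because $B$ is built from the $\mathrm O(n)$-equivariant maps $\tau_{0r}$ and $\operatorname{tr}_{0r}$ in Lemma~\ref{lem:Bformula}. Self-adjointness holds because $\tau_{0r}$ is an isometric involution and $\operatorname{tr}^*\operatorname{tr}$ is self-adjoint.

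The only step needing care is the identification of $\ker\sigma(\cD)$ on the real bundle. Proposition~\ref{prop:symbol-sign} is stated over $\bbC$, so I complexify $U$ and note that the real quadratic form $\inner{BU}{U}$ agrees with the Hermitian one on real vectors. The decomposition \eqref{eq:pieri} is orthogonal for the induced inner product on $V^{*\otimes5}$, because the summands are isotypic for the unitary place-permutation action together with $GL$, and the inner product is $S_5$-invariant. The eigenvalue bound $J_5\le0$ on the kernel therefore applies to the quadratic form. The passage from the $\bbC[S_5]$ picture to the Hermitian form is the main point to check, since it uses self-adjointness of $J_5$ with respect to the tensor inner product.
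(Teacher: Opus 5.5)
Your proposal is correct and follows the paper's proof: $\cD T=0$ places $\nabla T$ pointwise in $\ker\sigma(\cD)$, Proposition~\ref{prop:symbol-sign} then gives $\inner{B\nabla T}{\nabla T}\leq0$, and \eqref{eq:integrated-W} converts this into the stated sign. The extra checks you include are details the paper leaves implicit: the Pieri identification of the kernel, the parallelism and self-adjointness of $B$, and the real-versus-complex comparison via self-adjointness of $J_5$. The one step you assert without a reason, that symmetrization is nonzero on $\mathbb S_{(4,1)}V^*$, holds because the product of polynomials $a(x)\,T(x,x,x,\cdot)$ is nonzero whenever $a\neq0$ and $T\neq0$.
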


\begin{proof}
The equation $\cD T=0$ says pointwise that
$\nabla T\in\ker\sigma(\cD)$. Apply Proposition \ref{prop:symbol-sign} in
\eqref{eq:integrated-W}, and then use
\eqref{eq:q-Q-exact-convention}.
\end{proof}

Equivalently,
\begin{equation*}
  -\int_M\inner{\cQ_E(\Rop^g)T}{T}\,d\vol_g\geq0.
\end{equation*}
If $g$ is $\delta$-pinched at scale $\kappa_0$ and $\cA$ is the normalized
operator from \eqref{eq:normalized-curvature}, this becomes
\begin{equation}\label{eq:integral-negative-Q}
  -\kappa_0\int_M\inner{\cQ_E(\cA)T}{T}\,d\vol_g\geq0.
\end{equation}

\subsection{Verification of the Jucys--Murphy eigenvalues}\label{app:JM-verification}

For clarity, we recall why the eigenvalues in \eqref{eq:contents} are
$3,0,-2$. Let $[\lambda]$ be the irreducible symmetric-group module associated
with a partition $\lambda$. The restriction/induction branching rule says
that inducing $[(3,1)]$ from $S_4$ to $S_5$ gives the direct sum of the modules
obtained by adding one admissible box:
$$
  [(4,1)]\oplus[(3,2)]\oplus[(3,1,1)].
$$
The Jucys--Murphy element
$$
  J_5=(1\,5)+(2\,5)+(3\,5)+(4\,5)
$$
acts on a Young basis by the content of the box containing $5$. The added boxes
have coordinates
$$
  (1,4),\qquad(2,2),\qquad(3,1),
$$
and hence contents
$$
  4-1=3,
  \qquad
  2-2=0,
  \qquad
  1-3=-2.
$$
Schur--Weyl duality identifies the same scalars on the three $GL(V)$ summands
in \eqref{eq:pieri}.

\section{The first-three-slot contraction}\label{app:first-three-slot}

For completeness we record the word calculation used in
Lemma \ref{lem:first-three-slot-action}. For $L\in\{\mathsf D,\mathsf O\}$, write
$L_{ab}:=L_{e_a\wedge e_b}$. For
$L_1,L_2,L_3\in\{\mathsf D,\mathsf O\}$, let
$$
 \mathsf W^{L_1L_2L_3}_{ab}
 :=\frac12\operatorname{pr}_{\cU}\sum_{i,j}
 \bigl((L_1)_{ai}(L_2)_{ij}(L_3)_{jb}
      -(L_1)_{bi}(L_2)_{ij}(L_3)_{ja}\bigr)
 \operatorname{pr}_{\cU},
$$
and define the two-letter expressions analogously. We spell out two
representative contractions. Let $\mathsf E_{ab}$ be the matrix unit on the
fourth covariant slot, so that
$\mathsf O_{ab}=\mathsf E_{ba}-\mathsf E_{ab}$. A direct matrix-unit
contraction gives
\begin{align*}
 \frac12\sum_i
 \bigl(\mathsf O_{ai}\mathsf O_{ib}
      -\mathsf O_{bi}\mathsf O_{ia}\bigr)
 &=-\frac{n-2}{2}\mathsf O_{ab},\\
 \frac12\sum_{i,j}
 \bigl(\mathsf O_{ai}\mathsf O_{ij}\mathsf O_{jb}
      -\mathsf O_{bi}\mathsf O_{ij}\mathsf O_{ja}\bigr)
 &=\frac{n^2-3n+4}{2}\mathsf O_{ab}.
\end{align*}
Since
$\operatorname{pr}_{\cU}\mathsf O_{ab}\operatorname{pr}_{\cU}
=\rho_{ab}-\mathsf M_{ab}$, these are precisely the
$\mathsf{OO}$ and $\mathsf{OOO}$ rows below. For words involving
$\mathsf D$, one inserts \eqref{eq:tracefree-hook-projection} after each word and
uses $h_{(abcd)}=0$ together with the vanishing traces. For example,
$$
 \mathsf W^{\mathsf{DD}}_{ab}=-\frac{n-2}{2}\mathsf M_{ab},
 \qquad
 \mathsf W^{\mathsf{DDO}}_{ab}
 =\frac{n-4}{2}(\mathsf M_{ab}-\rho_{ab}),
 \qquad
 \mathsf W^{\mathsf{DOD}}_{ab}=-\mathsf M_{ab}.
$$
The remaining words are obtained by the same contraction. Thus
\eqref{eq:tracefree-hook-projection} gives the following coefficients in the
basis $(\mathsf M_{ab},\rho_{ab})$:
$$
\begin{array}{c|cc@{\qquad}c|cc}
\text{word}&\mathsf M&\rho&\text{word}&\mathsf M&\rho\\ \hline
\mathsf{DD}&-(n-2)/2&0&\mathsf{DDD}&(n^2+n+12)/2&0\\
\mathsf{DO},\mathsf{OD}&0&0&\mathsf{DDO},\mathsf{ODD}&(n-4)/2&-(n-4)/2\\
\mathsf{OO}&(n-2)/2&-(n-2)/2&\mathsf{DOD}&-1&0\\
&&&\mathsf{DOO},\mathsf{OOD}&n/2&-(n-2)/2\\
&&&\mathsf{ODO}&1&-1\\
&&&\mathsf{OOO}&-(n^2-3n+4)/2&(n^2-3n+4)/2.
\end{array}
$$
The two-letter row sums to \eqref{eq:alt-B2}; the eight three-letter words
sum to \eqref{eq:alt-B3}. Each entry uses only the first Bianchi/Young
relation, $\delta_{ii}=n$, the vanishing traces of a tensor in the totally trace-free $(3,1)$-component, and the two
eigenvalues of $CC^*$ displayed before
\eqref{eq:tracefree-hook-projection}.

\section{The quartic admissible module and eigenvalue table}\label{app:quartic-module}

Let $\mathscr N^{(2)}_4$ denote the space of quartic harmonic
$\Sym^2V$-valued polynomials satisfying
\eqref{eq:sym2-normal-conditions}. In the range $n\geq12$ used below, the
orthogonal Pieri rule gives the multiplicity-free decomposition
\begin{equation}\label{eq:quartic-full-pieri}
\begin{split}
 \Harm_4(V)\otimes\Sym^2_0V
 \simeq{}&
 \mathbb V^0_{(6)}V^*\oplus\mathbb V^0_{(5,1)}V^*
 \oplus\mathbb V^0_{(4,2)}V^*\\
 &\oplus\mathbb V^0_{(4)}V^*
 \oplus\mathbb V^0_{(3,1)}V^*
 \oplus\mathbb V^0_{(2)}V^*.
\end{split}
\end{equation}
Let $\pi_5(S)$ be the degree-five harmonic component of
$x\mapsto S(x)x$. Orthogonal Pieri also gives
$$
 \Harm_5(V)\otimes V
 \simeq
 \mathbb V^0_{(6)}V^*\oplus\mathbb V^0_{(5,1)}V^*
 \oplus\mathbb V^0_{(4)}V^*.
$$
The map $\pi_5$ is equivariant, and evaluation on one highest-weight vector
shows that it is nonzero on each of the three matching summands in
\eqref{eq:quartic-full-pieri}. Multiplicity one therefore gives
$$
 \ker\pi_5
 =\mathbb V^0_{(4,2)}V^*
 \oplus\mathbb V^0_{(3,1)}V^*
 \oplus\mathbb V^0_{(2)}V^*.
$$
The condition that $S(v)v$ have degree three implies
$\mathscr N^{(2)}_4\subseteq\ker\pi_5$. We now verify that all three summands
in this kernel satisfy the remaining normal condition, which will prove
\begin{equation}\label{eq:quartic-decomposition}
 \mathscr N^{(2)}_4
 \simeq
 \mathbb V^0_{(4,2)}V^*
 \oplus\mathbb V^0_{(3,1)}V^*
 \oplus\Sym^2_0V^*.
\end{equation}
We record concrete models for the last two summands.  Let $H$ be a harmonic
homogeneous vector polynomial of degree three satisfying
\begin{equation*}
  \inner{H(x)}x=0,
  \qquad
  \operatorname{div}H=0.
\end{equation*}
Then
\begin{equation}\label{eq:SH-embedding}
 S_H(x)=\frac{n+4}{n+2}
 \bigl(x\otimes H+H\otimes x\bigr)
 -\frac{|x|^2}{n+2}
 \bigl(\nabla H+\nabla H^\top\bigr).
\end{equation}
If $q\in\Harm_2(V)$, put
\begin{align}
 S_q(x)=\frac1{n(n+1)}\bigl[&
 (n+4)(n+2)q\,x\otimes x-(n+2)|x|^2q\,\Id_V\notag\\
 &-(n+2)|x|^2
 (x\otimes\nabla q+\nabla q\otimes x)
 +|x|^4\nabla^2q\bigr].
 \label{eq:Sq-embedding}
\end{align}
Both expressions are harmonic, trace-free, and satisfy the normal
conditions.  Moreover,
$$
  S_H(x)x=|x|^2H(x),
  \qquad
  S_q(v)v=q(v)v-\frac1n\nabla^Sq(v)
  \quad(|v|=1).
$$
The contraction $S\mapsto S(v)v$ vanishes on the $(4,2)$ summand, whereas
the displayed identities show that $S_H$ and $S_q$ define nonzero equivariant
maps from the $(3,1)$ and $(2)$ summands, respectively. Multiplicity one then
identifies these three models with all of $\mathscr N^{(2)}_4$.

\begin{lemma}[Quartic eigenvalue table]
\label{lem:quartic-table}
The values of the $6$ invariant quadratic forms on the three summands of
\eqref{eq:quartic-decomposition} are given in
Table~\eqref{eq:quartic-table}.  The first row is normalized by
$\int|S|^2=1$, the second by $\int|H|^2=1$, and the third by
$\int q^2=1$.
\begin{equation}\label{eq:quartic-table}
\begin{array}{c|cccccc}
 &\mathscr W_I^{(2)}&A_0&A_C&B&Z_b&Z_C\\ \hline
(4,2)
&4n+6
&1
&n-1
&0
&1
&2n+3\\[2mm]
(3,1)
&\dfrac{2(n+4)(3n+4)}{n+2}
&n+\dfrac4{n+2}
&\dfrac{4(n-1)}{n+2}
&1
&\dfrac{3n^2+12n+16}{n+2}
&\dfrac{n^2+10n+12}{n+2}\\[4mm]
(2)
&\dfrac{2(n+2)^2(n+4)}{n(n+1)}
&n+1+\dfrac{n+4}{n(n+1)}
&\dfrac{2(n-2)}{n(n+1)}
&\dfrac2n
&\dfrac{(n+2)^2(n+4)}{n(n+1)}
&\dfrac{3(n+2)^2}{n(n+1)}.
\end{array}
\end{equation}
\end{lemma}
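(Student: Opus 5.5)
The plan is to verify the table summand by summand. By Schur's lemma each of the six invariant quadratic forms is a scalar multiple of the norm on each irreducible summand of \eqref{eq:quartic-decomposition}. So it is enough to evaluate each form on one convenient representative per summand, normalized as in the statement.

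\textbf{Easy columns.} The columns $B$, $A_0$ and $A_C$ only need pointwise data. We have $a=\inner{S(v)v}{v}$, $b$ is the $v^\perp$-part of $S(v)v$, and $|C|^2=|S|^2-a^2-2|b|^2$. Hence all three reduce to the integrals $\int|S|^2$, $\int|S(v)v|^2$ and $\int\inner{S(v)v}{v}^2$.

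\begin{itemize}
\item On $(4,2)$ the contraction $S(v)v$ vanishes, so $a=b=0$. This immediately gives $A_0=1$, $A_C=n-1$ and $B=0$.
\item On $(3,1)$ use the identity $S_H(v)v=H(v)$ on $|v|=1$. Since $\inner{H}{v}=0$, we get $a=0$ and $b=H$.
\item On $(2)$ use $S_q(v)v=q(v)v-\frac1n\nabla^Sq$. This gives $a=q$ and $b=-\frac1n\nabla^Sq$. The formula $\int|\nabla^Sq|^2=2n\int q^2$ then yields $B=2/n$.
\end{itemize}

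The remaining input is $\int|S_H|^2$ and $\int|S_q|^2$. These come from expanding \eqref{eq:SH-embedding} and \eqref{eq:Sq-embedding} on the sphere and using the standard moment identities for spherical harmonics. The Bochner-type energies already used in Lemma \ref{lem:tracefree-hook-twist-estimate}, namely $\kappa\int|\nabla^SH|^2=(3n+2)$ per unit norm together with divergence-freeness, handle the $(3,1)$ case.

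\textbf{Derivative columns.} The column $\mathscr W^{(2)}_I$ follows directly from \eqref{eq:sym2-WI}, using $\lambda_4=4(n+2)$, once $N$ and $N_1$ are known. For example, on $(4,2)$ we get $4n+8-2=4n+6$. For $Z_b$ and $Z_C$, use $\int\sum|z_{ab}|^2=\lambda_4N$ together with the splitting of $\sum_{a,b}|z_{ab}|^2$ into the blocks $|z_{00}|^2$, $2\sum_\alpha|z_{\alpha0}|^2$ and $2Z_C$.

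\begin{itemize}
\item The block $z_{00}=\nabla^S a$ gives $\int|z_{00}|^2=\lambda$-type energy of the radial function $a$.
\item For $z_{\alpha0}$, write $z_{\alpha 0}=\nabla^S\inner{S e_0}{e_\alpha}$ with $e_0$ fixed. Its integral is the spherical Dirichlet energy of the vector field $S(v)e_0$ summed over $e_0$. I would compute it instead as $\int|\nabla^S(S(v)v)|^2$ corrected by the terms coming from differentiating $v$. Those corrections are $\int|S|^2$-type terms and cross terms expressible through $N$, $N_1$ and $\int a^2$.
\end{itemize}

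This is the \emph{main obstacle}: the exact bookkeeping of the correction terms arising because $e_0=v$ is frozen only at a point. I would first carry it out generally, obtaining a formula for $Z_b$ in terms of $\int|\nabla^S(S(v)v)|^2$, $N$, $N_1$ and $\int a^2$. Then $Z_C$ follows by subtraction.

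\textbf{Checks.} Finally I would sanity-check each row using the identity $\lambda_4N=\int|z_{00}|^2+2Z_b+2Z_C$. I would also confirm that the gaps \eqref{eq:quartic-gaps} are reproduced.
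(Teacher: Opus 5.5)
Your route is the paper's: the blocks of $S(v)$ relative to $\bbR v\oplus v^\perp$, the models $S_H$ and $S_q$, the column $\mathscr W^{(2)}_I$ from \eqref{eq:sym2-WI}, and $Z_C$ by subtraction from $4(n+2)N=Z_{00}+2Z_b+2Z_C$. The pointwise columns are fine. The derivative columns, however, contain an error and an unfilled step.

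\textbf{First, $z_{00}=\nabla^S a$ is false.} The frozen-frame correction you anticipate for $z_{\alpha0}$ already affects $z_{00}$. Since $S(v)$ is symmetric, differentiating $a(w)=\inner{S(w)w}{w}$ at $w=v$ along $u\perp v$ gives $\inner{\nabla^Sa}{u}=\inner{z_{00}}{u}+2\inner{b}{u}$, i.e.
\[
 z_{00}=\nabla^S a-2b .
\]
So $z_{00}=-2H$ on the $(3,1)$ summand, with energy $4$ rather than $0$. On the $(2)$ summand $z_{00}=\frac{n+2}{n}\nabla^Sq$, with energy $2(n+2)^2/n$ rather than $2n$. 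With your value the $Z_C$ entries would come out too large by $2$ and by $4(n+1)/n$ respectively. Your proposed check cannot detect this, because $Z_C$ is \emph{defined} by subtraction from the very identity you would check it against.

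\textbf{Second, the $Z_b$ computation is one line, but it does not reduce to $N$, $N_1$, $\int a^2$ by any identity you give.} Write $S(w)w=a(w)w+b(w)$ and differentiate at $w=v$ along $u\perp v$. The $v^\perp$-components give
\[
 \sum_\alpha\inner{z_{\alpha0}}{u}e_\alpha=\nabla^S_ub+a\,u-Cu,
 \qquad\text{so}\qquad
 Z_b=\int\bigl|\nabla^Sb+a\,\Id_{v^\perp}-C\bigr|^2 .
\]
The cross term $\int\inner{\nabla^Sb}{C}$ needs the explicit $C$-block of each model, which is what the paper uses:
\begin{itemize}
\item On $(4,2)$ one has $a=b=0$ and $C=S$, so $Z_b=N$.
\item On $(3,1)$ one has $C=-\frac2{n+2}\Sigma$, so $Z_b=\int|\nabla^SH+\frac2{n+2}\Sigma|^2$, evaluated with $\int|\Sigma|^2=n+2$ and $\int|\Omega|^2=2n$.
\item On $(2)$ one has $C=\frac1{n(n+1)}(\nabla_S^2q-nq\,g_{S^{n-1}})$. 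Here the displayed map collapses to $-(n+2)C$, giving $Z_b=(n+2)^2\int|C|^2$.
\end{itemize}
With these inputs, together with $\int|\nabla_S^2q|^2=2n(n+2)$ and $\tr\nabla_S^2q=-2nq$, every entry of the table follows. It comes from $\mathscr W^{(2)}_I=(4n+6)N-2(n+4)N_1$ and the energy split $4(n+2)N=Z_{00}+2Z_b+2Z_C$, using the corrected $Z_{00}$.
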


\begin{proof}
We give the calculation, including the derivative energies.  For $k=4$,
\eqref{eq:sym2-WI} reads
\begin{equation}\label{eq:quartic-WI-common}
 \mathscr W_I^{(2)}=(4n+6)N-2(n+4)N_1.
\end{equation}
If
$$
 Z_{00}:=\int|z_{00}|^2,
$$
then the spherical harmonic energy splits as
\begin{equation}\label{eq:quartic-energy-split}
 4(n+2)N=Z_{00}+2Z_b+2Z_C.
\end{equation}

On the $(4,2)$ summand one has $a=b=0$ and $C=S$.  Differentiating
$S(v)v=0$ tangentially gives $z_{00}=0$ and $Z_b=N$.  Equations
\eqref{eq:quartic-WI-common} and \eqref{eq:quartic-energy-split} then give
the first row.

For $S_H$, one has on the unit sphere
$$
 a=0,
 \qquad b=H,
 \qquad C=-\frac2{n+2}\Sigma,
 \qquad \nabla^SH=\Sigma+\Omega,
$$
where $\Sigma$ and $\Omega$ are the symmetric and skew parts of the
intrinsic derivative of the tangent vector field $H$.  Since each ambient
component of $H$ is a degree-three spherical harmonic,
$$
 \int|\nabla^SH|^2=3n+2.
$$
The divergence-free condition and integration by parts on $S^{n-1}$ give
$$
 \int\inner{\nabla^SH}{(\nabla^SH)^\top}=-(n-2),
$$
and consequently
\begin{equation*}
 \int|\Sigma|^2=n+2,
 \qquad
 \int|\Omega|^2=2n.
\end{equation*}
It follows that
$$
 N=2+\frac4{(n+2)^2}\int|\Sigma|^2
   =\frac{2(n+4)}{n+2},
 \qquad N_1=1.
$$
Furthermore, differentiating $S_H(v)v=H(v)$ yields
$$
 z_{00}=-2H,
 \qquad
 Z_b=\int\left|\nabla^SH+\frac2{n+2}\Sigma\right|^2
 =\frac{3n^2+12n+16}{n+2}.
$$
The remaining entries follow from the block definitions,
\eqref{eq:quartic-WI-common}, and
\eqref{eq:quartic-energy-split}.

Finally, for $S_q$ one has
$$
 a=q,
 \qquad
 b=-\frac1n\nabla^Sq,
 \qquad
 C=\frac1{n(n+1)}
 \bigl(\nabla_S^2q-nq\,g_{S^{n-1}}\bigr).
$$
For a normalized degree-two spherical harmonic,
\begin{equation*}
 \int|\nabla^Sq|^2=2n,
 \qquad
 \int|\nabla_S^2q|^2=2n(n+2),
 \qquad
 \tr(\nabla_S^2q)=-2nq.
\end{equation*}
Thus
$$
 \int|C|^2=\frac{n+4}{n(n+1)},
 \qquad
 N=\frac{(n+2)(n+4)}{n(n+1)},
 \qquad
 N_1=\frac{n+2}{n}.
$$
Differentiating the identity
$S_q(v)v=qv-n^{-1}\nabla^Sq$ gives
$$
 z_{00}=\frac{n+2}{n}\nabla^Sq,
 \qquad
 Z_b=(n+2)^2\int|C|^2
 =\frac{(n+2)^2(n+4)}{n(n+1)}.
$$
Again \eqref{eq:quartic-WI-common} and
\eqref{eq:quartic-energy-split} give all remaining entries.
\end{proof}

\section{Seven-dimensional coefficient }\label{app:seven-dimensional-audit}

\begin{lemma}
\label{lem:seven-contraction}
Let $V$ be a Euclidean $7$-space and let
$$
  u\in \Harm_3(V)\otimes\Lambda^2V^*
$$
satisfy the top normal condition, so that
$$
  g(v):=\iota_vu(v)\in\Harm_2(V)\otimes V^*,
  \qquad \iota_vg(v)=0.
$$
Then
\begin{equation*}
  \|g\|^2\leq\frac23\|u\|^2.
\end{equation*}
\end{lemma}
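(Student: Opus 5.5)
The plan is to reduce the inequality to a Schur-lemma computation on each irreducible $\mathrm O(7)$-summand of the admissible space, and then evaluate the ratio $\|g\|^2/\|u\|^2$ summand by summand.

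\emph{Step 1: decomposition and equivariance.} The admissible space
\[
\mathscr N:=\{u\in\Harm_3(V)\otimes\Lambda^2V^*:\ u \text{ satisfies the top normal condition}\}
\]
is an $\mathrm O(7)$-module. By orthogonal Pieri,
\[
\Harm_3\otimes\Lambda^2\simeq\mathbb V^0_{(4,1,1)}\oplus\mathbb V^0_{(3,2,1)}\oplus\mathbb V^0_{(4,1)}\oplus\mathbb V^0_{(3,1,1)}\oplus\mathbb V^0_{(2,1)}\oplus\mathbb V^0_{(3)}\oplus\cdots,
\]
and the decomposition is multiplicity-free in dimension $7$. This can be checked by a dimension count, as done for $n=8$ in Lemma~\ref{lem:projector-eight}. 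The top normal condition kills the degree-four harmonic part of $\iota_vu(v)$, so $\mathscr N$ is a sum of some of these summands. The map $u\mapsto g$ is equivariant into $\Harm_2\otimes V^*$, and both norms are invariant. Schur's lemma then shows that, on each irreducible summand $\mathbb V_\lambda\subset\mathscr N$,
\[
\|g\|^2=c_\lambda\|u\|^2,
\]
with no cross terms between summands. It therefore suffices to prove $c_\lambda\le 2/3$ for every summand.

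\emph{Step 2: summands where $g=0$.} Summands on which contraction vanishes identically, for example those of shape $(3,2,1)$ or $(4,1,1)$ whose highest weight has no match in $\Harm_2\otimes V^*$, give $c_\lambda=0$. Only the summands that also occur in $\Harm_2\otimes V^*$ remain, namely those with highest weights among $(3)$, $(2,1)$, $(1)$, and possibly $(3,1)$. The condition $\iota_vg(v)=0$ then restricts $g$ further, to the tangential divergence-free type of shape $(2,1)$ together with a gradient-type $(1)$ part. I expect this to leave one or two summands.

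\emph{Step 3: explicit models.} For each surviving summand, write an explicit harmonic model, in analogy with \eqref{eq:SH-embedding}--\eqref{eq:Sq-embedding}. Starting from a degree-two tangential vector harmonic $h$ with $\inner{h(x)}x=0$, take
\[
u_h(x)=\alpha\, x\wedge h(x)+\beta\,|x|^2\,d h(x),
\]
with $\alpha,\beta$ fixed by harmonicity. The contraction is then $\iota_xu_h(x)=\mathrm{const}\cdot|x|^2h(x)+\cdots$. The ratio $\|g\|^2/\|u\|^2$ is computed from the spherical energies $\int|\nabla^Sh|^2=\lambda_2-1$ and the Hodge--Bochner identity on $S^6$, exactly as in the proof of Lemma~\ref{lem:tracefree-hook-twist-estimate}. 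I expect the maximal ratio to be attained on this summand and to equal exactly $2/3$ (consistent with the sharpness claimed in the introduction). If there is also a $(1)$-type summand, its ratio is computed similarly from a linear vector field and should be smaller.

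\emph{Main obstacle.} The hard part is the bookkeeping of the normalizations in the harmonic extension. An alternative that avoids this is a pointwise route: at each $v$, decompose $u(v)$ into its radial part $v^\flat\wedge g$ and its tangential part, and bound the radial energy against the total energy using the degree-three harmonicity of each coefficient via integration by parts on $S^6$. This yields an inequality of the form $\|g\|^2\le\frac{\lambda_3-\lambda_2}{\lambda_3}\cdot(\text{const})\,\|u\|^2$; one then checks that the constant gives $2/3$, using $\lambda_3=24$ and $\lambda_2=14$. I would try the Schur-model computation first.
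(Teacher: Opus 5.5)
Your plan works, and its computational core is the paper's. The paper uses your model with $\alpha=\frac32$, $\beta=-\frac16$: writing $\iota_xu(x)=|x|^2G(x)$, it takes $u_G(x)=\frac32x^\flat\wedge G(x)-\frac16|x|^2dG(x)$, which on $S^6$ becomes $v^\flat\wedge g-\frac16 d_Sg$. The inputs $\delta_Sg=0$, $\int|\nabla^Sg|^2=13\|g\|^2$ and Hodge--Bochner ($\int|d_Sg|^2=18\|g\|^2$) then give $\|u_G\|^2=\frac32\|g\|^2$, so the ratio on this piece is exactly $\frac23$.

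You differ in how you show that the rest of $u$ does not interfere. You use Schur's lemma on a multiplicity-free $\mathrm O(7)$-decomposition. The paper instead sets $u_0:=u-u_G$ and notes $\iota_vu_0=0$, so $u_0$ is pointwise orthogonal to $v^\flat\wedge g$. It then contracts the Young relation $U_{(abc;i)j}=0$ to get $\delta_Su_0=0$, hence $\int\inner{u_0}{d_Sg}=0$ and $\|u\|^2=\|u_0\|^2+\|u_G\|^2$. That route is more economical, since it needs no knowledge of the decomposition. Yours makes sharpness transparent (equality exactly on the $(2,1)$ summand), but it depends on getting the decomposition right.

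As written, your decomposition is wrong. A product of a three-box and a two-box diagram has at most five boxes, so $(4,1,1)$ and $(3,2,1)$ cannot occur. In fact $\Harm_3\otimes\Lambda^2\simeq\mathbb V^0_{(4,1)}\oplus\mathbb V^0_{(3,1,1)}\oplus\mathbb V^0_{(3)}\oplus\mathbb V^0_{(2,1)}$, with dimensions $819+616+77+105=1617=77\cdot21$.

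The normal condition removes $(4,1)$ and $(3)$. On the $(3)$ summand, modeled by $x^\flat\wedge dp$ with $p\in\Harm_3$, the contraction $|x|^2dp-3p\,x^\flat$ has nonzero degree-four harmonic part. This leaves $\mathbb V^0_{(3,1,1)}\oplus\mathbb V^0_{(2,1)}$, and the contraction vanishes on the first summand. There is no $(1)$ or $(3,1)$ case to worry about: $\iota_xG(x)=0$ holds automatically by skew-symmetry and forces $G$ to be of pure type $(2,1)$.

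You only need two facts: $g$ is of type $(2,1)$, and $(2,1)$ occurs with multiplicity one in $\Harm_3\otimes\Lambda^2$. Then $u\mapsto g$ factors through the $(2,1)$-projection, which is precisely $u\mapsto u_G$; this recovers the paper's splitting.

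Drop the ``pointwise'' alternative. The factor $(\lambda_3-\lambda_2)/\lambda_3=5/12$ gives no evident route to $\frac23$.
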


\begin{proof}
Homogeneously, the normal condition reads
$$
  \iota_xu(x)=|x|^2G(x),
$$
where $G$ is a harmonic homogeneous one-form of degree two satisfying
$\iota_xG(x)=0$.  Define
\begin{equation*}
  u_G(x):=\frac32x^\flat\wedge G(x)-\frac16|x|^2dG(x).
\end{equation*}
Since $G$ is harmonic of degree two,
$$
  \Delta(x^\flat\wedge G)=2dG,
  \qquad
  \Delta(|x|^2dG)=18dG,
$$
so $u_G$ is harmonic.  Cartan's formula for the Euler vector field gives
$\iota_xdG=3G$, hence
$\iota_xu_G(x)=|x|^2G(x)$.  On the unit sphere,
\begin{equation*}
  u_G=v^\flat\wedge g-\frac16d_Sg.
\end{equation*}

Write $G_i(x)=A_{ab;i}x_ax_b$, where
$A_{ab;i}=A_{ba;i}$ and $A_{aa;i}=0$.  Tangency gives
$A_{(ab;i)}=0$.  Contracting one polynomial index with the form index gives
$2A_{ib;i}+A_{ii;b}=0$, and hence $A_{ib;i}=0$.  Thus
$\delta_Sg=0$.  Let $D$ denote the ambient spherical derivative.
Since each ambient component is a degree-two scalar harmonic,
$$
 \int_{S^6}|Dg|^2=14\int_{S^6}|g|^2.
$$
For the tangent field $g$ one has
$D_ag=\nabla^S_ag-\inner{g}{a}v$, and therefore
$|Dg|^2=|\nabla^Sg|^2+|g|^2$.  Hence
$$
  \int_{S^6}|\nabla^Sg|^2=13\int_{S^6}|g|^2.
$$
The coclosed Hodge--Bochner identity on $S^6$ gives
$$
  \int_{S^6}|d_Sg|^2=18\int_{S^6}|g|^2.
$$

Put $u_0:=u-u_G$.  Then $\iota_vu_0=0$.  In components write
$$
 (u_0)_{ij}(x)=U_{abc;ij}x_ax_bx_c,
$$
where $U$ is symmetric in $a,b,c$, skew in $i,j$, and
$U_{aac;ij}=0$.  Exact tangency is the Young relation
$U_{(abc;i)j}=0$.  Contracting $a$ with $i$ in this relation gives
$$
  2U_{ibc;ij}+U_{iic;bj}+U_{ibi;cj}=0,
$$
so $U_{ibc;ij}=0$.  This is precisely $\delta_Su_0=0$.  The radial part of
$u_G$ is pointwise orthogonal to $u_0$, while integration by parts gives
$$
  \int_{S^6}\inner{u_0}{d_Sg}
  =\int_{S^6}\inner{\delta_Su_0}g=0.
$$
Therefore $u_0\perp u_G$, and
$$
  \|u\|^2
  =\|u_0\|^2+\|u_G\|^2
  \geq\|g\|^2+\frac1{36}\|d_Sg\|^2
  =\frac32\|g\|^2.
$$
\end{proof}

\begin{lemma}
\label{lem:seven-degree-two-estimate}
Let $V$ be a Euclidean $7$-space and let
$g\in\Harm_2(V)\otimes V$ satisfy $\inner{g(v)}v=0$.  If
$0\leq\sec_{\cE}\leq1$, then
\begin{equation}\label{eq:seven-degree-two-residual}
  \mathcal W_{\cE}(g)\geq-2\sqrt6\,\|g\|^2,
  \qquad
  \mathcal W_I(g)=13\|g\|^2.
\end{equation}
Consequently, for $\cA=\delta I+(1-\delta)\cE$,
\begin{equation*}
  \mathcal W_{\cA}(g)
  \geq\bigl(13\delta-2\sqrt6(1-\delta)\bigr)\|g\|^2.
\end{equation*}
\end{lemma}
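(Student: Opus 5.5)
The plan is to treat $g$ exactly like the totally trace-free hook tensor in Lemma~\ref{lem:tracefree-hook-twist-estimate}, but with $(n,k)=(7,2)$. The aim is to show that the two pieces of the mixed term each cost exactly $\sqrt6\,\|g\|^2$.

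\textbf{The identity for $\mathcal W_I$.} Since $\inner{g(v)}v=0$, the radial part vanishes: $N_\parallel=0$, i.e.\ $u=0$ in \eqref{eq:WI-u}. With $\lambda_2=2(2+7-2)=14$, formula \eqref{eq:WI-u} gives $\mathcal W_I(g)=(\lambda_2-1)\|g\|^2=13\|g\|^2$. Equivalently, one can read it off as the Jacobi energy $14\|g\|^2$ minus the twist contribution $\|g\|^2$.

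\textbf{Setting up the residual.} Lemma~\ref{lem:tangent-twist-residual} is too weak here: it only gives $\frac14+\frac23\sqrt{70}$. Instead I use the spherical model. Write $F=g$, a tangent field on $S^6$. As in the proof of Lemma~\ref{lem:seven-contraction}, tangency together with trace-freeness of the coefficients gives $\delta_Sg=0$. Choose $e_0=x$.
\begin{itemize}
\item The $r=0$ summand vanishes, because $\alpha_0=\beta_0$.
\item The Jacobi terms $\cE(\beta_a,\beta_a)$ are nonnegative.
\end{itemize}
So only the mixed term $\mathfrak m=\sum_{a\ge1}\cE(F\wedge e_a,x\wedge\mathsf A^*e_a)$ needs to be bounded, where $\mathsf A=\nabla^SF=\Sigma+\Omega$. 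By the Bianchi splitting \eqref{eq:M-decomposition},
\[
\mathfrak m=\sum_a s_a\,\cE(F,e_a,x,e_a)+\cE(F\wedge x,\omega_\Omega).
\]

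\textbf{Exact energies.} These follow as in \eqref{eq:tracefree-hook-energies}.
\begin{itemize}
\item Each ambient component is a degree-two harmonic, so $\int|\mathsf A|^2=(14-1)\|g\|^2=13\|g\|^2$.
\item Since $g$ is coclosed, the Hodge--Bochner identity on $S^6$ (where $\mathrm{Ric}=5$) gives $4\int|\omega_\Omega|^2=\int|d_Sg|^2=(13+5)\|g\|^2$. Hence $\int|\omega_\Omega|^2=\frac92\|g\|^2$.
\item Therefore $\int|\Sigma|^2=13-9=4$, in units of $\|g\|^2$.
\end{itemize}

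\textbf{Pointwise bounds.} Now $\Sigma$ is trace-free on the $6$-dimensional space $x^\perp$. Since $m=6$ is even, the sign-class argument gives $4pq/(p+q)\le 6$, so $\sum_a|s_a|\le\sqrt6\,|\Sigma|$. Centering the Jacobi form at $\frac12\Id$ gives $|\cE(F,e_a,x,e_a)|\le\frac12|F|$. So the first piece is at most $\frac{\sqrt6}{2}|F||\Sigma|$. For the second piece, $\omega_\Omega\in\Lambda^2x^\perp$ has a canonical form with at most $3$ simple summands, each orthogonal to $F\wedge x$. Applying \eqref{eq:BK-simple-form} summand by summand, then Cauchy--Schwarz, gives $|\cE(F\wedge x,\omega_\Omega)|\le\frac23\sqrt3\,|F||\omega_\Omega|$.

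\textbf{Integration and conclusion.} Integrating and applying Cauchy--Schwarz with the energies above gives
\[
\int|\mathfrak m|\le\Bigl(\tfrac{\sqrt6}{2}\sqrt4+\tfrac{2\sqrt3}{3}\sqrt{\tfrac92}\Bigr)\|g\|^2=(\sqrt6+\sqrt6)\|g\|^2=2\sqrt6\,\|g\|^2.
\]
This yields $\mathcal W_{\cE}(g)\ge-2\sqrt6\,\|g\|^2$. The final inequality then follows by linearity from $\cA=\delta I+(1-\delta)\cE$.

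\textbf{Main obstacle.} The step that needs most care is confirming the normalization $\mathcal W=c_{7,2}^{-1}W$, so that the energies are measured in the $L^2$ norm $\|g\|^2$, and checking $\delta_Sg=0$. The latter is needed for the Hodge--Bochner step that produces the constant $\frac92$.
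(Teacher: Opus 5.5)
Your proposal is correct and follows the paper's proof essentially step by step. Both use the spherical model with $\delta_S g=0$, the vanishing $r=0$ summand, the Bianchi splitting of the mixed term, the bound $\sum_a|s_a|\leq\sqrt6\,|\Sigma|$ on a $6$-space, the three-summand bound $\frac23\sqrt3\,|F|\,|\omega_\Omega|$, and the energies $\int|\mathsf A|^2=13\|g\|^2$, $\int|\omega_\Omega|^2=\frac92\|g\|^2$, $\int|\Sigma|^2=4\|g\|^2$, so that each piece costs $\sqrt6\,\|g\|^2$ and $\mathcal W_I(g)=(\lambda_2-1)\|g\|^2$.
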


\begin{proof}
The same index contraction as in the preceding proof shows that $g$ is
divergence-free.  At a fixed $v\in S^6$, put
$F(v)=g(v)$ and $\mathsf A=\nabla^SF=\Sigma+\Omega$.  Then
$\tr\Sigma=0$.  The spherical formula for $\mathcal W_{\cE}$ is
\eqref{eq:spherical-w-formula} without its common normalization.  Its
radial term vanishes, and the nonnegative Jacobi terms may be discarded.
The remaining mixed term has the Bianchi decomposition
$$
  \mathfrak m
  =\sum_{a=1}^{6}s_a\cE(F,e_a,v,e_a)
   +\cE(F\wedge v,\omega_\Omega),
$$
where $(s_a)$ are the eigenvalues of $\Sigma$.  Since
$0\preceq\mathsf J_{e_a}^{\cE}\preceq\Id$,
$$
  |\cE(F,e_a,v,e_a)|\leq\frac12|F|.
$$
For a trace-free symmetric endomorphism of a $6$-space,
$\sum_a|s_a|\leq\sqrt6\,|\Sigma|$.  Moreover
$\omega_\Omega$ has at most three simple canonical summands, so
Lemma~\ref{lem:BBK} gives
$$
  |\cE(F\wedge v,\omega_\Omega)|
  \leq\frac23\sqrt3\,|F|\,|\omega_\Omega|.
$$

The scalar components of $F$ have spherical eigenvalue $14$.  If $D$
denotes the ambient spherical derivative, then
$D_aF=\mathsf A(a)-\inner{F}{a}v$, so
$$
 14\|g\|^2=\int|DF|^2=\int|\mathsf A|^2+\|g\|^2.
$$
Thus
$$
  \int|\mathsf A|^2=13\|g\|^2.
$$
The coclosed Hodge--Bochner identity, together with
$dF^\flat=2\omega_\Omega$, gives
$$
  \int|\omega_\Omega|^2=\frac92\|g\|^2,
  \qquad
  \int|\Sigma|^2=4\|g\|^2.
$$
Cauchy--Schwarz bounds each of the two parts of $\mathfrak m$ by
$\sqrt6\|g\|^2$.  This proves the first inequality in
\eqref{eq:seven-degree-two-residual}.  For constant curvature,
$\mathcal W_I=(\lambda_2-1)\|g\|^2=13\|g\|^2$.
\end{proof}

\section{Optimization of the projector threshold}\label{app:projector-optimization}

\begin{remark}[Optimization of the rational constant]
\label{rem:optimized-constant}
The choice $5/13$ is made only to keep
\eqref{eq:quartic-young-one}--\eqref{eq:quartic-gaps} rational.  More
generally, for $a,b>0$ one may use
\begin{align*}
 \frac43\sqrt{A_CZ_C}
 &\leq\frac23\left(aA_C+a^{-1}Z_C\right),\\
 \frac43\sqrt{(n-2)BZ_b}
 &\leq\frac23\left(b(n-2)B+b^{-1}Z_b\right).
\end{align*}
Let $q_*$ be the unique real root of
\begin{equation*}
 648q^3-1404q^2+1026q-251=0.
\end{equation*}
Then
$$
 q_*=0.624487235576400\ldots,
 \qquad
 \delta_*:=\frac{q_*}{1+q_*}
 =0.384421140345138\ldots.
$$
Choose
$$
 a_*=3q_*+\sqrt{9q_*^2-2},
 \qquad
 b_*=\frac{2}{3(2q_*-1)}.
$$
These parameters satisfy
\begin{align*}
 4q_*&=\frac23\left(a_*+\frac2{a_*}\right),\\
 6q_*-1&=\frac{2}{3a_*}
 +\frac23\left(b_*+\frac3{b_*}\right),\\
 2q_*-1&=\frac{2}{3b_*}.
\end{align*}
After substitution in the three rows of
Table \eqref{eq:quartic-table}, the finite-dimensional gaps are
\begin{align*}
 G_{(4,2)}={}&
 \frac{864q_*^2-1350q_*+547}
 {9(18q_*^2-27q_*+11)},\\
 G_{(3,1)}={}&
 \frac{n(324q_*^2-528q_*+239)
       +2808q_*^2-4560q_*+1946}
 {27(n+2)(2q_*-1)(18q_*^2-27q_*+11)},\\
 G_{(2)}={}&
 \frac{n^2(216q_*^2-324q_*+132)
       +n(1404q_*^2-2256q_*+977)
       +2808q_*^2-4560q_*+1946}
 {27n(n+1)(2q_*-1)(18q_*^2-27q_*+11)}.
\end{align*}
Since $0.624<q_*<0.625$, every numerator coefficient and every denominator
in these expressions is positive.  The high-degree calculation remains
positive as well: after writing $n=12+A$, $k=6+B$, the numerator obtained by
completing the square has coefficients
\begin{align*}
 &(18q_*^2+9q_*-4)A^2B+(72q_*^2+18q_*-33)A^2\\
 &+(54q_*^2+9q_*-4)AB^2+(828q_*^2+216q_*-128)AB\\
 &+(2412q_*^2+288q_*-813)A+36q_*^2B^3\\
 &+(1008q_*^2+90q_*-40)B^2
 +(8424q_*^2+1224q_*-880)B\\
 &+19440q_*^2+900q_*-4830,
\end{align*}
all of which are positive on the same interval.  The quadratic-projector
estimate has the smaller threshold $q>\sqrt2/3$.  Consequently, the
proof of Theorem \ref{thm:stable-main} actually yields the conclusion in dimensions
$n\equiv0\pmod4$, $n\geq12$, under strict $\delta_*$-pinching.  Equivalently,
$\delta_*$ is the unique real root in $(0,1)$ of
$$
 3329\delta^3-4209\delta^2+1779\delta-251=0.
$$
\end{remark}

\subsection*{Acknowledgment}
ZSH would like to thank Leiye Xu for helpful discussions.

\subsection*{Declaration of competing interest}
The authors declare that they have no known competing financial interests in this paper.

\subsection*{Data availability}
No data were used for the research described in the article.

\end{document}